\documentclass[10pt, a4paper]{amsart}
\usepackage{amssymb,graphicx}
\usepackage{mathrsfs, bbm, slashed}
\usepackage{color}
\usepackage{enumerate}
\usepackage{a4wide}

\usepackage[utf8]{inputenc}

\newcommand{\R}{\mathbb{R}}

\newcommand{\C}{\mathbb{C}}

\newcommand{\Z}{\mathbb{Z}}

\newcommand{\N}{\mathbb{N}}
\newcommand{\T}{\mathbb{T}}

\newcommand{\bB}{\mathbb{B}}

\newcommand{\sA}{\mathscr{A}}
\newcommand{\sB}{\mathscr{B}}
\newcommand{\sD}{\mathscr{D}}
\newcommand{\sE}{\mathscr{E}}

\newcommand{\sL}{\mathscr{L}}

\newcommand{\sH}{\mathscr{H}}
\newcommand{\sK}{\mathscr{K}}
\newcommand{\sM}{\mathscr{M}}

\newcommand{\sS}{\mathscr{S}}

\newcommand{\sU}{\mathscr{U}}
\newcommand{\sV}{\mathscr{V}}
\newcommand{\sW}{\mathscr{W}}

\newcommand{\fp}{\mathfrak{p}}

 \newcommand{\psido}{$\Psi$DO}
 \newcommand{\psidos}{$\Psi$DOs}

\def\Xint#1{\mathchoice
{\XXint\displaystyle\textstyle{#1}}
{\XXint\textstyle\scriptstyle{#1}}
{\XXint\scriptstyle\scriptscriptstyle{#1}}
{\XXint\scriptscriptstyle\scriptscriptstyle{#1}}
\!\int}
\def\XXint#1#2#3{{\setbox0=\hbox{$#1{#2#3}{\int}$}
\vcenter{\hbox{$#2#3$}}\kern-.5\wd0}}

\def\dashint{\Xint-}

\newcommand{\bint}{\ensuremath{\dashint}}

\newcommand{\op}{\operatorname} 

\newcommand{\Sp}{\op{Sp}}
\newcommand{\tr}{\op{tr}}

\newcommand{\Vol}{\op{Vol}}
\newcommand{\Tr}{\op{Tr}}

\newcommand{\dist}{\op{dist}}

\newcommand{\ran}{\op{ran}}

\newcommand{\End}{\op{End}}

\newcommand{\Hol}{\op{Hol}}

\newcommand{\id}{\mathrm{id}}

\newcommand{\Res}{\op{Res}}
\newcommand{\rk}{\op{rk}}
\newcommand{\car}{\mathbbm{1}}

\newcommand{\scal}[2]{\ensuremath{\left\langle #1 | #2 \right\rangle}} 
\newcommand{\acou}[2]{\ensuremath{\left\langle #1 , #2 \right\rangle}} 
 
\newcommand{\bigscal}[2]{\ensuremath{\big\langle #1 | #2 \big\rangle}}

\newcommand{\dom}{\op{dom}}

\def\Xint#1{\mathchoice
{\XXint\displaystyle\textstyle{#1}}
{\XXint\textstyle\scriptstyle{#1}}
{\XXint\scriptstyle\scriptscriptstyle{#1}}
{\XXint\scriptscriptstyle\scriptscriptstyle{#1}}
\!\int}
\def\XXint#1#2#3{{\setbox0=\hbox{$#1{#2#3}{\int}$ }
\vcenter{\hbox{$#2#3$ }}\kern-.6\wd0}}

\numberwithin{equation}{section}

\newtheorem{theorem}{Theorem}[section]
\newtheorem{proposition}[theorem]{Proposition}
\newtheorem{corollary}[theorem]{Corollary}

\newtheorem*{conditionW}{Condition (W)}
\newtheorem*{conditionCr}{Condition ($\textup{C}_r$)}

\newtheorem{lemma}[theorem]{Lemma}

\newtheorem*{conjecture*}{Conjecture}
\newtheorem*{lemma*}{Lemma}
\newtheorem*{proposition*}{Proposition}
\newtheorem*{theorem*}{Theorem}

\newtheorem*{SC-Weyl}{Semiclassical Weyl Law}
\newtheorem*{Integration-Formula}{Integration Formulas}
\newtheorem*{Spectral-Asymp}{Spectral Asymptotics}
\newtheorem*{trace-thm}{Trace Theorem}

\newtheorem{theoremalph}{Theorem}
\newtheorem{lemmalph}[theoremalph]{Lemma}
\newtheorem{propositionalph}[theoremalph]{Proposition}
\newtheorem{corollaryalph}[theoremalph]{Corollary}
\newtheorem{conjecturealph}[theoremalph]{Conjecture}

\theoremstyle{definition}
\newtheorem{definition}[theorem]{Definition}
\newtheorem{condition}[theorem]{Condition}
\newtheorem*{condition*}{Condition}

\theoremstyle{remark}

\newtheorem{remark}[theorem]{Remark}
 
\newtheorem*{claim*}{Claim} 

\newcommand{\LlogL}{L\!\log\!L}

\newcommand{\shD}{\slashed{D}}
\newcommand{\shS}{\slashed{S}}

\newcommand{\sa}{\textup{sa}}

\title{Noncommutative Geometry, Spectral Asymptotics,\\
and Semiclassical Analysis}
\date{}

\author{Rapha\"el Ponge}
 \address{Department of Mathematics and Statistics, University of Ottawa, Canada}
 \email{ponge.math@icloud.com}

\begin{document}
\begin{abstract}
Semiclassical analysis and noncommutative geometry are two pillars of quantum theory. It is only recently that bridges between them have been emerging.  
In this monograph, we combine various techniques from functional analysis and spectral theory to obtain semiclassical Weyl laws and extensions of Connes' integration formula for a large class of noncommutative manifolds (i.e., spectral triples). 
These results generalize and simplify recent results of McDonald-Sukochev-Zanin. In particular, all the regularity assumptions and restrictions on dimension there are removed in our approach. Moreover, the Tauberian condition used by McDonald-Sukochev-Zanin is replaced by a weaker spectral theoretic condition, called Condition (W). That condition holds in fairly greater generality and significantly opens the scope of applicability of the main results. 
We also give Tauberian conditions that imply Condition (W). These Tauberian conditions are easier to check in practice than the Tauberian condition of McDonald-Sukochev-Zanin and are satisfied in numerous examples.  The need for these conditions was highlighted by Alain Connes in an online seminar. 
The main results of this paper are illustrated by semiclassical Weyl laws and integration formulas in the settings of closed Riemannian manifolds and quantum tori. In the former settings we recover well-known semiclassical Weyl laws, as well as Weyl laws for Steklov eigenvalues. The only novelty is obtaining them from old results of  Minakshisundaram-Pleijel on heat kernel asymptotics. In the setting of quantum tori,  the semiclassical Weyl laws provide a positive answer to a conjecture of Edward McDonald and the author. The integration formulas are refinements of several previous analogues of Connes' integration formula for quantum tori. 
\end{abstract}

\maketitle


\section{Introduction}\label{chap:Intro}
Noncommutative geometry and semiclassical analysis are two important fields within the broader area of quantum theory. Recently, bridges between them have been emerging (see, e.g.,~\cite{MSZ:LMP22, Po:MPAG22, Po:JNCG23, Ro:JST22, SZ:FAA23}). This has its roots, on the one hand, in the functional analytic approach to semiclassical analysis of Schr\"odinger operators pioneered by Birman-Solomyak~\cite{BS:FAA70, BS:AMST89} and Simon~\cite{Si:TAMS76}, and, on the other hand, in the paramount role of operator ideals in noncommutative geometry program of Connes~\cite{Co:NCG}.

In the framework of noncommutative geometry, the role of manifolds is played by spectral triples $(\sA,\sH,D)$, where $\sA$ is a (unital) $*$-algebra represented by bounded operators on the Hilbert space $\sH$ and $D$ is an unbounded selfadjoint operator on $\sH$ with compact resolvent which commutes with elements of $\sA$ up to bounded operators. The notion of dimension is replaced by $p$-summability, in the sense that $D^{-1}$ lies in the weak Schatten class $\sL_{p,\infty}$. 
The main prototype of a spectral triple is the Dirac spectral triple $(C^\infty(M), L^2(M,\shS),\shD)$ associated with any closed Riemannian spin manifold. A closely related example is the square-root Laplacian spectral triple $(C^\infty(M),L^2(M),\sqrt{\Delta_g})$.  

In a recent article~\cite{MSZ:LMP22} McDonald-Sukochev-Zanin established a semiclassical Weyl law for spectral triples $(\sA,\sH,D)$, i.e., for the counting functions of  Schr\"odinger operators $h^2D^2+V$ under the semiclassical limit $h\rightarrow 0^+$. This semiclassical Weyl law is established as a consequence of a general Tauberian theorem for non-commuting pairs of operators on Hilbert space (see~\cite[Theorem~1.2]{MSZ:LMP22}). In order to apply this Tauberian theorem three conditions for the spectral triple are required:
\begin{enumerate}
 \item[(a)] $p$-summability with $p>2$. 
 
 \item[(b)] Lipschitz regularity, i.e., $[|D|,a]\in \sL(\sH)$ for all $a\in \sA$.
 
 \item[(c)] A Tauberian condition for the zeta functions $\Tr[a^z|D|^{-z}]$, $a\geq 0$, $\Re z>p$ (see Condition~\ref{cnd:TauberianMSZ} for the precise statement). 
\end{enumerate}

It would be desirable to remove the technical conditions (a) and (b), especially the former, since they prevent us from dealing with 1-dimensional and 2-dimensional examples. It is actually conjectured in~\cite{MSZ:LMP22} that the main results there should hold for $p\leq 2$.  

In addition, it  would be good to relate the Tauberian condition~(c) to more standard Tauberian conditions in terms of zeta functions $\Tr[a|D|^{-z}]$ or heat traces $\Tr[ae^{-tD^2}]$, $t>0$, which do not involve powers of $a$. The need for this was pointed out by Alain Connes during an online seminar in May 2021. He also pointed out that closedness under holomorphic functional calculus should play a role.

The main aim of  this monograph is to show that, in the setup of spectral triples, results such as semiclassical Weyl laws and Connes' integration formulas, as well as Weyl laws for Steklov eigenvalues are mere consequences of \emph{classical} Weyl laws for some Laplace-type operators, namely, operators of the form $aD^2a$, where $a$ ranges over positive invertible elements of $\sA$. This simplifies the approach of~\cite{MSZ:LMP22} and leads to stronger results.  In particular, we remove the conditions~(a)--(b) in the approach of~\cite{MSZ:LMP22} and replace the Tauberian condition of~\cite{MSZ:LMP22} by a more general spectral theoretic condition (see Condition~(W) below). That condition is weaker and easier to check in practice, which allows us to widen significantly the scope of applications of the results and cover various new examples. We also exhibit Tauberian conditions (Condition~(Z) and Condition (H); see below) which  imply Condition~(W) and answer the above-mentioned comments of Alain Connes.

\subsection{Spectral Condition (W)}  
Let $(\sA,\sH, D)$ be a $p$-summable (unital) spectral triple, where $p$ may be \emph{any} positive number. We denote by $\overline{\sA}$ the closure of $\sA$ in $\sL(\sH)$. We also let $\overline{\sA}_{++}$ be the cone of invertible positive elements of $\sA$ (i.e., selfadjoint elements with positive spectrum), and set $\sA_{++}=\sA \cap \overline{\sA}_{++}$. Moreover, given any $a\in \overline{\sA}_{++}$, we let 
\begin{equation*}
 \lambda_0(aD^{2} a)\leq \lambda_1(aD^{2} a)\leq \cdots
\end{equation*}
be the \emph{positive} eigenvalues of $aD^{2} a$ (counted with multiplicity). 

The approach of this monograph relies on trading the Tauberian condition of~\cite{MSZ:LMP22} for some spectral condition  (Condition (W) below) which does not involve any additional regularity conditions or restriction on the degree of summability for spectral triples. As we shall see, this condition is actually weaker than the Tauberian condition of~\cite{MSZ:LMP22} and will allow us to cover a wealth of new examples. This condition is formulated as follows. 

\begin{conditionW}
 For every $a\in {\sA}_{++}$, we have
\begin{equation}\label{eq:Intro.conf-Weyl}
 \lim_{j\rightarrow \infty}j^{-\frac{2}{p}}\lambda_j\big(aD^2a\big)= \tau\big[a^{-p}\big]^{-\frac{2}{p}}, 
\end{equation}
where $\tau:\overline{\sA}\rightarrow \C$ is a given positive linear map.
\end{conditionW}

We can interpret Condition~(W) as requiring Weyl laws for conformal deformations of the square $D^2$, where conformal deformation is meant in the sense of Connes-Moscovici~\cite{CM:Twisted}.

\subsection{Spectral asymptotics for Birman-Schwinger operators} 
As we shall see, Condition~(W) allows us to get spectral asymptotics for Birman-Schwinger type operators 
$|D|^{-q/2}a|D|^{-q/2}$, $q>0$, as $a$ ranges over the $C^*$-algebra $\overline{\sA}$. Once we have these spectral asymptotics, routine arguments enable us to get semiclassical Weyl laws and integration formulas (see below). 

In what follows, given any $q>0$, we denote by $\mu_j(|D|^{-q/2}a|D|^{-q/2})$, $j\geq 0$, the singular values of 
$|D|^{-q/2}a|D|^{-q/2}$ (i.e., the eigenvalues of the absolute value $||D|^{-q/2}a|D|^{-q/2}|$). If $a^*=a$, then  $|D|^{-q/2}a|D|^{-q/2}$ is selfadjoint, and so we further denote by $\lambda_j^\pm(|D|^{-q/2}a|D|^{-q/2})$, $j\geq 0$, its positive/negative eigenvalues (counted with multiplicity). 

More precisely, we  establish the following spectral asymptotics, 

\begin{propositionalph}[Spectral Asymptotics]\label{prop:Intro.Weyl-BirS}
 Assume Condition~\textup{(W)} holds. Given any $q>0$, for every $a\in \overline{\sA}$, we have
 \begin{gather}
 \label{eq:Intro.Weyl-|DpaDp|-tau}
 \lim_{j\rightarrow \infty}j^{\frac{q}{p}} \mu_j\big(|D|^{-\frac{q}{2}}a|D|^{-\frac{q}{2}}\big) = \tau\left[|a|^{\frac{p}{q}}\right]^{\frac{q}{p}},\\
 \label{eq:Intro.Weyl-DpaDp-tau}
 \lim_{j\rightarrow \infty} j^{\frac{q}{p}} \lambda_j^\pm\big(|D|^{-\frac{q}{2}}a|D|^{-\frac{q}{2}}\big) =\tau\left[\big(a_\pm\big)^{\frac{p}{q}}\right]^{\frac{q}{p}} \quad (\text{if}\ a^*=a). 
\end{gather}
Here $a_\pm=\frac12(|a|\pm a)$ are the positive/negative parts of $a$.  In particular, by Theorem~\ref{thm:Intro.Tauberian} the above spectral asymptotics hold under any of the Tauberian conditions~\textup{(H)} and~$\textup{(Z)}$. 
\end{propositionalph}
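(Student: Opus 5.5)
The plan is to first reduce to invertible positive elements of $\sA$, then transfer Condition~(W) to the operators $|D|^{-q/2}a|D|^{-q/2}$, and finally extend by approximation and positive/negative parts. Start with $a\in\sA_{++}$ and $q=2$. The nonzero eigenvalues of $aD^{-2}a$ (taken on the complement of $\ker D$, with $D^{-1}$ understood as the partial inverse; a finite-rank correction does not affect the asymptotics) coincide with those of $|D|^{-1}a^2|D|^{-1}$, since $TT^*$ and $T^*T$ have the same nonzero spectrum for $T=a|D|^{-1}$. Moreover $aD^{-2}a=(a^{-1}D^2a^{-1})^{-1}$. Condition~(W) applied to $a^{-1}\in\sA_{++}$ gives $\lambda_j(a^{-1}D^2a^{-1})\sim j^{2/p}\tau[a^{p}]^{-2/p}$. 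Inverting yields
\[
\mu_j\big(|D|^{-1}a^2|D|^{-1}\big)\sim j^{-2/p}\tau[a^p]^{2/p}.
\]
Writing $b=a^2$, this reads $j^{2/p}\mu_j(|D|^{-1}b|D|^{-1})\to\tau[b^{p/2}]^{2/p}$ for every $b\in\sA_{++}$ of the form $a^2$. Since $\sA_{++}$ is stable under square roots only after closure, I will state the result for $b\in\overline{\sA}_{++}$ via approximation (see below).

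Next I pass to general $q$. For $b\in\overline{\sA}_{++}$, I compare $|D|^{-q/2}b|D|^{-q/2}$ with $(|D|^{-1}b^{2/q}|D|^{-1})^{q/2}$. The tool is an Araki--Lieb--Thirring or Birman--Koplienko--Solomyak type inequality for weak Schatten quasi-norms, applied to $A=|D|^{-1}$ and $B=b^{2/q}$. Combined with the fact that $b$ is invertible, and hence comparable to scalar multiples of the identity, this should give two-sided asymptotic bounds. The difficulty is that this comparison only yields $\limsup$/$\liminf$ inequalities, not the exact limit. To obtain equality I plan to use a localization argument: approximate $b$ in norm by finite-spectrum elements $\sum c_k e_k$. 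These lie in $\overline{\sA}$ only if it has projections, so instead I use Birman--Solomyak style additivity of asymptotics for operators with ``almost orthogonal'' supports, implemented via continuous partitions of unity $\phi_k(b)$. On each piece $b$ is close to a constant, and constants commute with $|D|$, so the exact power relation holds there.

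I expect the main obstacle to be this passage from $q=2$ to arbitrary $q$, i.e.\ making the localization rigorous when $[|D|,\phi_k(b)]$ need not be bounded, since no Lipschitz regularity is assumed. What I would try first is to express everything through the functional $b\mapsto\lim j^{2/p}\mu_j$ and show it equals $\tau[\,\cdot\,]$ on the cone. The key inputs are:
\begin{enumerate}
\item[(i)] continuity in $b$ with respect to the operator norm, which follows from $\|\,|D|^{-q/2}(b-b')|D|^{-q/2}\|_{\sL_{p/q,\infty}}\le\|b-b'\|\,\||D|^{-q}\|_{\sL_{p/q,\infty}}$ together with the standard fact that the set of operators with a given Weyl asymptotic is closed in $\sL_{p/q,\infty}$ modulo the separable part;
\item[(ii)] the Ky Fan and Birman--Solomyak lemma on asymptotics of sums $T_1+T_2$ with $T_1^*T_2$ and $T_1T_2^*$ of lower order, applied to pieces $\phi_k(b)^{1/2}|D|^{-q/2}$.
\end{enumerate}

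Finally, for general selfadjoint $a$ I write $a=a_+-a_-$ and approximate $a_\pm$ by $a_\pm+\epsilon$. Then $|D|^{-q/2}a_+|D|^{-q/2}$ and $|D|^{-q/2}a_-|D|^{-q/2}$ are asymptotically orthogonal: choose $\chi$ with $\chi a_+\approx a_+$ and $\chi a_-\approx0$, and apply the Birman--Solomyak lemma. This gives the $\lambda_j^\pm$ asymptotics~(\ref{eq:Intro.Weyl-DpaDp-tau}). For general $a\in\overline{\sA}$, $\mu_j$ of $|D|^{-q/2}a|D|^{-q/2}$ is handled through the polar decomposition, i.e.\ via $T^*T$ with $T=|a|^{1/2}u^*|D|^{-q/2}$-type factorizations, reducing to the positive case with $|a|$. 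The last claim follows from Theorem~\ref{thm:Intro.Tauberian}.
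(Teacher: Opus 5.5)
Your $q=2$ step is essentially sound, with one correction. $\sA$ is not assumed inverse-closed, so $a^{-1}$ need not lie in $\sA_{++}$. Instead, apply (W) to $a$ itself to get the Weyl law for $a^{-1}D^{-2}a^{-1}$. Then reach all of $\overline{\sA}_+$ by density of $\sA_{++}^{-1}$ and Birman--Solomyak continuity, as in Lemma~\ref{lem:ST.equiv-conformal-Weyl}.

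The genuine gap is the passage from $q=2$ to general $q$, which is the heart of the proposition. The localization you sketch does not close it, for two reasons.
\begin{enumerate}
\item A continuous partition of unity $\phi_k(b)$ has overlapping supports, so the pieces $|D|^{-q/2}\phi_k(b)|D|^{-q/2}$ are not asymptotically orthogonal. On the overlaps $\phi_j(b)\phi_k(b)\neq 0$, and the cross terms are of the same order as the pieces themselves. Disjointly supported functions avoid this, but then $\sum_k c_k\psi_k(b)$ cannot in general approximate $b\geq m>0$ in norm.
\item More fundamentally, even granting additivity, each piece requires the asymptotics of $|D|^{-q/2}\phi_k(b)|D|^{-q/2}$ for $q\neq 2$. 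Here $\phi_k(b)$ is neither a scalar nor a projection. For a projection $e$ almost commuting with $D^{-2}$ you could compare $(eD^{-2}e)^{q/2}$ with $e|D|^{-q}e$ via BKS. For $0\leq\phi\leq 1$, however, comparing $(\phi^{1/2}D^{-2}\phi^{1/2})^{q/2}$ with $\phi^{q/4}|D|^{-q}\phi^{q/4}$ is exactly the original problem, so the argument is circular.
\end{enumerate}
In general $\overline{\sA}$ has no projections with which to run Birman--Solomyak's step-function argument. Araki--Lieb--Thirring only gives a one-sided bound, as you note.

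What is missing is a power-comparison lemma: for $a\in\sA_{++}$, $\alpha\in(0,1]$ and real $s>1$,
\[
\big(a^{1/2}|D|^{-\alpha}a^{1/2}\big)^s-a^{s/2}|D|^{-\alpha s}a^{s/2}\in\big(\sL_{(\alpha s)^{-1}p,\infty}\big)_0 .
\]
The paper gets this from Lemma~\ref{lem:Intro.refined-CSZ} combined with $[|D|^{-q},x]\in(\sL_{q^{-1}p,\infty})_0$ for $x\in\overline{\sA}$ (Lemma~\ref{lem:ST.Com-Dq-sL0}). Lemma~\ref{lem:Intro.refined-CSZ} is proved via the Connes--Sukochev--Zanin integral representation and the DOI factorization of $[|D|^{-\alpha},a]$, which keeps all integrands compact.

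Since $\lambda_j(X^s)=\lambda_j(X)^s$, this lemma transports Weyl asymptotics from exponent $\alpha$ to exponent $\alpha s$. Take $\alpha=q$, $s=2/q$ to go from $q=2$ to $q\leq 1$; then take $\alpha=1$, $s=q$ for $q>1$; finish by density. Note that the same commutator property is what your asymptotic-orthogonality steps (for the pieces, and for $a_\pm$) silently require, and you have not established it. The paper derives it from Lemma~\ref{lem:Intro.Com-Dq}.

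If you want to avoid the CSZ machinery, integer powers are elementary. The difference $(b^{1/2}|D|^{-\alpha}b^{1/2})^m-b^{m/2}|D|^{-m\alpha}b^{m/2}$ is a sum of terms each containing a commutator $[|D|^{-\alpha},b^{\gamma}]$, and this yields all rational $q$ from $q=2$. Irrational $q$ then still needs an extra argument. One option is convexity in $q$ of $\sum_{j<N}\log\lambda_j(b^{1/2}|D|^{-q}b^{1/2})$ (via antisymmetric tensor powers), combined with the monotonicity of $\lambda_j$.
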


In the special case $q=p/2$, a version of~(\ref{eq:Intro.Weyl-|DpaDp|-tau}) for $a\geq 0$ is provided by~\cite[Theorem~1.4]{MSZ:LMP22}, and a version of~(\ref{eq:Intro.Weyl-DpaDp-tau}) is provided by~\cite[Theorem~1.5]{MSZ:LMP22}. Those results in~\cite{MSZ:LMP22} further require Lipschitz regularity and $p$-summability with $p>2$. These two extra conditions are removed in Proposition~\ref{prop:Intro.Weyl-BirS}. The spectral asymptotics~(\ref{eq:Intro.Weyl-DpaDp-tau})--(\ref{eq:Intro.Weyl-|DpaDp|-tau}) for $q\neq p/2$ are new. 

\subsection{Semiclassical Weyl laws} 
Combining the spectral asymptotics~(\ref{eq:Intro.Weyl-DpaDp-tau}) with the Birman-Schwinger principle enables us to get semiclassical Weyl laws for (fractional) Schr\"odinger operators,
 \begin{equation*}
 H_V^{(q)}(h):= h^{2q}\left(D^2\right)^{q}+V, \qquad h>0. 
\end{equation*}
More precisely,  in our setup the spectrum of $H_V^{(q)}(h)$ is discrete and bounded from below. Given any energy level $\lambda\in \R$, we denote 
by $N(H_V^{(q)}(h); \lambda)$ the number of eigenvalues of $H_V^{(q)}(h)$ that are~$<\lambda$ (see Eq.~(\ref{eq:ST.counting-Schroedinger1}) for the precise definition). 
We then obtain the following semiclassical Weyl law.

\begin{theoremalph}[Semiclassical Weyl law]\label{thm:Intro.SC-Weyl-lawDq} 
 Assume Condition~\textup{(W)} holds.  Given any $q>0$ and  $V^*=V\in \overline{\sA}$, for any energy level $ \lambda\in \R$, we have 
\begin{gather}\label{eq:Intro.SC-Weyl-Dq}
 \lim_{h\rightarrow 0^+} h^{p} N\big(H_V^{(q)}(h);\lambda\big)= \tau\Big[\left(V-\lambda\right)_{-}^{\frac{p}{2q}}\Big]. 
\end{gather}
In particular, this semiclassical Weyl law holds under the Tauberian conditions~\textup{(H)} and~$\textup{(Z)}$. 
\end{theoremalph}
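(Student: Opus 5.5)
Replacing $V$ by $V-\lambda$ (still selfadjoint in $\overline{\sA}$, since $\sA$ is unital), we have $N(H_V^{(q)}(h);\lambda)=N(H_{V-\lambda}^{(q)}(h);0)$, so it suffices to treat $\lambda=0$. The plan is to combine the Birman--Schwinger principle with the spectral asymptotics~(\ref{eq:Intro.Weyl-DpaDp-tau}) of Proposition~\ref{prop:Intro.Weyl-BirS}, applied with $a=V$ and the given $q$. Throughout I assume, as a harmless reduction, that $D$ is invertible. Otherwise $\ker D$ is finite dimensional and contributes $O(1)$ to the counting function, and $h^pO(1)\to0$; alternatively one may replace $D$ by $D+P_{\ker D}$.

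\emph{Step 1 (Birman--Schwinger).} Let $K=|D|^{-q}V|D|^{-q}$, a compact selfadjoint operator. For $h>0$ the quadratic form of $H_V^{(q)}(h)$ is
\[ h^{2q}\||D|^q\xi\|^2+\langle V\xi,\xi\rangle. \]
Substituting $\xi=|D|^{-q}\eta$, it is negative exactly when $\langle K\eta,\eta\rangle<-h^{2q}\|\eta\|^2$. By the min--max principle (Glazman's lemma), the maximal dimension of a subspace of $\dom |D|^q$ on which the form is negative equals the maximal dimension of a subspace on which $K+h^{2q}$ is negative. Hence
\[ N\big(H_V^{(q)}(h);0\big)=\#\{j:\ \lambda_j^-(K)>h^{2q}\}. \]
I will check this carefully, including the form domain and the fact that $\dom|D|^q$ is mapped onto $\sH$ by $|D|^q$.

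\emph{Step 2 (asymptotics).} This is Proposition~\ref{prop:Intro.Weyl-BirS} with $q$ replaced by $2q$: $K=|D|^{-2q/2}V|D|^{-2q/2}$. Therefore
\[ \lambda_j^-(K)\sim c\, j^{-2q/p}, \qquad c=\tau\big[V_-^{p/(2q)}\big]^{2q/p}. \]

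\emph{Step 3 (inversion).} This is the standard elementary lemma: if $\lambda_j\sim c j^{-\alpha}$ with $\alpha=2q/p$, then
\[ \#\{j:\lambda_j>t\}\sim c^{1/\alpha}t^{-1/\alpha} \quad\text{as } t\to0^+. \]
When $c>0$, fix $\epsilon$; for large $j$ we have $(c-\epsilon)j^{-\alpha}\le\lambda_j\le(c+\epsilon)j^{-\alpha}$, which gives two-sided bounds on the counting function up to $O(1)$. When $c=0$, one gets $\lambda_j\le\epsilon j^{-\alpha}$ eventually, hence $\limsup t^{1/\alpha}N(t)\le\epsilon^{1/\alpha}$. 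Putting $t=h^{2q}$ gives $t^{-1/\alpha}=h^{-p}$ and $c^{1/\alpha}=\tau[V_-^{p/(2q)}]$, which is~(\ref{eq:Intro.SC-Weyl-Dq}) at $\lambda=0$. The final claim then follows from Theorem~\ref{thm:Intro.Tauberian}, which says that (H) and (Z) imply (W).

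The main obstacle is Step 1: making the Birman--Schwinger correspondence exact at the level of counting functions when $V$ is merely bounded. It also requires checking the convention for $N$ (eigenvalues strictly $<\lambda$ versus $\le\lambda$). Any off-by-boundary discrepancy only changes the count by the multiplicity of the eigenvalue $h^{2q}$. This is absorbed because the limit is continuous in the threshold: I would sandwich the count between thresholds $(1\pm\delta)h^{2q}$ and then let $\delta\to0$.
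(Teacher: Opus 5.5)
Your proposal is correct and follows the paper's proof. The paper also reduces to $\lambda=0$ by substituting $V-\lambda$ for $V$, then combines Proposition~\ref{prop:Intro.Weyl-BirS}, applied with $2q$ in place of $q$ (i.e., to $|D|^{-q}V|D|^{-q}$), with the Birman--Schwinger principle. The only difference is that the paper cites the latter as Proposition~\ref{prop:NCInt.BSP-NCInt}, whereas you re-derive it in your Steps 1 and 3, including the kernel reduction. Your Step 1 identity is in fact exact, with strict inequalities on both sides, so the threshold sandwich in your last paragraph is unnecessary.
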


For $q=1$ we recover from~(\ref{eq:Intro.SC-Weyl-Dq}) the semiclassical Weyl law of~\cite{MSZ:LMP22} without any extra regularity or requiring $p$ to be~$>2$.  The semiclassical Weyl law~(\ref{eq:Intro.SC-Weyl-Dq}) for $q\neq 1$ is new even for $p>2$. 

It is known that the semiclassical Weyl law for $q=2$ implies a classical Weyl law for the eigenvalue distribution of $D^2$.  Therefore, we may interpret Theorem~\ref{thm:Intro.SC-Weyl-lawDq} as a kind of converse of this statement.

\subsection{Integration formulas} 
The Weyl laws~(\ref{eq:Intro.Weyl-|DpaDp|-tau})--(\ref{eq:Intro.Weyl-DpaDp-tau}) have further implications in noncommutative geometry. In the framework of noncommutative geometry the role of the integral is played by positive traces in $\sL_{1,\infty}$ (see~\cite{Co:NCG}). An important class of such traces is provided by Dixmier traces~\cite{Di:CRAS66}. Following Connes~\cite{Co:NCG} an operator $A\in \sL_{1,\infty}$ is called \emph{measurable} if it takes the same value on all Dixmier traces. Equivalently, $A$ is measurable if and only if
\begin{equation*}
 \bint A := \lim_{N\rightarrow \infty} \sum_{j<N} \lambda_j(A)\ \text{exists}, 
\end{equation*}
where $\{\lambda_j(A)\}$ is an eigenvalue sequence for $A$ (see~\cite{LSZ:Book, Po:JNCG23}). The limit $\bint A$ is precisely the NC integral of $A$. A stronger notion of measurability (called \emph{strong measurability} or \emph{positive measurability}) requires $A$ to take the same value on \emph{all normalized positive  traces}. 

Given a closed Riemannian manifold $(M^n,g)$, Connes' integration formula asserts that, if $f\in C^\infty(M)$, then the operator  $f\Delta_g^{-n/2}$ is strongly measurable, and we have 
\begin{equation}\label{eq:Intro.int-form}
 \bint f\Delta_g^{-\frac{n}{2}}= c(n) \int_Mf(x) d\nu_g(x), \qquad c(n):=(2\pi)^{-n}|\bB^n|, 
\end{equation}
where $\nu_g(x)$ is the Riemannian measure (see~\cite{Co:CMP88, KLPS:AIM13}). This shows that the NC integral recaptures the Riemannian measure. 

Spectral asymptotics of the form~(\ref{eq:Intro.Weyl-DpaDp-tau}) with $q/p=1$ imply an even stronger form of measurability.  If $A^*=A$ it can be shown that if $\lim_{j\rightarrow \infty} j \lambda_j^{\pm} (A)$ exists, then $A$ is strongly measurable, and $\bint A$ agrees with the difference of these limits (see, e.g.,~\cite{Po:JNCG23}). This extends to non-selfadjoint operators by considering their real and imaginary parts. If these conditions are satisfied we shall say that $A$ is \emph{spectrally measurable}. Thus, spectral measurability is an even stronger form of measurability than strong measurability. 

It actually follows from results of Birman-Solomyak~\cite{BS:VLU77, BS:VLU79, BS:SMJ79} that, on any closed manifold $M$, the \psidos\ of order~$-\dim M$ are spectrally measurable. Birman-Solomyak's result actually predates Connes' integration formula~(\ref{eq:Intro.int-form}) and implies a stronger form of this result.  

In the light of this, the spectral asymptotics~(\ref{eq:Intro.Weyl-|DpaDp|-tau})--(\ref{eq:Intro.Weyl-DpaDp-tau}) imply far-reaching extensions of  Connes' integration formula to our class of spectral triples. 

\begin{theoremalph}[Integration Formulas]\label{thm:Intro.Integration}
Assume Condition~\textup{(W)} holds. For any $a\in  \overline{\sA}$, the operators 
$a|D|^{-p}$, $|D|^{-p/2}a|D|^{-p/2}$, and  $||D|^{-p/2}a|D|^{-p/2}|$ are spectrally measurable, and we have 
 \begin{gather}\label{eq:Intro.bint-aDp-tau}
 \bint a|D|^{-p} = \bint |D|^{-\frac{p}{2}}a|D|^{-\frac{p}{2}}=\tau\big[a\big],\\
  \bint \big| |D|^{-\frac{p}{2}}a|D|^{-\frac{p}{2}}\big|^{\frac{p}{q}}=  \tau\left[ |a|\right]. 
  \label{eq:Intro.bint-|DpaDp|-tau}
\end{gather}
  In particular, the above formulas hold under any of the Tauberian conditions~\textup{(H)} or~\textup{(Z)}. 
 \end{theoremalph}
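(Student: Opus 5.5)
Theorem~\ref{thm:Intro.Integration} will be derived from the Spectral Asymptotics of Proposition~\ref{prop:Intro.Weyl-BirS}, specialized to $q=p$. The link is the measurability criterion recalled above: if $A^*=A\in\sL_{1,\infty}$ and both limits $\lim j\lambda_j^\pm(A)$ exist, then $A$ is spectrally measurable and $\bint A=\lim j\lambda_j^+(A)-\lim j\lambda_j^-(A)$. The non-selfadjoint case follows by taking real and imaginary parts.

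\emph{Step 1: the operator $|D|^{-p/2}a|D|^{-p/2}$.} For $a=a^*\in\overline{\sA}$, I take $q=p$ in~(\ref{eq:Intro.Weyl-DpaDp-tau}), which gives $\lim_j j\lambda_j^\pm(|D|^{-p/2}a|D|^{-p/2})=\tau[a_\pm]$. Hence $|D|^{-p/2}a|D|^{-p/2}$ is spectrally measurable, with $\bint |D|^{-p/2}a|D|^{-p/2}=\tau[a_+]-\tau[a_-]=\tau[a]$ by linearity of $\tau$. For general $a$, write $a=\Re a+i\Im a$. The map $a\mapsto |D|^{-p/2}a|D|^{-p/2}$ preserves adjoints, so the real and imaginary parts of the operator are the images of $\Re a$ and $\Im a$. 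Linearity of $\tau$ then gives $\tau[a]$.

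\emph{Step 2: the absolute value.} Here $q=p$ in~(\ref{eq:Intro.Weyl-|DpaDp|-tau}) yields $\lim_j j\mu_j(|D|^{-p/2}a|D|^{-p/2})=\tau[|a|]$. The eigenvalues of $\big||D|^{-p/2}a|D|^{-p/2}\big|$ are exactly these singular values, so this positive operator is spectrally measurable with NC integral $\tau[|a|]$. This is~(\ref{eq:Intro.bint-|DpaDp|-tau}); the exponent $p/q$ there equals $1$ in this reading. If a general $q$ is intended, I would use $\mu_j(|A|^{p/q})=\mu_j(A)^{p/q}$ together with~(\ref{eq:Intro.Weyl-|DpaDp|-tau}) for that $q$: this gives $j\,\mu_j^{p/q}\to\tau[|a|^{p/q}]$, which equals $\tau[|a|]$ only after relabeling, so I would take the statement with $q=p$.

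\emph{Step 3: the operator $a|D|^{-p}$.} This is the main obstacle, since $a|D|^{-p}$ is not selfadjoint and its eigenvalues are not directly governed by the proposition. The plan is to show that $a|D|^{-p}$ and $|D|^{-p/2}a|D|^{-p/2}$ have the same NC integral. Both lie in $\sL_{1,\infty}$. The eigenvalue sequences of $XY$ and $YX$ agree away from zero, and I take $X=|D|^{-p/2}$ and $Y=a|D|^{-p/2}$. Both are in $\sL_{2p/p,\infty}=\sL_{2,\infty}$, since $|D|^{-p/2}\in\sL_{2,\infty}$ by $p$-summability. Then $YX=a|D|^{-p}$ and $XY=|D|^{-p/2}a|D|^{-p/2}$. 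By Lidskii-type results for $\sL_{1,\infty}$, namely that the NC integral $\bint$ is computed from any eigenvalue sequence and the nonzero eigenvalues of $XY$ and $YX$ coincide with multiplicities, both operators have the same eigenvalue sequence. Measurability and the value $\tau[a]$ then transfer.

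For spectral measurability of $a|D|^{-p}$ in the strong sense, I would prove that its real and imaginary parts, for instance $\frac12(a|D|^{-p}+|D|^{-p}a^*)$, have positive and negative eigenvalue asymptotics. Since the spectral asymptotics may not pass directly through real parts, I expect this to need a perturbation argument: show $\Re(a|D|^{-p})-|D|^{-p/2}(\Re a)|D|^{-p/2}$ lies in the closure $(\sL_{1,\infty})_0$ of finite rank operators, then invoke Birman–Solomyak/Ky Fan stability of the limits $j\lambda_j^\pm$ under such perturbations. To get that perturbation estimate, I would first handle $a\in\sA$, using boundedness of commutators $[|D|^{s},a]|D|^{-s}$ via holomorphic calculus (avoiding Lipschitz assumptions is where I expect difficulty). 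I would then extend to $\overline{\sA}$ by norm density, combining the estimate $\|(a-b)|D|^{-p}\|_{\sL_{1,\infty}}\lesssim\|a-b\|$ with continuity of the limits $\limsup/\liminf j\lambda_j^\pm$ in $\sL_{1,\infty}$-quasinorm.
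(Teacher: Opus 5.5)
Your Steps~1 and~2 match the paper's proof: specialize Proposition~\ref{prop:Intro.Weyl-BirS} to $q=p$ and apply Proposition~\ref{prop:NCInt.Weyl-strong}. Reading the exponent $p/q$ in the second formula as $1$ is also what the paper actually proves. The $XY$/$YX$ argument in Step~3 is sound as far as it goes: it shows that $a|D|^{-p}$ is measurable with $\bint a|D|^{-p}=\tau[a]$. But, as you note, the eigenvalues of the non-normal operator $a|D|^{-p}$ do not control those of $\Re(a|D|^{-p})$ and $\Im(a|D|^{-p})$. So this step does not give spectral measurability, which is part of the statement.

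The gap is in the perturbation step you fall back on. You need
\[
a|D|^{-p}-|D|^{-p/2}a|D|^{-p/2}=[a,|D|^{-p/2}]\,|D|^{-p/2}\in(\sL_{1,\infty})_0 ,
\]
which is a genuine gain over the trivial bound, since for any bounded $a$ both operators already lie in $\sL_{1,\infty}$. Boundedness of $[|D|^{s},a]|D|^{-s}$ provides no such gain. It only gives $[a,|D|^{-s}]=|D|^{-s}\cdot(\text{bounded})\in\sL_{p/s,\infty}$, hence a difference in $\sL_{1,\infty}$, not in $(\sL_{1,\infty})_0$.

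The missing ingredient is the Commutator Lemma~\ref{lem:Intro.Com-Dq}: $[|D|^{-q},a]\in\sL_{(q+1)^{-1}p,\infty}$ for $a\in\sA$. The paper derives it from the double operator integral factorization of Lemma~\ref{lem:Intro.factorization}, using only the boundedness of $[D,a]$; this is exactly how Lipschitz regularity is avoided. A Leibniz-rule induction in $q$ then handles large $q$.

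The rest is density and Hölder. The map $a\mapsto[|D|^{-q},a]$ is continuous from $\overline{\sA}$ to $\sL_{q^{-1}p,\infty}$, and $\sL_{(q+1)^{-1}p,\infty}\subseteq(\sL_{q^{-1}p,\infty})_0$. Hence density yields $[|D|^{-p/2},a]\in(\sL_{2,\infty})_0$ for every $a\in\overline{\sA}$ (Lemma~\ref{lem:ST.Com-Dq-sL0}). The second part of Proposition~\ref{prop:Hoelder} then puts the difference in $(\sL_{1,\infty})_0$. Applying Proposition~\ref{prop:NCInt.Bir-Sol} to the real and imaginary parts finishes the proof.

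So the density argument is most naturally done at the level of the commutator, not through your $\limsup/\liminf$ argument on $a|D|^{-p}$. Once this estimate is in place, your $XY$/$YX$ detour is no longer needed, since $(\sL_{1,\infty})_0$ is annihilated by the NC integral.
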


The integration formula for $a|D|^{-p}$ is alluded to in~\cite[p.~77]{MSZ:LMP22}. The integration formula for $||D|^{-p/2}a|D|^{-p/2}|$ seems to be new.

\subsection{Extensions to unbounded potentials} 
It is important to understand to which extent the spectral asymptotics~(\ref{eq:Intro.Weyl-|DpaDp|-tau})--(\ref{eq:Intro.Weyl-DpaDp-tau}), semiclassical Weyl law~(\ref{eq:Intro.SC-Weyl-Dq}) and integration formulas~(\ref{eq:Intro.bint-aDp-tau})--(\ref{eq:Intro.bint-|DpaDp|-tau}) above continue to hold whenever the corresponding r.h.s.\ make sense. Note that, from a spectral theoretic perspective, this involves considering \emph{unbounded} potentials.  On $\R^n$ or bounded domains in $\R^n$, the extension of the semiclassical Weyl laws to optimal $L^r$-spaces of potentials was made possible by the CLR inequality of Cwikel~\cite{Cw:AM77}, Lieb~\cite{Li:BAMS76, Li:1980}, and Rozenblum~\cite{Ro:SMD72,Ro:SM76}. 

The approach of Cwikel~\cite{Cw:AM77} relied on proving some weak-Schatten quasi-norm estimates for Birman-Schwinger operators that were conjectured by Simon~\cite{Si:TAMS76}. Combining these estimates with the perturbation theory of Birman-Solomyak further enables us to extend the spectral asymptotics for eigenvalues of Birman-Schwinger operators and the integration formulas to the relevant $L^r$-classes. We refer to~\cite{LSZ:PLMS20, Ro:JST22, RS:EMS21, So:PLMS95, SZ:MS22}, and the references therein, for various extensions of the Cwikel estimates.

In  this monograph, we lay down a general process for extending to ``$L^r$-potentials'' the spectral asymptotics~(\ref{eq:Intro.Weyl-|DpaDp|-tau})--(\ref{eq:Intro.Weyl-DpaDp-tau}). This will then allow us to extend to this class of potentials the semiclassical Weyl law~(\ref{eq:Intro.SC-Weyl-Dq}) and integration formulas~(\ref{eq:Intro.bint-aDp-tau})--(\ref{eq:Intro.bint-|DpaDp|-tau}). Conceptually, this is merely an elaboration of the approaches of Birman-Solomyak~(see, e.g., \cite{BS:AMST80}) and Simon~\cite{Si:TAMS76} to semiclassical Weyl laws for $L^r$-potentials. The only technical difference here is that, as we are working in a noncommutative setting, we need to work with \emph{noncommutative} $L^r$-spaces.

We proceed as follows. Suppose that Condition~\textup{(W)} holds and $\tau$ is the restriction of a positive faithful trace $\tau:\sM\rightarrow \C$, where $\sM\subseteq \sL(\sH)$ is the von Neumann algebra generated by $\sA$ (i.e., its weak closure in $\sL(\sH)$). As $\tau$ is finite, for any $r\in [1,\infty)$ we may  define the noncommutative  $L^r$-space $L_r(\sM)$\footnote{Throughout  this monograph we shall use subscripts for the exponents of NC $L^p$-spaces.} as the closure of $\sM$ with respect to the Banach norm $x\rightarrow \|x\|_r:= \left(\tau\big[|x|^r\big]\right)^{\frac1{r}}$.  

\begin{conditionCr}
Let $r>0$, and set $\hat{r}=\max(r,1)$. There is a continuous $*$-invariant norm $\|\cdot\|_{(r)}$ on $\overline{\sA}$ such that
\begin{enumerate}
  \item[(i)] The inclusion of $\overline{\sA}$ into $L_{\hat{r}}(\sM)$ is continuous with respect to the $\|\cdot\|_{(r)}$-topology.
 
 \item[(ii)] There is a constant $C_r>0$ such that
\begin{equation}\label{eq:Intro.Cwikel}
 \big\| (1+D^2)^{-\frac{p}{4r}}a(1+D^2)^{-\frac{p}{4r}}\big\|_{r,\infty} \leq C_r\|a\|_{(r)} \qquad \forall a\in \overline{\sA}. 
\end{equation}
\end{enumerate}
 \end{conditionCr}
 
If Condition~($\textup{C}_r$) holds, then we denote by $\sV_r$ the Banach space completion of $\overline{\sA}$ with respect to the norm~$\|\cdot \|_{(r)}$. For instance, for bounded domains $\Omega \subseteq \R^n$ we have $\sM=L^\infty(\Omega)$ and we can take
\begin{equation*}
 \sV_r=L^r(\Omega) \quad (r>1), \qquad \sV_1=L^s(\Omega), \ s>1, \qquad \sV_r=L^1(\Omega) \quad (r<1). 
\end{equation*}
 In fact, in the critical case $r=1$ we may even take $\sV_1$ to be the Orlicz space $\LlogL(\Omega)$ if $n$ is even (see~\cite{RS:EMS21, So:IJM94, So:PLMS95}). 

If $r=pq^{-1}$ and $x\in \sV_r$, then the estimate~(\ref{eq:Intro.Cwikel}) ensures that $|D|^{-q/2}x|D|^{-q/2}$ is in the weak Schatten class $\sL_{r,\infty}$. Birman-Solomyak's perturbation theory~\cite{BS:FAA70, BS:Book} then enables us to extend the spectral asymptotics~(\ref{eq:Intro.Weyl-|DpaDp|-tau})--(\ref{eq:Intro.Weyl-DpaDp-tau}) to elements in the spaces $x\in \sV_r$. Namely, we obtain the following result. 

\begin{propositionalph}\label{prop:Intro.Lq-asymptotics}
Suppose that Condition~\textup{(W)} is satisfied.  Let $q>0$, and assume further that  Condition~\textup{($\textup{C}_{r}$)} holds with $r:=pq^{-1}$. 
Given any $x\in \sV_r$, we have 
\begin{gather}\label{eq:Intro.Weyl-|DpaDp|-tau-sVr}
 \lim_{j\rightarrow \infty}j^{\frac{q}{p}} \mu_j\big(|D|^{-\frac{q}{2}}x|D|^{-\frac{q}{2}}\big) = \tau\left[|x|^{\frac{p}{q}}\right]^{\frac{q}{p}},\\
 \lim_{j\rightarrow \infty} j^{\frac{q}{p}} \lambda_j^\pm\big(|D|^{-\frac{q}{2}}x|D|^{-\frac{q}{2}}\big) =\tau\left[\big(x_\pm\big)^{\frac{p}{q}}\right]^{\frac{q}{p}} \quad (\text{if}\ x^*=x). \label{eq:Intro.Weyl-DpaDp-tau-sVR}
\end{gather}
  \end{propositionalph}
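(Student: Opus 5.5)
Since $\overline{\sA}$ is dense in $\sV_r$, the plan is to pass from Proposition~\ref{prop:Intro.Weyl-BirS} to all of $\sV_r$ by approximation, using Birman--Solomyak's perturbation lemma on asymptotically closed sets. For $T\in\sL_{r,\infty}$ put
\begin{equation*}
\overline{\mu}(T)=\limsup_{j\to\infty} j^{1/r}\mu_j(T),\qquad \underline{\mu}(T)=\liminf_{j\to\infty} j^{1/r}\mu_j(T),
\end{equation*}
and define $\overline{\lambda}^\pm$, $\underline{\lambda}^\pm$ in the same way for selfadjoint $T$. The lemma I will use says the following. Let $T_k\to T$ in the sense that $\limsup_j j^{1/r}\mu_j(T-T_k)\to0$ (i.e. $\overline{\mu}(T-T_k)\to 0$), and suppose $\lim_j j^{1/r}\mu_j(T_k)=c_k$ exists for each $k$. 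Then $c=\lim_k c_k$ exists and $\lim_j j^{1/r}\mu_j(T)=c$. The same holds for $\lambda_j^\pm$ when the $T_k$ are selfadjoint. The lemma rests on Ky Fan's inequality $\mu_{i+j}(A+B)\le\mu_i(A)+\mu_j(B)$ and its analogue for $\lambda^\pm$ (Weyl's inequality). Applying it to $\mu_{i+j}$ with $i\approx(1-\epsilon)j$ and $j\approx\epsilon j$ shows that $\overline{\mu}$ and $\underline{\mu}$ are continuous with respect to the quasi-metric $\overline{\mu}(\cdot-\cdot)$.

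\emph{Step 1: reduction from $|D|$ to $(1+D^2)^{1/2}$.} Condition~$(\textup{C}_r)$ controls $(1+D^2)^{-q/4}x(1+D^2)^{-q/4}$, not $|D|^{-q/2}x|D|^{-q/2}$. Write
\begin{equation*}
|D|^{-q/2}=(1+D^2)^{-q/4}\,\big(1+D^{-2}\big)^{q/4}
\end{equation*}
on $(\ker D)^\perp$. The factor $B=(1+D^{-2})^{q/4}$ equals $1+K$ with $K$ compact, since $D$ has compact resolvent. The finite-dimensional kernel of $D$ only adds a finite-rank term, and finite-rank terms do not affect the asymptotics. So $|D|^{-q/2}x|D|^{-q/2}=B S_x B$ up to finite rank, where $S_x:=(1+D^2)^{-q/4}x(1+D^2)^{-q/4}$. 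Next, $BS_xB-S_x=KS_x+S_xK+KS_xK$, and for $S\in\sL_{r,\infty}$ and $K$ compact the products $KS$ and $SK$ lie in the separable part $(\sL_{r,\infty})_0$, where $\overline{\mu}=0$. This is the standard fact: split $K$ into a finite-rank part plus a part of small norm. Hence the two operators have the same asymptotics of $\mu_j$ and $\lambda_j^\pm$. In particular Proposition~\ref{prop:Intro.Weyl-BirS} holds verbatim with $S_a$ in place of $|D|^{-q/2}a|D|^{-q/2}$ for every $a\in\overline{\sA}$.

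\emph{Step 2: approximation.} Given $x\in\sV_r$, pick $a_k\in\overline{\sA}$ with $\|x-a_k\|_{(r)}\to0$; when $x^*=x$, replace $a_k$ by its real part, which is allowed because the norm is $*$-invariant. By~(\ref{eq:Intro.Cwikel}), extended by continuity to $\sV_r$,
\begin{equation*}
\overline{\mu}(S_x-S_{a_k})\le\|S_{x-a_k}\|_{r,\infty}\le C_r\|x-a_k\|_{(r)}\to0 .
\end{equation*}
The lemma then gives that $\lim_j j^{1/r}\mu_j(S_x)$ exists and equals $\lim_k\tau[|a_k|^{r}]^{1/r}$. The same argument handles $\lambda_j^\pm$, with limit $\lim_k\tau[(a_k)_\pm^{r}]^{1/r}$.

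\emph{Step 3: identify the limits.} By Condition~$(\textup{C}_r)$(i), $a_k\to x$ in $L_{\hat r}(\sM)$. For $r\ge1$, the map $y\mapsto \|y\|_r$ is continuous on $L_r(\sM)$. The maps $y\mapsto y_\pm$ are continuous on selfadjoint elements of $L_r(\sM)$, which follows from continuity of $y\mapsto|y|$ in noncommutative $L_r$ (Powers--Størmer/Kosaki type inequalities). For $r<1$, convergence in $L_1(\sM)$ implies convergence in measure, so $\tau[|a_k|^r]\to\tau[|x|^r]$ because $\tau$ is finite; likewise for the positive and negative parts. Plugging these limits into Step 2 yields both formulas in the statement.

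The main obstacle I expect is Step 3 for the parts $x_\pm$ when $r<1$. There I would use that $t\mapsto t_\pm^r$ is operator-monotone-friendly (concave for $r<1$), together with the bound $\tau[|y|^r]\le\tau[|y|]^r$ that follows from Hölder's inequality for a normalized finite trace. Step 1 is routine but must be written out.
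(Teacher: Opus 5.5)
Your proposal is correct and follows essentially the paper's route. Both arguments use density of $\overline{\sA}$ in $\sV_r$, the Cwikel estimate to get convergence in $\sL_{r,\infty}$, Birman--Solomyak's perturbation result (Proposition~\ref{prop:NCInt.Bir-Sol}) combined with Proposition~\ref{prop:Intro.Weyl-BirS} for the approximants, and continuity of $x\mapsto\tau[|x|^r]^{1/r}$ and $x\mapsto\tau[(x_\pm)^r]^{1/r}$ obtained from Condition~($\textup{C}_r$)(i).

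Your Step~1 makes explicit a point the paper passes over silently, but it can be done more cheaply. The operator $(1+D^2)^{q/4}|D|^{-q/2}$ is bounded, so the Cwikel bound transfers directly to $|D|^{-q/2}x|D|^{-q/2}$ and no compact-perturbation argument is needed.
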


Moreover, if $V=V^*\in \sV_r$ the compactness of $|D|^{-q/2}V|D|^{-q/2}$ ensures that $V$ is $(D^2)^{q}$-form compact, and so the fractional Schr\"odinger operator 
$H_V^{(q)}(h)=h^{2q}(D^2)^{q}+V$, $h>0$, make sense as form sums and are selfadjoint operators bounded from below with pure discrete spectrum. We then obtain the following extension of the semiclassical Weyl law~(\ref{eq:Intro.SC-Weyl-Dq}).

\begin{theoremalph}[Semiclassical Weyl law; $\sV_r$-version]\label{thm:Intro.SC-Weyl-law-sVr}
 Suppose that Condition~\textup{(W)} is satisfied. Let $q>0$, and assume that Condition~\textup{($\textup{C}_{r}$)} holds with $r=p(2q)^{-1}$.  Then, 
  the semiclassical Weyl law~(\ref{eq:Intro.SC-Weyl-Dq}) holds for all $V=V^*\in \sV_r$, i.e., 
  for any energy level $ \lambda\in \R$, we have 
 \begin{gather}\label{eq:Intro.SC-Weyl-Dq-sVr1}
 \lim_{h\rightarrow 0^+} h^{p} N\big(H_V^{(q)}(h);\lambda\big)= \tau\Big[\left(V-\lambda\right)_{-}^{\frac{p}{2q}}\Big]. 
\end{gather}
\end{theoremalph}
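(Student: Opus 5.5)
We argue via the Birman-Schwinger principle, reducing to the spectral asymptotics of Proposition~\ref{prop:Intro.Lq-asymptotics} with exponent $q'=2q$, so that $r=p/q'=p(2q)^{-1}$ is exactly the exponent for which Condition~($\textup{C}_r$) is assumed. First we reduce to $\lambda=0$ after a shift: replacing $V$ by $V-\lambda$ (note $1\in\overline{\sA}\subseteq\sV_r$, so $V-\lambda\in\sV_r$), we have $N(H_V^{(q)}(h);\lambda)=N(H_{V-\lambda}^{(q)}(h);0)$. The main subtlety is the kernel of $D$. Since $D$ has compact resolvent, $\ker D$ is finite dimensional and only affects counting functions by a bounded amount, which is negligible after multiplying by $h^p$. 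We therefore work with $(1+D^2)^q$, or with $D^2$ restricted to $(\ker D)^\perp$, and account for the difference $(D^2)^q$ versus $(1+D^2)^q$ afterwards.

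The core step is the Birman-Schwinger principle in form-sum form. For $W=W^*$ that is form compact relative to $(D^2)^q$ on $(\ker D)^\perp$, the number of negative eigenvalues of $h^{2q}(D^2)^q+W$ equals the number of eigenvalues $>h^{2q}$ of the compact operator $K_W:=-|D|^{-q}W|D|^{-q}$. Write $W=W_+-W_-$. The count is then governed by $\lambda_j^+(|D|^{-q}(-W)|D|^{-q})$, whose asymptotics are given by Proposition~\ref{prop:Intro.Lq-asymptotics} with $q'=2q$:
\[
\lim_{j\to\infty} j^{\frac{2q}{p}}\lambda_j^+\big(|D|^{-q}(-W)|D|^{-q}\big)=\tau\big[W_-^{\frac{p}{2q}}\big]^{\frac{2q}{p}}.
\]
Set $C:=\tau[W_-^{p/(2q)}]$. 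If $\lambda_j^+\sim C^{2q/p}j^{-2q/p}$, then $\#\{j:\lambda_j^+>h^{2q}\}\sim C h^{-p}$ as $h\to 0^+$. This is an elementary inversion of regularly varying sequences, and the case $C=0$ is handled by an $o(j^{-2q/p})$ bound. Hence $h^pN(\cdot\,;0)\to\tau[(V-\lambda)_-^{p/(2q)}]$.

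The main obstacle is making the Birman-Schwinger reduction rigorous. This requires passing between $(D^2)^q$ and $(1+D^2)^q$, since Condition~($\textup{C}_r$) is phrased with $(1+D^2)^{-p/(4r)}=(1+D^2)^{-q/2}$, and handling $\ker D$. First, the weak-Schatten estimate from Condition~($\textup{C}_r$) shows that $|D|^{-q}V|D|^{-q}$ (on $(\ker D)^\perp$) is compact, which gives form-compactness and a discrete spectrum bounded from below. Second, the projection onto $\ker D$ is finite rank, so by the Weyl-type inequality $N(A+B;0)\le N(A;0)+\rk B$ the counts change by $O(1)$. Third, we compare $h^{2q}(D^2)^q$ with $h^{2q}(1+D^2)^q$. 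Their difference is bounded by $c\,h^{2q}(1+(D^2)^{q-\epsilon})$, and it can be absorbed by an $\epsilon$-perturbation of $V$: use the sandwich $h^{2q}(D^2)^q\ge (1-\delta)h^{2q}(1+D^2)^q-C_\delta h^{2q}$, and the corresponding upper bound. The constant shift $C_\delta h^{2q}\to 0$ enters as a shift of the energy level. Finally, we conclude from the continuity of $\lambda\mapsto\tau[(V-\lambda)_-^{p/(2q)}]$, by letting $\delta\to0$. This continuity follows from continuity of functional calculus in $L_{\hat r}(\sM)$, using Condition~($\textup{C}_r$)(i).
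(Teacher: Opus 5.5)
Your proposal is correct and is essentially the paper's proof. You apply Proposition~\ref{prop:Intro.Lq-asymptotics} with $2q$ in place of $q$ (so $r=p(2q)^{-1}$) to $|D|^{-q}(V-\lambda)|D|^{-q}$, then combine it with the Birman--Schwinger principle of Proposition~\ref{prop:NCInt.BSP-NCInt} for $H_0=(D^2)^q$, exactly as in the proof of Theorem~\ref{thm:Intro.SC-Weyl-lawDq}; that principle already accommodates the finite-dimensional $\ker D$ through the partial inverse.

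The sandwich comparison between $(D^2)^q$ and $(1+D^2)^q$ in your last paragraph is not needed. Since $|D|^{-q}(1+D^2)^{q/2}$ is bounded, the Cwikel-type estimate of Condition~($\textup{C}_r$) transfers directly to $|D|^{-q}x|D|^{-q}$. That is exactly the form in which Proposition~\ref{prop:Intro.Lq-asymptotics} is stated.
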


We further have the following extension of the integration formulas~(\ref{eq:Intro.bint-aDp-tau})--(\ref{eq:Intro.bint-|DpaDp|-tau}). 

\begin{theoremalph}[Integration Formulas; $\sV_1$-version]\label{thm:Intro.Integration-sVr}
 Assume that Condition~\textup{(W)}  and Condition~\textup{($\textup{C}_{1}$)} are satisfied. For every $x\in \sV_1$, the operators  $|D|^{-p/2}x|D|^{-p/2}$ and $||D|^{-p/2}x|D|^{-p/2}|$ are spectrally measurable, and we have 
 \begin{gather}\label{eq:Intro.int-formula-sVr1}
  \bint |D|^{-\frac{p}{2}}x|D|^{-\frac{p}{2}}=\tau\big[x\big],\\
   \bint \big| |D|^{-\frac{p}{2}}x|D|^{-\frac{p}{2}}\big|=  \tau\left[ |x|\right]. 
   \label{eq:Intro.int-formula-sVr2}
\end{gather}
 \end{theoremalph}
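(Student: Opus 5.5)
The plan is to derive Theorem~\ref{thm:Intro.Integration-sVr} from the $\sV_r$-spectral asymptotics of Proposition~\ref{prop:Intro.Lq-asymptotics}, taking $q=p$ so that $r=pq^{-1}=1$. The relevant condition is then exactly Condition~($\textup{C}_1$). The link between the two statements is the fact recalled in the introduction on spectral measurability. If $A=A^*\in\sL_{1,\infty}$ and both limits $\lim_{j} j\lambda_j^\pm(A)$ exist, then $A$ is spectrally measurable (hence strongly measurable). Moreover, $\bint A$ equals the difference of these two limits. For a positive operator only the $+$ limit is involved, and $\bint A$ equals that limit.

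First take $x=x^*\in\sV_1$. By Condition~($\textup{C}_1$)(ii), extended by continuity to the completion $\sV_1$, the operator $A=|D|^{-p/2}x|D|^{-p/2}$ lies in $\sL_{1,\infty}$. Proposition~\ref{prop:Intro.Lq-asymptotics} with $q=p$ gives
\begin{equation*}
\lim_{j\rightarrow\infty} j\lambda_j^\pm(A)=\tau\big[x_\pm\big].
\end{equation*}
Hence $A$ is spectrally measurable, and $\bint A=\tau[x_+]-\tau[x_-]=\tau[x]$. Here $\tau$ is understood as its extension to $L_1(\sM)$, and $\sV_1\hookrightarrow L_1(\sM)$ continuously by Condition~($\textup{C}_1$)(i), so both sides make sense and $x_\pm\in L_1(\sM)$. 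For a general $x\in\sV_1$, write $x=\Re x+i\Im x$. Both parts lie in $\sV_1$ because the norm $\|\cdot\|_{(1)}$ is $*$-invariant, so $x\mapsto x^*$ extends to $\sV_1$. The map $x\mapsto|D|^{-p/2}x|D|^{-p/2}$ commutes with taking adjoints and real/imaginary parts, which follows by approximation from $\overline{\sA}$. By the definition of spectral measurability for non-selfadjoint operators via real and imaginary parts, linearity of $\bint$ and of $\tau$ then gives~(\ref{eq:Intro.int-formula-sVr1}).

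For~(\ref{eq:Intro.int-formula-sVr2}), set $B=\big||D|^{-p/2}x|D|^{-p/2}\big|$. This is a positive operator whose eigenvalues are the singular values $\mu_j(|D|^{-p/2}x|D|^{-p/2})$. By~(\ref{eq:Intro.Weyl-|DpaDp|-tau-sVr}) with $q=p$, we get $\lim_j j\mu_j=\tau[|x|]$, while $\lambda_j^-(B)=0$. So $B$ is spectrally measurable with $\bint B=\tau[|x|]$.

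The main point requiring care is bookkeeping rather than analysis. One has to make sure that $\tau$, $x_\pm$ and $|x|$ are interpreted in $L_1(\sM)$ via Condition~($\textup{C}_1$)(i). One also has to check that the map $x\mapsto|D|^{-p/2}x|D|^{-p/2}$ is well defined on $\sV_1$ as a continuous map into $\sL_{1,\infty}$. This requires replacing $(1+D^2)^{-1/4\cdot p}$ in~(\ref{eq:Intro.Cwikel}) by $|D|^{-p/2}$, which is a bounded perturbation on the orthogonal complement of the finite-dimensional kernel of $D$. That kernel contributes only a finite-rank operator, which affects neither the asymptotics nor $\bint$. All the substantial work is contained in Proposition~\ref{prop:Intro.Lq-asymptotics}.
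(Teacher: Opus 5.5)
Your proposal is correct and follows essentially the same route as the paper: specialize the $\sV_r$-spectral asymptotics of Proposition~\ref{prop:Intro.Lq-asymptotics} to $q=p$ (so that $r=1$), apply Proposition~\ref{prop:NCInt.Weyl-strong} to the absolute value and, for selfadjoint $x$, to $|D|^{-p/2}x|D|^{-p/2}$, and then pass to general $x\in\sV_1$ through real and imaginary parts. The bookkeeping you flag is what the paper uses implicitly in the proof of Proposition~\ref{prop:Intro.Lq-asymptotics}: reading $\tau$, $x_\pm$ and $|x|$ in $L_1(\sM)$, and controlling $|D|^{-p/2}$ by $(1+D^2)^{-p/4}$; the latter needs no finite-rank correction, since $(1+D^2)^{p/4}|D|^{-p/2}$ is bounded.
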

 
 \subsection{Tauberian conditions} 
There are various ways to establish Weyl laws. They are often deduced from Tauberian theorems. The approach of~\cite{MSZ:LMP22} relies on a new Tauberian condition. As mentioned above, the need for a more standard Tauberian condition was pointed out by Alain Connes. From this perspective, we establish the following Tauberian criteria for Condition~(W). 

\begin{theoremalph}[Tauberian Theorem]\label{thm:Intro.Tauberian} 
Assume there exists a Fr\'echet subalgebra  $\sB\subseteq \overline{\sA}$ which contains $\sA$ and is closed under holomorphic functional calculus such that one of the following conditions is satisfied: 
\begin{itemize}
\item \textup{\textbf{Condition (Z)}}. For every $a\in \sB$, the function 
\begin{equation*}
\Tr\big[a|D|^{-s}\big] -p\tau(a)(s-p)^{-1}, \qquad \Re s>p,
\end{equation*}
 has a unique continuous extension to the halfplane $\Re s\geq p$.\smallskip  
 
  \item \textup{\textbf{Condition (H)}}.  
  There is $\delta>0$ such that, for all $a\in \sB$, as $t\rightarrow 0^+$ we have
 \begin{equation*}
  \Tr\big[ae^{-tD^2}\big]=  \Gamma\left(1+\frac{p}{2}\right) t^{-\frac{p}{2}}\left[\tau(a) + \op{O}\big(t^{\delta}\big)\right].   
\end{equation*}
\end{itemize}
 Then the Weyl law~(\ref{eq:Intro.conf-Weyl}) is satisfied for all $a\in \sA_{++}$, i.e., Condition~\textup{(W)} holds. 
\end{theoremalph}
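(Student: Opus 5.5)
We fix $a\in\sA_{++}$ and reduce the Weyl law for $aD^2a$ to a Tauberian statement about a zeta function or heat trace of $aD^2a$ itself. Since $a$ is invertible, $aD^2a$ is selfadjoint and nonnegative, with the same kernel dimension as $D^2$, and its nonzero spectrum consists of eigenvalues $\lambda_j(aD^2a)$. By the Wiener--Ikehara theorem (under (Z)) or Karamata's theorem with remainder (under (H)), it suffices to show one of the following.
\begin{itemize}
\item The function $\Tr[(aD^2a)^{-s/2}]-p\,\tau[a^{-p}](s-p)^{-1}$ extends continuously to $\Re s\geq p$.
\item $\Tr[e^{-t\,aD^2a}]\sim\Gamma(1+\frac p2)\,t^{-p/2}\tau[a^{-p}]$ as $t\to0^+$.
\end{itemize}
The Wiener--Ikehara route gives $N(\lambda)\sim\tau[a^{-p}]\lambda^{p/2}$ for the counting function of $aD^2a$. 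Inverting this yields $\lambda_j\sim (j/\tau[a^{-p}])^{2/p}$, which is exactly~(\ref{eq:Intro.conf-Weyl}).

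The key is to pass from $aD^2a$ to operators of the form $b|D|^{-s}$ or $be^{-tD^2}$ with $b\in\sB$. Here closedness of $\sB$ under holomorphic functional calculus enters: $a^{-1}$, $a^{-p}$ and the other powers of $a$ all lie in $\sB$.

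\textbf{Zeta route.} We use a conformal-deformation identity. The operator $aD^2a$ is unitarily related, up to nonzero spectrum, to $|D|a^2|D|$. Indeed, $aD^2a=(a|D|)(|D|a)$ and $(|D|a)(a|D|)=|D|a^2|D|$ have the same nonzero eigenvalues. We then compare $(|D|a^2|D|)^{-s/2}$ with $a^{-s}|D|^{-s}$ modulo operators whose traces are holomorphic on $\Re s>p-\epsilon$.

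The obstacle is that no Lipschitz regularity is available, so $[|D|,a]$ need not be bounded. I would therefore avoid commutators. Instead I would approximate through the integral representation
\[
(|D|a^2|D|)^{-s/2}=\frac{\sin(\pi s/2)}{\pi}\int_0^\infty \mu^{-s/2}\,\bigl(\mu+|D|a^2|D|\bigr)^{-1}\,d\mu ,
\]
together with resolvent identities, and control the errors using only $p$-summability and the interplay between $a$ and $D$ in form sense. I expect this to be the main obstacle.

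A cleaner alternative avoids the comparison altogether and argues by comparison of counting functions. Suppose $b\in\sB$ with $b\geq 0$. Then (Z), applied to $b^p$ via Wiener--Ikehara for the positive operator $|D|^{-1}b^p|D|^{-1}$ (after a suitable power), gives spectral asymptotics of Birman--Schwinger type. Using the Birman--Schwinger principle, $N(aD^2a;\lambda)$ equals the number of eigenvalues $>\lambda^{-1}$ of $a^{-1}D^{-2}a^{-1}$, whose nonzero spectrum coincides with that of $|D|^{-1}a^{-2}|D|^{-1}$. This reduces (W) to the asymptotics of $\mu_j(|D|^{-1}b|D|^{-1})$ for $b=a^{-2}\in\sB_{++}$. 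Those asymptotics in turn follow from the corresponding statement for $|D|^{-s}$-zeta functions of $b^{s/2}$ by a Ky Fan / Birman--Solomyak approximation argument.

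\textbf{Heat route.} Condition (H) is equivalent, by Karamata, to (Z)-type information on $\Tr[b|D|^{-s}]$ for real $s$. The remainder $O(t^\delta)$ provides the analytic continuation to $\Re s>p-2\delta$ after a Mellin transform, since
\[
\Tr[b|D|^{-s}]=\Gamma(s/2)^{-1}\int_0^\infty t^{s/2-1}\Tr[be^{-tD^2}]\,dt .
\]
This reduces (H) to (Z), which is then handled as above.
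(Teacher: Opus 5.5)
Your proposal has a genuine gap: the step that actually proves the theorem is either deferred or replaced by arguments that cannot work.

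\textbf{What is correct.} The outer shell is fine. Ikehara converts continuity of $\Tr[X^s]-A(s-p)^{-1}$ up to $\Re s=p$ into eigenvalue asymptotics for a positive operator $X$. Condition (W) does reduce to asymptotics of $|D|^{-1}b|D|^{-1}$ with $b=a^{-2}$. And (H)$\Rightarrow$(Z) is the Mellin argument you give, once you insert $1-\Pi_0$ so the integral converges at $t=\infty$.

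\textbf{The missing step.} Condition (Z) only controls $\Tr[b|D|^{-s}]=\sum_j\langle b\xi_j,\xi_j\rangle\lambda_j(|D|)^{-s}$. This is a weighted Dirichlet series over the spectrum of $|D|$. For $b\geq 0$, Ikehara applied to it gives asymptotics of $\sum_{\lambda_j(|D|)<\lambda}\langle b\xi_j,\xi_j\rangle$. That is information on the diagonal of $b$ in the eigenbasis of $D$, not on the eigenvalues of $|D|^{-1}b|D|^{-1}$. So your claim that (Z), ``via Wiener--Ikehara for the positive operator $|D|^{-1}b^p|D|^{-1}$'', gives Birman--Schwinger asymptotics is not correct.

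To get a Dirichlet series of eigenvalues, you must show that $\Tr[(b^{1/2}|D|^{-\alpha}b^{1/2})^s]-\Tr[b^s|D|^{-\alpha s}]$ extends holomorphically across $\Re s=\alpha^{-1}p$. This is Lemma~\ref{lem:Intro.extension}. In your zeta route you defer exactly this as ``the main obstacle''. In your alternative you replace it by a Ky Fan/Birman--Solomyak approximation, which cannot supply it. Those results only transfer already-known asymptotics between operators differing by elements of $(\sL_{r,\infty})_0$ at a fixed exponent. They give no analytic information on traces, and near $\Re s=p$ an error in $(\sL_{1,\infty})_0$ need not even be trace class.

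\textbf{What the paper does instead.} The argument needs a gain of almost one full order. The only hypothesis controlling the noncommutativity of $a$ and $D$ is $[D,a]\in\sL(\sH)$, so ``avoiding commutators'' is not an option.
\begin{itemize}
\item The paper starts from the Connes--Sukochev--Zanin representation $T(s)=F_s(0)-\int_\R\hat g_s(t)X^{it}F_s(t)Y^{-it}\,dt$, with $X=|D|^{-\alpha}$ and $Y=a^{1/2}Xa^{1/2}$. It is valid only for $\Re s>1$, and its integrand contains $[|D|^{-\alpha},a^z]$.
\item Into this it inserts the double-operator-integral factorization of Lemma~\ref{lem:Intro.factorization}: $[|D|^{-\alpha},b]=|D|^{-(1-\epsilon)}\Phi^{(\alpha)}_\epsilon([D,b])|D|^{-\alpha}$ plus finite-rank terms.
\item H\"older's inequality then gives $T(s)\in\Hol(\Re s>\alpha^{-1}(p-\delta);\sL_1)$.
\item Choosing $\alpha\in(0,1]$ with $\alpha<p$ puts the abscissa $\alpha^{-1}p$ inside $\Re s>1$ for every $p>0$.
\item Lemma~\ref{lem:Intro.refined-CSZ} then transfers the asymptotics from $a^{1/2}|D|^{-\alpha}a^{1/2}$ to $a^{1/\alpha}D^{-2}a^{1/\alpha}$.
\end{itemize}
By contrast, your representation $\int_0^\infty\mu^{-s/2}(\mu+|D|a^2|D|)^{-1}d\mu$ converges only for $0<\Re s<2$. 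As written it is therefore unavailable near $\Re s=p$ once $p\geq 2$.

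\textbf{A second gap: (Z)$\Rightarrow$($\textup{Z}_0$).} Condition (Z) holds for each fixed $b\in\sB$, but you need it for $b=a^{\pm s}$, which moves with $s$. This is where holomorphic closedness really enters: it makes $s\mapsto a^s$ holomorphic in the Fr\'echet topology of $\sB$, not merely puts $a^{-p}$ in $\sB$. Banach--Steinhaus then gives local equicontinuity of the boundary functionals $\psi_s$, which yields continuity of $s\mapsto\psi_s(a^s)$ up to $\Re s=p$. Together with $a^s-a^p=(s-p)b(s)$, this gives ($\textup{Z}_0$).
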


Theorem~\ref{thm:Intro.Tauberian} is a refinement of the Tauberian theorem of~\cite{MSZ:LMP22}. First, all the extra regularity and summability assumptions in~\cite{MSZ:LMP22} are removed. Second, the proof of Theorem~\ref{thm:Intro.Tauberian} involves showing that Condition~(Z) and Condition~(H) imply a weaker version of the Tauberian condition of~\cite{MSZ:LMP22} and showing that this weaker version implies Condition~(W) (see Section~\ref{chap:Main-results}). We thus obtain the Weyl law~(\ref{eq:Intro.conf-Weyl}) under a weaker form of the Tauberian condition of~\cite{MSZ:LMP22}.  

Condition~(Z) and Condition~(H) are the types of Tauberian conditions sought by Alain Connes in his comments mentioned above. 

In practice, they are easier to check than the Tauberian condition of~\cite{MSZ:LMP22}.  As we shall see in the second half of this monograph, they allow us to deal with a wealth of new examples. 

\begin{corollaryalph}
 The main results of this article Proposition~\ref{prop:Intro.Weyl-BirS} through Theorem~\ref{thm:Intro.Integration-sVr} hold true if we replace Condition~\emph{(W)} by Condition~\emph{(H)} or Condition~\emph{(Z)}. 
\end{corollaryalph}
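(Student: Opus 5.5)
The corollary is a formal consequence of Theorem~\ref{thm:Intro.Tauberian} combined with the earlier results, so the plan is a chaining argument rather than new analysis.

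Suppose first that Condition~(H) or Condition~(Z) holds, for some Fr\'echet subalgebra $\sB\subseteq\overline{\sA}$ containing $\sA$ and closed under holomorphic functional calculus, with a positive linear map $\tau$. By Theorem~\ref{thm:Intro.Tauberian}, the Weyl law~(\ref{eq:Intro.conf-Weyl}) holds for every $a\in\sA_{++}$, with the same $\tau$. This is exactly Condition~(W).

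The point to verify is that the $\tau$ produced by Theorem~\ref{thm:Intro.Tauberian} is the one appearing in the conclusions of Proposition~\ref{prop:Intro.Weyl-BirS} through Theorem~\ref{thm:Intro.Integration-sVr}. In (H) and (Z) the map $\tau$ is only prescribed on $\sB$. However, it is bounded for the sup-norm: positivity gives $|\tau(a)|\leq \tau(1)\|a\|$ for selfadjoint $a$, and we split general $a$ into real and imaginary parts. Hence it extends uniquely by continuity to a positive linear map on $\overline{\sA}$. This extension is the map entering Condition~(W), and the conclusions of the later results are all formulated in terms of it.

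Each of Proposition~\ref{prop:Intro.Weyl-BirS}, Theorem~\ref{thm:Intro.SC-Weyl-lawDq}, Theorem~\ref{thm:Intro.Integration}, Proposition~\ref{prop:Intro.Lq-asymptotics}, Theorem~\ref{thm:Intro.SC-Weyl-law-sVr} and Theorem~\ref{thm:Intro.Integration-sVr} assumes only Condition~(W), together with Condition~($\textup{C}_r$) in the $\sV_r$-versions, plus the standing $p$-summability. Since (W) now holds, each applies verbatim. The extra assumption ($\textup{C}_r$) is kept as a hypothesis and is unaffected by swapping (W) for (H) or (Z). The same is true of the requirement that $\tau$ be the restriction of a faithful trace on $\sM$.

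The only real content is Theorem~\ref{thm:Intro.Tauberian} itself, which the paper proves separately. At the level of this corollary, no step is expected to be an obstacle beyond the bookkeeping about $\tau$ just described.
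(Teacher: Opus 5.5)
Your argument is correct and matches how the paper obtains the corollary: Theorem~\ref{thm:Intro.Tauberian} shows that (Z) or (H) implies Condition~(W). Then Proposition~\ref{prop:Intro.Weyl-BirS} through Theorem~\ref{thm:Intro.Integration-sVr} apply verbatim, with ($\textup{C}_r$) and the trace assumption on $\tau$ retained where needed.

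Your extension paragraph for $\tau$ is unnecessary, because the paper takes $\tau$ to be a positive linear map on $\overline{\sA}$ from the outset. If you did want it, note that $\sB$ is not assumed $*$-closed, so splitting $a\in\sB$ into real and imaginary parts is not available. Instead, get $|\tau(a)|\leq\tau(1)\|a\|$ for all $a\in\sB$ directly from $|\Tr[aT]|\leq\|a\|\Tr[T]$ for $T\geq 0$, applied to $T=e^{-tD^2}$ or $T=|D|^{-s}$.
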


 \subsection{Functional analysis -- main technical lemmas}
 The main results mentioned above follow as consequences of Proposition~\ref{prop:Intro.Weyl-BirS} and Theorem~\ref{thm:Intro.Tauberian}. Therefore, the bulk of our approach is proving these two results. This involves a significant amount of functional analysis and spectral theory.  More specifically, the proofs of both results rely on three main technical lemmas on weak Schatten properties of commutators and differences of powers of operators naturally associated with spectral triples. 
 
 In what follows, given any $r>0$, we denote by $\|\cdot\|_{r,\infty}$ the quasi-norm of the weak Schatten class $\sL_{r,\infty}$.  

 The first new technical result is about the weak Schatten properties of fractional commutators $[|D|^{-q},a]$, $a\in \sA$. 
 
 \begin{lemmalph}[Commutator Lemma]\label{lem:Intro.Com-Dq}
Given any $q>0$, for all $a\in \sA$, we have
 \begin{equation*}
 \left[ |D|^{-q},a\right] \in \sL_{(q+1)^{-1}p,\infty}.
\end{equation*}
In fact, there is a constant $C_{D,q}>0$ such that
\begin{equation}\label{eq:Com.Com-Dq-estimate}
 \left\| \left[ |D|^{-q},a\right]\right\|_{(q+1)^{-1}p,\infty}\leq C_{D,q}  \left\| \left[ D,a\right]\right\| \qquad \forall a \in \sA. 
\end{equation} 
 \end{lemmalph}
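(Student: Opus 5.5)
We need $[|D|^{-q},a]\in \sL_{p/(q+1),\infty}$, with $\|[|D|^{-q},a]\|_{p/(q+1),\infty}\le C\|[D,a]\|$. Since $D$ has compact resolvent, its kernel is finite dimensional. I interpret $|D|^{-q}$ as zero on $\ker D$, or equivalently I replace $|D|$ by $\Lambda=(1+D^2)^{1/2}$. The two choices differ by finite rank and bounded smooth functional-calculus terms. Those terms only produce commutator contributions of the form $[f(D),a]$ with $f$ smooth and rapidly decaying, and these are handled by the same integral below.

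The plan is to use an integral representation of the negative power through resolvents of $D^2$. For $0<q<2$,
\begin{equation*}
\Lambda^{-q}=c_q\int_0^\infty t^{-q/2}(1+t+D^2)^{-1}\,dt, \qquad c_q=\frac{\sin(\pi q/2)}{\pi}.
\end{equation*}
Commuting with $a$ gives
\begin{equation*}
[\Lambda^{-q},a]=-c_q\int_0^\infty t^{-q/2}R_t\,[D^2,a]\,R_t\,dt,\qquad R_t=(1+t+D^2)^{-1},
\end{equation*}
with $[D^2,a]=D[D,a]+[D,a]D$. Each integrand is therefore a sum of terms of the form $R_tD\,b\,R_t$ or $R_t\,b\,DR_t$, where $b=[D,a]$ is bounded.

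Next I estimate these terms in weak Schatten quasi-norms. Write $\lambda_t=1+t$. Since $\Lambda^{-1}\in\sL_{p,\infty}$, functional calculus gives
\begin{equation*}
\|R_t\|_{s,\infty}\lesssim \lambda_t^{-1+p/(2s)},\qquad \|DR_t\|_{s,\infty}\lesssim \lambda_t^{-1/2+p/(2s)}.
\end{equation*}
These hold whenever the exponents are admissible. For instance, $\mu_j(DR_t)\le\sup_\lambda \lambda/(\lambda^2+\lambda_t)$ restricted to $\lambda\gtrsim j^{1/p}$, which yields the interpolated bounds. A Hölder inequality in weak Schatten classes then bounds the integrand in $\sL_{\rho,\infty}$ with $\rho=p/(q+1)$ by $\|b\|\,\lambda_t^{\theta}$ for a suitable exponent $\theta$. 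The obstacle is that the naive integral $\int t^{-q/2}\lambda_t^{\theta}\,dt$ is only borderline convergent at $t=\infty$. Quasi-norms are not additive, and the borderline exponent is exactly $\rho$, so weak-type integrals diverge logarithmically. This is the main difficulty.

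To overcome it I will use real interpolation, that is, a dyadic decomposition. Split $\sH$ spectrally into blocks $P_k=\car_{[2^k,2^{k+1})}(|D|)$, so that the operator becomes $\sum_{k,l}P_k[\Lambda^{-q},a]P_l$. On each block the commutator is a divided-difference (Schur multiplier) of $\lambda^{-q}$ applied to $P_k[D,a]P_l$, of size roughly $2^{-(q+1)\max(k,l)}$. It is applied to an operator of rank at most $\lesssim 2^{p\max(k,l)}$. This shows the $j$-th singular value is $\lesssim j^{-(q+1)/p}$ once the off-diagonal pieces are summed, provided those pieces decay geometrically in $|k-l|$. That decay follows from the divided-difference bound when $2^k$ and $2^l$ are far apart. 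For the near-diagonal pieces I use that the Schur multiplier $(\lambda^{-q}-\mu^{-q})/(\lambda-\mu)$ restricted to $[2^k,2^{k+1}]^2$ is $2^{-(q+1)k}$ times a fixed smooth multiplier, hence bounded on $\sL(\sH)$ (a double operator integral argument of Birman–Solomyak/Peller type).

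Summing with the quasi-triangle inequality for $\sL_{\rho,\infty}$, I count ranks: the $j$-th singular value is controlled by the pieces with $2^{pk}\gtrsim j$. This gives the estimate~(\ref{eq:Com.Com-Dq-estimate}) with constant depending only on $D$ and $q$. For $q\ge2$ I reduce to small $q$ by iteration, using the Leibniz identity
\begin{equation*}
[\Lambda^{-q_1-q_2},a]=\Lambda^{-q_1}[\Lambda^{-q_2},a]+[\Lambda^{-q_1},a]\Lambda^{-q_2}
\end{equation*}
together with the weak-Schatten Hölder inequality.
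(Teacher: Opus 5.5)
There is a genuine gap in the decisive counting step. Your dyadic route is genuinely different from the paper's and can be made to work, but not with the bookkeeping you state. For separated blocks, say $k\geq l+2$, the divided difference of $f(\lambda)=|\lambda|^{-q}$ on $[2^k,2^{k+1})\times[2^l,2^{l+1})$ is $\approx -f(\mu)/\lambda\approx -2^{-ql-k}$, not $2^{-(q+1)\max(k,l)}$.

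For instance, take the circle with $D=-i\,d/dx$ and $a=N^{-1}e^{iNx}$, $N=3\cdot 2^{k-1}$. Then $\|[D,a]\|=1$, while $\|P_k[|D|^{-q},a]P_0\|\approx N^{-1}\approx 2^{-k}$.

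If you count ranks at $2^{p\max(k,l)}$, as you propose, such blocks exceed the target $2^{-(q+1)\max(k,l)}$ by a factor $2^{q|k-l|}$. So there is no geometric decay, and the sum is not controlled. The quasi-triangle inequality cannot rescue this either. The only available bound for each dyadic scale in $\sL_{p/(q+1),\infty}$ is $O(\|[D,a]\|)$, with no decay in the scale. Summing quasi-norms therefore reproduces exactly the logarithmic divergence you found for the resolvent integral.

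\textbf{The repair.} Let $X=[|D|^{-q},a]$, and split each $P_k$ according to the sign of $D$.
\begin{enumerate}
\item Use $\rk(P_kXP_l)\leq \rk P_{\min(k,l)}\lesssim 2^{p\min(k,l)}$, and group $C_m=\sum_{\min(k,l)=m}P_kXP_l$.
\item On separated blocks expand $(\lambda-\mu)^{-1}=\lambda^{-1}\sum_{n\geq 0}(\mu/\lambda)^n$ into separated-variable Schur multipliers. Near the diagonal, use your smooth-multiplier argument. Together these give $\|P_kXP_l\|\lesssim 2^{-(q+1)m-|k-l|}\|[D,a]\|$ with $m=\min(k,l)$.
\item Summing in \emph{operator norm} (not quasi-norm) gives $\rk C_m\lesssim 2^{pm}$ and $\|C_m\|\lesssim 2^{-(q+1)m}\|[D,a]\|$.
\item Min-max then yields $\mu_j(X)\leq \|\sum_{m>M}C_m\|\lesssim 2^{-(q+1)M}\|[D,a]\|$ once $j\gtrsim 2^{pM}$. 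This works for every $q>0$ at once, so the Leibniz iteration is unnecessary.
\end{enumerate}

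Run the argument directly with $|\lambda|^{-q}$ rather than $\Lambda^{-q}$. The correction $|D|^{-q}-\Lambda^{-q}$ decays only like $|\lambda|^{-q-2}$, not rapidly. Bounding its commutator via $\|a\|$ would lose the estimate in terms of $\|[D,a]\|$. With $|\lambda|^{-q}$, the $\ker D$ row and column give exactly the finite-rank terms $\Pi_0[D,a]F|D|^{-(q+1)}$ and $|D|^{-(q+1)}F[D,a]\Pi_0$ of Lemma~\ref{lem:Intro.factorization}.

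\textbf{Comparison with the paper.} The paper never estimates the resolvent integrand in a weak Schatten class. It pulls $|D|^{-\epsilon}$ and $|D|^{-\epsilon'}$ out of the integral, so that the remaining double operator integral $\Psi^{-q}_{1+\epsilon,\epsilon'}$ sits on the critical line $-q+1+\epsilon+\epsilon'=2$. There it is bounded on $\sL(\sH)$ by the Birman--Solomyak Cauchy--Schwarz argument, and H\"older's inequality finishes the proof. Your corrected argument is the discrete analogue: rank counting replaces H\"older, and uniform block Schur-multiplier bounds replace the critical double operator integral.
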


The above Lemma subsumes and improves various commutator results for spectral triples (see, e.g., \cite{BJ:CRAS83, CP:CJM98, CM:CRAS86, MSX:PSPM23, PS:Crelle09, SWW:CRAS98, Vo:JFA90}). It yields Schatten properties as sharp as those established by pseudodifferential techniques in the cases where suitable pseudodifferential calculi are available (see, e.g., \cite{MSX:CMP19}). However, it holds in far greater generality and requires very little regularity for the elements of $\sA$. For instance, for Dirac spectral triples over closed Riemannian spin manifolds,  Lemma~\ref{lem:Intro.Com-Dq} holds for all Lipschitz functions. In addition, the type of estimate provided by~(\ref{eq:Com.Com-Dq-estimate}) is usually not reachable by pseudodifferential techniques. 

The other two new technical results provide refinements for spectral triples of deep results of Connes-Sukochev-Zanin~\cite[Lemma~5.3]{CSZ:MS17} and 
Sukochev-Zanin~\cite[Theorem~5.4.2]{SZ:Ast23}. 
 
 \begin{lemmalph}\label{lem:Intro.refined-CSZ}
Given any $\alpha \in (0,1]$ and $s>1$, for every $a\in {\sA}_{++}$, we have
 \begin{equation}
 \left( a^{\frac12}|D|^{-\alpha} a^{\frac12}\right)^{s} - |D|^{-\alpha s} a^{s} \in \sL_{(\alpha s +1-\epsilon)^{-1}p,\infty} \qquad \forall \epsilon\in (0,1). 
\end{equation}
\end{lemmalph}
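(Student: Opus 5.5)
We follow the strategy of Connes--Sukochev--Zanin~\cite[Lemma~5.3]{CSZ:MS17}. The idea is to move the factors $a^{1/2}$ through powers of $|D|^{-\alpha}$, paying each time with a commutator. Lemma~\ref{lem:Intro.Com-Dq} controls these commutators sharply in weak Schatten classes. Throughout, set $B=a^{\frac12}|D|^{-\alpha}a^{\frac12}$. Since $a\in\sA_{++}$ and $D$ is $p$-summable, $B$ is a positive operator in $\sL_{p/\alpha,\infty}$.

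\textbf{Step 1: integer powers.} For an integer $s=n\geq 2$, expand
\[
B^n=a^{\frac12}|D|^{-\alpha}a\,|D|^{-\alpha}a\cdots a\,|D|^{-\alpha}a^{\frac12}
\]
and commute all copies of $a$ to the right. Each exchange produces a term in which one factor $|D|^{-\alpha}$ (or $|D|^{-k\alpha}$) is replaced by a commutator $[|D|^{-k\alpha},a]$. By Lemma~\ref{lem:Intro.Com-Dq}, this commutator lies in $\sL_{p/(k\alpha+1),\infty}$. The remaining factors $|D|^{-\alpha}$ contribute $\sL_{p/\alpha,\infty}$ each. By the H\"older inequality for weak Schatten classes, every error term lies in $\sL_{p/(n\alpha+1),\infty}$. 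We also need $a^{1/2}\in\sA$, or at least in an algebra where Lemma~\ref{lem:Intro.Com-Dq} applies. This holds if we work in the holomorphically closed $C^1$-type algebra $\{x:[D,x]\ \text{bounded}\}$, which contains $a^{1/2}$ and $a^{s}$ by holomorphic functional calculus; Lemma~\ref{lem:Intro.Com-Dq} only uses $\|[D,a]\|$. This gives the claim for integer $s$, even with $\epsilon=0$.

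\textbf{Step 2: non-integer $s$.} Write $s=n+\theta$ with $\theta\in(0,1)$. It is enough to show that
\[
B^\theta-|D|^{-\alpha\theta}a^\theta\in\sL_{p/(\alpha\theta+1-\epsilon),\infty}.
\]
Then $B^s=B^n B^\theta$, combined with Step 1 and H\"older (using $B^n\in \sL_{p/(n\alpha),\infty}$), gives the result after one more commutation of $a^n$ past $|D|^{-\alpha\theta}$ via Lemma~\ref{lem:Intro.Com-Dq}. For the fractional power we use the integral representation
\[
X^\theta=\frac{\sin\pi\theta}{\pi}\int_0^\infty \lambda^{\theta-1}X(\lambda+X)^{-1}\,d\lambda .
\]
We compare $B(\lambda+B)^{-1}$ with $|D|^{-\alpha}a(\lambda+|D|^{-\alpha}a)^{-1}$; the operator $|D|^{-\alpha}a$ is similar to $B$, so its resolvent is well defined. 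The differences are products of resolvents with $[|D|^{-\alpha},a^{1/2}]\in\sL_{p/(\alpha+1),\infty}$. Separately, we compare $(|D|^{-\alpha}a)^\theta$ with $|D|^{-\alpha\theta}a^\theta$, again via commutators of resolvents of $|D|^{-\alpha}$ with $a$.

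\textbf{Main obstacle.} The hardest part is integrating weak-Schatten bounds in $\lambda$. The quasi-norm $\sL_{r,\infty}$ is not a norm for $r\leq1$, and the resolvent gains decay in $\lambda$ only by interpolating between $\|(\lambda+B)^{-1}\|\le\lambda^{-1}$ and Schatten membership. This interpolation is exactly where the loss of $\epsilon$ enters. To handle it, we plan to split the integral at $\lambda\sim 1$. Over each dyadic block we use the estimate
\[
\|B^{\beta}(\lambda+B)^{-1}\|\le\lambda^{\beta-1},
\]
and sum the dyadic pieces in $\sL_{r',\infty}$ for a slightly smaller exponent. We rely on the $\ell^{q}$-type triangle inequality for weak Schatten quasi-norms with a geometric series, which costs an arbitrary $\epsilon>0$ in the exponent.
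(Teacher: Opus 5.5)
Your Step~1 (integer $s$, even with $\epsilon=0$) is correct, and so is the reduction of $s=n+\theta$ to an estimate for $B^\theta-|D|^{-\alpha\theta}a^\theta$. The gap is that this fractional estimate carries the entire difficulty, and your plan gives no mechanism that works for it.

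Your second comparison, $(|D|^{-\alpha}a)^\theta$ versus $|D|^{-\alpha\theta}a^\theta$, is essentially the statement itself for the exponent $\theta$. To see this, conjugate by $a^{1/2}$ and discard a commutator $[a^{1/2},|D|^{-\alpha\theta}]$, which Lemma~\ref{lem:Intro.Com-Dq} handles. Resolvent identities cannot produce this comparison. They compare $f(S)$ with $f(S')$ through $S-S'$, but $|D|^{-\alpha\theta}a^\theta$ is not a function of any operator close to $|D|^{-\alpha}a$; note that $|D|^{-\alpha}a-|D|^{-\alpha}=|D|^{-\alpha}(a-1)$ is of the same order. The identity that does the job is the Connes--Sukochev--Zanin/Sukochev--Zanin representation. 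It needs $\Re s>1$, which is where $g_s$ is Schwartz, so reducing to $\theta\in(0,1)$ moves you out of the range where it is available.

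Even your first comparison does not work as proposed. The resolvent formula gives $B^\theta-a^{-1/2}B^\theta a^{1/2}=-\frac{\sin\pi\theta}{\pi}\int_0^\infty\lambda^\theta a^{-1/2}(\lambda+B)^{-1}K(\lambda+B)^{-1}d\lambda$ with $K=a^{1/2}[|D|^{-\alpha},a^{1/2}]a^{1/2}$. This is a double operator integral whose symbol is the divided difference $(x^\theta-y^\theta)/(x-y)$. That symbol is comparable to $\max(x,y)^{\theta-1}$ and blows up at $0$, and near $\lambda=0$ the integrand is only bounded by $\lambda^{\theta-2}\|K\|$, which is not integrable. Your bound $\|B^\beta(\lambda+B)^{-1}\|\le\lambda^{\beta-1}$ helps only if $K$ carries powers of $B$ next to the resolvents. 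Lemma~\ref{lem:Intro.Com-Dq} gives only $K\in\sL_{p/(\alpha+1),\infty}$. That is compatible with $K$ being concentrated on eigenvectors of arbitrarily small eigenvalues of $B$, where the symbol is huge.

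To factor $K$ you need Lemma~\ref{lem:Intro.factorization}, and then the two-sided $B$-resolvents become the obstruction. Only powers of $B$ commute with them, and only the first power $|D|^{-\alpha}=a^{-1/2}Ba^{-1/2}$ converts exactly. Suppose you want to reach the exponent $\alpha\theta+1$ while leaving a merely bounded middle factor, so that the $\lambda$-integral is an honest Bochner integral. Then the commutator would have to be written as $|D|^{-\alpha}\Phi(\cdot)|D|^{-\alpha}$ with total order $2\alpha\ge\alpha+1$. That forces $\alpha=1$, and then it is exactly Lipschitz regularity, which is not assumed. Otherwise a factor $|D|^{-\beta}$ is trapped between $(\lambda+B)^{-1}$ and $\Phi$, and you are back to integrating in $\sL_{r,\infty}$ with possibly $r\le 1$. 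Your dyadic splitting handles summation over blocks, but not the integral within each block, which is still an integral in a non-locally convex space.

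The paper avoids all of this by working directly with $s>1$. With $X=|D|^{-\alpha}$ and $Y=B$, it writes $T(s)=F_s(0)-\int_\R\hat g_s(t)X^{it}F_s(t)Y^{-it}dt$. It factors the commutators $[X,a^z]$ in $F_s(t)$ by Lemma~\ref{lem:Intro.factorization} with $\beta=1-\epsilon$ and $\gamma=\alpha$, and uses $|D|^{-\alpha}=a^{-1/2}Ya^{-1/2}$. The point is that $X^{it}$ commutes with \emph{every} power of $X$, so $|D|^{-\beta}=X^{\beta/\alpha}$ is harmless, and $Y^{-it}$ commutes with $Y$. Hence the weak-Schatten factors $X^{\beta/\alpha}$, $X^{s-1}$, $Y$, $Y^{s-1}$ come out of the integral exactly. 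What remains is a Schwartz function times a compact-valued function of linear growth in $t$ (Lemma~\ref{lem:CSZ.Daz}), which is Bochner-integrable in $\sK$. H\"older's inequality then gives $\sL_{(\alpha s+1-\epsilon)^{-1}p,\infty}$. The loss of $\epsilon$ is the price of needing $\beta<1$ in the factorization lemma, not of any dyadic summation.
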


Lemma~5.3 of~\cite{CSZ:MS17} was stated without proof by Connes~\cite[Lemma~ IV.3.11]{Co:NCG}. This result implies that for $0<q<p$ the differences $a^{p/2}|D|^{-p}a^{p/2}-(a^{1/2}|D|^{-q}a^{1/2})^{p/q}$, $a\in \sA_{++}$, are in $(\sL_{1,\infty})_0$. In particular, for $q=2$ (which is needed for getting the semiclassical Weyl law of~\cite{MSZ:LMP22}) the condition $q<p$ yields the restriction $p>2$. 

In contrast, Lemma~\ref{lem:Intro.refined-CSZ} above allows us to deal with differences $(a^{\frac12}|D|^{-1}a^{\frac12})^{q} - a^{\frac1{2q}}|D|^{-q}a^{\frac1{2q}}$ for \emph{all values} $q>0$ and we obtain better Schatten properties (e.g., for $q=p$ we get membership in ideals $\sL_{r,\infty}$ with $r<1$, which implies that the difference is trace-class). Incidentally, Lemma~\ref{lem:Intro.refined-CSZ} is instrumental in removing the restriction $p>2$ from the approach of~\cite{MSZ:LMP22}. 
 
In fact, we prove Lemma~\ref{lem:Intro.refined-CSZ} for all $s\in \C$ with $\Re s>1$. An elaboration of the arguments of its proof further shows that if we set $c=\max(1,\alpha^{-1}(q^{-1}p-1))$, then, for every $a\in \sA_{++}$,  we have
\begin{equation*}
  \left( a^{\frac12}|D|^{-\alpha} a^{\frac12}\right)^{s} - |D|^{-\alpha s} a^{s} \in \Hol\left( \Re s>c; \sL_{q}\right). 
\end{equation*}
This can be interpreted as a holomorphic version of Lemma~\ref{lem:Intro.refined-CSZ} (see Lemma~\ref{lem:CSZ.hol-refined-CSZ}). As we shall see this implies our 3rd new technical lemma.

\begin{lemmalph}\label{lem:Intro.extension} 
 Let $\alpha \in (0,1]$ be such that $\delta:=\min(1,p-\alpha)>0$. Given any $a\in {\sA}_{++}$,  the function 
 \begin{equation*}
 \Tr \big[ \big(a^{\frac{1}{2}}|D|^{-\alpha}a^{\frac{1}{2}}\big)^{s}\big]- \Tr\big[ |D|^{-\alpha s}a^{s}\big], \qquad \Re s> \alpha^{-1}p, 
\end{equation*}
has an analytic extension to the half-plane $\Re s>\alpha^{-1}(p-\delta)$. 
\end{lemmalph}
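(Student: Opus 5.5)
We derive Lemma~\ref{lem:Intro.extension} from the holomorphic refinement of Lemma~\ref{lem:Intro.refined-CSZ} described just above, i.e., the statement that for $c=\max\big(1,\alpha^{-1}(q^{-1}p-1)\big)$ the difference
\begin{equation*}
 F(s):=\left( a^{\frac12}|D|^{-\alpha} a^{\frac12}\right)^{s} - |D|^{-\alpha s} a^{s}
\end{equation*}
lies in $\Hol(\Re s>c;\sL_q)$. The idea is to take $q=1$. Then $F$ is a holomorphic family of trace-class operators on $\Re s>c_1$, with $c_1:=\max(1,\alpha^{-1}(p-1))$. Since $\Tr:\sL_1\rightarrow\C$ is a continuous linear functional, $s\mapsto \Tr[F(s)]$ is holomorphic on $\Re s>c_1$.

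The next step checks that on $\Re s>\alpha^{-1}p$ this function agrees with the one in the statement. There $|D|^{-\alpha s}\in \sL_1$, because $|D|^{-1}\in\sL_{p,\infty}$ and $\alpha\Re s>p$. Hence both $(a^{1/2}|D|^{-\alpha}a^{1/2})^s$ and $|D|^{-\alpha s}a^s$ are trace-class. Indeed, $a^{1/2}|D|^{-\alpha}a^{1/2}$ is a positive operator in $\sL_{p/\alpha,\infty}$, so its $s$-th power is in $\sL_{p/(\alpha\Re s),\infty}\subseteq\sL_1$. By linearity of the trace, $\Tr[F(s)]$ is exactly the difference of traces in the statement. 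Therefore $\Tr[F(s)]$ gives the analytic extension to $\Re s>c_1$.

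It remains to match $c_1$ with $\alpha^{-1}(p-\delta)$, where $\delta=\min(1,p-\alpha)$. If $p-\alpha\geq 1$, then $\delta=1$ and $\alpha^{-1}(p-1)\geq 1$, so $c_1=\alpha^{-1}(p-1)=\alpha^{-1}(p-\delta)$. If $0<p-\alpha<1$, then $\delta=p-\alpha$, so $\alpha^{-1}(p-\delta)=1$. Moreover $\alpha^{-1}(p-1)<\alpha^{-1}\alpha=1$, hence $c_1=1=\alpha^{-1}(p-\delta)$. In both cases $c_1=\alpha^{-1}(p-\delta)$. The hypothesis $\delta>0$ ensures $\alpha^{-1}(p-\delta)<\alpha^{-1}p$, so the extension is genuine.

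The real work is therefore in the holomorphic version of Lemma~\ref{lem:Intro.refined-CSZ} (Lemma~\ref{lem:CSZ.hol-refined-CSZ}), which I take as given. If it had to be proved, the main obstacle would be getting trace-class holomorphy for $\Re s$ close to $1$. My plan there would be an integral representation of $F(s)$ combined with the Commutator Lemma~\ref{lem:Intro.Com-Dq}. Specifically, I would write $A^s=\frac{\sin(\pi s)}{\pi}\int_0^\infty \lambda^{s-1}A(\lambda+A)^{-1}\,d\lambda$ after splitting $s=m+\sigma$ to reduce to $\sigma\in(0,1)$. I would then move $a^{1/2}$ factors past $|D|^{-\alpha}$, so that each move produces a commutator in $\sL_{p/(\alpha+1),\infty}$. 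Finally, I would check that the resulting Schatten exponents give $\sL_1$-convergence locally uniformly in $s$.
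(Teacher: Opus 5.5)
Your proof is correct and is essentially the paper's own argument: you specialize the holomorphic version (Lemma~\ref{lem:CSZ.hol-refined-CSZ}) to $q=1$, compose with the continuous functional $\Tr$ on $\sL_1$, and verify that $\max(1,\alpha^{-1}(p-1))=\alpha^{-1}(p-\delta)$, exactly as the paper does. Your closing sketch of how you would prove the holomorphic lemma is not needed here; note only that the paper deliberately avoids integrating commutators valued in $\sL_{p/(\alpha+1),\infty}$ (merely quasi-Banach when $p\leq \alpha+1$), and instead uses the factorization Lemma~\ref{lem:Intro.factorization} inside the Connes--Sukochev--Zanin integral representation.
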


In the setup of (unital) spectral triples, Lemma~\ref{lem:Intro.extension} is a refinement of~\cite[Theorem~5.4.2]{SZ:Ast23}. The latter result is proved for $p>2$ under some strong regularity assumptions (their result actually holds for non-unital spectral triples). A weaker version for (unital) Lipschitz regular spectral triples of summability degree $p>2$ is given in~\cite{MSZ:LMP22}. 

All the above regularity conditions are removed in Lemma~\ref{lem:Intro.extension} above. More importantly, our result does not involve any restriction on the degree of summability, since we can always find $\alpha\in (0,1]$ small enough so that $\min(1,p-\alpha)>0$.  

 As in~\cite{MSZ:LMP22} the approach relies on the  integral representation given in~\cite{CSZ:MS17, SZ:Ast23} for differences  $(A^{1/2}BA^{1/2})^s-B^sA^s$, $\Re s>1$, where $A$ and $B$ are arbitrary positive operators. However, we make a few observations that simplify matters significantly and lead to stronger results. In particular, degree summability issues disappear and we do not need to appeal to the difficult and deep results of~\cite{HSZ:PLMS22, PS:AM11} that were used in~\cite{MSZ:LMP22}. Thus, even if we get stronger results, with respect to the approaches of~\cite{CSZ:MS17, MSZ:LMP22, SZ:Ast23}, our approach remains fairly elementary. 
 
In addition, it is instrumental for our approaches to the proofs of the key results Proposition~\ref{prop:Intro.Weyl-BirS} and Theorem~\ref{thm:Intro.Tauberian} to establish the above results for the fractional powers $|D|^\alpha$ and the commutators $[|D|^{-\alpha},a]$, $a\in \sA$, as $\alpha$ ranges over \emph{all} $(0,1]$, and not just for $\alpha=1$. This also comes into play in the proof of Lemma~\ref{lem:Intro.Com-Dq}.  

The main simplifications provided by our approach occur from the following factorization lemma.  
 
 \begin{lemmalph}[Factorization Lemma]\label{lem:Intro.factorization}
 Let $\alpha \in [0,1]$ and $\beta,\gamma\in [0,1]$ be such that, either $\beta+\gamma<\alpha+1$, or $\beta+\gamma=\alpha+1$ with $\max(\beta,\gamma)<1$. 
 \begin{enumerate}
 \item[(i)] There is a bounded linear operator $\Phi_{\beta,\gamma}^{\alpha}:\sL(\sH)\rightarrow \sL(\sH)$ such that, for all $a\in \sA$, we have
 \begin{align}\label{eq:Intro.factorization}
  \left[ |D|^{-\alpha},a\right]  =|D|^{-\beta}\Phi_{\beta,\gamma}^{\alpha}\left([D,a]\right)|D|^{-\gamma} 
  + |D|^{-(\alpha+1)}F[D,a]\Pi_0 +\Pi_0[D,a]F|D|^{-(\alpha+1)},  
\end{align}
where $F=D|D|^{-1}$ is the sign of $D$ and $\Pi_0$ is the orthogonal projection onto $\ker D$. 

\item[(ii)] Suppose that $\beta+\gamma<\alpha+1$. If $\max(\beta,\gamma)<1$, then $\Phi_{\beta,\gamma}^{\alpha}$ gives rise to a continuous linear operator, 
\begin{equation*}
 \Phi_{\beta,\gamma}^{\alpha}: \sL(\sH)\longrightarrow \sL_{r,\infty}, \qquad \text{with}\ r=(1+\alpha -\beta-\gamma)^{-1}p. 
\end{equation*}
If $\max(\beta,\gamma)=1$, then this result holds for any $r>(1+\alpha -\beta-\gamma)^{-1}p$. In both cases the range of $\Phi_{\beta,\gamma}^{\alpha}$ is contained in the ideal of compact operators. 
\end{enumerate}
 \end{lemmalph}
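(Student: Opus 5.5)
We will write $|D|^{-\alpha}$ as an integral of resolvents and commute $a$ through each resolvent. On $(\ker D)^\perp$ we use, for $0<\alpha<1$, the Balakrishnan formula
\begin{equation*}
 |D|^{-\alpha}=\frac{\sin(\pi\alpha/2)}{\pi}\int_0^\infty \lambda^{-\alpha/2}(\lambda+D^2)^{-1}\,d\lambda .
\end{equation*}
Here $|D|^{-\alpha}$ is understood as $0$ on $\ker D$, and we work with $D^2+\Pi_0$ where needed. The endpoints $\alpha=0$ and $\alpha=1$ are handled separately. For $\alpha=0$ the commutator is $[1-\Pi_0,a]$. For $\alpha=1$ we write $|D|^{-1}=F D^{-1}$ and use $[D^{-1},a]=-D^{-1}[D,a]D^{-1}$ on the range of $1-\Pi_0$.

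For each $\lambda$ we use $[(\lambda+D^2)^{-1},a]=-(\lambda+D^2)^{-1}[D^2,a](\lambda+D^2)^{-1}$ together with $[D^2,a]=D[D,a]+[D,a]D$. This gives
\begin{equation*}
[|D|^{-\alpha},a]=-c_\alpha\int_0^\infty\lambda^{-\alpha/2}\Big(D(\lambda+D^2)^{-1}[D,a](\lambda+D^2)^{-1}+(\lambda+D^2)^{-1}[D,a]D(\lambda+D^2)^{-1}\Big)d\lambda .
\end{equation*}
We split $1=(1-\Pi_0)+\Pi_0$ on both sides of $[D,a]$. The $(1-\Pi_0)\cdots(1-\Pi_0)$ block defines
\begin{equation*}
\Phi^\alpha_{\beta,\gamma}(X):=-c_\alpha\int_0^\infty\lambda^{-\alpha/2}\Big(|D|^{\beta}D(\lambda+D^2)^{-1}X(\lambda+D^2)^{-1}|D|^{\gamma}+|D|^{\beta}(\lambda+D^2)^{-1}XD(\lambda+D^2)^{-1}|D|^{\gamma}\Big)d\lambda ,
\end{equation*}
with all powers restricted to $(\ker D)^\perp$, applied to any $X\in\sL(\sH)$. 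This is a double operator integral, i.e.\ a Schur multiplier relative to the spectral measure of $|D|$. Its symbol is
\begin{equation*}
\phi(s,t)=s^\beta t^\gamma\,\frac{s^{-\alpha}-t^{-\alpha}}{t-s}\cdot(\text{sign factors}).
\end{equation*}
Indeed, the scalar integral evaluates the divided difference: $\int\lambda^{-\alpha/2}\frac{s+t}{(\lambda+s^2)(\lambda+t^2)}\,d\lambda$ is proportional to $(s^{-\alpha}-t^{-\alpha})/(t-s)$. The cross blocks $\Pi_0\cdots(1-\Pi_0)$ collapse, since $D\Pi_0=0$ and $(\lambda+D^2)^{-1}\Pi_0=\lambda^{-1}\Pi_0$. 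The remaining $\lambda$-integral produces exactly $|D|^{-(\alpha+1)}F[D,a]\Pi_0$ and its mirror term $\Pi_0[D,a]F|D|^{-(\alpha+1)}$, and the $\Pi_0\cdots\Pi_0$ block vanishes. This gives the identity in (i).

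Boundedness in (i) reduces to bounding this Schur multiplier on $\sL(\sH)$. The function $\phi$ is homogeneous of degree $\beta+\gamma-\alpha-1\le 0$ in $(s,t)\in[\mu_0,\infty)^2$. For bounded Schur multipliers we would write $\phi$ explicitly as an integral of products $f_\xi(s)g_\xi(t)$ with $\int\|f_\xi\|_\infty\|g_\xi\|_\infty\,d\xi<\infty$. A direct route is to keep the $\lambda$-integral. Each term factors as $A_\lambda X B_\lambda$, with
\begin{equation*}
\|A_\lambda\|\lesssim\sup_{s\ge\mu_0}s^{\beta+1}(\lambda+s^2)^{-1}, \qquad \|B_\lambda\|\lesssim\sup_{t\ge\mu_0}t^{\gamma}(\lambda+t^2)^{-1},
\end{equation*}
so that $\|A_\lambda\|\|B_\lambda\|\lesssim\lambda^{(\beta+\gamma-3)/2}$ for large $\lambda$. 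Multiplying by $\lambda^{-\alpha/2}$ gives an integrand that is integrable at $\infty$ exactly when $\beta+\gamma<\alpha+1$, and the integral is bounded near $0$ since $s\ge\mu_0>0$. The borderline case $\beta+\gamma=\alpha+1$ with $\max(\beta,\gamma)<1$ is the main obstacle, because this crude bound becomes logarithmically divergent. There I would instead use that $\phi$ is homogeneous of degree $0$ and smooth away from the diagonal. I would substitute $s=e^x$, $t=e^y$ and check that $\phi$ becomes a function of $x-y$ whose Fourier transform is integrable. The conditions $\beta,\gamma<1$ give exponential decay in $|x-y|$ in both directions, which makes $\phi$ a bounded Schur multiplier, similar to the standard multipliers for $\log$-type commutators.

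For (ii), with $\beta+\gamma<\alpha+1$, we pick $\beta'\ge\beta$ and $\gamma'\ge\gamma$ with $\beta'+\gamma'=\alpha+1$ and both $<1$, or with $\max=1$ allowed at the cost of an $\epsilon$-loss. We then write
\begin{equation*}
\Phi^\alpha_{\beta,\gamma}(X)=|D|^{\beta'-\beta}\,\Phi^\alpha_{\beta',\gamma'}(X)\,|D|^{\gamma'-\gamma},
\end{equation*}
which follows from the symbol identity. By $p$-summability, $|D|^{-\theta}\in\sL_{p/\theta,\infty}$, and the weak-Schatten Hölder inequality then gives membership in $\sL_{r,\infty}$ with $r^{-1}=(1+\alpha-\beta-\gamma)/p$, together with a norm bound. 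This uses the bounded borderline multiplier from (i). When $\max(\beta,\gamma)=1$ the borderline multiplier is unavailable, so we shift to $\beta'$ or $\gamma'$ slightly below $1$ and lose an arbitrary $\epsilon$, which gives any $r$ above the critical value. Compactness follows because the range lies in $\sL_{r,\infty}$.
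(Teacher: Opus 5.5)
Your proposal has a genuine gap at $\alpha=1$ in (ii), plus a divergent step in your treatment of the blocks involving $\ker D$.

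\textbf{The case $\alpha=1$ in (ii).} Your sharpening step needs $\beta'\ge\beta$ and $\gamma'\ge\gamma$ with $\beta'+\gamma'=\alpha+1$ and $\beta',\gamma'<1$. For $\alpha=1$ no such pair exists. This is not a bookkeeping accident. For $\alpha=1$ and $\beta'+\gamma'=2$, the symbol of your single multiplier on a same-sign block is $\mp(s/t)^{\beta'-1}$, which is bounded only when $\beta'=\gamma'=1$. In that case the opposite-sign blocks carry $\pm(t-s)/(s+t)$, which does not decay in $u=\log(s/t)$, so your Fourier argument is unavailable.

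Your route therefore only gives $\Phi^{1}_{\beta,\gamma}:\sL(\sH)\to\sL_{r,\infty}$ for $r>(2-\beta-\gamma)^{-1}p$. The lemma claims $r=(2-\beta-\gamma)^{-1}p$ when $\max(\beta,\gamma)<1$. Moreover, $\alpha=1$, $\beta=\gamma=0$ is exactly the case giving $[|D|^{-1},a]\in\sL_{p/2,\infty}$, which seeds the induction in the Commutator Lemma.

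Your separate treatment of $\alpha=1$ via $|D|^{-1}=FD^{-1}$ does not repair this. The identity $[D^{-1},a]=-D^{-1}[D,a]D^{-1}$ only covers the same-sign blocks and leaves $[F,a]$ untreated. For (i) no separate treatment is needed anyway, since the resolvent formula with weight $\lambda^{-\alpha/2}$ holds for all $\alpha\in(0,2)$.

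The paper never treats $\Phi^\alpha_{\beta,\gamma}$ as one multiplier. It keeps the two terms $-c(\alpha)F\Psi^{-\alpha}_{\beta+1,\gamma}$ and $-c(\alpha)\Psi^{-\alpha}_{\beta,\gamma+1}F$ apart. Each $\Psi^{-\alpha}_{\delta,\delta'}$ involves only $|D|$, and at the borderline $\delta+\delta'=2+\alpha$ it is bounded for all $\delta,\delta'\in(0,2)$. For $\alpha>0$, in your variables its symbol is a multiple of $e^{\delta u}(e^{-\alpha u}-1)/(1-e^{2u})$. This decays like $e^{(\delta-2)u}$ as $u\to+\infty$ and like $e^{(2-\delta')u}$ as $u\to-\infty$.

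Consequently the first term can absorb a right exponent up to $2$ and the second a left exponent up to $2$. Shifting the two terms independently gives the sharp $r$ for every $\alpha\in[0,1]$. The paper proves this borderline boundedness by the Birman--Solomyak Cauchy--Schwarz bound $\int_0^\infty\mu^{3-2\delta}\|R_\delta(\mu)\xi\|^2\,d\mu\le I(\delta)\|\xi\|^2$. Your log-variable Fourier argument is a legitimate alternative for that step, and for $\alpha<1$ your single-multiplier version of (ii) is correct.

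\textbf{The blocks involving $\ker D$.} Since $(\lambda+D^2)^{-1}\Pi_0=\lambda^{-1}\Pi_0$, the $(1-\Pi_0)\cdots\Pi_0$ block of your integrand is $\lambda^{-1-\alpha/2}D(\lambda+D^2)^{-1}[D,a]\Pi_0$. This is not integrable at $\lambda=0$ as soon as $[D,a]\Pi_0\neq0$. So the displayed integral for the full commutator is not valid, and it yields $|D|^{-(\alpha+1)}F[D,a]\Pi_0$ only formally.

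The resolvent representation holds only after compressing by $1-\Pi_0$. The cross terms should instead be obtained algebraically, as in the paper: $[|D|^{-\alpha},a]=(1-\Pi_0)[|D|^{-\alpha},a](1-\Pi_0)+|D|^{-\alpha}a\Pi_0-\Pi_0a|D|^{-\alpha}$. Here $|D|^{-\alpha}a\Pi_0=|D|^{-(\alpha+1)}F\,Da\Pi_0=|D|^{-(\alpha+1)}F[D,a]\Pi_0$, and symmetrically for $\Pi_0a|D|^{-\alpha}$.

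\textbf{Smaller points.}
\begin{enumerate}
\item In (ii) the outer factors must be $|D|^{-(\beta'-\beta)}$ and $|D|^{-(\gamma'-\gamma)}$, not positive powers.
\item On opposite-sign blocks your symbol has denominator $s+t$, not $t-s$. Your decay analysis still works there for $\beta,\gamma<1$.
\item The identity $[(\lambda+D^2)^{-1},a]=-(\lambda+D^2)^{-1}[D^2,a](\lambda+D^2)^{-1}$ is only formal, because $a$ need not preserve $\dom(D^2)$. Either use the Carey--Phillips identity as the paper does, or check all identities on matrix entries in an eigenbasis of $D$. The latter also identifies your Schur multipliers with the commutator directly.
\end{enumerate}
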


The operator $\Phi_{\beta,\gamma}^{m}$ in~(\ref{eq:Intro.factorization}) is defined in terms of  double operator-integrals (DOIs) in the sense of Birman-Solomyak~\cite{BS:PMF66, BS:PMF73,BS:IEOT03} (see Eq.~(\ref{eq:Com.Phimbgam1})). Factorization results for commutators and differences of functions of positive operators in  terms of DOIs are not new (see, e.g.,~\cite{BS:JSM92,BS:JMS93, GLMST:AIM11, LSZ:PLMS20, LMSZ:JFA17, PS:Crelle09}). The proof of our factorization result is actually a mere adaptation to our setup of the approach of DOI representation of differences of fractional powers of positive operators in~\cite{BS:JSM92}. 
However, the type of factorization provided by~(\ref{eq:Intro.factorization}) in such a wide setting does not seem to have appeared elsewhere.

Lemma~\ref{lem:Intro.Com-Dq} is a consequence of the above factorization lemma and some standard induction argument (see Section~\ref{chap:Main-results}).  

As mentioned above, the proofs of Lemma~\ref{lem:Intro.refined-CSZ} and Lemma~\ref{lem:Intro.extension} rely on the integral representations in~\cite{CSZ:MS17, SZ:Ast23} for $(A^{1/2}BA^{1/2})^s-B^sA^s$, $\Re s>1$. In the case $A=a\in \sA_{++}$ and $B=|D|^{-\alpha}$, $\alpha \in (0,1]$, the integrands in this formula involve commutators $[|D|^{-\alpha},a^z]$, $z\in \C$. By Lemma~\ref{lem:Intro.Com-Dq} these commutators lie in the weak Schatten class $\sL_{(|\alpha|+1)^{-1}p,\infty}$. Weak Schatten classes $\sL_{r,\infty}$ are Banach spaces for $r>1$, but for $r\leq 1$ they are only quasi-Banach spaces, and hence are not locally convex spaces. This results in serious technical difficulties to deal with integration and analyticity of maps with values in $\sL_{r,\infty}$, $r\leq 1$. This is in a nutshell the main technical hurdle that prevents us from extending to summability degree $p\leq 2$ the approaches of~\cite{CSZ:MS17, MSZ:LMP22, SZ:Ast23}. 

Our main idea is to use Lemma~\ref{lem:Intro.factorization} to ``factorize out'' powers of $|D|^{-1}$ and get new integrands depending on the bounded commutators $[D,a^z]$. Thereby, we trade integrals with integrands in weak Schatten classes for integrals whose integrands are compact operators, for which standard Bochner integration techniques apply. The Schatten properties stated by Lemma~\ref{lem:Intro.refined-CSZ} and Lemma~\ref{lem:Intro.extension} then become consequences of H\"older's inequality for weak Schatten classes.  In particular, degree summability issues become irrelevant. We refer to Section~\ref{chap:CSZ} for the full proofs of Lemma~\ref{lem:Intro.refined-CSZ} and Lemma~\ref{lem:Intro.extension}.  

\subsection{Sketch of proof of Proposition~\ref{prop:Intro.Weyl-BirS}}
We observe that Lemma~\ref{lem:Intro.Com-Dq} enables us to reduce the proof of Proposition~\ref{prop:Intro.Weyl-BirS} as follows. First, we need only to prove the result for $a\in \overline{\sA}_+$, i.e.,
\begin{equation}
 \lim_{j\rightarrow \infty} j^{\frac{q}{p}} \lambda_j\big(|D|^{-\frac{q}{2}}a|D|^{-\frac{q}{2}}\big) = \tau\left[|a|^{\frac{p}{q}}\right]^{\frac{q}{p}} \qquad \forall a\in  \overline{\sA}_+. 
\end{equation}
Second, the above spectral asymptotics are equivalent to
\begin{equation}\label{eq:Intro.Weyl-BirS-sA+2}
 \lim_{j\rightarrow \infty} j^{\frac{q}{p}} \lambda_j\big(a^{-\frac{1}{2}}|D|^{-q}a^{-\frac{1}{2}}\big) = \tau\left[|a|^{\frac{p}{q}}\right]^{\frac{q}{p}} \qquad \forall a\in  
 \overline{\sA}_+. 
\end{equation}
These kinds of reductions are not new, but here they follow from elementary arguments (compare~\cite{MSZ:LMP22, SZ:FAA23}).

 The main new idea in the proof of Proposition~\ref{prop:Intro.Weyl-BirS} is using Lemma~\ref{lem:Intro.refined-CSZ}, along with Lemma~\ref{lem:Intro.Com-Dq} and the perturbation theory of Birman-Solomyak~\cite{BS:FAA70, BS:Book}, to show in substance that if the spectral asymptotics~(\ref{eq:Intro.Weyl-BirS-sA+2}) hold for \emph{some} $q_0>0$, then they hold for \emph{all} $q>0$. Note that for achieving this it is crucial to allow the exponent of $|D|^{-\alpha}$ in Lemma~\ref{lem:Intro.refined-CSZ} to range over all $(0,1]$ instead of restricting ourselves to $\alpha=1$ (compare~\cite{MSZ:LMP22}). 
 
 As it turns out, Condition~(W) is actually equivalent to the spectral asymptotics~(\ref{eq:Intro.Weyl-BirS-sA+2}) for $q=2$ (see Lemma~\ref{lem:ST.equiv-conformal-Weyl}). This allows us to complete the proof of Proposition~\ref{prop:Intro.Weyl-BirS} (see Section~\ref{chap:Main-results} for the full details).

\subsection{Sketch of proof of Theorem~\ref{thm:Intro.Tauberian}} 
The proof of Theorem~\ref{thm:Intro.Tauberian} clarifies the relationships between the conditions of  this monograph and the Tauberian condition of~\cite{MSZ:LMP22} (see  also Condition~\ref{cnd:TauberianMSZ}  for the statement of that condition). In~\cite{MSZ:LMP22} the condition is required to hold for all $a\in \overline{\sA}_+$. This is arguably too strong a requirement. We introduce a weaker condition (Condition~($\textup{Z}_0$); see Section~\ref{chap:Tauberian}). This condition is required to hold for all $a\in \sA_{++}$ only. 

For one thing, we use Lemma~\ref{lem:Intro.extension}, along with the well-known Ikehara's Tauberian theorem, to show that Condition~($\textup{Z}_0$) implies Condition~(W). The role of Lemma~\ref{lem:Intro.extension} in this proof is paramount. For another thing, we show that Condition~(Z) and Condition~(H) of Theorem~\ref{thm:Intro.Tauberian} both imply Condition~($\textup{Z}_0$), and hence they imply Condition~(W). This establishes Theorem~\ref{thm:Intro.Tauberian}. 

We refer to Section~\ref{chap:Tauberian} for the full proof of Theorem~\ref{thm:Intro.Tauberian}.

\subsection{Example: Closed Riemannian Manifolds}
In Section~\ref{chap:CM}, we explain how the results of this paper enables us to recover well-known semiclassical Weyl laws and integration formulas for closed manifolds, as well as Weyl laws for Steklov eigenvalue problems. The results are not new. The only novelty is getting these results from results of Minakshisundaram-Pleijel~\cite{MP:CJM49} that were established in the late 40s. 

Assume that $(M^n,g)$ is a closed Riemannian manifold and $E$ a Hermitian vector bundle over $M$. We let $D_E:C^\infty(M,E)\rightarrow C^\infty(M,E)$ be a selfadjoint first order elliptic \psido\ with $\sigma_2(D_E^2)(x,\xi)= |\xi|_g^2\op{id}_{E_x}$. We then get an $n$-summable spectral triple,
\begin{equation*}
 \big( C^\infty(M,\End(E)), L^2(M,E), D_E\big). 
\end{equation*}
This setup allows us to cover various examples for the operator $D_E$: square-root Laplacian spectral, Dirac operator (when $M$ is spin), square root of a connection Laplacian, and Dirichlet-to-Neumann operator (when $M$ is the smooth boundary of a compact Riemannian manifold). 

Condition~(W) holds thanks to old results of Minakshisundaram-Pleijel~\cite{MP:CJM49}. Those results imply Condition~(Z), and hence Condition~(W) via Theorem~\ref{thm:Intro.Tauberian}, with
\begin{equation*}
 \tau(u)= c(n)\int_M \tr_{E_x}[u(x)]\,d\nu_g(x), \qquad c(n):= (2\pi)^{-n}|\bB^{n}|.
\end{equation*}

Condition~($\textup{C}_r$) holds thanks to various Cwikel-type estimates. Namely, it holds for $r>1$ with $\sV_r=L^r(M,E)$ (see~\cite{Cw:AM77}); for $r<1$ with $\sV_r=L^1(M,E)$ (see~\cite{BS:UMN77}); and in the critical case $r=1$ with $\sV_1=\LlogL(M,E)$ (see~\cite{So:IJM94,So:PLMS95,SZ:MS22}).

Applying Theorem~\ref{thm:Intro.SC-Weyl-lawDq} allows to recover the semiclassical Weyl laws established in~\cite{BS:FAA70, Ro:SMD72, Ro:SM76,Ro:JST22,So:PLMS95}. More precisely, suppose that $q\neq n/2$ and $V(x)=V(x)^*\in L^{r}(M,E)$ with $r= \max(n(2q)^{-1},1)$, or $q=n/2$ and $V(x)=V(x)^*\in \LlogL(M,E)$. Then, for any energy level  $ \lambda\in \R$, we have
 \begin{equation}
 \lim_{h\rightarrow 0^+} h^n N\left(h^{2q}|D_E|^{2q}+V; \lambda\right) = c(n) \int_M \tr_{E_x}\big[\left(V(x)-\lambda\right)_{-}^{\frac{n}{2q}}\big]d\nu_g(x). 
\end{equation}

Applying Theorem~\ref{thm:Intro.Integration} allows us to recover the extensions of Connes' integration formula of~\cite{Po:MPAG22, Ro:JST22} (see also~\cite{SZ:FAA23}). Namely, for every $u\in  \LlogL(M,E)$, the operators $|D_E|^{-n/2}u|D_E|^{-n/2}$ and  
$||D_E|^{-n/2}u|D_E|^{-n/2}|$ are spectrally measurable, and we have
\begin{gather}\label{eq:DC-M.int-formula1}
\bint |D_E|^{-\frac{n}{2}}u|D_E|^{-\frac{n}{2}}= c(n)\int_M \tr_{E_x}[u(x)] d\nu_g(x),\\
\bint\left| |D_E|^{-\frac{n}{2}}u|D_E|^{-\frac{n}{2}}\right|= c(n)\int_M \tr_{E_x}\left[|u(x)|\right] d\nu_g(x). 
\end{gather}

The case  of $M$ being the boundary of a compact Riemannian manifold $(\overline{X}^{n+1},\overline{g})$ is of  special interest. We then may take $D_E$ to be the Dirichlet-to-Neumann operator $\Lambda_g$. In this case the spectral asymptotics~(\ref{eq:Intro.Weyl-|DpaDp|-tau-sVr}) for $\Lambda_g$ allow us to recover Weyl laws for Steklov eigenvalues problems established in~\cite{Ag:RJMP06,Su:RJMP99} with weight in $L^n$ for $n\geq 2$ and with $\LlogL$-weight in 1D in~\cite{Ro:JST22} (see Proposition~\ref{prop:CM.Steklov-Weyl} for the precise statement). Note these spectral asymptotics imply (un-weighted) Steklov Weyl laws for Lipschitz boundaries (see~\cite{KLP:ARMA23, Ro:JST23}).  

Incidentally, the integration formula~(\ref{eq:DC-M.int-formula1}) for $D_E=\Lambda_g$ further gives a reformulation of these Weyl laws in terms of the NC integral (see Proposition~\ref{prop:CM.Steklov-Integration}).

\subsection{Example: Quantum Tori}
In Section~\ref{chap:QT}, we apply the main results of this paper to quantum tori. Recall that, given a real anti-symmetric matrix $\theta=(\theta_{jk})$, the quantum torus is understood as the ``noncommutative space" whose $C^*$-algebra $C(\T^n_\theta)$ is generated by unitaries $U_1, \ldots, U_n$ subject to the relations, 
\begin{equation*}
 U_kU_j= e^{2i\pi \theta_{jk}}U_jU_k, \qquad j,k=1,\ldots, n. 
\end{equation*}
For instance, for $\theta=0$ we recover the $C^*$-algebra of continuous function on the ordinary torus $\T^n=(\R/\Z)^n$. 

Noncommutative $L_p$-spaces $L_p(\T^n_\theta)$ are defined in terms of the standard (faithful) trace $\tau_0$ such that $\tau_0(1)=1$ and $\tau(U^m)=0$ if $m\neq 0$. We also have a natural action of $\R^n$ whose infinitesimal generators are the canonical derivations $\partial_1, \ldots, \partial_n$ such that $\partial_j(U_j)=\sqrt{-1}U_j$ and $\partial_j(U_k)=0$ for $k\neq 0$. We then have an $n$-summable spectral, 
\begin{equation}\label{eq:Intro.ST-QT}
 \big(C^\infty(\T^n_\theta), L_2(\T^n_\theta), \sqrt{\Delta}\big),
\end{equation}
 where $C^\infty(T^n_\theta)$ is the smooth quantum torus and $\Delta=-(\partial_1^2+\cdots + \partial_n^2)$ is the (flat) Laplacian. Note that $\Delta$ is isospectral to 
 the Laplacian on the ordinary torus. 
 
 The following conjecture of Ed McDonald and the author is one of the main motivation for this paper. 
 
\begin{conjecturealph}[{\cite[Conjecture~8.8]{MP:JMP22}}]\label{Conj:Intro-QT.Conjecture-flat}
Let $q>0$, set $r=2nq^{-1}$, and suppose that either $r\neq 1$ and $r'=\max(r,1)$, or $r=1<r'$. Given any $V=V^*\in L_{r'}(\T^n_\theta)$, 
for every energy level $\lambda\in \R$, we have
\begin{equation*}
 \lim_{h\rightarrow 0^+} h^nN\big(h^{2q}\Delta^q+V;\lambda\big) = \hat{c}(n)\tau_0\big[(V-\lambda)_{-}^{\frac{n}{2q}}\big], \qquad \hat{c}(n):=|\bB^n|. 
\end{equation*}
\end{conjecturealph}

Part of this conjecture for $n\geq 3$, $q=1$ and $V\in C(\T^n_\theta)$ was proved in~\cite{MSZ:LMP22}. However, the approach there required $p>2$-summability, which excludes quantum 2-tori, which are arguably the main examples of quantum tori. 

We prove the full conjecture in Section~\ref{chap:QT} as an application of Theorem~\ref{thm:Intro.SC-Weyl-law-sVr} for the spectral triple~(\ref{eq:Intro.ST-QT}). Condition~(H) is checked by using a Poisson summation formula argument. Condition~($\textup{C}_r$) holds as a consequence of the Cwikel-type estimates of~\cite{MP:JMP22, MSX:CMP19}.  

Moreover, as a direct application of Theorem~\ref{thm:Intro.Integration-sVr} provides us integration formulas that refine the integrations formulas of~\cite{MP:AIM23,MSZ:MA19,Po:JMP20} for flat tori (see Theorem~\ref{thm:NCT.Integration-Lr}). In particular, we establish spectral measurability which is stronger than the strong measurability under consideration in ~\cite{MP:AIM23,MSZ:MA19,Po:JMP20}. Moreover, we also get integration formulas for operators of the form $|\Delta^{-n/4}x\Delta^{-n/4}|$.

%
%
 
Finally, the above line of thoughts apply \emph{verbatim} to the Dirac spectral triple of~\cite[\S\S12.3]{GVF:Birkhauser01}. For that spectral triple, we further obtain semiclassical Weyl laws (Theorem~\ref{thm:NCT.SC-Weyl-Lr-Dirac}) and integration formulas  (Theorem~\ref{thm:NCT.Integration-Lr-Dirac}) at the same level of generality as for the square-root spectral triple~(\ref{eq:Intro.ST-QT}).

\subsection{Organization of the paper} 
This paper is organized as follows. In Section~\ref{chap:NCInt}, we review the main background on weak Schatten ideals, Connes' integration and their relationships with Weyl laws and the Birman-Schwinger principle. In Section~\ref{chap:Main-results}, we prove Proposition~\ref{prop:Intro.Weyl-BirS} (assuming Lemma~\ref{lem:Intro.refined-CSZ} and Lemma~\ref{lem:Intro.factorization}) and we deduce from it Theorem~\ref{thm:Intro.SC-Weyl-lawDq}  through Theorem~\ref{thm:Intro.Integration-sVr}. In Section~\ref{chap:Tauberian}, we prove Theorem~\ref{thm:Intro.Tauberian} assuming Lemma~\ref{lem:Intro.refined-CSZ}. In Section~\ref{chap:DOIs}, we use double integral operators techniques to prove Lemma~\ref{lem:Intro.factorization}. In Section~\ref{chap:CSZ}, we prove Lemma~\ref{lem:Intro.refined-CSZ} and Lemma~\ref{lem:Intro.extension}.  In Section~\ref{chap:CM}, we apply with  main the results of the paper on closed Riemannian manifolds. Finally, in Section~\ref{chap:QT} we apply these results on quantum tori. In particular, we prove Conjecture~\ref{Conj:Intro-QT.Conjecture-flat}.

\subsection*{Acknowledgements}
I wish to thank Alan Carey, Yves Colin de Verdi\`ere, Joachim Cuntz, Magnus Goffeng, Edward McDonald, Iosif Polterovich, Adam Rennie, Grigori Rozenblum,  Fedor Sukochev, Emmanuel Tr\'elat, Teun van Nuland, and Zuoqin Wang for various discussions related to the subject matter of this article. The research for this monograph was partially funded by NSFC grant No.~11971328 (China).

\section{Weak Schatten Classes, Connes' Integration,  and Weyl laws}\label{chap:NCInt} 
In this section, we review the main facts regarding Connes' integration and its relationship with Birman-Solomyak's perturbation theory.  The presentation closely follows that of~\cite{Po:JNCG23}. 

Throughout this section we let $\sH$ be a (separable) Hilbert space, and we denote by $\sK$ the (closed) ideal of compact operators on $\sH$. 

\subsection{Weak Schatten Classes} We recall here the main definition and properties regarding weak Schatten classes. We refer to~\cite{GK:AMS69, Si:AMS05} for more detailed accounts on this topic. 

Given any operator $T\in \sK$, we denote by $(\mu_j(T))_{j\geq 0}$ its sequence of \emph{singular values}, i.e., $\mu_j(T)$ is the $(j+1)$-th eigenvalue counted with multiplicity of the absolute value $|T|=\sqrt{T^*T}$. Recall that by the \emph{min-max principle} we have
\begin{align}
 \mu_j(T)&=\min \left\{\|T_{|E^\perp}\|;\ \dim E=j\right\}, \nonumber\\
 &= \min\{ \|T-R\|; \ R\in \sL(\sH), \ \rk(R)\leq j\}. 
 \label{eq:min-max} 
\end{align}
Note  that $\mu_0(T)=\|T\|$.  We record the following properties of the singular values (see, e.g., \cite{GK:AMS69, Si:AMS05}), 
\begin{gather}
 \mu_j(T)=\mu_j(T^*)=\mu_j(|T|), \\
  \mu_{j+k}(S+T)\leq \mu_j(S) + \mu_k(T),
 \label{eq:Quantized.properties-mun2}\\
 \mu_j(ATB)\leq \|A\| \mu_j(T) \|B\|,\quad 
 \mu_j(U^*TU)= \mu_j(T),
\end{gather}
where $A, B\in \sL(\sH)$ and $U\in\sL(\sH)$ is unitary. 

By definition the \emph{Schatten class} $\sL_p$, $p>0$, consists of operators $T\in \sK$ such that $|T|^p$ is trace-class. It is equipped with the quasi-norm, 
\begin{equation*}
 \|T\|_{p}:=\Tr\big(|T|^p\big)^{\frac1p}= \bigg( \sum_{j\geq 0}\mu_j(T)^p\bigg)^{\frac1p}, \qquad T\in \sL_p. 
\end{equation*}
We obtain a quasi-Banach ideal. For $p\geq 1$ the above quasi-norm is actually a norm, and so in this case $\sL_p$ is a Banach ideal. 

The \emph{weak Schatten class}  $\sL_{p,\infty}$, $p>0$, is defined by
\begin{equation*}
 \sL_{p,\infty}:=\left\{T\in \sK; \ \mu_j(T)=\op{O}\big(j^{-\frac1p}\big)\right\}. 
\end{equation*}
 This is a two-sided ideal. We equip it with the quasi-norm,
\begin{equation}
 \|T\|_{p,\infty}:=\sup_{j\geq 0}\;(j+1)^{\frac{1}{p}}\mu_j(T), \qquad T\in \sL_{p,\infty}. 
\end{equation}
With this quasi-norm $\sL_{p,\infty}$ is a quasi-Banach ideal. In particular, we have
\begin{equation*}
 \|ATB\|_{p,\infty}\leq \|A\| \|T\|_{p,\infty} \|B\| \qquad A,B\in \sL(\sH), \ T\in \sL_{p,\infty}. 
\end{equation*}
For $p>1$ the above quasi-norm is equivalent to an actual norm. Thus, in this case $\sL_{p,\infty}$ is a Banach ideal. 

We denote by $(\sL_{p,\infty})_{0}$ the closure in $\sL_{p,\infty}$ of the ideal of finite-rank operators. Equivalently, 
\begin{equation*}
 \big(\sL_{p,\infty}\big)_{0}=\left\{T\in \sK; \ \mu_j(T)=\op{o}\big(j^{-\frac1p}\big)\right\}.
\end{equation*}
We have the following (strict) continuous inclusions, 
\begin{equation}
 \sL_p \subsetneq  \big(\sL_{p,\infty}\big)_{0}\subsetneq \sL_{p,\infty}  \subsetneq \sL_{q}, \qquad 0<p<q.
\end{equation}

We mention the following version of H\"older's inequality for weak Schatten classes. 

\begin{proposition}[H\"older's Inequality; see \cite{GK:AMS69, Si:AMS05, SZ:PAMS21}]\label{prop:Hoelder}
 Suppose that  $p^{-1}+q^{-1}=r^{-1}$.
 \begin{enumerate}
 \item If $S\in \sL_{p,\infty}$ and $T\in \sL_{q,\infty}$, then $ST\in \sL_{r,\infty}$, and we have
\begin{equation}\label{eq:Schatten.Holder}
 \|ST\|_{r,\infty} \leq  C_{pq}\|S\|_{p,\infty}  \|T\|_{q,\infty}. 
\end{equation}
where $C_{pq}=p^{-\frac1{q}}q^{-\frac1{p}} (p+q)^{\frac1{p}+\frac1{q}}$ and the inequality is sharp.

\item If in addition $S\in (\sL_{p,\infty})_0$ or $T\in (\sL_{q,\infty})_0$, then 
$ST\in  (\sL_{r,\infty})_0$.
\end{enumerate}
\end{proposition}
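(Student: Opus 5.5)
The plan is to derive both parts from the singular value inequalities recorded above, chiefly the Ky Fan type inequality $\mu_{j+k}(ST)\leq \mu_j(S)\mu_k(T)$. This is the multiplicative analogue of~(\ref{eq:Quantized.properties-mun2}) and follows from the min-max characterization~(\ref{eq:min-max}). Let $R_1$ have rank $\le j$ with $\|S-R_1\|=\mu_j(S)$, and let $R_2$ have rank $\le k$ with $\|T-R_2\|=\mu_k(T)$. Then
\begin{equation*}
ST-(R_1T+(S-R_1)R_2)=(S-R_1)(T-R_2),
\end{equation*}
and the subtracted operator has rank $\le j+k$. This gives the inequality.

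For part (1), I would fix $n\geq 0$ and split the index $n$ as $n\approx j+k$, with $j\approx \theta n$ and $k\approx(1-\theta)n$ for some $\theta\in(0,1)$ to be optimized. Using $\mu_j(S)\le \|S\|_{p,\infty}(j+1)^{-1/p}$ and the analogous bound for $T$, we get
\begin{equation*}
\mu_n(ST)\lesssim \|S\|_{p,\infty}\|T\|_{q,\infty}\,\theta^{-1/p}(1-\theta)^{-1/q}\,n^{-1/p-1/q}.
\end{equation*}
Since $1/p+1/q=1/r$, this yields membership in $\sL_{r,\infty}$ with a quasi-norm bound. Minimizing $\theta^{-1/p}(1-\theta)^{-1/q}$ at $\theta=q/(p+q)$ gives exactly $p^{-1/q}q^{-1/p}(p+q)^{1/p+1/q}$, which is the constant $C_{pq}$.

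The main obstacle is the integer bookkeeping: the $(j+1)$ normalization together with taking $j=\lfloor\theta (n+1)\rfloor$, or a similar choice, has to lose nothing in order to get the constant exactly. I would handle this with the continuous-parameter formulation. Write $\mu(t)$ for the right-continuous step function $\mu(t)=\mu_{\lfloor t\rfloor}$ on $[0,\infty)$, so the bound reads $\mu(t)\le \|T\|_{q,\infty}t^{-1/q}$ in a form where $t=j+1$. The Fan inequality in the form $\mu(s+t)\le\mu(s)\mu(t)$ for real $s,t$ then gives the sharp constant by setting $s=\theta x$ and $t=(1-\theta)x$.

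Sharpness is not needed for the applications. If required, it is witnessed by commuting diagonal operators with $\mu$-sequences arranged (after rearrangement) so that the products saturate, e.g.\ in the limit of step functions. I would cite~\cite{SZ:PAMS21} for this part rather than reprove it.

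For part (2), suppose $S\in(\sL_{p,\infty})_0$. Repeat the argument with $\mu_j(S)=o(j^{-1/p})$. With $j,k$ both proportional to $n$, we get $\mu_{j+k}(ST)\le o(j^{-1/p})\,O(k^{-1/q})=o(n^{-1/r})$, so $ST\in(\sL_{r,\infty})_0$. The case $T\in(\sL_{q,\infty})_0$ is symmetric. Alternatively, one can approximate $S$ in $\sL_{p,\infty}$ by finite-rank operators and use the continuity from part (1), together with the fact that $(\sL_{r,\infty})_0$ is closed.
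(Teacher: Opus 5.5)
Your argument is correct, and it is the standard proof in the sources the paper cites; the paper itself gives no proof. That proof consists of Fan's inequality $\mu_{j+k}(ST)\leq \mu_j(S)\mu_k(T)$, obtained exactly as you do from~(\ref{eq:min-max}), then optimizing the split at $\theta=q/(p+q)$ to get $C_{pq}$, and for part (2) the same split with $\mu_j(S)=o(j^{-1/p})$.

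Your remark "in a form where $t=j+1$" is muddled. The step to state is this: with $\mu(t)=\mu_{\lfloor t\rfloor}$, you get $\mu_{ST}(x)\leq C_{pq}\|S\|_{p,\infty}\|T\|_{q,\infty}x^{-1/r}$ for all $x>0$. Since $\mu_{ST}\equiv\mu_n(ST)$ on $[n,n+1)$, letting $x\uparrow n+1$ gives the $(n+1)^{1/r}$ normalization.

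Sharpness is witnessed by $S=\mathrm{diag}((i+1)^{-1/p})_{i<m}$ and $T=\mathrm{diag}((m-i)^{-1/q})_{i<m}$. For these, $\|S\|_{p,\infty}=\|T\|_{q,\infty}=1$ and $\|ST\|_{r,\infty}\geq (m/(m+1))^{1/r}C_{pq}$.
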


We will also need the following version of the BKS inequality of Birman-Koplenko-Solomyak. 

\begin{proposition}[BKS Inequality; see~\cite{BKS:IVUZM75, BS:JSM92}] \label{prop:BKS}
Let $A$ and $B$ be positive operators on $\sH$ such that $A-B\in \sL_{p,\infty}$, $p>0$. Given any $\alpha \in (0,1)$, the difference 
$A^\alpha-B^\alpha$ is in $\sL_{\alpha^{-1}p,\infty}$, and we have 
\begin{equation*}
 \big\|A^\alpha-B^\alpha\big\|_{\alpha^{-1}p,\infty} \leq C_{p\alpha} \|A-B\|_{p,\infty}^\alpha,
\end{equation*}
 where the constant $C_{p\alpha}$ depends only on $p$ and $\alpha$. If in addition, $A-B\in (\sL_{p,\infty})_0$, then $A^\alpha-B^\alpha$ is actually in $(\sL_{\alpha^{-1}p,\infty})_0$. 
\end{proposition}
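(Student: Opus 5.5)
The plan is to derive Proposition~\ref{prop:BKS} from a \emph{majorization} estimate for operator monotone functions, and then convert that majorization into a weak Schatten bound. Since $t\mapsto t^\alpha$, $\alpha\in(0,1)$, is a nonnegative operator monotone function on $[0,\infty)$, I would invoke Ando's inequality. For positive operators $A,B$ and every unitarily invariant norm $|||\cdot|||$ it gives $|||A^\alpha-B^\alpha|||\leq |||\,|A-B|^\alpha|||$. Applying this to the Ky Fan norms yields the weak majorization
\begin{equation*}
 \sum_{j<N}\mu_j\big(A^\alpha-B^\alpha\big)\leq \sum_{j<N}\mu_j(A-B)^\alpha \qquad \forall N\geq 1 .
\end{equation*}
The argument is therefore reduced to the scalar sequence $\mu_j(A-B)\leq \|A-B\|_{p,\infty}(j+1)^{-1/p}$.

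\emph{Case $\alpha<p$.} The right-hand side is then at most $C\|A-B\|_{p,\infty}^\alpha N^{1-\alpha/p}$. Because the singular values are nonincreasing, $N\mu_{N-1}(A^\alpha-B^\alpha)$ is bounded by the partial sum on the left. This gives $\mu_{N-1}(A^\alpha-B^\alpha)\leq C\|A-B\|_{p,\infty}^\alpha N^{-\alpha/p}$, which is exactly the claimed $\sL_{\alpha^{-1}p,\infty}$ estimate.

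\emph{Case $\alpha\geq p$.} Here the partial sums of $(j+1)^{-\alpha/p}$ no longer decay, so Ando's majorization is too weak. This is the step I expect to be the main obstacle. My first attempt would be to reduce to the previous case by rescaling exponents. Pick an integer $k$ with $\alpha/k<p$ and write $t^\alpha=(t^{\alpha/k})^k$. For each step I would apply either the first case or a product/telescoping identity $X^k-Y^k=\sum_i X^{i}(X-Y)Y^{k-1-i}$ combined with H\"older's inequality (Proposition~\ref{prop:Hoelder}). The trouble is that $A,B$ are unbounded in general, so the factors $X^i$ are not bounded and this route may fail.

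The fallback is the original Birman--Koplienko--Solomyak approach. Split $A-B=F_N+R_N$ with $\rk F_N\leq N$ and $\|R_N\|\leq \mu_N(A-B)$. Using operator monotonicity, sandwich $A$ and $B$ between $B\pm R_N$-type comparisons modulo rank-$N$ corrections. The norm version of the inequality, $\|f(X)-f(Y)\|\leq f(\|X-Y\|)$, then controls the remainder, giving $\mu_{cN}(A^\alpha-B^\alpha)\leq C\mu_N(A-B)^\alpha$ for some absolute constant $c$. This $s$-number inequality implies the quasi-norm bound for every $p>0$ at once. It would also supersede the first case, so it is where the real work lies.

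For the $(\sL_{p,\infty})_0$ statement, I would feed $\mu_N(A-B)=\op{o}(N^{-1/p})$ into the same $s$-number estimate to get $\mu_{cN}(A^\alpha-B^\alpha)=\op{o}(N^{-\alpha/p})$. In the first case one can instead run the partial-sum argument with an $\op{o}$ tail: first split off the initial terms, then let $N\to\infty$.
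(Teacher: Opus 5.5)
\paragraph{Verdict: the case $\alpha<p$ is proved, the case $\alpha\ge p$ is not.}
Your first case is correct. Ando's weak majorization $\sum_{j<N}\mu_j(A^\alpha-B^\alpha)\le\sum_{j<N}\mu_j(A-B)^\alpha$, together with monotonicity of the $\mu_j$, gives the bound and its $\op{o}$-version whenever $\alpha<p$.

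The genuine gap is the case $\alpha\ge p$, i.e.\ whenever the target ideal $\sL_{\alpha^{-1}p,\infty}$ is not normable. Already at $\alpha=p$, Ando only gives $\op{O}(N^{-1}\log N)$. This case matters in the paper. It is used with $\alpha=\frac12$ to land in $(\sL_{q^{-1}p,\infty})_0$, with $p$ the summability degree. For $q\ge p$ (e.g.\ $q=p$ in Theorem~\ref{thm:Intro.Integration}) this is exactly your second case.

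\paragraph{The key inequality of your fallback is false.}
Your fallback rests on $\mu_{cN}(A^\alpha-B^\alpha)\le C\mu_N(A-B)^\alpha$, and this fails for every choice of $c,C$. Take $B=\op{diag}(b_1,b_2,\ldots)$ with distinct $b_i>0$, and $A=B+\xi\xi^*$ with all $\xi_i\neq0$. Then $\mu_N(A-B)=0$ for $N\ge1$. However, the Sherman--Morrison formula gives
\begin{equation*}
 \langle (A^\alpha-B^\alpha)\eta,\eta\rangle=\frac{\sin\pi\alpha}{\pi}\int_0^\infty \frac{s^\alpha\,|\langle (B+s)^{-1}\xi,\eta\rangle|^2}{1+\langle (B+s)^{-1}\xi,\xi\rangle}\,ds>0
\end{equation*}
for every nonzero finitely supported $\eta$. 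Hence $A^\alpha-B^\alpha$ has infinite rank, while the right-hand side of your inequality is zero.

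The underlying problem is that ``modulo rank-$N$ corrections'' does not survive the functional calculus: $f(X+F)-f(X)$ is not of finite rank when $F$ is. Operator-order sandwiches compare eigenvalues of $A^\alpha$ and $B^\alpha$ separately; they say nothing about the singular values of the difference.

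Your telescoping attempt also fails, and not because of unboundedness. Even for bounded $A,B$ it only yields $A^\alpha-B^\alpha\in\sL_{k\alpha^{-1}p,\infty}$, which is a strictly larger ideal than the target.

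\paragraph{What is needed.}
You need a quantitative substitute for the finite-rank step. One way to supply it is as follows.
\begin{enumerate}
\item Reduce to $A\ge B$ via $C=B+(A-B)_+=A+(A-B)_-$.
\item Split $A-B=\sum_{k\ge0}K_k$ spectrally, with $K_k\ge0$, $\rk K_k\le 2^k$ and $\|K_k\|\le 2^{-k/p}\|A-B\|_{p,\infty}$.
\item Telescope $A^\alpha-B^\alpha=\sum_k\big[(X_k+K_k)^\alpha-X_k^\alpha\big]$ with $X_k=B+\sum_{l>k}K_l$. Each term has norm $\le\|K_k\|^\alpha$.
\item The new ingredient: for $X\ge0$ and $F\ge0$ of rank $r$, one has $\mu_j\big((X+F)^\alpha-X^\alpha\big)\le C_\alpha\|F\|^\alpha e^{-c_\alpha\sqrt{j/r}}$, uniformly in $X$. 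This follows from the representation $\frac{\sin\pi\alpha}{\pi}\int_0^\infty s^\alpha(X+s)^{-1}F(X+F+s)^{-1}ds$ by cutting $s$ into dyadic blocks. On each block the integrand extends holomorphically to a disc in $\{\Re s>0\}$, with bounds independent of $X$. Its degree-$d$ Taylor part then integrates to an operator of rank $\le(d+1)r$, with a geometrically small remainder.
\item Let $2^M\le N<2^{M+1}$. Allocate about $N2^{-(M-k)/2}$ ranks to the terms with $k\le M$ and none to the others. The inequality $\mu_{i+l}(S+T)\le\mu_i(S)+\mu_l(T)$ then gives $\mu_{CN}(A^\alpha-B^\alpha)\le C'\|A-B\|_{p,\infty}^\alpha N^{-\alpha/p}$ for every $p>0$. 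The $\op{o}$-version follows in the same way.
\end{enumerate}
The paper itself does not prove the proposition; it quotes it from Birman--Koplienko--Solomyak and Birman--Solomyak.
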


\begin{remark}
 The last part in the statement above is the contents of~\cite[Theorem~3]{BKS:IVUZM75}.   
\end{remark}

\subsection{Connes' integration} 
One of the main goals of noncommutative geometry~\cite{Co:NCG} is to translate the main tools of differential geometry into the Hilbert space formalism of quantum mechanics.  In this framework the notion of integral corresponds to positive traces on the weak trace class $\sL_{1,\infty}$. Such traces are annihilated by finite-rank operators (see, e.g., \cite{LSZ:Book}) and are always continuous (see, e.g.,~\cite[Remark~2.3]{Po:JMP20}), and so they vanish on $(\sL_{1,\infty})_0$. 

There is a whole zoo of positive traces on $\sL_{1,\infty}$ (see, e.g., \cite{LSZ:Book, LSZ:Survey19} and the references therein). One important class of such traces is provided by Dixmier traces $\Tr_\omega:\sL_{1,\infty} \rightarrow \C$ (see~\cite{Di:CRAS66}; see also~\cite{Co:NCG, LSZ:Book, Po:JNCG23}).  We then say that an operator $A\in \sL_{1,\infty}$ is \emph{measurable} if the value $\Tr_\omega(A)$ does not depend on the choice of the Dixmier trace. Equivalently (see~\cite{LSZ:Book, Po:JNCG23}), the operator $A$ is measurable if and only if it satisfies the following Tauberian condition, 
\begin{equation}\label{eq:NCInt.integration}
 \bint A:=  \lim_{N\rightarrow \infty} \frac{1}{\log (N+1)}\sum_{j<N}\lambda_j(A) \quad \text{exists}. 
\end{equation}
Here $(\lambda_j(A))_{j\geq 0}$ is any eigenvalue sequence of $A$ in such a way that 
\begin{equation*}
|\lambda_0(A)|\geq |\lambda_1(A)|\geq \cdots \geq  |\lambda_j(A)|\geq \cdots \geq 0. 
\end{equation*}
where each eigenvalue is repeated according to its (algebraic) multiplicity. The limit $\bint A$ is then called the \emph{NC integral} of $A$. 
 
 There are numerous positive traces on $\sL_{1,\infty}$ that are not Dixmier traces (see, e.g.,~\cite{SSUZ:AIM15}). Therefore, it stands to reason to consider a stronger notion of measurability. We say that an operator $A\in \sL_{1,\infty}$ is \emph{strongly measurable}  (or  \emph{positively measurable}) if all positive normalized traces take on the same value on $A$. Here a trace $\varphi$ on $ \sL_{1,\infty}$ is called normalized if 
 \begin{equation*}
 \big(A\geq 0 \ \text{and} \ \lambda_j(A)=(j+1)^{-1} \big)\ \Longrightarrow \ \varphi(A)=1. 
\end{equation*}

The Dixmier traces are positive normalized traces. Thus, if $A\in \sL_{1,\infty}$ is strongly measurable, then $A$ is measurable and, for any positive normalized trace $\varphi$ on  $\sL_{1,\infty}$, we have
\begin{equation*}
 \varphi(A)=\bint A = \lim_{N\rightarrow \infty} \frac{1}{\log (N+1)}\sum_{j<N}\lambda_j(A).
\end{equation*}
We refer to~\cite[Section~7]{SSUZ:AIM15} for a characterization of strongly measurable operators in terms of their eigenvalue sequences, as well as for examples of measurable operators that are not strongly measurable. 

As an example, let $\sH=L^2(M)$, where $(M^n,g)$ is a closed Riemannian manifold. Denote by $\Delta_g$ the corresponding Laplacian acting on (smooth) functions. This operator is essentially selfadjoint and has pure discrete non-negative spectrum. The partial inverse $\Delta_g^{-1}$ is in the weak Schatten class $\sL_{n/2,\infty}$. A well-known result of Connes~\cite{Co:CMP88} asserts that, for every $f\in C^\infty(M)$, the operator $f\Delta_g^{-n/2}$ is measurable, and we have
\begin{equation}\label{eq:NCInt.integ-formula}
 \bint f \Delta_g^{-\frac{n}{2}} = c(n) \int_M f(x) d\nu_g(x), \qquad c(n):= (2\pi)^{-n}|\bB^{n}|,
\end{equation}
where $d\nu_g(x)$ is the Riemannian measure. The above formula is often called \emph{Connes' integration formula}. It shows that the NC integral recaptures the Riemannian density. In particular, $\bint \Delta_g^{-n/2}=c(n)\Vol_g(M)$. 

It was further shown in~\cite{KLPS:AIM13} that the above formula continues to hold for functions in $L^2(M)$ and the operators  $ f \Delta_g^{-\frac{n}{2}}$ are actually strongly measurable. Recently, versions of the formula for functions in the Orlicz $\LlogL$-space have been established (see~\cite{Ro:JST22, SZ:FAA23}; see also~\cite{Po:MPAG22}). 
We will address further extensions of Connes' integration formula in the Riemannian case in Section~\ref{chap:CM}.

\subsection{NC integration and Weyl laws} 
An important class of strongly measurable operators is provided by operators that satisfy some form of Weyl law.  Given any $A=A^*\in \sK$ we denote by $\pm \lambda_j^\pm(A)$ its positive and negative eigenvalues in such a way that
\begin{equation*}
 \lambda_0^\pm(A) \geq  \lambda_1^\pm(A)\geq \cdots >0,
\end{equation*}
where each eigenvalue is repeated according to multiplicity. If $A\geq 0$, then we set $\lambda_j(A)=\lambda_j^+(A)$. 

Let $p>0$. We say that $A=A^*\in \sL_{p,\infty}$, $p>0$, is a \emph{Weyl operator} if $
\lim j^{1/p}\lambda_j^\pm(A)$ exist. In this case we set
\begin{equation*}
 \Lambda^\pm(A)=\lim_{j\rightarrow \infty}j^{\frac{1}{p}}\lambda_j^\pm(A). 
\end{equation*}
If $A\geq 0$, then we just set $\Lambda(A)=\Lambda^{+}(A)$. 

If $A\in \sL_{p,\infty}$ is not selfadjoint, we call it a \emph{Weyl operator} if its real part $\Re A:=\frac12(A+A^*)$ and its imaginary part $\Im A:=\frac1{2i}(A-A^*)$ are both Weyl operators. In this case we define
\begin{equation*}
 \Lambda^\pm(A)= \Lambda^\pm\big(\Re A) + i \Lambda^\pm\big(\Im A). 
\end{equation*}
 We denote by $\sW_{p,\infty}$ the class of Weyl operators in $\sL_{p,\infty}$.  
 
 We also denote by $|\sW|_{p,\infty}$ the class of operators $A\in \sL_{p,\infty}$ such that its absolute value $|A|$ is a Weyl operator. Thus, 
 \begin{equation*}
 A \in |\sW|_{p,\infty} \ \Longleftrightarrow \ \lim_{j\rightarrow \infty} j^{1/p} \mu_j(A)\ \text{exists}. 
\end{equation*}
Note that if $A \in |\sW|_{p,\infty}$, then the above limit is just $\Lambda(|A|)$.

\begin{proposition}[Birman-Solomyak~\cite{BS:FAA70, BS:Book}]\label{prop:NCInt.Bir-Sol}  
 Let $p>0$. The following hold. 
 \begin{enumerate}
 \item $\sW_{p,\infty}$ and $|\sW|_{p,\infty}$ are both closed subsets of $\sL_{1,\infty}$ containing $(\sL_{1,\infty})_0$. 
 
 \item The maps $A \rightarrow \Lambda^\pm(A)$ and $A\rightarrow \Lambda(|A|)$ are continuous on  $\sW_{p,\infty}$ and $|\sW|_{p,\infty}$, respectively. 
 
 \item If $A\in\sW_{p,\infty}$ and $B\in (\sL_{1,\infty})_0$, then $A+B\in \sW_{p,\infty}$ and $\Lambda^\pm(A+B)=\Lambda^{\pm}(A)$.
 
\item If $A\in |\sW|_{p,\infty}$ and $B\in (\sL_{1,\infty})_0$, then $A+B\in |\sW|_{p,\infty}$ and $\Lambda(|A+B|)=\Lambda(|A|)$. 
\end{enumerate}
\end{proposition}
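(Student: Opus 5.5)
Throughout I read $\sL_{1,\infty}$ in items (1)--(4) as $\sL_{p,\infty}$, which is what the definitions of $\sW_{p,\infty}$ and $|\sW|_{p,\infty}$ force. The plan is to reduce everything to a single two-sided quantitative estimate on the upper and lower asymptotic constants. For $A=A^*\in\sL_{p,\infty}$ set
\begin{equation*}
 \overline{\Lambda}^\pm(A)=\limsup_{j\rightarrow\infty} j^{\frac1p}\lambda_j^\pm(A), \qquad \underline{\Lambda}^\pm(A)=\liminf_{j\rightarrow\infty} j^{\frac1p}\lambda_j^\pm(A).
\end{equation*}
When there are only finitely many positive (resp.\ negative) eigenvalues, I pad the sequence with zeros. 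Similarly, for arbitrary $A\in\sL_{p,\infty}$, define $\overline{\Lambda}(|A|)$ and $\underline{\Lambda}(|A|)$ using $\mu_j(A)$. Then $A$ is Weyl exactly when the upper and lower constants agree. For $B\in\sL_{p,\infty}$ we also put $\overline{\mu}(B)=\limsup_j j^{1/p}\mu_j(B)$. Note that $\overline{\mu}(B)\leq \|B\|_{p,\infty}$, and $\overline{\mu}(B)=0$ if and only if $B\in(\sL_{p,\infty})_0$.

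\textbf{Step 1: Weyl--Ky Fan inequalities.} For $A,B$ selfadjoint and compact I will prove
\begin{equation*}
 \lambda^\pm_{j+k}(A+B)\leq \lambda^\pm_j(A)+\mu_k(B).
\end{equation*}
This follows from the min-max characterization of the $(j+1)$-th positive eigenvalue,
\begin{equation*}
 \lambda_j^+(A)=\min_{\dim E=j}\ \sup_{\xi\perp E,\ \|\xi\|=1}\langle A\xi,\xi\rangle_+ .
\end{equation*}
Take $E=E_1+E_2$, where $E_1$ is optimal for $A$ and $E_2$ is the span of the top $k$ singular vectors of $B$, so that $\|B|_{E_2^\perp}\|=\mu_k(B)$. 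The case of $\lambda^-$ follows by replacing $A,B$ with $-A,-B$. For singular values, the analogous inequality is already recorded in~(\ref{eq:Quantized.properties-mun2}). I expect this step to be the only place needing care, because of the padding conventions when one of the operators has few positive eigenvalues. Padding by zeros keeps the min-max formula valid with $\langle\cdot,\cdot\rangle_+$ (the positive part).

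\textbf{Step 2: the key estimate.} Fix $\epsilon\in(0,1)$ and take $k=\lfloor\epsilon j\rfloor$. Applying Step 1 in the form $\lambda_j^\pm(A+B)\leq\lambda_{j-k}^\pm(A)+\mu_k(B)$ and multiplying by $j^{1/p}$ gives
\begin{equation*}
 \overline{\Lambda}^\pm(A+B)\leq (1-\epsilon)^{-\frac1p}\,\overline{\Lambda}^\pm(A)+\epsilon^{-\frac1p}\,\overline{\mu}(B).
\end{equation*}
For the lower constant, write $A=(A+B)+(-B)$ and apply Step 1 in the same way. This gives
\begin{equation*}
 \underline{\Lambda}^\pm(A)\leq(1-\epsilon)^{-\frac1p}\,\underline{\Lambda}^\pm(A+B)+\epsilon^{-\frac1p}\,\overline{\mu}(B),
\end{equation*}
and by symmetry the same inequalities hold with the roles of $A$ and $A+B$ exchanged. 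The same computation with~(\ref{eq:Quantized.properties-mun2}) yields the analogous bounds for $\overline{\Lambda}(|\cdot|)$ and $\underline{\Lambda}(|\cdot|)$. All of these constants are bounded by $\|\cdot\|_{p,\infty}$.

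\textbf{Step 3: conclusions.}
\begin{enumerate}
 \item[(a)] \emph{Items (3) and (4).} If $B\in(\sL_{p,\infty})_0$, then $\overline{\mu}(B)=0$. Letting $\epsilon\rightarrow0$ in Step 2 shows that the upper and lower constants of $A+B$ equal those of $A$.
 \item[(b)] \emph{Containment.} Taking $A=0$ in (a) shows that $(\sL_{p,\infty})_0\subseteq \sW_{p,\infty}\cap|\sW|_{p,\infty}$, with $\Lambda^\pm=0$ there.
 \item[(c)] \emph{Continuity, item (2).} For Weyl operators $A,A'$, Step 2 gives
 \begin{equation*}
 |\Lambda^\pm(A)-\Lambda^\pm(A')|\leq \big((1-\epsilon)^{-\frac1p}-1\big)\|A\|_{p,\infty}+\epsilon^{-\frac1p}\|A-A'\|_{p,\infty}.
 \end{equation*}
 Choosing $\epsilon$ small and then $A'$ close to $A$ makes the right-hand side small.
 \item[(d)] \emph{Closedness, item (1).} Let $A_n\rightarrow A$ in $\sL_{p,\infty}$ with each $A_n$ Weyl. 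By (c), $(\Lambda^\pm(A_n))$ is Cauchy; let $L$ denote its limit. Step 2 then gives, for every $\epsilon$,
 \begin{equation*}
 \overline{\Lambda}^\pm(A)\leq(1-\epsilon)^{-\frac1p}L, \qquad \underline{\Lambda}^\pm(A)\geq(1-\epsilon)^{\frac1p}L.
 \end{equation*}
 Letting $\epsilon\rightarrow0$ shows $A$ is Weyl.
\end{enumerate}
Finally, the non-selfadjoint case of (1)--(3) follows by applying the above to $\Re A$ and $\Im A$, since $A\mapsto\Re A$ and $A\mapsto\Im A$ are continuous on $\sL_{p,\infty}$ and preserve $(\sL_{p,\infty})_0$.
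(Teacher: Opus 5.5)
Your proof is correct. Reading $\sL_{1,\infty}$ as $\sL_{p,\infty}$ is the right interpretation; this is how the proposition is used later, e.g.\ modulo $(\sL_{p/2,\infty})_0$. Your argument, the Ky Fan inequality $\lambda^\pm_{j+k}(A+B)\leq\lambda^\pm_j(A)+\mu_k(B)$ combined with the index splitting $k=\lfloor\epsilon j\rfloor$, is the classical Birman--Solomyak/Fan asymptotic perturbation argument that the paper cites rather than reproves. One step deserves to be stated explicitly: in the $\liminf$ estimates you use that $j\mapsto j-\lfloor\epsilon j\rfloor$ is nondecreasing with increments $0$ or $1$, so it takes every large index and the $\liminf$ along it equals $\underline{\Lambda}^\pm$.
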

 
\begin{remark}
 The last two parts go back to Fan~\cite{Fa:PNAS51} (see also~\cite{GK:AMS69}). 
\end{remark}
 
For $p=1$, it follows from~(\ref{eq:NCInt.integration}) that any Weyl operator in $\sL_{1,\infty}$ is measurable. We actually have a stronger result. 

\begin{proposition}[see~\cite{Po:JNCG23}]\label{prop:NCInt.Weyl-strong}
 If $A\in \sW_{1,\infty}$, then $A$ is strongly measurable, and we have
\begin{equation}\label{eq:Bir-Sol.bint-Lambdapm0}
 \bint A =\Lambda^+(A)-\Lambda^{-}(A). 
\end{equation}
In particular, if $A=A^*\in \sW_{1,\infty}$, then 
\begin{equation}\label{eq:Bir-Sol.bint-Lambdapm}
 \bint A = \lim_{j\rightarrow \infty}j \lambda_j^+(A) -  \lim_{j\rightarrow \infty}j \lambda_j^-(A). 
\end{equation}
In addition, if $A\in |\sW|_{1,\infty}$ (i.e., $|A| \in \sW_{1,\infty}$), then 
\begin{equation}
  \bint |A| = \lim_{j\rightarrow \infty}j \mu_j(A).  
\end{equation}
\end{proposition}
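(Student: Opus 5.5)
The plan is to reduce everything to the case of a positive Weyl operator, and then compare that operator with a diagonal model operator whose value on every normalized trace is forced by the normalization condition.

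\emph{Step 1: reduction to positive operators.} If $A\in\sW_{1,\infty}$ is not selfadjoint, then $A=\Re A+i\Im A$, and $\Re A$, $\Im A$ are selfadjoint Weyl operators. Positive normalized traces are linear, and by definition $\Lambda^\pm(A)=\Lambda^\pm(\Re A)+i\Lambda^\pm(\Im A)$. So it suffices to treat $A=A^*$. In that case write $A=A_+-A_-$ with $A_\pm\geq 0$ and $A_+A_-=0$. The nonzero eigenvalues of $A_\pm$ are exactly the $\lambda_j^\pm(A)$, so $A_\pm$ are positive Weyl operators with $\Lambda(A_\pm)=\Lambda^\pm(A)$. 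By linearity we are reduced to proving the following claim: if $B\geq 0$ and $\lim j\lambda_j(B)=c$, then $\varphi(B)=c$ for every positive normalized trace $\varphi$ on $\sL_{1,\infty}$. The statement about $|A|$ is then the case $B=|A|$, since $\lambda_j(|A|)=\mu_j(A)$.

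\emph{Step 2: comparison with a model operator.} Take an orthonormal eigenbasis $(e_j)_{j\geq0}$ of $B$ with $Be_j=\lambda_j(B)e_j$ on $\overline{\ran B}$, and complete it by an orthonormal basis of $\ker B$. If $B$ has finite rank, then $c=0$ and $B$ is killed by every trace, so assume the range is infinite-dimensional. Define $B_0\geq 0$ by $B_0e_j=c(j+1)^{-1}e_j$ on $\overline{\ran B}$ and $B_0=0$ on $\ker B$.
\begin{itemize}
\item By the normalization condition (applied to $c^{-1}B_0$ when $c>0$; the case $c=0$ is trivial), we get $\varphi(B_0)=c$.
\item The difference $B-B_0$ is diagonal in the same basis, with entries $x_j=\lambda_j(B)-c(j+1)^{-1}=\epsilon_j(j+1)^{-1}$, where $\epsilon_j\to0$.
\end{itemize}

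\emph{Step 3: the main point.} The key step is to check that $B-B_0\in(\sL_{1,\infty})_0$. The singular values of $B-B_0$ are the decreasing rearrangement of $(|x_j|)$, and a priori rearranging could spoil the $\op{o}(j^{-1})$ decay. I will argue through distribution functions instead. Fix $\epsilon>0$ and choose $J$ with $|\epsilon_j|<\epsilon$ for $j\geq J$. Then for every $t>0$,
\begin{equation*}
\#\{j;\ |x_j|>t\}\leq J+\epsilon t^{-1}.
\end{equation*}
This gives $\mu_k(B-B_0)\leq \epsilon(k-J)^{-1}$ for $k>J$, and hence $\limsup k\mu_k(B-B_0)\leq\epsilon$ for every $\epsilon>0$. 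So $B-B_0\in(\sL_{1,\infty})_0$, as required.

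\emph{Step 4: conclusion.} Positive traces on $\sL_{1,\infty}$ vanish on $(\sL_{1,\infty})_0$, so $\varphi(B)=\varphi(B_0)=c$. This proves the claim, and by Step 1 it gives $\varphi(A)=\Lambda^+(A)-\Lambda^-(A)$ for every positive normalized trace $\varphi$. In particular $A$ is strongly measurable. Since Dixmier traces are positive normalized traces, the value $\bint A$ defined by~(\ref{eq:NCInt.integration}) coincides with this common value. This yields~(\ref{eq:Bir-Sol.bint-Lambdapm0}) and~(\ref{eq:Bir-Sol.bint-Lambdapm}), and the case $B=|A|$ gives the formula for $\bint|A|$.
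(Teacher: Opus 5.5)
Your proof is correct. Note that the paper does not prove this proposition itself; it quotes it from \cite{Po:JNCG23}, so there is no in-paper argument to compare against.

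Relative to what Section~2 records, your proof is self-contained. It uses only three facts stated there:
\begin{itemize}
\item the normalization condition;
\item the vanishing of positive traces on $(\sL_{1,\infty})_0$;
\item the cited equivalence between measurability and existence of the logarithmic Ces\`aro limit defining $\bint A$.
\end{itemize}

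Two points in your argument deserve to be made explicit.

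\emph{The crux of Step~3.} What makes it work is that $B_0$ is diagonal in the \emph{same ordered} eigenbasis as $B$. In general, two positive operators with the same Weyl constant need not differ by an element of $(\sL_{1,\infty})_0$; for instance $B$ and $UBU^*$ for a suitable unitary $U$. Your construction avoids this. Your distribution-function estimate is fine. Equivalently, $B-B_0$ is a finite-rank operator plus a diagonal operator dominated entrywise by $\epsilon\,\mathrm{diag}((j+1)^{-1})$, hence it lies within $\sL_{1,\infty}$-quasi-distance $\epsilon$ of the finite-rank operators.

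\emph{The order of Step~4.} You first compute $\varphi(A)$ for every positive normalized trace, using linearity of $\varphi$. Only then do you invoke the cited equivalence to get existence of the limit defining $\bint A$ and to identify it with the common value. Working directly with the eigenvalue sums of a non-normal $A$ would not allow this reduction, since eigenvalues are not additive in $\Re A$ and $\Im A$.

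An alternative route for the positive case is the eigenvalue characterization of strong measurability in \cite[Section~7]{SSUZ:AIM15}, which the paper mentions. That route rests on much heavier machinery, whereas your comparison argument needs nothing beyond the normalization and the vanishing on $(\sL_{1,\infty})_0$.
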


The above proposition leads to the following definition. 

\begin{definition}
The operators in $\sW_{1,\infty}$ are called \emph{spectrally measurable}. 
\end{definition}

Proposition~\ref{prop:NCInt.Weyl-strong} thus asserts that any spectrally measurable operator is strongly measurable. Thus spectral measurability is an even stronger form of measurability than strong measurability. Moreover, if $A\in \sL_{1,\infty}$ is spectrally measurable, then from~(\ref{eq:Bir-Sol.bint-Lambdapm0})--(\ref{eq:Bir-Sol.bint-Lambdapm}) we get
\begin{align*}
 \bint A  =  \left[ \lim_{j\rightarrow \infty}j \lambda_j^+(\Re A) -  \lim_{j\rightarrow \infty}j \lambda_j^-(\Re A)\right] + i \left[  \lim_{j\rightarrow \infty}j \lambda_j^+(\Im A) -  \lim_{j\rightarrow \infty}j \lambda_j^-(\Im A)\right]. 
\end{align*}

\subsection{Connes' integration and Birman-Schwinger principle} 
Let $H_0$ be an unbounded selfadjoint operator with non-negative spectrum. We assume that $0$ is not in the essential spectrum of $H_0$, i.e., it is either not in $\Sp(H_0)$ or is an isolated eigenvalue with finite multiplicity. These conditions are automatically satisfied if $H_0$ has compact resolvent. In any case, this allows us to define the (partial) inverse $H_0^{-1}$ in the usual way. 

Let $V$ be a selfadjoint operator which is $H_0$-form compact, i.e., $\dom(V)\supseteq \dom(1+H_0)^{1/2}$ and $(1+H_0)^{-1/2}V(1+H_0)^{-1/2}$ is a compact operator. We then define the operator $H_V:=H_0+V$ as a form-sum (see, e.g., \cite{Si:AMS15}).  This operator is selfadjoint, bounded from below, and has the same essential spectrum as $H_0$. This implies that the negative part of its spectrum consists of finitely many isolated eigenvalues. Moreover, if $H_0$ has a compact resolvent, then $H_V$ has a compact resolvent as well (\emph{loc.\ cit.}).

For $h>0$, denote by $N^{-}(h^2H_0+V)$ the number of negative eigenvalues  of the Schr\"odinger operator $h^2H_0+V$ counted with multiplicity. In the case of Schr\"odinger operators on $\R^n$ this is precisely the number of bound states. We are interested in the behaviour of $N^{-}(h^2H_0+V)$ under the semiclassical limit $h\rightarrow 0^+$.  We shall use the following form of the Birman-Schwinger principle.

\begin{proposition}\label{prop:NCInt.BSP-NCInt}
If $H_0^{-1/2}VH_0^{-1/2}$ is in $\sW_{p,\infty}$, $p>0$, then 
\begin{align}\label{eq:SC.NHV=NKV-SC1}
 \lim_{h\rightarrow 0^+} h^{2p}N^{-}\big(h^2H_0+V\big)  &= \lim_{j\rightarrow \infty} j\lambda_j^{-}\left(H_0^{-\frac12}VH_0^{-\frac12}\right)^p\\ 
 & = \bint \big(H_0^{-\frac12}VH_0^{-\frac12}\big)_{-}^p
 \label{eq:SC.NHV=NKV-SC2} 
\end{align}
\end{proposition}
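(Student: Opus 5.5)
The plan is to reduce everything to counting the negative eigenvalues of the Birman--Schwinger operator $K:=H_0^{-\frac12}VH_0^{-\frac12}$, and then to read off the limit from the Weyl asymptotics of $K$. Set $L:=\Lambda^-(K)=\lim_{j\rightarrow\infty}j^{\frac1p}\lambda_j^-(K)$, which exists since $K\in\sW_{p,\infty}$. The first step is the Birman--Schwinger counting identity: for every $h>0$,
\begin{equation*}
 N^{-}\big(h^2H_0+V\big)= \#\big\{j;\ \lambda_j^-(K)>h^2\big\} + \op{O}(1),
\end{equation*}
where the $\op{O}(1)$ term is bounded by $\dim\ker H_0$, uniformly in $h$.

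\textbf{Case $\ker H_0=\{0\}$.} I will argue variationally. By the min-max principle for the form sum, $N^-(h^2H_0+V)$ is the maximal dimension of a subspace $E\subseteq \dom(H_0^{1/2})$ on which the form $h^2\|H_0^{1/2}u\|^2+\acou{Vu}{u}$ is negative definite. Substituting $u=H_0^{-1/2}w$, which is a bijection from $\ran H_0^{1/2}$ onto $\dom H_0^{1/2}$, this form becomes $\acou{(h^2+K)w}{w}$ with $w$ ranging over a dense subspace. Because $K$ is compact (hence bounded), negativity on a $k$-dimensional subspace of a dense set is equivalent to negativity on a $k$-dimensional subspace of $\sH$. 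Thus the maximal dimension equals the number of eigenvalues of $K$ that are $<-h^2$, i.e.\ $\#\{j;\ \lambda_j^-(K)>h^2\}$.

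\textbf{Case $\ker H_0\neq\{0\}$.} Here $\ker H_0$ is finite dimensional, since $0$ is not in the essential spectrum. I will run the same argument on $(\ker H_0)^\perp$ and use that restricting to, or enlarging by, a subspace of dimension $d=\dim\ker H_0$ changes the maximal dimension of a negative subspace by at most $d$. The form-compactness of $V$ ensures the cross terms between $\ker H_0$ and its complement are harmless in this counting. This step, handling the kernel and the form domains, is the one I expect to require the most care. Since everything is ultimately multiplied by $h^{2p}\rightarrow 0$, it suffices to obtain the identity up to this bounded error.

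The second step is an elementary asymptotic inversion. From $\lambda_j^-(K)=(L+\op{o}(1))j^{-1/p}$, the counting function $n(t):=\#\{j;\ \lambda_j^-(K)>t\}$ satisfies $t^{p}n(t)\rightarrow L^p$ as $t\rightarrow 0^+$. To see this when $L>0$, for any $\epsilon>0$ I will sandwich $\lambda_j^-(K)$ between $(L\pm\epsilon)j^{-1/p}$ for $j$ large, which sandwiches $n(t)$ between $((L\pm\epsilon)/t)^p+\op{O}(1)$. When $L=0$, the upper bound alone gives $t^pn(t)\rightarrow 0$. Taking $t=h^2$ gives $h^{2p}N^-(h^2H_0+V)\rightarrow L^p=\lim_{j\rightarrow\infty} j\lambda_j^-(K)^p$, which is (\ref{eq:SC.NHV=NKV-SC1}).

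The third step establishes (\ref{eq:SC.NHV=NKV-SC2}). The operator $(K)_-^p$ is positive with eigenvalues $\lambda_j^-(K)^p$, and $j\lambda_j^-(K)^p\rightarrow L^p$. Hence $(K)_-^p\in\sW_{1,\infty}$ with $\Lambda\big((K)_-^p\big)=L^p$. Proposition~\ref{prop:NCInt.Weyl-strong} then shows that $(K)_-^p$ is strongly measurable with $\bint (K)_-^p=L^p$, which completes the argument.
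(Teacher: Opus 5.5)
Your proposal is correct and follows the paper's route: the paper gets (\ref{eq:SC.NHV=NKV-SC1}) by citing the abstract Birman--Schwinger principle of Birman--Solomyak, and gets (\ref{eq:SC.NHV=NKV-SC2}) from Proposition~\ref{prop:NCInt.Weyl-strong}. Your variational (Glazman-lemma) proof of the counting identity, with an error of at most $\dim\ker H_0$, plus the elementary inversion of the eigenvalue asymptotics, simply makes that cited principle self-contained; note that the kernel correction is a purely linear-algebraic codimension argument, so the appeal to form-compactness for the cross terms is not needed.
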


\begin{remark}
The equality~(\ref{eq:SC.NHV=NKV-SC1}) is a well-known consequence of the Birman-Schwinger principle in its abstract form given in~\cite[Lemma~1.4]{BS:AMST89} (see also \cite[Theorem~10.1]{BS:TMMS72}).
 In fact, the equality holds whenever any of the limit exists. The equality~(\ref{eq:SC.NHV=NKV-SC2}) is a direct consequence of Proposition~\ref{prop:NCInt.Weyl-strong}. 
For the reader's convenience a proof of  Proposition~\ref{prop:NCInt.BSP-NCInt} is included in~\cite{Po:JNCG23}.
 \end{remark}
 
 \section{Spectral asymptotics, Semiclassical Weyl laws, and Integration Formulas}\label{chap:Main-results} 
 
 In this section, after presenting Condition~(W) and some of its implications, we prove Lemma~\ref{lem:Intro.Com-Dq} and Proposition~\ref{prop:Intro.Weyl-BirS}, assuming Lemma~\ref{lem:Intro.refined-CSZ} and Lemma~\ref{lem:Intro.factorization} whose proofs are postponed to Section~\ref{chap:DOIs}  and Section~\ref{chap:CSZ} , respectively. We shall then prove the main semiclassical Weyl laws and integration formulas stated in Introduction (i.e., Theorem~\ref{thm:Intro.SC-Weyl-lawDq} through Theorem~\ref{thm:Intro.Integration-sVr}) as consequences of Proposition~\ref{prop:Intro.Weyl-BirS}. 
   
 \subsection{Spectral triples}
In the framework of noncommutative geometry, noncommutative manifolds are represented by spectral triples. We shall use the following definition of spectral triples. 

\begin{definition}
 A \emph{spectral triple} is a triple $(\sA, \sH, D)$, where $\sA$ is a unital $*$-algebra represented by bounded operators on the (separable) Hilbert space $\sH$ and $D$ is a selfadjoint unbounded operator on $\sH$ with compact resolvent such that 
\begin{equation*}
a\left(\dom(D)\right)\subseteq \dom(D) \quad \text{and} \quad [D,a]\in \sL(\sH) \qquad \forall a \in \sA. 
\end{equation*}
We further say that $(\sA,\sH, D)$ is \emph{$p$-summable}, with $p>0$, if the partial inverse $D^{-1}$ is in $\sL_{p,\infty}$. 
\end{definition}

The prototype of a spectral triple is given by the Dirac spectral triple $(C^\infty(M), L^2(M, \shS), \shD)$, where $M$ is a closed Riemannian spin manifold, $L^2(M,\shS)$ is the Hilbert space of $L^2$-spinors on $M$ and $\shD$ is the Dirac operator of $M$ acting on spinors. Another example is the  square-root Laplacian spectral triple $(C^\infty(M),L^2(M),\sqrt{\Delta_g})$, where $\Delta_g$ is the (positive) Laplacian of $(M,g)$ acting on functions. Both spectral triples are $p$-summable with $p=\dim M$. 

Some authors also require some regularity properties for the spectral triple. A spectral triple $(\sA, \sH, D)$ is called \emph{Lipschitz regular} if 
\begin{equation*}
 \big[|D|,a\big] \in \sL(\sH) \qquad \forall a \in \sA. 
\end{equation*}
In other words, $\sA$ is contained in the domain of the derivation $\delta(T):=[|D|,T]$. We also say that  
$(\sA, \sH, D)$ is $QC^\infty$-\emph{regular} if 
\begin{equation*}
 \text{$a$ and $[D,a]$ are in} \  \bigcap_{j\geq 1} \dom \delta^j \ \text{for all $a\in \sA$}.  
\end{equation*}
For instance, the Dirac spectral triple $(C^\infty(M), L^2(M, \shS), \shD)$ and the square-root Laplacian spectral triple $(C^\infty(M),L^2(M),\sqrt{\Delta_g})$ are  $QC^\infty$-regular.

Throughout the rest of this section, we let $(\sA,\sH,D)$ be a $p$-summable spectral triple with $p>0$. We denote by $\overline{\sA}$  the closure of $\sA$ in $\sL(\sH)$, and  let $\overline{\sA}_{+}$ be its cone of positive elements (i.e., selfadjoint elements with non-negative spectrum). We also denote by $\overline{\sA}_{++}$ the open cone of its invertible elements (i.e., selfadjoint elements with positive spectrum). The cone ${\sA}_{++}$ is then defined to be $\sA \cap \overline{\sA}_{++}$, i.e., it consists of elements of $\sA$ that are positive and invertible in $\overline{\sA}$.  

\subsection{Condition~(W)}
Let $\Pi_0$ be the orthogonal projection onto $\ker D$. This is a finite rank operator, since $D$ has compact resolvent. Let $D^{-1}$ be the partial inverse of $D$, so that $D^2D^{-2}=1-\Pi_0$ on $\sH$ and $D^{-2}D^2=1-\Pi_0$ on $\dom(D^2)$. If $a\in \overline{\sA}_{++}$, then the action of $a$ on $\sH$ is an isomorphism. Thus, $aD^2a$ is a selfadjoint densely defined operator on $\sH$ with non-negative spectrum whose domain is $a^{-1}(\dom(D^2))$. In particular, $\dom(aD^2a)\supseteq a^{-1}(\ran D^2)$, and so on $\sH$ we have 
\begin{equation}
 \big(aD^2a\big)\big(a^{-1}D^{-2}a^{-1}\big)=aD^2D^{-2}a^{-1}=1-a\Pi_0a^{-1}. 
\end{equation}
Likewise, on $\dom(aD^2a)=a^{-1}(\dom(D^2))$ we have
\begin{equation}\label{eq:ST.partial-inverse-aD2a2}
 (a^{-1}D^{-2}a^{-1})(aD^2a)= 1-a^{-1}\Pi_0a. 
\end{equation}
This shows that $aD^2a$ is invertible modulo finite rank operators, and so this is a Fredholm operator. In particular, it has pure discrete spectrum, and we can list its \emph{positive} eigenvalues as a sequence, 
\begin{equation}\label{eq:ST.eigenvalues-aD2a}
0< \lambda_0\big(aD^2a\big) \leq  \lambda_1\big(aD^2a\big) \leq  \lambda_2\big(aD^2a\big) \leq \cdots, 
\end{equation}
where each eigenvalue is repeated according to multiplicity. 

The spectral condition on which the approach relies is the following. 

 \begin{conditionW}
For every $a\in {\sA}_{++}$, we have
\begin{equation}\label{eq:ST.conf-Weyl}
 \lim_{j\rightarrow \infty}j^{-\frac{2}{p}}\lambda_j\big(aD^2a\big)= \tau\big[a^{-p}\big]^{-\frac{2}{p}}, 
\end{equation}
where $\tau:\overline{\sA}\rightarrow \C$ is a given non-zero positive linear map. 
\end{conditionW}

\begin{remark}
 For $a=1$ the Weyl law~(\ref{eq:ST.conf-Weyl}) gives 
\begin{equation*}
 \lambda_j(|D|)=\sqrt{\lambda(D^2)} \sim j^{1/p}\tau[1]^{-1/p} \qquad \text{as}\ j\rightarrow \infty. 
\end{equation*}
This automatically implies that $|D|^{-1}\in \sL_{p,\infty}$, i.e., the spectral triple $(\sA, \sH, D)$ is $p$-summable. 
\end{remark}

\begin{remark}
 The assumption on the linearity of $\tau$ is not necessary. In the main examples the linearity is immediate. Moreover, the map $\tau$ will be later identified with a linear map (see Remark~\ref{rmk:ST-identification-tau}). Therefore, there is no harm in assuming it beforehand. 
 \end{remark}

\begin{remark}
The fact that $\tau$ is a non-zero positive linear map on $\overline{\sA}$ ensures that $\tau$ is continuous and $\tau(a)>0$ for all $a\in\overline{\sA}_{++}$. 
\end{remark}

\begin{remark}\label{rmk:ST-bdd-perturbation-Condition}
 If $A=A^*\in \sL(\sH)$ and we set $D_A=D+A$, then $(\sA,\sH,D_A)$ is a $p$-summable spectral triple. It can also be shown that if $(\sA,\sH,D_A)$ satisfies Condition~\textup{(W)}, then $(\sA,\sH,D_A)$ still satisfies this condition with the same map $\tau$ (see Lemma~\ref{lem:ST-bdd-perturbation} below). Thus, Condition~\textup{(W)} is invariant under bounded perturbations $D\rightarrow D+A$. This includes the inner fluctuations of the metric in the sense of Connes (see, e.g., \cite{CM:AMS08}).
 \end{remark}

It is useful to relate the Weyl laws~(\ref{eq:ST.conf-Weyl}) to Weyl laws for the compact operators $aD^{-2}a$, $a\in \overline{\sA}$.

\begin{lemma}\label{lem:ST.ComparisonD2}
 For every $a\in \overline{\sA}_{++}$, we have 
\begin{equation*}
 \lim_{j\rightarrow \infty} j^{\frac{2}{p}}\lambda_{j}\left(a^{-1}D^{-2}a^{-1}\right) =  \lim_{j\rightarrow \infty} j^{\frac{2}{p}}\lambda_{j}\left(aD^{-2}a\right)^{-1}, 
\end{equation*}
provided any the above limits exists in $\N_0\cup\{\infty\}$. That is, if one limit exists, then the other limit exists and the limits agree.  
 \end{lemma}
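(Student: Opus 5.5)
The plan is to compare the eigenvalue counting functions of the unbounded operator $aD^2a$ and the compact operator $a^{-1}D^{-2}a^{-1}$. I will show that, up to a \emph{fixed finite shift of indices}, the positive eigenvalues of $a^{-1}D^{-2}a^{-1}$ are the reciprocals of the positive eigenvalues of $aD^2a$. I read the right-hand side of the statement as $\lim_{j}\big(j^{-2/p}\lambda_j(aD^2a)\big)^{-1}$, in line with Condition~(W) and the identities displayed just before the lemma. Let $N:=\dim\ker D<\infty$.

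First I rewrite both operators as $T^*T$. Put $T:=Da$, which is closed on $a^{-1}(\dom D)$ since $a$ is a bounded isomorphism, so that $aD^2a=T^*T$. Similarly, $a^{-1}D^{-2}a^{-1}=S^*S$ with $S:=D^{-1}a^{-1}$ bounded. The nonzero spectra of $T^*T$ and $TT^*=Da^2D$ coincide with multiplicities, via the polar decomposition of $T$. Likewise $S^*S$ and $SS^*=D^{-1}a^{-2}D^{-1}$ have the same nonzero eigenvalues. This reduces the problem to comparing $Da^2D$ with $D^{-1}a^{-2}D^{-1}$. Both live on $\ran(1-\Pi_0)=(\ker D)^\perp$ up to $N$-dimensional kernels.

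The key step is to show that these two operators are inverse to each other modulo operators of rank at most $N$. Let $P=1-\Pi_0$. On $\ran P$ I will use the quadratic-form identity $\langle Da^2Dx,x\rangle=\|aDx\|^2$, and the substitution $x=D^{-1}a^{-2}D^{-1}y$ restricted to the subspace where $a^{-2}D^{-1}y\perp\ker D$. Since $\ker D$ has dimension $N$, this subspace has codimension at most $N$. On it, $Da^2D\,(D^{-1}a^{-2}D^{-1}y)=y$ exactly. By the min-max principle (\ref{eq:min-max}), this gives the counting-function inequalities
\begin{equation*}
 \#\{j:\lambda_j(aD^2a)<\lambda\}-2N\;\le\;\#\{j:\lambda_j(a^{-1}D^{-2}a^{-1})>\lambda^{-1}\}\;\le\;\#\{j:\lambda_j(aD^2a)\le\lambda\}+2N ,
\end{equation*}
for all $\lambda>0$, where the constant $2N$ also absorbs the kernels. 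Translated into eigenvalues, this says
\begin{equation*}
 \lambda_{j+2N}(aD^2a)^{-1}\le \lambda_j\big(a^{-1}D^{-2}a^{-1}\big)\le \lambda_{j-2N}(aD^2a)^{-1}\qquad (j\ge 2N).
\end{equation*}

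Finally, an index shift by a fixed integer $k$ does not affect limits of the form $\lim_j j^{2/p}\mu_{j\pm k}$, because $(j\pm k)/j\to1$. The two-sided bound therefore shows that one limit exists, in $[0,\infty]$, if and only if the other does, and that they agree. The main obstacle is the middle step. Making the ``inverse modulo rank $N$'' statement rigorous at the level of min-max requires care with domains, because $aD^2a$ is unbounded and $a^{-1}\Pi_0a$ is not an orthogonal projection. I will handle this by working entirely with quadratic forms on explicitly chosen finite-codimensional subspaces, rather than with operator products.
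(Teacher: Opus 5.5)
Your argument is correct and is essentially the paper's. In both, the key fact is that the partial inverse $(aD^2a)^{-1}$ and $a^{-1}D^{-2}a^{-1}$ differ by a finite-rank operator. The inequality $\mu_{j+k}(S+T)\le\mu_j(S)+\mu_k(T)$ then gives index-shifted two-sided bounds, and these do not affect the limits.

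The detour through $Da^2D$ and $D^{-1}a^{-2}D^{-1}$, and the quadratic-form bookkeeping you flagged as the main obstacle, are unnecessary. The identity $(a^{-1}D^{-2}a^{-1})(aD^2a)=1-a^{-1}\Pi_0a$ on $\dom(aD^2a)$, together with the orthogonal projection $\Pi_1$ onto $\ker(aD^2a)$, already exhibits $(aD^2a)^{-1}-a^{-1}D^{-2}a^{-1}$ as a finite-rank operator.
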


\begin{proof}
 Let $a\in \overline{\sA}_{++}$, and denote by $\Pi_1$ the orthogonal projection onto $\ker (aD^2a)=a^{-1}(\ker D)$. By using~(\ref{eq:ST.partial-inverse-aD2a2}) we get
 \begin{align*}
 a^{-1}D^{-2}a^{-1}&=a^{-1}D^{-2}a^{-1}(1-\Pi_1)+ (a^{-1}D^{-2}a^{-1})\Pi_1 \\
 &= (a^{-1}D^{-2}a^{-1})(aD^2a)(aD^2a)^{-1} + (a^{-1}D^{-2}a^{-1})\Pi_1 \\
 &= (1-a^{-1}\Pi_0 a)(aD^2a)^{-1} + (a^{-1}D^{-2}a^{-1})\Pi_1 \\
 &= (aD^2a)^{-1} - (a^{-1}\Pi_0 a)(aD^2a)^{-1} +(a^{-1}D^{-2}a^{-1})\Pi_1. 
\end{align*}
 It then follows that the operator $R:=(aD^2a)^{-1}-a^{-1}D^{-2}a^{-1}$ has finite rank. 
 
 Set $N=\op{rk}(R)$. The min-max principle~(\ref{eq:min-max}) implies that $\mu_j(R)=0$ for $j\geq N$. Thus, if $j\geq N$, then by using~(\ref{eq:Quantized.properties-mun2}) we get
\begin{equation*}
\lambda_j\big(\left(aD^{2}a\right)^{-1}\big) = \mu_j \big(\left(aD^{2}a\right)^{-1}\big) \leq  \mu_{j-N}\left(a^{-1}D^{-2}a^{-1} \right)+\mu_N(R) =\lambda_{j-N}\left(a^{-1}D^{-2}a^{-1} \right) .
\end{equation*}
 Similarly, we have 
 \begin{equation*}
\lambda_j\big(\left(aD^{2}a\right)^{-1}\big) = \mu_j\big(\left(aD^{2}a\right)^{-1}\big)  \geq  \mu_{j+N}\left(a^{-1}D^{-2}a^{-1} \right)-\mu_N(R) =\lambda_{j+N}\left(a^{-1}D^{-2}a^{-1} \right) .
\end{equation*}
With our index conventions $\lambda_j[(a^{-1}D^2a^{-1})^{-1}]=\lambda_{j}(a^{-1}D^2a^{-1})^{-1}$. Thus, for all $j\geq N$, we have
\begin{equation*}
 \lambda_{j+N}\left(a^{-1}D^{-2}a^{-1} \right) \leq \lambda_{j}\left(aD^{2}a\right)^{-1} \leq  \lambda_{j-N}\left(a^{-1}D^{-2}a^{-1} \right). 
\end{equation*}
It then follows that 
\begin{equation}\label{eq:ST.lambdaj-D2-D-2}
 \lim_{j\rightarrow \infty} j^{\frac{2}{p}}\lambda_{j}\left(aD^{2}a\right)^{-1} =  \lim_{j\rightarrow \infty} j^{\frac{2}{p}}\lambda_{j}\left(a^{-1}D^{-2}a^{-1} \right), 
\end{equation}
provided any of these limits exists. This gives the result. 
\end{proof}

In what follows,  we set
\begin{equation}\label{eq:STC.sA++q}
 \sA_{++}^q:=\left\{a^{q}; \ a\in {\sA}_{++}\right\}, \qquad q\neq 0. 
\end{equation}
If $\sA$ is closed under holomorphic functional calculus, then $\sA_{++}^{q}=\sA_{++}$, but in general we do not have an equality for $q\neq 1$. 
  
We will need the following lemma. 

\begin{lemma}[compare~{\cite[Remark~3.5]{MSZ:LMP22}}]\label{lem:STC.density}
If $a\in \overline{\sA}_+$, then, given any $q\neq 0$, we can always find a sequence $(a_\ell)_{\ell\geq 0}\subseteq \sA_{++}^q$ such that $(a_\ell)^r \rightarrow a^r$ in $\overline{\sA}$ for all $r>0$. In particular,  $\sA_{++}^q$ is a dense subset of $\overline{\sA}_+$. 
 \end{lemma}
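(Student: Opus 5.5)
Given $a\in\overline{\sA}_+$ and $q\neq 0$, I will build the approximating sequence in three steps: push $a$ into the invertible cone, approximate the resulting invertible element from $\sA$, and then apply the map $b\mapsto b^q$. The first reduction is to $b_\epsilon:=a+\epsilon\in\overline{\sA}_{++}$ for $\epsilon>0$. For every $r>0$ the map $t\mapsto t^r$ is continuous on $[0,\infty)$, so $(a+\epsilon)^r\to a^r$ in norm as $\epsilon\to 0^+$. This follows from continuity of the continuous functional calculus, since $\|f(a+\epsilon)-f(a)\|\le \sup_{t\in[0,\|a\|]}|f(t+\epsilon)-f(t)|$ by uniform continuity of $f(t)=t^r$ on compact sets. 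It therefore suffices to approximate elements $b\in\overline{\sA}_{++}$ by elements of $\sA_{++}^q$, with all positive powers converging, and then use a diagonal argument with $\epsilon=1/\ell$.

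Now fix $b\in\overline{\sA}_{++}$, so that $\Sp(b)\subseteq[m,M]$ with $m>0$. Set $c:=b^{1/q}\in\overline{\sA}_{++}$, which is well defined because $t\mapsto t^{1/q}$ is continuous on $(0,\infty)$ for either sign of $q$. Since $\sA$ is dense in $\overline{\sA}$ and $\sA$ is a $*$-algebra, I pick $c_k\in\sA$ with $c_k\to c$ and replace $c_k$ by $\frac12(c_k+c_k^*)$, which still lies in $\sA$ and still converges to $c$. Selfadjoint elements with $\|c_k-c\|<m^{1/q}\wedge M^{1/q}/2$ (more precisely, less than half of $\min\Sp(c)$) have spectrum in a fixed compact interval $K\subset(0,\infty)$, by the standard perturbation estimate on spectra of selfadjoint elements. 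Hence $c_k\in\sA\cap\overline{\sA}_{++}=\sA_{++}$ for $k$ large, and $a_k:=c_k^{\,q}\in\sA_{++}^q$.

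It remains to check that $a_k^r=c_k^{qr}\to c^{qr}=b^r$ for every $r>0$. The function $g(t)=t^{qr}$ is continuous on the compact interval $K$, and it is a standard fact that for selfadjoint elements with spectra in a fixed compact set, $x\mapsto g(x)$ is norm continuous. To prove this, approximate $g$ uniformly on $K$ by polynomials (Weierstrass); polynomials are norm continuous, and $\|g(x)-P(x)\|\le\sup_K|g-P|$ for all such $x$, so an $\epsilon/3$ argument concludes. Finally, for $a\in\overline{\sA}_+$ I choose for each $\ell$ an element $a_\ell\in\sA_{++}^q$ with $\|a_\ell^{\,r}-(a+1/\ell)^{r}\|<1/\ell$. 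Since this must hold for all $r>0$ at once, the choice is made for $r\in\{1/\ell,\dots,\ell\}$ on a grid, or more cleanly for $r\in[1/\ell,\ell]$. The uniform version is available because the polynomial approximation of $t^{qr}$ on $K$ is uniform in $r$ ranging over a compact set, as $(t,r)\mapsto t^{qr}$ is jointly continuous on $K\times[1/\ell,\ell]$. Convergence for small $r$ at the first step also holds because $t^r$ is continuous at $0$ for each fixed $r>0$. Density of $\sA_{++}^q$ in $\overline{\sA}_+$ is then the case $r=1$.

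The only mildly delicate point is this uniformity in $r$ needed to get a single sequence that works for all $r>0$; handling it by exhausting $(0,\infty)$ with compacts $[1/\ell,\ell]$ and joint continuity should suffice.
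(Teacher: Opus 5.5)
Your argument is correct, but it is organized differently from the paper's proof.

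The paper starts from one observation. If $a_\ell\to a$ in norm with all $a_\ell\geq 0$, then $f(a_\ell)\to f(a)$ for every continuous $f$ on $[0,\infty)$, by polynomial approximation on $[0,\sup_\ell\|a_\ell\|]$. Since $t\mapsto t^r$ is continuous on $[0,\infty)$ for every $r>0$, this reduces the whole lemma to the density of $\sA_{++}^q$ in $\overline{\sA}_+$. The paper then gets density of $\sA_{++}$ from $b_\ell^*b_\ell+(\ell+1)^{-1}$ with $b_\ell\in\sA$, $b_\ell\to\sqrt{a}$; this element is automatically positive and invertible. It handles general $q$ by approximating $b^{1/q}$ for $b\in\overline{\sA}_{++}$ and taking $q$-th powers.

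You proceed differently in three places. You regularize to $a+\epsilon$ first. You obtain positive invertible approximants by symmetrizing and using spectral stability of selfadjoint elements. You control all powers through a diagonal argument that needs uniformity in $r\in[1/\ell,\ell]$.

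That last step is valid, for instance via compactness of $\{t^{qr}\}_{r\in[1/\ell,\ell]}$ in $C(K)$, but it is unnecessary. Once you have $a_\ell\in\sA_{++}^q$ with $\|a_\ell-(a+1/\ell)\|<1/\ell$, the Weierstrass argument from your third step, applied to $t^r$ on $[0,\|a\|+2]$, gives $a_\ell^r\to a^r$ for every $r>0$ at once. So the ``delicate point'' you flag does not actually arise.

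Your route does have one advantage. It makes explicit that the approximants $c_k$ have spectra in a fixed compact subset of $(0,\infty)$, which is exactly what justifies $c_k^q\to c^q$ when $q<0$. The paper's appeal to continuity of the functional calculus on $[0,\infty)$ passes over this for negative $q$, since $t^q$ is then not continuous at $0$.
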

 \begin{proof}
 Note that if $a_\ell \rightarrow a$ in $\overline{\sA}_+$, then $f(a_\ell)\rightarrow f(a)$ for every continuous function $f:[0,\infty)\rightarrow \C$. This is true for polynomials. The result then can be extended to all continuous functions on $[0,\infty)$ by using Stone-Weierstrass theorem. In particular, the result holds for all power functions $t\rightarrow t^q$, $q>0$.  Therefore, in order to prove the lemma it is enough to show that 
 $\sA_{++}^q$ is a dense subset of $\overline{\sA}_+$ for all $q\neq 0$. 
 
We first prove the result for $q=1$.  Let $a\in \overline{\sA}_+$, and let $(b_\ell)_{\ell \geq 0}\subseteq \sA_{++}$ be such that $b_\ell \rightarrow \sqrt{a}$. Set $a_\ell=b_\ell^*b_\ell+(\ell+1)^{-1}$. Then $a_\ell\in \sA_{++}$ and $a_\ell \rightarrow (\sqrt{a})^*\sqrt{a}=a$ as $\ell \rightarrow \infty$. This shows that $\sA_{++}$ is dense in $\overline{\sA}_+$. 
 
 Let $q\neq 0$ and $a\in \overline{\sA}_{++}$. The density of $\sA_{++}$ in $\overline{\sA}_{+}$ ensures there is a sequence $(b_\ell)_{\ell \geq 0}\subseteq \sA_{++}$ such that $b_\ell\rightarrow b^{1/q}$. As mentioned above this implies that $b_\ell^q\rightarrow (b^{1/q})^q=b$. This shows that $\sA_{++}^q$ is dense in $\overline{\sA}_{++}$. As $\overline{\sA}_{++}$ is dense in $\overline{\sA}_{+}$, we deduce that $\sA_{++}^{q}$ is dense in $\overline{\sA}_+$, completing the proof. 
\end{proof}

We have the following characterizations of Condition~\textup{(W)}. 
 
 \begin{lemma}\label{lem:ST.equiv-conformal-Weyl}
 The following are equivalent:
\begin{enumerate}
\item[(i)]  The Weyl laws~(\ref{eq:ST.conf-Weyl}) hold for all $a\in {\sA}_{++}$, i.e., Condition~(W) is satisfied.

\item[(ii)]  The Weyl laws~(\ref{eq:ST.conf-Weyl}) hold for all $a\in \overline{\sA}_{++}$. 

\item[(iii)] For all $a\in \overline{\sA}_+$, we have
\begin{equation}\label{eq:ST.conforrmal-Weyl-neg}
 \lim_{j\rightarrow \infty}j^{\frac{2}{p}}\lambda_j\big(aD^{-2}a\big)= \tau\big[a^{p}\big]^{\frac{2}{p}}. 
\end{equation}
\end{enumerate}
\end{lemma}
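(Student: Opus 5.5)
The plan is to prove (ii)$\Rightarrow$(i) trivially, (ii)$\Leftrightarrow$(iii) via Lemma~\ref{lem:ST.ComparisonD2}, and (i)$\Rightarrow$(ii) by a density and monotonicity argument based on Lemma~\ref{lem:STC.density}.

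Since $\sA_{++}\subseteq\overline{\sA}_{++}$, the implication (ii)$\Rightarrow$(i) is immediate.

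For (ii)$\Rightarrow$(iii), first take $a\in\overline{\sA}_{++}$. Applying (ii) to $a^{-1}$ gives $j^{-2/p}\lambda_j(a^{-1}D^2a^{-1})\to\tau[a^p]^{-2/p}$. Lemma~\ref{lem:ST.ComparisonD2}, applied with $a^{-1}$ in place of $a$, then yields $j^{2/p}\lambda_j(aD^{-2}a)\to\tau[a^p]^{2/p}$. To pass to general $a\in\overline{\sA}_+$, use Lemma~\ref{lem:STC.density} to pick $a_\ell\in\overline{\sA}_{++}$ with $a_\ell\to a$ and $a_\ell^p\to a^p$. Then
\begin{equation*}
a_\ell D^{-2}a_\ell-aD^{-2}a=(a_\ell-a)D^{-2}a_\ell+aD^{-2}(a_\ell-a)
\end{equation*}
tends to $0$ in $\sL_{p/2,\infty}$, because $D^{-2}\in\sL_{p/2,\infty}$. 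By Proposition~\ref{prop:NCInt.Bir-Sol}, the class $\sW_{p/2,\infty}$ is closed and $\Lambda$ is continuous on it, so $aD^{-2}a$ is Weyl with $\Lambda=\lim_\ell\tau[a_\ell^p]^{2/p}=\tau[a^p]^{2/p}$, using continuity of $\tau$. Conversely, (iii)$\Rightarrow$(ii): apply (iii) to $a^{-1}\in\overline{\sA}_{++}$ and invert the limit using Lemma~\ref{lem:ST.ComparisonD2}. Since $\tau[a^{-p}]>0$, the limits are finite and nonzero, so inversion is legitimate.

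The main obstacle is (i)$\Rightarrow$(ii), because the hypothesis only covers $a\in\sA_{++}$, whereas (ii) needs the whole closure. I will work on the inverse side with $b=a^{-1}$ and the operators $bD^{-2}b$. Given $b\in\overline{\sA}_{++}$, choose $b_\ell\in\sA_{++}^{-1}$ with $b_\ell\to b$; this is possible by Lemma~\ref{lem:STC.density} with $q=-1$. Write $b_\ell=c_\ell^{-1}$ with $c_\ell\in\sA_{++}$. Hypothesis (i) applied to $c_\ell$, together with Lemma~\ref{lem:ST.ComparisonD2}, gives that $b_\ell D^{-2}b_\ell$ is Weyl with $\Lambda=\tau[b_\ell^p]^{2/p}$. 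The closedness argument from the previous paragraph then gives the Weyl law for $bD^{-2}b$ with constant $\tau[b^p]^{2/p}$, and Lemma~\ref{lem:ST.ComparisonD2} converts this back into~(\ref{eq:ST.conf-Weyl}) for $a=b^{-1}$.

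Two points need checking along the way. The first is that norm convergence $b_\ell\to b$ implies $b_\ell^p\to b^p$, which holds by continuity of the functional calculus as in the proof of Lemma~\ref{lem:STC.density}. The second is that the quasi-norm estimate $\|(b_\ell-b)D^{-2}b_\ell\|_{p/2,\infty}\le\|b_\ell-b\|\,\|D^{-2}\|_{p/2,\infty}\|b_\ell\|$ is uniform, which is clear since the $b_\ell$ are bounded. I expect the only real subtlety to be keeping track of these inversions, and checking that Proposition~\ref{prop:NCInt.Bir-Sol} is being used for positive operators in $\sL_{p/2,\infty}$, where $\Lambda=\Lambda^+$.
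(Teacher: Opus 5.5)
Your proof is correct and uses the same ingredients as the paper. You pass between $aD^2a$ and $a^{-1}D^{-2}a^{-1}$ via Lemma~\ref{lem:ST.ComparisonD2}, in its proven form relating $\lambda_j(aD^2a)^{-1}$ to $\lambda_j(a^{-1}D^{-2}a^{-1})$. For the density step you use Lemma~\ref{lem:STC.density} with $q=-1$, the closedness and continuity in Proposition~\ref{prop:NCInt.Bir-Sol}, and continuity of $\tau$.

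The only difference is organizational. The paper proves the cycle (i)$\Rightarrow$(iii)$\Rightarrow$(ii)$\Rightarrow$(i) and runs the density argument once, directly from $\sA_{++}^{-1}$ to all of $\overline{\sA}_+$. You run it twice: first from $\sA_{++}^{-1}$ to $\overline{\sA}_{++}$, then from $\overline{\sA}_{++}$ to $\overline{\sA}_+$.
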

\begin{proof}
It is immediate that (ii)$\Rightarrow$(i). Moreover, if (iii) holds, then by Lemma~\ref{lem:ST.ComparisonD2}, for all $a\in \overline{\sA}_{++}$, we have
 \begin{equation*}
 \lim_{j\rightarrow \infty} j^{\frac{2}{p}}\lambda_{j}\left(aD^{-2}a\right) =  
 \lim_{j\rightarrow \infty}j^{\frac{2}{p}}\lambda_j\big(a^{-1}D^{-2}a^{-1}\big)^{-1} = \tau\big[a^{-p}\big]^{\frac{2}{p}}. 
\end{equation*}
This shows that (iii)$\Rightarrow$(ii).

It remains to show that (i)$\Rightarrow$(iii). Suppose that (i) holds. Lemma~\ref{lem:ST.ComparisonD2} then ensures that, for all $a\in \sA_{++}$, we have
\begin{equation*}
 \lim_{j\rightarrow \infty}j^{\frac{2}{p}}\lambda_j\big(a^{-1}D^{-2}a^{-1}\big)= 
 \lim_{j\rightarrow \infty} j^{\frac{2}{p}}\lambda_{j}\left(aD^{-2}a\right)^{-1} = \tau\big[a^{-p}\big]^{\frac{2}{p}}. 
\end{equation*}
This shows that the Weyl laws~\eqref{eq:ST.conforrmal-Weyl-neg} hold for all $a\in \sA_{++}^{-1}$. 

In general, if $a\in \overline{\sA}_+$, then by Lemma~\ref{lem:STC.density} there is a sequence $(a_\ell)\subseteq {\sA}_{++}^{-1}$ such that $a_\ell \rightarrow a$   
and $(a_\ell)^p\rightarrow a^p$ in $\overline{\sA}$. In particular, $a_\ell D^{-2}a_\ell\rightarrow  aD^{-2}a$ in $\sL_{p/2,\infty}$. Combining this with Proposition~\ref{prop:NCInt.Bir-Sol} and using the continuity of $\tau$ we then get
\begin{equation}
 \lim_{j\rightarrow \infty} j^{\frac{2}{p}}\lambda_j\big(aD^{-2}a\big)= \lim_{\ell \rightarrow \infty}  \lim_{j\rightarrow \infty} j^{\frac{2}{p}}\lambda_j\big(a_\ell D^{-2}a_\ell\big)
 =  \lim_{\ell \rightarrow \infty}\tau\big[a_\ell^{p}\big]^{\frac{2}{p}}= \tau\big[a^{p}\big]^{\frac{2}{p}}. 
\end{equation}
That is, $aD^{-2}a$ satisfies the Weyl law~\ref{eq:ST.conforrmal-Weyl-neg}, and so (iii) is satisfied. This proves that (i)$\Rightarrow$(iii). The proof is complete. 
\end{proof}

\begin{remark}
The spectral asymptotics~(\ref{eq:ST.conforrmal-Weyl-neg}) identifies on $\overline{\sA}_+$ the map $\tau$ with the functional,
\begin{equation*}
 \overline{\sA}_+\ni a \longrightarrow \lim_{j\rightarrow \infty} j\lambda_j\big(a^{\frac1{p}}D^{-2}a^{\frac1{p}}\big)^{\frac{p}{2}}=\Lambda\big(a^{\frac1{p}}D^{-2}a^{\frac1{p}}\big)^{\frac{p}{2}}. 
\end{equation*}
This functional is continuous on $\overline{\sA}_+$ thanks to Proposition~\ref{prop:NCInt.Bir-Sol}. Moreover, the density of ${\sA}_{++}^{-p}$ in $\overline{\sA}_+$ implies that the above functional is uniquely determined by its restriction to ${\sA}_{++}^{-p}$. It follows that the Weyl laws~(\ref{eq:ST.conf-Weyl}) uniquely define a continuous functional on $\overline{\sA}_+$. 
\end{remark}

We are now in a position to prove the following perturbation result (which was mentioned in Remark~\ref{rmk:ST-bdd-perturbation-Condition}). 

\begin{lemma}\label{lem:ST-bdd-perturbation}
 Let $A=A^*\in \sL(\sH)$, and set $D_A=D+A$. If Condition~\textup{(W)} is satisfied by the spectral triple $(\sA,\sH,D)$, then it is also satisfied by the spectral triple 
 $(\sA,\sH,D_A)$. 
\end{lemma}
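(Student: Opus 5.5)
By Lemma~\ref{lem:ST.equiv-conformal-Weyl}, Condition~(W) for $(\sA,\sH,D)$ is equivalent to the Weyl laws~(\ref{eq:ST.conforrmal-Weyl-neg}) for all $a\in\overline{\sA}_+$. The same equivalence applies to $(\sA,\sH,D_A)$. So it suffices to prove, for every $a\in\overline{\sA}_+$,
\begin{equation*}
 \lim_{j\rightarrow\infty}j^{\frac{2}{p}}\lambda_j\big(aD_A^{-2}a\big)=\tau\big[a^p\big]^{\frac{2}{p}}.
\end{equation*}
By Proposition~\ref{prop:NCInt.Bir-Sol}(3), this reduces to showing that $aD_A^{-2}a-aD^{-2}a\in(\sL_{p/2,\infty})_0$. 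Since multiplication by $a$ on both sides preserves this ideal, it is enough to show $D_A^{-2}-D^{-2}\in(\sL_{p/2,\infty})_0$, where $D_A^{-1}$ denotes the partial inverse of $D_A$.

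To get this, I would first check that $D_A$ is selfadjoint with $\dom(D_A)=\dom(D)$ and compact resolvent (Kato--Rellich, since $A$ is bounded). I would then work with resolvents at $\pm i$ rather than partial inverses, to avoid kernel issues. The second resolvent identity gives
\begin{equation*}
(D_A+i)^{-1}-(D+i)^{-1}=-(D_A+i)^{-1}A(D+i)^{-1}.
\end{equation*}
Since $(D+i)^{-1}\in\sL_{p,\infty}$, this difference lies in $\sL_{p,\infty}$. Hence $(D_A+i)^{-1}\in\sL_{p,\infty}$, and inserting this back into the identity, Hölder's inequality (Proposition~\ref{prop:Hoelder}) places the difference in $\sL_{p/2,\infty}\subseteq(\sL_{p,\infty})_0$.

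Next I would write
\begin{equation*}
(D_A^2+1)^{-1}=(D_A+i)^{-1}(D_A-i)^{-1}
\end{equation*}
and expand
\begin{equation*}
(D_A^2+1)^{-1}-(D^2+1)^{-1}=\big[(D_A+i)^{-1}-(D+i)^{-1}\big](D_A-i)^{-1}+(D+i)^{-1}\big[(D_A-i)^{-1}-(D-i)^{-1}\big].
\end{equation*}
Each term is a product of an $\sL_{p/2,\infty}$ factor with an $\sL_{p,\infty}$ factor. By Hölder again, the whole difference lies in $\sL_{p/3,\infty}\subseteq(\sL_{p/2,\infty})_0$.

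Finally, $D^{-2}-(D^2+1)^{-1}=D^{-2}(D^2+1)^{-1}-\Pi_0$ lies in $\sL_{p/4,\infty}$ modulo finite rank. The same holds for $D_A$, because its kernel projection is finite rank. Therefore $D_A^{-2}-D^{-2}\in(\sL_{p/2,\infty})_0$.

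I expect the main point of care to be this bookkeeping with the partial inverses, kernels, and domain compatibility, together with checking that $(D_A+i)^{-1}$ is in $\sL_{p,\infty}$ before the Hölder argument is applied. Once the difference is placed in $(\sL_{p/2,\infty})_0$, Proposition~\ref{prop:NCInt.Bir-Sol} and Lemma~\ref{lem:ST.equiv-conformal-Weyl} give Condition~(W) for $D_A$ with the same $\tau$.
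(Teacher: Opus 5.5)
Your proof is correct and follows the same route as the paper. Both arguments reduce, via Lemma~\ref{lem:ST.equiv-conformal-Weyl}, to the Weyl laws for $aD_A^{-2}a$, show $D_A^{-2}-D^{-2}\in\sL_{p/3,\infty}\subseteq(\sL_{p/2,\infty})_0$ by a resolvent identity and H\"older's inequality, and conclude with Proposition~\ref{prop:NCInt.Bir-Sol}. The only difference is that you use the resolvents $(D_A\pm i)^{-1}$ instead of the paper's partial-inverse identity $D_A^{-1}=D^{-1}-D^{-1}AD_A^{-1}$; that identity holds only modulo finite-rank terms when $\ker D$ or $\ker D_A$ is nontrivial, so your bookkeeping is slightly more careful at no extra cost.
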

\begin{proof}
 As $A$ is bounded, $\dom(D_A)=\dom(D)$. We then have the resolvent formula, 
 \begin{equation*}
 D_A^{-1}=D^{-1}-D^{-1}(D_A-D)D_A^{-1}=D^{-1}-D^{-1}AD_A^{-1}.
\end{equation*}
As $D^{-1}\in \sL_{p,\infty}$, it follows that $D_A^{-1}\in \sL_{p,\infty}$, and hence
\begin{equation*}
 D_A^{-1}-D^{-1}=-D^{-1}AD_A^{-1}\in \sL_{p/2,\infty}. 
\end{equation*}
Thus,
\begin{equation*}
 D^{-2}_A-D^{-2}=D^{-1}_A\left(D_A^{-1}-D^{-1}\right) + \left(D_A^{-1}-D^{-1}\right)D^{-1}\in \sL_{p/3,\infty}. 
\end{equation*}
It follows that, for all $a\in \overline{\sA}_+$, we have
\begin{equation*}
 aD^{-2}_Aa=aD^{-2}a \qquad \bmod \big(\sL_{\frac{p}{2},\infty}\big)_0.
\end{equation*}
Thus, if Condition~\textup{(W)} holds, then by using Proposition~\ref{prop:NCInt.Bir-Sol} and Lemma~\ref{lem:ST.equiv-conformal-Weyl} we get
\begin{equation*}
 \lim_{j\rightarrow \infty}j^{\frac{2}{p}}\lambda_j\big(aD_A^{-2}a\big)  =  \lim_{j\rightarrow \infty}j^{\frac{2}{p}}\lambda_j\big(aD^{-2}a\big) = \tau\big[a^{p}\big]^{\frac{2}{p}}. 
\end{equation*}
Using Lemma~\ref{lem:ST.equiv-conformal-Weyl} once more then shows that 
\begin{equation*}
 \lim_{j\rightarrow \infty}j^{-\frac{2}{p}}\lambda_j\big(aD_A^{2}a\big)  =   \tau\big[a^{-p}\big]^{\frac{2}{p}} \qquad \forall a\in \sA_{++}. 
\end{equation*}
That is, the spectral triple $(\sA,\sH,D_A)$ satisfies Condition~\textup{(W)}. The proof is complete. 
\end{proof}

\subsection{Weak Schatten properties of fractional commutators -- Proof of Lemma~\ref{lem:Intro.Com-Dq}} 
In what follows given any $q> 0$ we define $|D|^{-q}$ by means of the Borel functional calculus for $D$ with the convention that $|D|^{-q}=0$ on $\ker D$. 
Thus, if $(\xi_\ell)_{\ell\geq 0}$ is any orthonormal eigenbasis of $\sH$  with $D\xi_\ell=\lambda_\ell\xi_\ell$ for all $\ell\geq 0$, then
\begin{equation*}
 |D|^{-q}\xi_\ell = \left\{ 
\begin{array}{cl}
 |\lambda_\ell|^{-q} \xi_\ell & \text{if}\ \lambda_\ell>0,\\
 0 & \text{if}\ \lambda_\ell=0. 
\end{array}\right. 
\end{equation*}

\begin{proof}[Proof of Lemma~\ref{lem:Intro.Com-Dq}] 
 In what follows we denote by $C_{pq}$ and $C_{D,q}$ positive constants depending only on the data $(p,q)$ or $(D,q)$ which may vary from line to line. 
 
  Assume that $q\in (0,1]$, and let $a\in \sA$. Applying Lemma~\ref{lem:Intro.factorization} with 
$\beta=\gamma=0$ shows there is a bounded operator $\Phi^{(q)}:\sL(\sH)\rightarrow \sL_{(q+1)^{-1}p,\infty}$ depending only on $D$ and $q$ such that 
 \begin{align}
  \left[ |D|^{-q},a\right]  =&\Phi^{(q)}\left([D,a]\right) 
  + |D|^{-(q+1)}F[D,a]\Pi_0 +\Pi_0[D,a]F|D|^{-(q+1)},
  \label{eq:Com.factorization2}
\end{align}
 where  $F=D|D|^{-1}$ is the sign of $D$, and $\Pi_0$ is the orthogonal projection onto $\ker D$.  
 
 Set $r=(q+1)^{-1}p$. The continuity of $\Phi^{(q)}$ ensures us that
 \begin{equation*}
 \big\|\Phi^{(q)}\left([D,a]\right)\big\|_{r,\infty}  \leq C_{D,q} \left\| \left[ D,a\right]\right\|. 
\end{equation*}
Moreover, as $|D|^{-(q+1)}\in \sL_{r,\infty}$, we also get
\begin{equation*}
  \left\| |D|^{-(q+1)}F[D,a]\Pi_0\right\|_{r,\infty} \leq \big\| |D|^{-(q+1)}\big\|_{r,\infty}  \left\|F[D,a]\Pi_0\right\| \leq C_{D,q}\left\| \left[ D,a\right]\right\|. 
\end{equation*}
We have a similar estimate for $\Pi_0[D,a]F|D|^{-(q+1)}$. Combining all those estimates with~(\ref{eq:Com.factorization2}) yields the estimates~(\ref{eq:Com.Com-Dq-estimate}) for all $q\in (0,1]$. In particular, for $q=1$ we get 
 \begin{equation}\label{eq:Com.Com-Dq-estimate1}
 \big\| \big[ |D|^{-1},a\big]\big\|_{2p,\infty}\leq C_{D}  \left\| \left[ D,a\right]\right\| \qquad \forall a \in \sA. 
\end{equation}

Suppose now that the estimate~(\ref{eq:Com.Com-Dq-estimate}) holds for some $q>0$. Set $r=(q+1)^{-1}p$ and $r_1=(q+2)^{-1}p$. By Leibniz's rule, we have 
\begin{equation}\label{eq:Com.Com-Dq1-Leibniz}
  \big[ |D|^{-(q+1)},a\big]= |D|^{-1}  \big[ |D|^{-q},a\big] +   \big[ |D|^{-1},a\big] |D|^{-q}.  
\end{equation}
Here $|D|^{-1}\in \sL_{p,\infty}$ and $r^{-1}+p^{-1}=r_1^{-1}$. Thus, by using~(\ref{eq:Com.Com-Dq-estimate}) and H\"older's inequality (Proposition~\ref{prop:Hoelder}) we get
\begin{equation*}
  \big\| |D|^{-1}  \left[ |D|^{-q},a\right]\big\|_{r_1,\infty} \leq C_{pq} \big\|  |D|^{-1}\big\|_{p,\infty}  \left\| \big[ |D|^{-q},a\big]\right\|_{r,\infty} 
  \leq C_{D,q} \left\| \left[ D,a\right]\right\|.
\end{equation*}
In addition,  $|D|^{-q}\in \sL_{q^{-1}p,\infty}$ and $2p^{-1}+qp^{-1}=r_1^{-1}$. Therefore, by using~(\ref{eq:Com.Com-Dq-estimate1}) and H\"older's inequality we  also get
\begin{equation*}
  \big\| \big[ |D|^{-1},a\big] |D|^{-q}\big\|_{r_1,\infty} \leq C_{pq} \left\| |D|^{-q}\right\|_{q^{-1}p,\infty}  \big\| \big[ |D|^{-1},a\big]\big\|_{2p,\infty} 
  \leq C_{D,q} \left\| \left[ D,a\right]\right\|.
\end{equation*}
Combining the previous two estimates with~(\ref{eq:Com.Com-Dq1-Leibniz}) yields the estimate~(\ref{eq:Com.Com-Dq-estimate}) for $q+1$.  As the estimate~(\ref{eq:Com.Com-Dq-estimate}) holds for all $q\in (0,1]$, an induction then shows that it holds for all $q$ in intervals $(0,k]$, $k=1,2,\ldots $, and hence it holds for all $q>0$. This completes the proof of  Lemma~\ref{lem:Intro.Com-Dq}. 
\end{proof}

\subsection{Proof of Proposition~\ref{prop:Intro.Weyl-BirS}}  
We will need the following consequences of Lemma~\ref{lem:Intro.Com-Dq}. 

\begin{lemma}\label{lem:ST.Com-Dq-sL0}
Let $q>0$. For all $a\in \overline{\sA}$, we have
 \begin{equation*}
  \left[ |D|^{-q},a\right] \in \big(\sL_{q^{-1}p,\infty}\big)_0. 
\end{equation*}
\end{lemma}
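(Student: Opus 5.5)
The plan is a density argument, played off against the stronger commutator estimate of Lemma~\ref{lem:Intro.Com-Dq}. The map
\begin{equation*}
 \overline{\sA}\ni a \longrightarrow \big[ |D|^{-q},a\big]\in \sL_{q^{-1}p,\infty}
\end{equation*}
is continuous with respect to the operator norm on $\overline{\sA}$. Indeed, $|D|^{-q}\in \sL_{q^{-1}p,\infty}$ by $p$-summability, and $\sL_{q^{-1}p,\infty}$ is a quasi-Banach ideal, so
\begin{equation*}
 \big\|[|D|^{-q},a]\big\|_{q^{-1}p,\infty}\leq 2\kappa\, \big\||D|^{-q}\big\|_{q^{-1}p,\infty}\|a\|,
\end{equation*}
where $\kappa$ is the quasi-triangle constant. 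Linearity then gives the difference estimates.

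Next, for $a\in \sA$, Lemma~\ref{lem:Intro.Com-Dq} places $[|D|^{-q},a]$ in $\sL_{(q+1)^{-1}p,\infty}$. Since $(q+1)^{-1}p<q^{-1}p$, we have $\mu_j = \op{O}(j^{-(q+1)/p})=\op{o}(j^{-q/p})$, so the commutator lies in $(\sL_{q^{-1}p,\infty})_0$.

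For general $a\in\overline{\sA}$, pick $a_\ell\in \sA$ with $a_\ell\rightarrow a$ in norm. By the continuity estimate,
\begin{equation*}
 [|D|^{-q},a_\ell]\rightarrow [|D|^{-q},a] \quad \text{in the quasi-norm of } \sL_{q^{-1}p,\infty}.
\end{equation*}
The subspace $(\sL_{q^{-1}p,\infty})_0$ is closed, being by definition the closure of the finite-rank operators. It follows that $[|D|^{-q},a]\in(\sL_{q^{-1}p,\infty})_0$.

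I do not expect a genuine obstacle here. The only point needing care is that the quasi-norm topology is metrizable, which holds by the Aoki--Rolewicz theorem, so sequential closure suffices.
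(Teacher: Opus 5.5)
Your proof is correct and is essentially the paper's argument. Like the paper, you use continuity of $a\mapsto[|D|^{-q},a]$ from $\overline{\sA}$ into $\sL_{q^{-1}p,\infty}$, the inclusion $\sL_{(q+1)^{-1}p,\infty}\subseteq(\sL_{q^{-1}p,\infty})_0$ for $a\in\sA$ via Lemma~\ref{lem:Intro.Com-Dq}, and density of $\sA$ together with closedness of $(\sL_{q^{-1}p,\infty})_0$; you simply make the quasi-triangle constant and the metrizability point explicit, where the paper leaves them implicit.
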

\begin{proof}
 Thanks to Proposition~\ref{prop:Hoelder}, for all $\beta,\gamma\geq 0$, the map $a\rightarrow |D|^{-\beta}a|D|^{-\gamma}$ is a continuous linear map from $\overline{\sA}$ to $\sL_{(\beta +\gamma)^{-1}p,\infty}$. Thus, $a\rightarrow [|D|^{-q},a]$ is at least a continuous linear map from  $\overline{\sA}$ to $\sL_{q^{-1}p,\infty}$. 
 By Lemma~\ref{lem:Intro.Com-Dq}, if $a\in \sA$, then $[|D|^{-q},a]$ is contained in $\sL_{(q+1)^{-1}p,\infty}\subseteq (\sL_{q^{-1}p,\infty})_0$. As $\sA$ is dense in $\overline{\sA}$, it  follows that $[|D|^{-q},a]\in(\sL_{q^{-1}p,\infty})_0$ for all $a\in \overline{\sA}$. The result is proved.    
\end{proof}

\begin{lemma}
 Let $q>0$. For all $a\in \overline{\sA}_+$, we have
 \begin{align}
 |D|^{-q}a & = |D|^{-\frac{q}{2}}a|D|^{-\frac{q}{2}} \quad \bmod \big(\sL_{q^{-1}p,\infty}\big)_0, \nonumber\\
 &= a^{\frac{1}{2}} |D|^{-q} a^{\frac12}  \qquad \bmod \big(\sL_{q^{-1}p,\infty}\big)_0 . 
 \label{eq:ST.Com-Dq-sL02}
\end{align}
In particular, 
\begin{equation}\label{eq:ST.Com-Dq-sL03}
 |D|^{-\frac{q}{2}}a|D|^{-\frac{q}{2}} - a^{\frac{1}{2}} |D|^{-q} a^{\frac12} \in \big(\sL_{q^{-1}p,\infty}\big)_0 \qquad \forall a\in \overline{\sA}_+.
\end{equation}
\end{lemma}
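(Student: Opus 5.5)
The whole statement will follow from Lemma~\ref{lem:ST.Com-Dq-sL0}, applied with the exponent $q/2$ and to the elements $a$ and $a^{1/2}$ of $\overline{\sA}$, together with H\"older's inequality (Proposition~\ref{prop:Hoelder}). Throughout, $a\in\overline{\sA}_+$, and $a^{1/2}\in\overline{\sA}$ by continuous functional calculus in the $C^*$-algebra $\overline{\sA}$.

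\emph{First congruence.} I will write
\begin{equation*}
|D|^{-q}a-|D|^{-\frac q2}a|D|^{-\frac q2}=|D|^{-\frac q2}\big[|D|^{-\frac q2},a\big].
\end{equation*}
By Lemma~\ref{lem:ST.Com-Dq-sL0} with exponent $q/2$, the commutator $[|D|^{-q/2},a]$ lies in $(\sL_{2q^{-1}p,\infty})_0$. Moreover, $|D|^{-q/2}\in\sL_{2q^{-1}p,\infty}$. Since $(2q^{-1}p)^{-1}+(2q^{-1}p)^{-1}=(q^{-1}p)^{-1}$, part~(2) of Proposition~\ref{prop:Hoelder} shows that the product lies in $(\sL_{q^{-1}p,\infty})_0$.

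\emph{Second congruence.} Put $b=a^{1/2}$. I will write
\begin{equation*}
|D|^{-q}b^2-b|D|^{-q}b=\big[|D|^{-q},b\big]\,b,
\end{equation*}
and apply Lemma~\ref{lem:ST.Com-Dq-sL0} with exponent $q$. This gives $[|D|^{-q},b]\in(\sL_{q^{-1}p,\infty})_0$. Right multiplication by the bounded operator $b$ preserves this ideal, because $(\sL_{q^{-1}p,\infty})_0$ is a two-sided ideal: it is the set of $T$ with $\mu_j(T)=\op{o}(j^{-q/p})$, and $\mu_j(TB)\leq\mu_j(T)\|B\|$. Hence $|D|^{-q}a\equiv a^{1/2}|D|^{-q}a^{1/2}$ modulo $(\sL_{q^{-1}p,\infty})_0$.

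Combining the two congruences gives~(\ref{eq:ST.Com-Dq-sL02}). Subtracting them, and using that $(\sL_{q^{-1}p,\infty})_0$ is a linear subspace, gives~(\ref{eq:ST.Com-Dq-sL03}).

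There is no real obstacle here. The only point to check is that the H\"older exponents match and that the ``$0$'' (vanishing) version of H\"older's inequality is used; both are supplied by the stated propositions.
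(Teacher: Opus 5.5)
Your proof is correct and is essentially the paper's own argument: the same two identities $|D|^{-q}a=|D|^{-\frac q2}a|D|^{-\frac q2}+|D|^{-\frac q2}\big[|D|^{-\frac q2},a\big]$ and $|D|^{-q}a=a^{\frac12}|D|^{-q}a^{\frac12}+\big[|D|^{-q},a^{\frac12}\big]a^{\frac12}$, combined with Lemma~\ref{lem:ST.Com-Dq-sL0} at exponents $q/2$ and $q$ and the second part of Proposition~\ref{prop:Hoelder}. Your H\"older step correctly lands in $(\sL_{q^{-1}p,\infty})_0$; the paper's text writes $(\sL_{2q^{-1}p,\infty})_0$ at that point, which is a typo.
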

\begin{proof}
Let $a\in \overline{\sA}_+$. We have
\begin{align*}
 |D|^{-q}a & = |D|^{-\frac{q}{2}}a|D|^{-\frac{q}{2}} + |D|^{-\frac{q}{2}}\big[|D|^{-\frac{q}{2}},a\big],\\
& = a^{\frac{1}{2}} |D|^{-q} a^{\frac12} +\big[|D|^{-q},a^{\frac{1}{2}}\big]a^{\frac{1}{2}}. 
\end{align*}
 It follows from Lemma~\ref{lem:ST.Com-Dq-sL0} that $[|D|^{-q},a^{1/2}]\in (\sL_{q^{-1}p,\infty})_0$. This gives the 2nd equality in~(\ref{eq:ST.Com-Dq-sL02}). Thanks to Lemma~\ref{lem:ST.Com-Dq-sL0} we also know that  
 $[|D|^{-q/2},a]\in  (\sL_{2q^{-1}p,\infty})_0$. As $|D|^{-q/2}\in \sL_{2q^{-1}p,\infty}$, the 2nd part of Proposition~\ref{prop:Hoelder} ensures that $|D|^{-q/2}[|D|^{-q/2},a]\in  (\sL_{2q^{-1}p,\infty})_0$. This gives the 1st equality in~(\ref{eq:ST.Com-Dq-sL02}) and completes the proof. 
\end{proof}

\begin{remark}\label{rmk:ST.Com-Dq-sL}
 The first equality in~(\ref{eq:ST.Com-Dq-sL02}) actually holds for all $a\in \overline{\sA}$. If $a\in \sA$, then it actually holds modulo $\sL_{(q+1)^{-1}p,\infty}$. This is also true for the 2nd equality provided that $a\in \sA_{+}$ and $a^{1/2}\in \sA$. 
\end{remark}

\begin{lemma}[see also~\cite{MSZ:LMP22, SZ:FAA23}]
 For any $q>0$ and $a\in \overline{\sA}$, we have
 \begin{align}\label{eq:ST.abs}
  \big||D|^{-\frac{q}{2}}a|D|^{-\frac{q}{2}}\big| &- |D|^{-\frac{q}{2}}|a| |D|^{-\frac{q}{2}}\in  \big(\sL_{q^{-1}p,\infty}\big)_0, \\
  \big(|D|^{-\frac{q}{2}}a|D|^{-\frac{q}{2}}\big)_{\pm} &-|D|^{-\frac{q}{2}}a_\pm|D|^{-\frac{q}{2}}\in  \big(\sL_{q^{-1}p,\infty}\big)_0 
  \quad (\text{if}\ a^*=a). 
 \label{eq:ST.pos-neg-parts}
\end{align}
\end{lemma}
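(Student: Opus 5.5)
The plan is to reduce everything to the commutator estimate of Lemma~\ref{lem:ST.Com-Dq-sL0}, Hölder's inequality (Proposition~\ref{prop:Hoelder}) and the BKS inequality (Proposition~\ref{prop:BKS}) with exponent $\alpha=\frac12$. Throughout set $r=q^{-1}p$. For $b\in\overline{\sA}$ write $T_b:=|D|^{-\frac{q}{2}}b|D|^{-\frac{q}{2}}$. Note that $|D|^{-\frac{q}{2}}\in \sL_{2r,\infty}$, and by Lemma~\ref{lem:ST.Com-Dq-sL0} (applied with $q/2$ in place of $q$) we have $[|D|^{-\frac{q}{2}},b]\in(\sL_{2r,\infty})_0$ for all $b\in\overline{\sA}$.

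First I will compare the squares. We have
\[
T_a^*T_a=|D|^{-\frac{q}{2}}a^*|D|^{-\frac{q}{2}}\,|D|^{-\frac{q}{2}}a|D|^{-\frac{q}{2}}.
\]
I will move the two inner factors $|D|^{-\frac{q}{2}}$ outward, one past $a^*$ and one past $a$. Each move produces an error term which is a product of one commutator in $(\sL_{2r,\infty})_0$ with three factors in $\sL_{2r,\infty}$ (bounded factors are harmless). By part~(2) of Proposition~\ref{prop:Hoelder}, each such term lies in $(\sL_{r/2,\infty})_0$. Hence
\[
T_a^*T_a\equiv |D|^{-q}\,a^*a\,|D|^{-q} \bmod (\sL_{r/2,\infty})_0 .
\]
Applying the same manipulation to $T_{|a|}^2=|D|^{-\frac{q}{2}}|a|\,|D|^{-q}|a|\,|D|^{-\frac{q}{2}}$ gives
\[
T_{|a|}^2\equiv |D|^{-q}|a|^2|D|^{-q} \bmod (\sL_{r/2,\infty})_0 .
\]
Since $|a|^2=a^*a$, we conclude that $T_a^*T_a-T_{|a|}^2\in(\sL_{r/2,\infty})_0$.

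Next I will take square roots. Both $T_a^*T_a$ and $T_{|a|}^2$ are positive operators, and $T_{|a|}\geq 0$, so $(T_{|a|}^2)^{\frac12}=T_{|a|}$, while $(T_a^*T_a)^{\frac12}=|T_a|$. The BKS inequality with $\alpha=\frac12$, including its refinement for $(\sL_{p,\infty})_0$, then gives $|T_a|-T_{|a|}\in(\sL_{r,\infty})_0$. This is exactly (\ref{eq:ST.abs}).

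Finally, for $a^*=a$ the operator $T_a$ is selfadjoint. Linearity of $b\mapsto T_b$ gives $T_{a_\pm}=\frac12(T_{|a|}\pm T_a)$, while $(T_a)_\pm=\frac12(|T_a|\pm T_a)$. Therefore $(T_a)_\pm-T_{a_\pm}=\frac12(|T_a|-T_{|a|})$, and (\ref{eq:ST.pos-neg-parts}) follows from (\ref{eq:ST.abs}).

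The step requiring the most care is the bookkeeping in the first step. One must check that every error term contains at least one factor from the ``little-o'' class $(\sL_{2r,\infty})_0$, so that the error lands in $(\sL_{r/2,\infty})_0$ and not merely in $\sL_{r/2,\infty}$. This is what makes the BKS refinement applicable.
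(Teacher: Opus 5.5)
Your proof is correct and follows the paper's argument. You compare the squares modulo $(\sL_{(2q)^{-1}p,\infty})_0$ using Lemma~\ref{lem:ST.Com-Dq-sL0} and H\"older, take square roots with the BKS inequality at exponent $\frac12$ (including its $(\cdot)_0$ refinement), and get the $\pm$ statement from $(T_a)_\pm-T_{a_\pm}=\frac12\big(|T_a|-T_{|a|}\big)$. The only cosmetic difference is that the paper writes the difference of squares directly as $|D|^{-\frac{q}{2}}\big(a^*[|D|^{-q},a]-|a|\,[|D|^{-q},|a|]\big)|D|^{-\frac{q}{2}}$ using commutators with $|D|^{-q}$, whereas you commute with $|D|^{-\frac{q}{2}}$ and compare both squares to $|D|^{-q}a^*a|D|^{-q}$.
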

\begin{proof}
Let $q>0$ and $a\in\overline{\sA}$. We have
\begin{align}
 \big| |D|^{-\frac{q}{2}}a|D|^{-\frac{q}{2}}\big|^2- \big( |D|^{-\frac{q}{2}}|a||D|^{-\frac{q}{2}}\big)^2 & =  
 |D|^{-\frac{q}{2}} \big(a^*|D|^{-q}a-|a| |D|^{-q}|a|\big) |D|^{-\frac{q}{2}} \nonumber \\
 &= |D|^{-\frac{q}{2}} a^*\big[|D|^{-q},a\big] |D|^{-\frac{q}{2}}- |D|^{-\frac{q}{2}}|a| \big[|D|^{-q},|a|\big] |D|^{-\frac{q}{2}}. 
 \label{eq:ST.square-difference}
\end{align}
 By Lemma~\ref{lem:ST.Com-Dq-sL0} the commutators $[|D|^{-q},a]$ and $[|D|^{-q},|a|]$ are contained in $(\sL_{q^{-1}p,\infty})_0$. Proposition~\ref{prop:Hoelder} then ensures that both summands 
 $ |D|^{-\frac{q}{2}} a^*[|D|^{-q},a] |D|^{-\frac{q}{2}}$ and $|D|^{-\frac{q}{2}}|a| [|D|^{-q},|a|] |D|^{-\frac{q}{2}}$ are in $(\sL_{2q^{-1}p,\infty})_0$, and so the difference 
 $| |D|^{-q/2}a|D|^{-q/2}|^2- ( |D|^{-q/2}|a||D|^{-q/2})^2$ is in $(\sL_{2q^{-1}p,\infty})_0$. Applying Proposition~\ref{prop:BKS} then gives~(\ref{eq:ST.abs}). 
 
Suppose that $a^*=a$. We have
\begin{align*}
 \big( |D|^{-\frac{q}{2}}a|D|^{-\frac{q}{2}}\big)_{\pm}&=\frac{1}{2}\left(\big| |D|^{-\frac{q}{2}}a|D|^{-\frac{q}{2}}\big| \pm |D|^{-\frac{q}{2}}a|D|^{-\frac{q}{2}}\right),\\ 
 |D|^{-\frac{q}{2}}a_\pm|D|^{-\frac{q}{2}}&= \frac{1}{2}|D|^{-\frac{q}{2}}\left(|a|\pm a\right)|D|^{-\frac{q}{2}}. 
\end{align*}
Thus, 
\begin{equation*}
  \big( |D|^{-\frac{q}{2}}a|D|^{-\frac{q}{2}}\big)_{\pm}-  |D|^{-\frac{q}{2}}a_\pm|D|^{-\frac{q}{2}}= \frac{1}{2}\Big( 
  \big| |D|^{-\frac{q}{2}}a|D|^{-\frac{q}{2}}\big|- |D|^{-\frac{q}{2}}|a||D|^{-\frac{q}{2}}\Big).
\end{equation*}
Combining this with~(\ref{eq:ST.abs}) then gives~(\ref{eq:ST.pos-neg-parts}).  The proof is complete.
\end{proof}

\begin{remark}
 If $a\in \sA$ is invertible, we can actually get an even stronger result. In this case $|a|\in\sA$, and so by using the first part of Lemma~\ref{lem:Intro.Com-Dq} we see that the right-hand side of~(\ref{eq:ST.square-difference}) is in $\sL_{(2q+1)^{-1}p}$. 
Proposition~\ref{prop:BKS} then ensures that the relation~(\ref{eq:ST.abs}) holds in the smaller ideal $\sL_{(q+1/2)^{-1}p,\infty}$ rather than in  $(\sL_{pq^{-1},\infty})_0$. Likewise, the relation~(\ref{eq:ST.pos-neg-parts}) holds in  $\sL_{(q+1/2)^{-1}p,\infty}$. 
\end{remark}

\begin{remark}
 A version of~(\ref{eq:ST.pos-neg-parts}) is established in~\cite[Lemma~1.15]{MSZ:LMP22} (see also \cite[Lemma~5.10]{SZ:FAA23}). The approach in~\cite{MSZ:LMP22, SZ:FAA23} relies on a deep result of~\cite{HSZ:PLMS22} which extends the BKS inequality to $q$-convexifications of fully symmetric normed ideals. As the above proof shows, the original version of the BKS inequality as stated by Proposition~\ref{prop:BKS} is enough for our purpose. 
\end{remark}

We are now in a position to prove Proposition~\ref{prop:Intro.Weyl-BirS}
\begin{proof}[Proof of Proposition~\ref{prop:Intro.Weyl-BirS}] 
We need to show that, given any $q>0$, for all $a\in \overline{\sA}$, we have
\begin{gather}
 \label{eq:ST.Weyl-|DpaDp|-tau}
 \lim_{j\rightarrow \infty}j^{\frac{q}{p}} \mu_j\big(|D|^{-\frac{q}{2}}a|D|^{-\frac{q}{2}}\big) = \tau\left[|a|^{\frac{p}{q}}\right]^{\frac{q}{p}},\\
 \label{eq:ST.Weyl-DpaDp-tau}
 \lim_{j\rightarrow \infty} j^{\frac{q}{p}} \lambda_j^\pm\big(|D|^{-\frac{q}{2}}a|D|^{-\frac{q}{2}}\big) =\tau\left[\big(a_\pm\big)^{\frac{p}{q}}\right]^{\frac{q}{p}} \quad (\text{if}\ a^*=a). 
\end{gather}

Let $a\in \overline{\sA}$. It follows from~(\ref{eq:ST.abs}) and Proposition~\ref{prop:NCInt.Bir-Sol}  that 
\begin{equation*}
  \lim_{j\rightarrow \infty}j^{\frac{q}{p}} \mu_j\big(|D|^{-\frac{q}{2}}a|D|^{-\frac{q}{2}}\big)= 
   \lim_{j\rightarrow \infty}j^{\frac{q}{p}} \lambda_j\big(   |D|^{-\frac{q}{2}}|a| |D|^{-\frac{q}{2}}\big),
\end{equation*}
provided any of these limits exists. Likewise, if $a^*=a$, then by using~(\ref{eq:ST.pos-neg-parts}) and Proposition~\ref{prop:NCInt.Bir-Sol}  we see that
\begin{equation*}
  \lim_{j\rightarrow \infty}j^{\frac{q}{p}} \lambda_j^\pm\big(|D|^{-\frac{q}{2}}a|D|^{-\frac{q}{2}}\big) =   
  \lim_{j\rightarrow \infty}j^{\frac{q}{p}} \mu_j\left(\big(|D|^{-\frac{q}{2}}a|D|^{-\frac{q}{2}}\big)_\pm\right) = 
  \lim_{j\rightarrow \infty}j^{\frac{q}{p}} \lambda_j\big( |D|^{-\frac{q}{2}}a_\pm|D|^{-\frac{q}{2}} \big).
\end{equation*}
Therefore, in order to establish the spectral asymptotics~(\ref{eq:ST.Weyl-|DpaDp|-tau})--(\ref{eq:ST.Weyl-DpaDp-tau}) it is enough to show that
\begin{equation}\label{eq:ST.spectral-asymptotics-pos}
 \lim_{j\rightarrow \infty}  j^{\frac{q}{p}} \lambda_j\big(|D|^{-\frac{q}{2}}a|D|^{-\frac{q}{2}} \big)=  \tau\left[a^{\frac{p}{q}}\right]^{\frac{q}{p}} \qquad \forall a\in \overline{\sA}_+. 
\end{equation}

Let $a\in \overline{\sA}_+$. If we use~(\ref{eq:ST.Com-Dq-sL03}) and Proposition~\ref{prop:NCInt.Bir-Sol}, then we get 
\begin{equation*}
  \lim_{j\rightarrow \infty}  j^{\frac{q}{p}} \lambda_j\big(|D|^{-\frac{q}{2}}a|D|^{-\frac{q}{2}} \big) =  
  \lim_{j\rightarrow \infty}  j^{\frac{q}{p}} \lambda_j\big(a^{\frac{1}{2}}|D|^{-q}a^{\frac{1}{2}}\big), 
\end{equation*}
provided any of these limits exists. It follows that~(\ref{eq:ST.spectral-asymptotics-pos}) is equivalent to 
\begin{equation}\label{eq:ST.spectral-asymptotics-pos2}
 \lim_{j\rightarrow \infty}  j^{\frac{q}{p}} \lambda_j\big(a^{\frac{1}{2}}|D|^{-q}a^{\frac{1}{2}}\big)=  \tau\left[a^{\frac{p}{q}}\right]^{\frac{q}{p}} \qquad \forall a\in \overline{\sA}_+. 
\end{equation}
This reduces the proof of Proposition~\ref{prop:Intro.Weyl-BirS} to establishing~(\ref{eq:ST.spectral-asymptotics-pos2}). 

Let $a\in \sA_{++}$. If $\alpha \in (0,1]$ and $s>1$, then it follows from Lemma~\ref{lem:Intro.refined-CSZ}  that
\begin{equation*}
  \left( a^{\frac12}|D|^{-\alpha} a^{\frac12}\right)^{s} - |D|^{-\alpha s} a^{s} \in \big(\sL_{(\alpha s)^{-1}p,\infty}\big)_0. 
\end{equation*}
Moreover, by~(\ref{eq:ST.Com-Dq-sL03}) we have
\begin{equation*}
 |D|^{-\alpha s} a^{s}-a^{\frac{s}2}|D|^{-\alpha s} a^{\frac{s}2} \in  \big(\sL_{(\alpha s)^{-1}p,\infty}\big)_0. 
\end{equation*}
Thus, 
\begin{equation*}
  \left( a^{\frac12}|D|^{-\alpha} a^{\frac12}\right)^{s} - a^{\frac{s}2}|D|^{-\alpha s} a^{\frac{s}2}  \in \big(\sL_{(\alpha s)^{-1}p,\infty}\big)_0. 
\end{equation*}
Combining this with Proposition~\ref{prop:NCInt.Bir-Sol} we then get
\begin{equation}\label{eq:ST.refined-CSZ-limits}
  \left[ \lim_{j\rightarrow \infty}  j^{\frac{\alpha}{p}} \lambda_j\big( a^{\frac12}|D|^{-\alpha} a^{\frac12}\big)\right]^s= 
  \lim_{j\rightarrow \infty}  j^{\frac{\alpha s}{p}} \lambda_j\left(\big( a^{\frac12}|D|^{-\alpha} a^{\frac12}\big)^s\right)
  =  \lim_{j\rightarrow \infty}  j^{\frac{\alpha s}{p}} \lambda_j\big(a^{\frac{s}2}|D|^{-\alpha s} a^{\frac{s}2}\big),
  \end{equation}
provided any of those limits exists. 

Assume now that Condition~(W) holds. Lemma~\ref{lem:ST.equiv-conformal-Weyl} then ensures that, for all $a\in \overline{\sA}_+$, we have
\begin{equation}\label{eq:ST.spectral-asymptotics-pos2-q=2}
  \lim_{j\rightarrow \infty}  j^{\frac{2}{p}} \lambda_j\big(a^{\frac{1}{2}}D^{-2}a^{\frac{1}{2}}\big)=  \tau\left[a^{\frac{p}{2}}\right]^{\frac{2}{p}}.
\end{equation}
This proves~(\ref{eq:ST.spectral-asymptotics-pos2}) for $q=2$. 

Let $q\leq 1$ and $a\in \sA_{++}$. Specializing~(\ref{eq:ST.refined-CSZ-limits}) to $\alpha=q$ and $s=2q^{-1}$ (so as to have $\alpha s=2$), and using~(\ref{eq:ST.spectral-asymptotics-pos2-q=2}) we obtain
\begin{equation}\label{eq:ST.spectral-asymptotics-pos2-q=<1}
\lim_{j\rightarrow \infty}  j^{\frac{q}{p}} \lambda_j\big( a^{\frac12}|D|^{-q} a^{\frac12}\big)= \left[  
\lim_{j\rightarrow \infty}  j^{\frac{2}{p}} \lambda_j\big(a^{\frac{1}{q}}D^{-2} a^{\frac{1}{q}}\big)\right]^{\frac{q}{2}} = 
\tau\left[a^{\frac{p}{q}}\right]^{\frac{q}{p}}. 
\end{equation}
 This shows that the spectral asymptotic~(\ref{eq:ST.spectral-asymptotics-pos2}) holds for $a\in \sA_{++}$. Using  Proposition~\ref{prop:NCInt.Bir-Sol} and Lemma~\ref{lem:STC.density}  and arguing as in the proof of Lemma~\ref{lem:ST.equiv-conformal-Weyl} shows that~(\ref{eq:ST.spectral-asymptotics-pos2}) holds for all $a\in \overline{\sA}_+$ if $q\leq 1$. In particular, for $q=1$ we get
 \begin{equation}\label{eq:ST.spectral-asymptotics-pos2-q=1}
  \lim_{j\rightarrow \infty}  j^{\frac{1}{p}} \lambda_j\big(a^{\frac{1}{2}}|D|^{-1}a^{\frac{1}{2}}\big)=  \tau\left[a^{p}\right]^{\frac{1}{p}} \qquad \forall a\in \overline{\sA}_+.
\end{equation}

It remains to deal with the case $q>1$. In this case, given any $a\in\sA_{++}$, in~(\ref{eq:ST.refined-CSZ-limits}) we may take $\alpha=1$ and $s=q$ and use~(\ref{eq:ST.spectral-asymptotics-pos2-q=1}) to get
\begin{equation*}
 \lim_{j\rightarrow \infty}  j^{\frac{q}{p}} \lambda_j\big(a^{\frac{q}2}|D|^{-q} a^{\frac{q}2}\big)=  
 \left[ \lim_{j\rightarrow \infty}  j^{\frac{1}{p}} \lambda_j\big( a^{\frac12}|D|^{-1} a^{\frac12}\big)\right]^q=  \tau\left[a^{p}\right]^{\frac{q}{p}}.  
\end{equation*}
This shows that the spectral asymptotic~(\ref{eq:ST.spectral-asymptotics-pos2}) is satisfied by all $a\in \sA_{++}^q$. Using Lemma~\ref{lem:STC.density} and Proposition~\ref{prop:NCInt.Bir-Sol} as above shows that it is satisfied 
by all $a\in \sA_{+}$. This completes the proof of Proposition~\ref{prop:Intro.Weyl-BirS}. 
\end{proof}

\begin{remark}\label{prop:ST.Weyl-DqaDq-q=2}
For general values $q>0$, the proof of Proposition~\ref{prop:Intro.Weyl-BirS} relies on Lemma~\ref{lem:Intro.refined-CSZ}. 
However, as the proof of Proposition~\ref{prop:Intro.Weyl-BirS} shows, for $q=2$ we get the spectral asymptotics~(\ref{eq:ST.Weyl-|DpaDp|-tau})--(\ref{eq:Intro.Weyl-DpaDp-tau}) directly from Lemma~\ref{lem:Intro.Com-Dq}, which is a much easier result. Therefore, Lemma~\ref{lem:Intro.refined-CSZ} is not needed if we are interested in proving~(\ref{eq:ST.Weyl-|DpaDp|-tau})--(\ref{eq:ST.Weyl-DpaDp-tau}) only for $q=2$.
 \end{remark}

\subsection{Semiclassical Weyl law -- Proof of Theorem~\ref{thm:Intro.SC-Weyl-lawDq}}
For $q>0$ and $V=V^*\in \sA$, we consider the Schr\"odinger operators,
\begin{equation}
 H_V^{(q)}(h):= h^{2q}\left(D^2\right)^{q}+V, \qquad h>0. 
\end{equation}
 Here $H_V^{(q)}(h)$ with domain $\dom(D^2)$ is a selfadjoint operator which is bounded from below and has pure discrete spectrum. 
 
 We are interested in the behavior of the spectrum of $H_V^{(q)}(h)$ under the semiclassical limit $h\rightarrow 0^+$.  We list the eigenvalues of $H_V^{(q)}(h)$ as a sequence, 
\begin{equation}\label{eq:ST.eigenvalue-Schroedinger}
 \lambda_0\big( H_V^{(q)}(h)\big) \leq   \lambda_1\big( H_V^{(q)}(h)\big) \leq \cdots, 
\end{equation}
where each eigenvalue is repeated according to multiplicity. Given any energy level $\lambda\in \R$, we then define
\begin{equation}\label{eq:ST.counting-Schroedinger1}
 N\big(H_V^{(q)};\lambda\big):= \#\big\{j; \  \lambda_j\big( H_V^{(q)}\big) <\lambda\big\}. 
\end{equation}
In particular, in the notation of Proposition~\ref{prop:NCInt.BSP-NCInt} we have $N^{-}(H_V^{(q)})=N(H_V^{(q)};0)$.  

\begin{proof}[Proof of Theorem~\ref{thm:Intro.SC-Weyl-lawDq}] 
Assume Condition~\textup{(W)} holds.  Let $q>0$ and  $V^*=V\in \overline{\sA}$. We have to determine the limit of $h^{p} N(H_V^{(q)}(h);\mu,\lambda)$ under the semiclassical limit $h\rightarrow 0^+$. 

Combining Proposition~\ref{prop:Intro.Weyl-BirS}  with the Birman-Schwinger principle~(\ref{eq:SC.NHV=NKV-SC1}) for $H_0=(D^2)^{q}$ gives
 \begin{equation*}
  \lim_{h\rightarrow 0^+} h^{p} N\big(H_V^{(q)}(h);\infty,0\big)= 
 \left[ \lim_{j\rightarrow \infty} j^{\frac{2q}{p}} \lambda_j^{-}\left(|D|^{-q}V|D|^{-q}\right)\right]^{\frac{p}{2q}}= 
  \tau\Big[V_{-}^{\frac{p}{2q}}\Big]. 
\end{equation*}
Given any $\lambda\in \R$, by substituting $V-\lambda$ for $V$ above we then get
 \begin{equation}\label{eq:ST.SC-Weyl-Dq1}
  \lim_{h\rightarrow 0^+} h^{p} N\big(H_V^{(q)}(h);\lambda\big)=  \lim_{h\rightarrow 0^+} h^{p} N\big(H_{V-\lambda}^{(q)}(h);0\big)
  =\tau\Big[\left(V-\lambda\right)_{-}^{\frac{p}{2q}}\Big]. 
  \end{equation}
 This proves the result. 
\end{proof}

\begin{remark}
The most important case for the semiclassical Weyl law~(\ref{eq:ST.SC-Weyl-Dq1}) is $q=1$. In this case, the asymptotic is a consequence of  the spectral asymptotics~(\ref{eq:Intro.Weyl-DpaDp-tau}) for $q=2$. As mentioned in Remark~\ref{prop:ST.Weyl-DqaDq-q=2}, for $q=2$ that spectral asymptotics can be deduced from Lemma~\ref{lem:Intro.Com-Dq} without using Lemma~\ref{lem:Intro.refined-CSZ}. Therefore, for $q=1$ we have a much simpler proof of  the semiclassical Weyl law~(\ref{eq:ST.SC-Weyl-Dq1}). 
 \end{remark}

\begin{remark}\label{rmk:ST.SC-WLaw-interval}
It is a routine argument to deduce from~(\ref{eq:ST.SC-Weyl-Dq1}) semiclassical Weyl laws for the number of eigenvalues in bands $(\mu, \lambda)$, $\mu<\lambda$. Given $\mu<\lambda$, set 
\begin{equation*}
 N\big(H_V^{(q)}(h);\mu,\lambda\big):= \#\left\{j; \  \lambda_j\big( H_V^{(q)}(h)\big) \in (\mu,\lambda)\right\}. 
\end{equation*}
For all $\epsilon \in (0,\lambda-\mu)$, we have
\begin{equation*}
   h^{p}N\big(H_V^{(q)};\lambda\big) -  h^{p}N\big(H_V^{(q)};\mu-\epsilon\big)\leq h^{p}N\big(H_V^{(q)};\mu,\lambda\big)\leq  h^{p}N\big(H_V^{(q)};\lambda\big) -  h^{p}N\big(H_V^{(q)};\mu-\epsilon\big). 
\end{equation*}
Combining this with the semiclassical Weyl law~(\ref{eq:ST.SC-Weyl-Dq1}) gives
\begin{gather*}
 \limsup_{h\rightarrow 0^+} h^{p}N\big(H_V^{(q)};\mu,\lambda\big) \leq \tau\Big[\left(V-\lambda\right)_{-}^{\frac{p}{2q}}\Big]- \tau\Big[\left(V-\mu-\epsilon\right)_{-}^{\frac{p}{2q}}\Big],\\
  \liminf_{h\rightarrow 0^+} h^{p}N\big(H_V^{(q)};\mu,\lambda\big) \geq \tau\Big[\left(V-\lambda\right)_{-}^{\frac{p}{2q}}\Big]- \tau\Big[\left(V-\mu+\epsilon\right)_{-}^{\frac{p}{2q}}\Big]. 
 \end{gather*}
We claim that 
\begin{equation}\label{eq:ST.SC-Weyl-continuity-tau}
 \lim_{\epsilon \rightarrow 0^+}\tau\Big[\left(V-\mu\pm\epsilon\right)_{-}^{\frac{p}{2q}}\Big]= \tau\Big[\left(V-\mu\right)_{-}^{\frac{p}{2q}}\Big]. 
\end{equation}
This would imply the semiclassical Weyl law, 
 \begin{equation}\label{eq:ST.SC-WLaw-interval}
  \lim_{h\rightarrow 0^+}  h^{p}   N\big(H_V^{(q)}(h);\mu,\lambda\big)=  \tau\Big[\left(V-\lambda\right)_{-}^{\frac{p}{2q}}\Big]- \tau\Big[\left(V-\mu\right)_{-}^{\frac{p}{2q}}\Big].
\end{equation}

To get~(\ref{eq:ST.SC-Weyl-continuity-tau}), set $x=V-\mu$ and $r=p(2q)^{-1}$.  Using the positivity of $\tau$ and the uniform continuity of the function $t\rightarrow t_{-}^r$ on the compact set $\Sp(x)$ we get
\begin{align*}
 \left|\tau\left[(x\pm\epsilon)_{-}^r\right]-\tau\left[x_{-}^r\right]\right| & \leq \tau(1) \left\| (x\pm\epsilon)_{-}^r-x_{-}^r\right\| \\
 & \leq \tau(1) \sup_{t\in \Sp(x)} \left| (t\pm\epsilon)_{-}^r-t_{-}^r\right|  \longrightarrow 0 \qquad \text{as $\epsilon\rightarrow 0^+$}.   
\end{align*}
This gives~(\ref{eq:ST.SC-Weyl-continuity-tau}) and proves the semiclassical Weyl law~(\ref{eq:ST.SC-WLaw-interval}). 
\end{remark}

\subsection{Integration formulas -- Proof of Theorem~\ref{thm:Intro.Integration}} 
As a further consequence of Proposition~\ref{prop:Intro.Weyl-BirS}  we prove Theorem~\ref{thm:Intro.Integration}. 

\begin{proof}[Proof of Theorem~\ref{thm:Intro.Integration}] 
Let $a\in \overline{\sA}$ and $q>0$. The spectral asymptotics~(\ref{eq:ST.Weyl-|DpaDp|-tau}) for $q=p$ gives
 \begin{equation}\label{eq:ST.Integration-proof.muj}
 \lim_{j\rightarrow \infty} j \mu_j\left( |D|^{-\frac{p}{2}}a|D|^{-\frac{p}{2}}\right)= \tau\left[ |a|\right]. 
\end{equation}
 Combining this with the 3rd part of Proposition~\ref{prop:NCInt.Weyl-strong} shows that the operator $ ||D|^{-p/2}a|D|^{-p/2}|$ is spectrally measurable, and we have
  \begin{equation*}
  \bint \big| |D|^{-\frac{p}{2}}a|D|^{-\frac{p}{2}}\big|=  \tau\left[ |a|\right]. 
\end{equation*}
 
Likewise, if $a^*=a$, then by~(\ref{eq:Intro.Weyl-DpaDp-tau}) we have
\begin{equation*}
  \lim_{j\rightarrow \infty} j \lambda_j^\pm\left(|D|^{-\frac{p}{2}}a|D|^{-\frac{p}{2}}\right)= \tau\left[ a_\pm\right]. 
\end{equation*}
It then follows from Proposition~\ref{prop:NCInt.Weyl-strong} that the operator $ |D|^{-p/2}a|D|^{-p/2}$ is spectrally measurable, and we have
\begin{align}
 \bint |D|^{-\frac{p}{2}}a|D|^{-\frac{p}{2}} & =  \lim_{j\rightarrow \infty}j \lambda_j^+\big(|D|^{-\frac{p}{2}}a|D|^{-\frac{p}{2}}\big) - \lim_{j\rightarrow \infty}j \lambda_j^{-}\big(|D|^{-\frac{p}{2}}a|D|^{-\frac{p}{2}}\big) \nonumber\\ 
&  =\tau\big[a_+\big]-\tau\big[a_{-}\big]
\label{eq:Int-SC.selfadjoint}\\
& =\tau\big[a\big]. \nonumber
\end{align}

If $a$ is not selfadjoint, then the selfadjoint part of $ |D|^{-p/2}a|D|^{-p/2}$ is equal to 
\begin{equation*}
\frac{1}{2}\left(|D|^{-\frac{p}{2}}a|D|^{-\frac{p}{2}}+|D|^{-\frac{p}{2}}a^*|D|^{-\frac{p}{2}} \right)= 
 |D|^{-\frac{p}{2}}(\Re a)|D|^{-\frac{p}{2}}. 
\end{equation*}
Likewise, its imaginary part is equal to $ |D|^{-p/2}(\Im a)|D|^{-p/2}$. Therefore, the real and imaginary parts both are spectrally measurable, and so 
$ |D|^{-p/2}a|D|^{-p/2}$ is spectrally measurable. Moreover, by using~(\ref{eq:Int-SC.selfadjoint}) we get 
\begin{align*}
 \bint  |D|^{-\frac{p}{2}}a |D|^{-\frac{p}{2}} & = \bint \Re\left( |D|^{-\frac{p}{2}}a|D|^{-\frac{p}{2}}\right) + i \bint \Im\left( |D|^{-\frac{p}{2}}a|D|^{-\frac{p}{2}}\right)\\
& = \tau\left[\Re a\right]+i\tau\left[\Im a\right]\\
& =\tau[a]. 
\end{align*}

Finally, by~(\ref{eq:ST.Com-Dq-sL03}) and Remark~\ref{rmk:ST.Com-Dq-sL} we have
\begin{equation*}
  a|D|^{-p} = |D|^{-\frac{p}{2}}a|D|^{-\frac{p}{2}} \quad \bmod \big(\sL_{1,\infty})_0. 
\end{equation*}
It then follows from Proposition~\ref{prop:NCInt.Bir-Sol}  that $a|D|^{-p}$ is spectrally measurable. Moreover, 
as the operators in $(\sL_{1,\infty})_0$ annihilate the NC integral, we have 
\begin{equation}\label{eq:ST.bint-aDp-tau}
 \bint a|D|^{-p} = \bint |D|^{-\frac{p}{2}}a|D|^{-\frac{p}{2}}=\tau\big[a\big]. 
\end{equation}
This completes the proof of Theorem~\ref{thm:Intro.Integration}. 
\end{proof}

\begin{remark}
 Under the Tauberian condition~(H) the integration formulas~(\ref{eq:ST.bint-aDp-tau}) can also be deduced from~\cite[Theorem~1.5]{SZ:JFA17}. We can also deduce them from a slightly stronger form of Condition~\textup{(Z)} by using~\cite[Theorem~1.4]{SUZ:IUMJ17} (see also~\cite[Theorem~1.2.7]{SZ:Ast23}).  
\end{remark}

\begin{remark}\label{rmk:ST-identification-tau}
 The formula~(\ref{eq:ST.bint-aDp-tau}) identifies $\tau$ with the functional $a\rightarrow \bint a|D|^{-p}$. Thus, given any $q>0$ and $V^*=V\in \overline{\sA}$, we may rewrite the semiclassical Weyl law~(\ref{eq:ST.SC-Weyl-Dq1}) in the form, 
\begin{gather*}
  \lim_{h\rightarrow 0^+} h^{p}N^{-}\big( H_V^{(q)}(h);\lambda\big) = \bint \left(V-\lambda\right)_{-}^{\frac{p}{2q}}|D|^{-p}, \quad \lambda\in \R. 
  \end{gather*}
\end{remark}

\begin{remark}
 By using the spectral asymptotics~(\ref{eq:Intro.Weyl-|DpaDp|-tau})--(\ref{eq:Intro.Weyl-DpaDp-tau}) and arguing as in the proof of Theorem~\ref{thm:Intro.Integration} above, it can be further shown that, for any $q>0$, $q\neq p$, the operators $||D|^{-q/2}a|D|^{-q/2}|^{p/q}$ and  $(|D|^{-q/2}a|D|^{-q/2})_\pm^{p/q}$ (if $a^*=a$), are spectrally measurable, and we have 
\begin{equation*}
   \bint \big| |D|^{-\frac{q}{2}}a|D|^{-\frac{q}{2}}\big|^{\frac{p}{q}}=  \tau\left[ |a|^{\frac{p}{q}}\right], \qquad 
   \bint \big( |D|^{-\frac{q}{2}}a|D|^{-\frac{q}{2}}\big)_\pm^{\frac{p}{q}}=  \tau\left[ (a_\pm)^{\frac{p}{q}}\right]. 
\end{equation*}
\end{remark}

\subsection{Extension to Unbounded Potentials -- Proofs of Theorem~\ref{thm:Intro.SC-Weyl-law-sVr} and Theorem~\ref{thm:Intro.Integration-sVr}} 
In this section, we shall prove Proposition~\ref{prop:Intro.Lq-asymptotics} and use it to obtain Theorem~\ref{thm:Intro.SC-Weyl-law-sVr} and Theorem~\ref{thm:Intro.Integration-sVr}. These results provide a general paradigm for extending to suitable classes of unbounded potentials the spectral asymptotics of Proposition~\ref{prop:Intro.Weyl-BirS} and the
semiclassical Weyl law and integration formulas provided by Theorem~\ref{thm:Intro.SC-Weyl-lawDq} and Theorem~\ref{thm:Intro.Integration}. 

The approach of this section is a mere elaboration of the well-known approach of Birman-Solomyak~(see, e.g., \cite{BS:AMST80})  and Simon~\cite{Si:TAMS76} to semiclassical Weyl laws for $L^p$-potentials. The only technical difference is the fact that, due to the noncommutativity of our general setup, we need to work with noncommutative $L^p$-spaces. 

\subsubsection{Cwikel-type estimates for spectral triples} As mentioned in the Introduction (Section~\ref{chap:Intro}), on $\R^n$ or bounded domains in $\R^n$, the extensions to $L^r$-potentials can be obtained by combining the perturbation theory of Birman-Solomyak (see Proposition~\ref{prop:NCInt.Bir-Sol})  and with deep estimates of Cwikel~\cite{Cw:AM77} and its generalizations (see, e.g., \cite{LSZ:PLMS20, Ro:JST22, RS:EMS21, So:PLMS95, SZ:MS22} and the references therein). In the setup of spectral triples we can formulate Cwikel-type estimates for spectral triples as follows. 

For $s>0$ denote by $\sH^{(s)}$ the Sobolev space with Bessel potential $(1+D^2)^{s/2}$, i.e., the Hilbert space consisting of $\dom(1+D^2)^{s/2}$ equipped with the Hilbert norm $\xi \rightarrow \|(1+D^2)^{s/2}\xi\|$. We denote by $\sH^{(-s)}$ its antilinear dual, i.e., the Hilbert space of continuous antilinear forms on $\sH^{(s)}$. 
Any operator $V\in \sL(\sH^{(s)},\sH^{(-s)})$ defines a quadratic form on $\sH^{(s)}$ by 
\begin{equation*}
 Q_V(\xi,\eta)=\acou{V\xi}{\eta}, \qquad \xi,\eta \in \sH^{(s)}, 
\end{equation*}
where $\acou{\cdot}{\cdot}:\sH^{(-s)}\times \sH^{(s)}\rightarrow \C$ is the duality pairing. 
The adjoint $V^*\in \sL(\sH^{(s)},\sH^{(-s)})$ is given by
\begin{equation*}
 \acou{V^*\xi}{\eta}= \overline{\acou{V\eta}{\xi}}, \qquad \xi,\eta \in \sH^{(s)}. 
\end{equation*}
In particular, $Q_{V^*}=\overline{Q_V}$, and so $Q_V$ is symmetric if and only if $V^*=V$. Note also that any $T\in \sL(\sH)$ induces an operator $T:\sH^{(s)}\rightarrow \sH^{(s)}$ such that
\begin{equation*}
 \acou{T\xi}{\eta}=\scal{T\xi}{\eta} \qquad \forall \xi, \eta \in \sH. 
\end{equation*}
This yields a continuous $*$-embedding of $\sL(\sH)$ into $\sL(\sH^{(s)},\sH^{(-s)})$, and so this induces a continuous $*$-embedding of $\overline{\sA}$ into $\sL(\sH^{(s)},\sH^{(-s)})$. 

Given any $q>0$, the Hilbert space $\sH^{(q)}$ is contained in the domain of the quadratic form of $(D^2)^q$. In addition, 
$|D|^{-q}$ maps continuously $\sH$ to $\sH^{(q)}$. It also extends to a bounded operator $|D|^{-q}:\sH^{(-q)}\rightarrow \sH$ given by
\begin{equation*}
 \scal{|D|^{-q}\xi}{\eta}= \acou{\xi}{|D|^{-q}\eta}, \qquad \xi\in \sH^{(-q)}, \ \eta \in \sH. 
\end{equation*}
Thus, if $V\in \sL(\sH^{(q)},\sH^{(-q)})$, then the operator $|D|^{-q}V|D|^{-q}$ is bounded on $\sH$. This operator is selfadjoint (resp., compact) if $V$ is selfadjoint (resp., compact). 

It follows from this that if $V=V^*\in \sL(\sH^{(q)},\sH^{(-q)})$ is compact, then the symmetric quadratic form $Q_V$ is $(D^2)^q$-form compact. This allows us to define the operator $H_V^{(q)}:=(D^2)^q+V$ as a form sum, i.e., as the selfadjoint operator whose quadratic form is $Q_{(D^2)^q}+Q_V$ . This operator is bounded from below and has compact resolvent, and hence it has pure discrete spectrum (see, e.g., \cite{Si:AMS15}). 

Assume now that Condition~\textup{(W)} holds and $\tau$ is the restriction of a positive faithful trace $\tau:\sM\rightarrow \C$, where $\sM\subseteq \sL(\sH)$ is the von Neumann algebra generated by $\sA$, i.e., its closure with respect to the weak operator topology. As $\tau$ is finite, for any $r\in (0,\infty)$ we may  define the noncommutative  $L^r$-space $L_r(\sM)$\footnote{Throughout  this monograph we shall use subscripts for the exponents of NC $L^p$-spaces.} as the closure of $\sM$ with respect to the quasi-norm, 
\begin{equation*}
 \|x\|_r= \left(\tau\big[|x|^r\big]\right)^{\frac1{r}}, \qquad x\in \sM. 
\end{equation*}
For $r\geq 1$, we get a Banach space, since in this case $\|\cdot\|_r$ is a norm. 
We refer to~\cite{FK:PJM86, Ku:TAMS58} for more background on noncommutative $L^p$-spaces. For instance, if $(X,\mu)$ is a finite measured space, then we may take $\sM=L^\infty(X,\mu)$ and $\tau(f)=\int_X f(x)d\mu(x)$, and in this case $L_r(\sM)=L^r(X,\mu)$. 

We note that for $r'\leq r$ we have a continuous inclusion of $L_r(\sM)$ into $L_{r'}(\sM)$. Moreover, the map $x\rightarrow x^*$ uniquely extends to an antilinear isometric map of $L_r(\sM)$ onto itself. It is a non-trivial fact that, for $r\geq 1$, the absolute value map  $x\rightarrow |x|$ uniquely extends to continuous map on $L_r(\sM)$ (see~\cite{Ko:TAMS84}).

\begin{condition}[Condition ($\textup{C}_r$)] Let $r>0$, and set $\hat{r}=\max(r,1)$. There is a continuous $*$-invariant norm $\|\cdot\|_{(r)}$ on $\overline{\sA}$ such that
\begin{enumerate}
  \item[(i)] The inclusion of $\overline{\sA}$ into $L_{\hat{r}}(\sM)$ is continuous with respect to the $\|\cdot\|_{(r)}$-topology.
 
 \item[(ii)] There is a constant $C_r>0$ such that
\begin{equation}\label{eq:ST.Cwikel}
 \big\| (1+D^2)^{-\frac{p}{4r}}a(1+D^2)^{-\frac{p}{4r}}\big\|_{r,\infty} \leq C_r\|a\|_{(r)} \qquad \forall a\in \overline{\sA}. 
\end{equation}
\end{enumerate}
 \end{condition}

\begin{remark}
In (i) we use the space $L_{\hat{r}}(\sM)$ instead of $L_{r}(\sM)$ to ensure the continuity of the absolute value map $x\rightarrow |x|$ on  $L_{\hat{r}}(\sM)$  (\emph{cf}.\ proof of Proposition~\ref{prop:Intro.Lq-asymptotics} below). In various examples, for $r<1$ the best $\|\cdot\|_{(r)}$-norm actually agrees with the $L_1$-norm $\|\cdot\|_{1}$ (see, e.g., \cite{BS:AMST80, MP:JMP22}). 
\end{remark}
 
 \begin{remark}
 We refer to~\cite{LSZ:PLMS20} for extensions of the Cwikel estimates to a large class of noncommutative $L^p$-spaces. This was the main impetus for the Cwikel estimates for noncommutative tori in~\cite{MP:JMP22, MP:AIM23} (see also Section~\ref{chap:QT}). It would be interesting to use the framework of~\cite{LSZ:PLMS20} to get further examples of spectral triples satisfying Condition~\textup{($\textup{C}_{r}$)}.
 \end{remark}

\subsubsection{Spectral asymptotics}  
 If Condition~($\textup{C}_r$) holds, then we denote by $\sV_r$ the Banach space completion of $\overline{\sA}$ with respect to the norm~$\|\cdot \|_{(r)}$. We have a continuous inclusion of $\overline{\sA}$ into $\sV_r$, as well as a continuous inclusion of $\sV_r$ into $L_{\hat{r}}(\sM)$ thanks to~(i). Moreover, the involution $a\rightarrow a^*$ of $\overline{\sA}$ uniquely extend to an isometric antilinear involution of $\sV_r$. 

Set $\Lambda=(1+D^2)^{1/2}$ and $q=p/2r$. The  estimate~(\ref{eq:ST.Cwikel}) is an abstract analogue of Cwikel's estimates. It implies that 
$\overline{\sA}\ni a\rightarrow \Lambda^{-q} a\Lambda^{-q}\in \sL_{r,\infty}$ uniquely extends to a continuous linear map on $\sV_r$. In particular, this provides us with a continuous inclusion of $\sV_r$ into the closed subspace of compact operators in $\sL(\sH^{(q)},\sH^{(-q)})$. Here any $x\in \sV_r$ is identified with the operator given by
\begin{equation*}
 \acou{x\xi}{\eta}=\bigscal{(\Lambda^{-q}x\Lambda^{-q})(\Lambda^q\xi)}{\Lambda^q\eta}, \qquad \xi,\eta \in \sH^{(q)}. 
\end{equation*}

We are now in a position to prove Proposition~\ref{prop:Intro.Lq-asymptotics}. 

\begin{proof}[Proof of Proposition~\ref{prop:Intro.Lq-asymptotics}]
Suppose that Condition~\textup{(W)} is satisfied. Let $q>0$, and assume further that  Condition~\textup{($\textup{C}_{r}$)} holds with $r:=pq^{-1}$. We have to prove, for every $x\in \sV_r$, the following spectral asymptotics holds: 
\begin{gather}\label{eq:ST.Weyl-|DpaDp|-tau-sVr}
 \lim_{j\rightarrow \infty}j^{\frac{q}{p}} \mu_j\big(|D|^{-\frac{q}{2}}x|D|^{-\frac{q}{2}}\big) = \tau\left[|x|^{\frac{p}{q}}\right]^{\frac{q}{p}},\\
 \lim_{j\rightarrow \infty} j^{\frac{q}{p}} \lambda_j^\pm\big(|D|^{-\frac{q}{2}}x|D|^{-\frac{q}{2}}\big) =\tau\left[\big(x_\pm\big)^{\frac{p}{q}}\right]^{\frac{q}{p}} \quad (\text{if}\ x^*=x). \label{eq:ST.Weyl-DpaDp-tau-sVR}
\end{gather}

By the first part of Condition~\textup{($\textup{C}_{r}$)} we have a continuous inclusion of $\sV_r$ into $L_{\hat{r}}(\sM)$.  As $\hat{r}\geq 1$, the map $x\rightarrow |x|$ is continuous from $L_{\hat{r}}(\sM)$ to itself. 
We also have a continuous inclusion of $L_{\hat{r}}(\sM)$ into $L_r(\sM)$, since $\hat{r}\geq r$. Therefore, the absolute value map $x\rightarrow |x|$ induces a continuous map from $\sV_r$ to $L_r(\sM)$. Moreover, if we denote by $\sV_r^\sa$ the closed \emph{real} subspace of selfadjoint elements of $\sV_r$, we also get continuous maps 
 $x\rightarrow x_\pm= \frac{1}{2}(|x|\pm x)$ from $\sV_r^\sa$ to $L_r(\sM)$. Composing all those maps with the quasi-norm $\|x\|_r=\tau[|x|^{p/q}]^{q/p}$ we then get continuous functions, 
\begin{equation*}
\sV_r\ni x \longrightarrow  \tau\left[|x|^{\frac{p}{q}}\right]^{\frac{q}{p}}, \qquad \sV_r^\sa \ni x \longrightarrow  \tau\left[(x_\pm)^{\frac{p}{q}}\right]^{\frac{q}{p}}. 
\end{equation*}

Bearing this in mind, let $x=x^*\in \sV_r$ and $(x_\ell)_{\ell \geq 0}\subseteq \overline{\sA}$ be such that $x_\ell^*=x_\ell\rightarrow x$ in $\sV_r$. 
The estimate~(\ref{eq:ST.Cwikel}) ensures that $|D|^{-q/2}x_\ell |D|^{-q/2}\rightarrow |D|^{-q/2}x |D|^{-q/2}$ in $\sL_{q^{-1}p,\infty}$. 
Therefore, by using Proposition~\ref{prop:NCInt.Bir-Sol} and Proposition~\ref{prop:Intro.Weyl-BirS} and the continuity the function 
$x \rightarrow  \tau[(x_\pm)^{p/q}]^{q/p}$ on $\sV_r$ we obtain
\begin{align*}
 \lim_{j\rightarrow \infty}j^{\frac{q}{p}} \lambda^\pm_j\big(|D|^{-q/2}x|D|^{-q/2}\big) & = \lim_{\ell \rightarrow \infty} 
 \lim_{j\rightarrow \infty}j^{\frac{q}{p}}  \lambda^\pm_j\big(|D|^{-q/2}x_\ell |D|^{-q/2}\big)\\
  &= \lim_{\ell \rightarrow \infty}  \tau\left[(x_\ell)_\pm^{\frac{p}{q}}\right]^{\frac{q}{p}} =  \tau\left[(x)_\pm^{\frac{p}{q}}\right]^{\frac{q}{p}}. 
\end{align*}
This proves~(\ref{eq:ST.Weyl-DpaDp-tau-sVR}). The spectral asymptotics~(\ref{eq:ST.Weyl-|DpaDp|-tau-sVr}) is proved similarly. This completes the proof of Proposition~\ref{prop:Intro.Lq-asymptotics}. 
\end{proof}

\subsubsection{Proof of Theorem~\ref{thm:Intro.SC-Weyl-law-sVr} and Theorem~\ref{thm:Intro.Integration-sVr}}
Let $q>0$, and set $r=p(2q)^{-1}$. If we assume Condition~\textup{($\textup{C}_{r}$)}, then, as mentioned above, we have a continuous inclusion of 
$\sV_r$ into the space of compact operators in $\sL(\sH^{(q)},\sH^{(-q)})$. Given any $V=V^*\in \sV_r$ this allows us to define the Schr\"odinger operators $H_V^{(q)}(h):= h^{2q}(D^2)^{q}+V$, $h>0$, as form sums as above.  As mentioned above, we get bounded from below selfadjoint operator with pure discrete spectrum. We list its eigenvalues as a non-decreasing sequence as in~(\ref{eq:ST.eigenvalue-Schroedinger}). We then define the counting functions $N(H_V^{(q)};\mu,\lambda)$ as in~(\ref{eq:ST.counting-Schroedinger1}). 
 
 Granted this preparation we are ready to prove Theorem~\ref{thm:Intro.SC-Weyl-law-sVr}.

\begin{proof}[Proof of Theorem~\ref{thm:Intro.SC-Weyl-law-sVr}] 
Let $q>0$, set $r=p(2q)^{-1}$, and assume Condition~(W) and Condition~\textup{($\textup{C}_{r}$)} are both satisfied. In addition, let $V=V^*\in \sV_r$.  Given any $\lambda\in \R$, by using Proposition~\ref{prop:Intro.Lq-asymptotics} and the Birman-Schwinger principle in its version of Proposition~\ref{prop:NCInt.BSP-NCInt} and by arguing along the same lines as in the proof of Theorem~\ref{thm:Intro.SC-Weyl-lawDq} we get
\begin{equation}\label{eq:ST.SC-Weyl-law-sVr}
  \lim_{h\rightarrow 0^+} h^{p} N\big(H_V^{(q)}(h);\lambda\big)= \tau\Big[\left(V-\lambda\right)_{-}^{\frac{p}{2q}}\Big]. 
\end{equation}
This gives the result. 
\end{proof}

\begin{remark}
As with (selfadjoint) potentials in $\overline{\sA}$ (\emph{cf}.\ Remark~\ref{rmk:ST.SC-WLaw-interval}), the semiclassical Weyl law~(\ref{eq:ST.SC-Weyl-law-sVr}) implies semiclassical Weyl laws for the number of eigenvalues in bands $(\mu,\lambda)$, $\mu<\lambda$, for any potential $V\in\sV_r^\sa$. Indeed, as mentioned in the proof of Proposition~\ref{prop:Intro.Lq-asymptotics},  the map $x\rightarrow  \tau[(x_{-})^{{p}/{2q}}]^{2q/p}$ is continuous on $\sV_r^\sa$. It follows that 
\begin{equation}
 \lim_{\epsilon \rightarrow 0^+}\tau\Big[\left(V-\mu\pm\epsilon\right)_{-}^{\frac{p}{2q}}\Big]= \tau\Big[\left(V-\mu\right)_{-}^{\frac{p}{2q}}\Big]. 
\end{equation}
We can then argue along the same lines as in Remark~\ref{rmk:ST.SC-WLaw-interval} to see that for all  $V\in\sV_r^\sa$, we have
\begin{equation*}
  \lim_{h\rightarrow 0^+}  h^{p}   N\big(H_V^{(q)}(h);\mu,\lambda\big) =  \tau\Big[\left(V-\lambda\right)_{-}^{\frac{p}{2q}}\Big]- \tau\Big[\left(V-\mu\right)_{-}^{\frac{p}{2q}}\Big].
\end{equation*}
\end{remark}

Finally, we prove Theorem~\ref{thm:Intro.Integration-sVr}. 

\begin{proof}[Proof of Theorem~\ref{thm:Intro.Integration-sVr}]
The proof follows the same outline as that of the proof of Theorem~\ref{thm:Intro.Integration}. Assume that Condition~(W) and Condition~\textup{($\textup{C}_{1}$)} hold. Let $x\in \sV_1$. As with~(\ref{eq:ST.Integration-proof.muj}),  specializing the spectral asymptotic~(\ref{eq:ST.Weyl-|DpaDp|-tau-sVr}) to $q=p$ yields
 \begin{equation*}
 \lim_{j\rightarrow \infty} j \mu_j\left( |D|^{-\frac{p}{2}}x|D|^{-\frac{p}{2}}\right)= \tau\left[ |x|\right]. 
\end{equation*}
As in the proof of Theorem~\ref{thm:Intro.Integration}, Proposition~\ref{prop:NCInt.Weyl-strong} then ensures that $ ||D|^{-p/2}x|D|^{-p/2}|$ is spectrally measurable, and we have
  \begin{equation*}
  \bint \big| |D|^{-\frac{p}{2}}x|D|^{-\frac{p}{2}}\big|=  \tau\left[ |x|\right]. 
\end{equation*}

Suppose that $x^*=x$. The spectral asymptotics~(\ref{eq:ST.Weyl-DpaDp-tau-sVR}) for $q=p$ then gives 
\begin{equation*}
 \lim_{j\rightarrow \infty} j \lambda_j^\pm\left(|D|^{-\frac{p}{2}}x|D|^{-\frac{p}{2}}\right)= \tau\left[ x_\pm\right]. 
\end{equation*}
In the same way as in the proof of Theorem~\ref{thm:Intro.Integration}, combining this with Proposition~\ref{prop:NCInt.Weyl-strong} shows that 
$ |D|^{-p/2}x|D|^{-p/2}$ is spectrally measurable, and we have
\begin{equation*}
 \bint |D|^{-\frac{p}{2}}x|D|^{-\frac{p}{2}}= \tau\big[x_+\big]-\tau\big[x_{-}\big] =\tau\big[x\big].
\end{equation*}
Arguing as in the proof of Theorem~\ref{thm:Intro.Integration} shows that properties actually hold for all $x\in \sV_1$. This completes the proof of Theorem~\ref{thm:Intro.Integration-sVr}.  
\end{proof}

\section{Tauberian Conditions -- Proof of Theorem~\ref{thm:Intro.Tauberian}}\label{chap:Tauberian}

This section is devoted to proving Theorem~\ref{thm:Intro.Tauberian}, which provides Tauberian conditions that give Condition~(W). As a byproduct of the approach of this section, we will even get a stronger version for spectral triples of the Tauberian theorem of~\cite{MSZ:LMP22} without any extra regularity or summability conditions (see Remark~\ref{rmk:Tauberian.comparison-MSZ}).

Throughout this section we let $(\sA, \sH,D)$ be a $p$-summable spectral triple, with $p>0$ arbitrary. We continue using the notation of the previous sections. 

\subsection{Complex powers of positive operators} 
In what follows, if $X$ is a positive compact operator, then we define $X^z$, $\Re z\geq 0$, by standard continuous or Borel functional calculus. Thus, if $(\xi_\ell)_{j \geq 0}$ is an orthonormal eigenfamily such that $X\xi_\ell =\lambda_\ell\xi_\ell$ for all $\ell \geq 0$ (where $\lambda_0\geq \lambda_1\geq \cdots$ are the eigenvalues of $X$ counted with multiplicity), then
\begin{equation}\label{eq:Com.Xz-definition}
 X^z\xi_\ell=\lambda_\ell^z \xi_\ell \quad \ell\geq 0, \qquad X^z=0 \ \text{on}\ \ker X. 
\end{equation}

Similarly, if $A$ is a positive operator  on $\sH$ with compact resolvent, we  define its  powers $A^z$, $z\in \C$, by using the Borel functional calculus for $A$  with the convention that $A^z=0$ on $\ker A$. That is, $A^z=f_z(A)$ with $f_z(\lambda)=\car_{[c,\infty)}(t)t^z$, where $0<c<\dist(0,\Sp(P)\setminus 0)$. Equivalently, if $(\xi_\ell)_{\ell\geq 0}$ is any orthonormal eigenbasis of $\sH$  with $A\xi_\ell=\lambda_\ell\xi_\ell$ for all $\ell\geq 0$, then
\begin{equation*}
 A^z\xi_\ell = \left\{ 
\begin{array}{cl}
 \lambda_\ell^z \xi_\ell & \text{if}\ \lambda_\ell>0,\\
 0 & \text{if}\ \lambda_\ell=0. 
\end{array}\right. 
\end{equation*}
For $\Re z\leq 0$ the operator $A^z$ is bounded. For $\Re z>0$ this an unbounded selfadjoint operator whose domain is
\begin{equation*}
 \dom(A^z)= \left\{ \xi\in \sH; \ \sum |\lambda_\ell(A)|^{2\Re z} \left|\scal{\xi_\ell}{\xi}\right|^2<\infty\right\}. 
\end{equation*}
In any case we have, 
\begin{equation*}
 A^{0}=1-\Pi_0(A), \qquad A^{z_1+z_2}=A^{z_1}A^{z_2}, \quad z_j\in \C. 
\end{equation*}
Here $\Pi_0(A)$ is the orthogonal projection onto $\ker A$. In particular, $A^{-1}$ is the partial inverse of $A$, and in view of~(\ref{eq:Com.Xz-definition}) we have
\begin{equation*}
 A^{-z}=(A^{-1})^z \qquad \text{if}\ \Re z\geq 0. 
\end{equation*}
Note also that with the above convention $A^{it}$, $t\in \R$, is not a unitary operator unless $\ker A=0$.

\begin{lemma}\label{lem:Tauberian.powers}
 Let $X$ be a (non-zero) positive compact operator on $\sH$. The following holds.
 \begin{enumerate}
\item[(i)] The family $X^{z}$, $\Re z>0$, is a holomorphic family of compact operators. 

\item[(ii)] Assume further that $X\in \sL_{p,\infty}$. Given any $c>0$, the family $X^{z}$, $\Re z>c$, is a holomorphic family in $\sL_{c^{-1}p}$.
\end{enumerate}
\end{lemma}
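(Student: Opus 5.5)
The plan is to work in an orthonormal eigenbasis $(\xi_\ell)$ of $X$ with eigenvalues $\lambda_0\geq\lambda_1\geq\cdots\to 0$, so that $X^z=\sum_\ell \lambda_\ell^z\,\xi_\ell\langle\xi_\ell,\cdot\rangle$. Both parts then reduce to showing that a series of holomorphic rank-one (or finite-rank) families converges locally uniformly in the relevant topology, since a locally uniform limit of holomorphic Banach-valued (or quasi-Banach-valued, with a $p$-norm) maps is holomorphic.

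For (i), fix a compact set $K\subset\{\Re z>0\}$, with $\Re z\geq c>0$ and $|z|\leq R$ on $K$. Set $P_N=\sum_{\ell<N}\xi_\ell\langle\xi_\ell,\cdot\rangle$. Then $X^zP_N$ is a finite sum of entire operator-valued functions $z\mapsto\lambda_\ell^z\,\xi_\ell\langle\xi_\ell,\cdot\rangle$, where $\lambda_\ell^z=e^{z\log\lambda_\ell}$ for $\lambda_\ell>0$. The remainder is diagonal, so
\[
\|X^z(1-P_N)\|=\sup_{\ell\geq N}\lambda_\ell^{\Re z}\leq\max(\lambda_N^{c},\lambda_N^{c'})
\]
with $c'$ the maximum of $\Re z$ on $K$, and this tends to $0$ uniformly on $K$. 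Hence $X^z$ is a locally uniform operator-norm limit of holomorphic finite-rank families, so it is holomorphic with values in the closed ideal $\sK$.

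For (ii), let $\lambda_\ell=\mu_\ell(X)\leq C(\ell+1)^{-1/p}$. For $\Re z\geq c_1>c$, the singular values of $X^z$ are $\lambda_\ell^{\Re z}\leq C'(\ell+1)^{-c_1/p}$. This is $\ell^{\,c^{-1}p}$-summable because $(c_1/p)(p/c)=c_1/c>1$, so $X^z\in\sL_{c^{-1}p}$. For holomorphy, the same truncation gives
\[
\|X^z(1-P_N)\|_{c^{-1}p}^{c^{-1}p}=\sum_{\ell\geq N}\lambda_\ell^{\,c^{-1}p\Re z}\leq\sum_{\ell\geq N}C''(\ell+1)^{-c_1/c}\to 0
\]
uniformly on $\{\Re z\geq c_1\}$. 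This step needs $\lambda_\ell\leq1$ for large $\ell$, with the finitely many remaining terms absorbed into $P_N$, together with a bound on $|z|$ to handle the factor $\lambda_\ell^{z}$ for $\lambda_\ell>1$.

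The main obstacle is that for $c^{-1}p<1$ the space $\sL_{c^{-1}p}$ is only quasi-Banach, so the meaning of holomorphy must be fixed. I would take it to mean locally a uniform limit of polynomials/finite-rank entire families, or equivalently complex differentiability in the $r$-norm $\|\cdot\|_{r}^{r}$, which is a genuine metric. The uniform convergence above then yields holomorphy in either sense. Alternatively, one can bypass this by checking directly that the difference quotient converges in $\sL_{c^{-1}p}$: compare $(\lambda^{z+h}-\lambda^z)/h$ with $\lambda^z\log\lambda$ termwise, using $|\lambda^{z}\log\lambda|\leq C_\epsilon\lambda^{\Re z-\epsilon}$ and choosing $\epsilon$ so that $\Re z-\epsilon>c$, then apply dominated convergence in the summation over $\ell$.
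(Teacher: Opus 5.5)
Your proof is correct. Part (i) is essentially the paper's argument: the eigen-expansion of $X^z$ together with a tail estimate in operator norm that is uniform in $z$.

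For (ii) you take a more hands-on route than the paper. You use $|X^z|=X^{\Re z}$ to compute the $\sL_{c^{-1}p}$ quasi-norm of the tail directly. This shows that the finite-rank truncations converge uniformly on $\{\Re z\geq c_1\}$, and alternatively you check the difference quotient termwise.

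The paper instead reduces (ii) to (i) through the factorization $X^z=X^{c+\epsilon}X^{z-c-\epsilon}$ for $\Re z>c+\epsilon$:
\begin{itemize}
\item the fixed factor $X^{c+\epsilon}$ lies in $\sL_{(c+\epsilon)^{-1}p,\infty}\subseteq \sL_{c^{-1}p}$;
\item the second factor is holomorphic in $\sL(\sH)$ by (i);
\item the inequality $\|AB\|_{c^{-1}p}\leq \|A\|_{c^{-1}p}\|B\|$ makes left multiplication by $X^{c+\epsilon}$ a bounded linear map $\sL(\sH)\to \sL_{c^{-1}p}$;
\item letting $\epsilon\to 0$ finishes the proof.
\end{itemize}

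The advantage of the factorization is that it avoids the quasi-Banach issue your last paragraph deals with. A norm-holomorphic $\sL(\sH)$-valued family is locally a norm-convergent power series. A bounded linear map carries this to a locally convergent power series in $\sL_{c^{-1}p}$. So the conclusion holds under any reasonable notion of holomorphy.

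Your approach in turn gives explicit quantitative control: uniform convergence of the truncations on whole half-planes, and explicit bounds on $\|X^z\|_{c^{-1}p}$.

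One small caveat. You say that being locally a uniform limit of finite-rank entire families is ``equivalent'' to complex differentiability in the $r$-norm. These two notions are not equivalent in general quasi-Banach spaces. This does no harm here, since you verify both properties directly.
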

\begin{proof}
 The above result is standard. We include a proof for the reader's convenience. We may assume $X\neq 0$, since otherwise the result is trivially true. 
  Let $(\xi_j)_{j\geq 0}$ be an orthonormal eigenfamily of $X$ such that $X\xi_j=\lambda_j(X)\xi_j$ for all $j\geq 0$, where 
 $\lambda_0(X)\geq \lambda_1(X)\geq \cdots $ are the eigenvalues of $X$. It follows from~(\ref{eq:Com.Xz-definition}) that  
\begin{equation*}
 X^{z} = \sum_{\lambda_j(X)>0} \lambda_j(X)^{z} |\xi_j\rangle \langle\xi_j |, \qquad \Re z>0,
\end{equation*}
 where the series converges in $\sK$ uniformly on each halfspace $\Re z>c>0$. This ensures that $X^{z}$, $\Re z>0$, is a holomorphic family of compact operators. 
  
Suppose now that $X\in \sL_{p,\infty}$. Let $c>0$ and $\epsilon \in (0,c)$. Then $X^{z}=X^{c+\epsilon}X^{z-c-\epsilon}$, $\Re z>c+\epsilon$, is a holomorphic family in $\sL_{c^{-1}p}$, since $X^{c+\epsilon}$ is an operator in $\sL_{(c+\epsilon)^{-1}p,\infty}\subseteq \sL_{c^{-1}p}$ and $X^{z-c-\epsilon}$, $\Re z>c+\epsilon$, is a holomorphic family in $\sK$ by the first part. As this is true for all $\epsilon \in (0,c)$, it follows that $X^{z}$, $\Re z>c$, is a holomorphic family in $\sL_{c^{-1}p}$. 
The proof is complete.  
\end{proof}

\subsection{Condition ($\textup{Z}_0$)} In our setting, the Tauberian condition of~\cite{MSZ:LMP22} is formulated as follows.

\begin{condition}[Tauberian Condition of \cite{MSZ:LMP22}] \label{cnd:TauberianMSZ} 
 For every $a\in \overline{\sA}_{+}$, the function
 \begin{equation*}
 \Tr\big[a^{s}|D|^{-s}\big] - p\tau\left[a^{p}\right](s-p)^{-1}, \qquad \Re s>p,
\end{equation*}
has a unique continuous extension to the closed half-plane $\Re s \geq p$. 
\end{condition}

It is too strong a requirement to have the condition hold for all positive elements of the $C^*$-algebra $\overline{\sA}$. In fact, in examples where $\sA$ is an algebra of \psidos\ it may be technically cumbersome to check Condition~\ref{cnd:TauberianMSZ} for all elements of $\overline{\sA}_+$, whereas it is comparatively straightforward to check Condition~(W). 

In addition, as mentioned in the Introduction, it was asked by Alain Connes to relate Condition~\ref{cnd:TauberianMSZ} to more standard Tauberian conditions involving the zeta functions $\Tr[a|D|^{-s}]$, $a\in \sA$, instead of the functions $\Tr[a^s|D|^{-s}]$, $a\in \overline{\sA}_+$. He also pointed out that closedness under holomorphic functional calculus could come into play. 

In fact, what is relevant in our setting is a weakening of Condition~\ref{cnd:TauberianMSZ} provided by Condition~($\textup{Z}_0$) below. For one thing, that condition implies Condition~(W) (see Lemma~\ref{lem:Tauberian.Z0-W} below). For another thing, it will be shown that Condition~($\textup{Z}_0$) is implied by both Condition~(Z) and Condition~(H) (see Lemma~\ref{lem:Tauberian.Z-Z0} and Lemma~\ref{lem:Tauberian.H-Z} below). As a result,  this will show Condition~(Z) and Condition~(H) both imply Condition~(W), thereby proving Theorem~\ref{thm:Intro.Tauberian}. 

\begin{condition}[Condition ($\textup{Z}_0$)] 
 For every $a\in {\sA}_{++}$, the function
 \begin{equation*}
 \Tr\big[a^{s}|D|^{-s}\big] - p\tau\left[a^{p}\right](s-p)^{-1}, \qquad \Re s>p,
\end{equation*}
has a unique continuous extension to the closed half-plane $\Re s \geq p$. 
\end{condition}

\begin{lemma}\label{lem:Tauberian.Z0-W}
Condition~$\textup{(Z}_0\textup{)}$ implies Condition~\textup{(W)}. 
\end{lemma}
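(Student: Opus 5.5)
Fix $a\in\sA_{++}$. The plan is to turn Condition~($\textup{Z}_0$) into a statement about the zeta function of the positive compact operator $X:=a^{\frac12}|D|^{-1}a^{\frac12}$, apply Ikehara's theorem to get a Weyl law for $X$, and then feed that into the reduction machinery of Section~\ref{chap:Main-results}.

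\textbf{Step 1: comparison of zeta functions.} Apply Lemma~\ref{lem:Intro.extension} with $\alpha=1$ if $p>1$, and otherwise with some $\alpha\in(0,1]$, $\alpha<p$. Take $\alpha=1$ for definiteness; the general case only rescales $s$. With $\delta=\min(1,p-1)>0$, the lemma gives that
\[
\Tr\big[X^{s}\big]-\Tr\big[|D|^{-s}a^{s}\big]
\]
extends analytically to $\Re s>p-\delta$. By cyclicity of the trace, $\Tr[|D|^{-s}a^s]=\Tr[a^s|D|^{-s}]$. Condition~($\textup{Z}_0$) then shows that
\[
\Tr[X^s]-p\tau[a^p](s-p)^{-1}
\]
extends continuously to $\Re s\geq p$. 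If $p\le 1$, I would instead use $X_\alpha=a^{\frac12}|D|^{-\alpha}a^{\frac12}$. Condition~($\textup{Z}_0$) for $a$ then controls $\Tr[a^{\alpha s}|D|^{-\alpha s}]$. This requires the substitution $s\mapsto \alpha s$, which turns the pole at $s=p$ into a pole at $s=\alpha^{-1}p$ with residue $\alpha^{-1}p\,\tau[a^p]$ after rescaling.

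\textbf{Step 2: Tauberian step.} Write $\Tr[X^s]=\int_0^\infty t^{s}\,dN(t)$, where $N$ is the counting function of the eigenvalues $\lambda_j(X)$ on the $1/t$ scale. This is a Dirichlet series with positive coefficients, so the Wiener--Ikehara theorem applies. It gives $\#\{j;\ \lambda_j(X)>\epsilon\}\sim\tau[a^p]\,\epsilon^{-p}$ as $\epsilon\to0^+$. Inverting this counting asymptotic yields
\[
\lim_{j\to\infty} j^{\frac1p}\lambda_j\big(a^{\frac12}|D|^{-1}a^{\frac12}\big)=\tau[a^p]^{\frac1p}.
\]
In the $\alpha$-version the analogous computation gives $\lim_j j^{\alpha/p}\lambda_j(X_\alpha)=\tau[a^p]^{\alpha/p}$.

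\textbf{Step 3: back to $D^{-2}$.} Use~\eqref{eq:ST.refined-CSZ-limits} with $s=2/\alpha$. It shows that $j^{2/p}\lambda_j\big(a^{1/\alpha}D^{-2}a^{1/\alpha}\big)\to\tau[a^p]^{2/p}$. Writing $b=a^{1/\alpha}\in\sA_{++}^{1/\alpha}$, this says the Weyl law~\eqref{eq:ST.conforrmal-Weyl-neg} holds for $b$, namely $\lambda_j\big(b^{\frac12}\cdots\big)$ in the form $\lim_j j^{2/p}\lambda_j(bD^{-2}b)=\tau[b^{p\alpha}]^{2/p}$.

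The exponent bookkeeping needs care here. Condition~($\textup{Z}_0$) controls $a^s$ paired with $|D|^{-s}$, so the natural outcome is the Weyl law for $a^{1/2}|D|^{-q}a^{1/2}$ in the normalized form~\eqref{eq:ST.spectral-asymptotics-pos2}, i.e.\ with right-hand side $\tau[a^{p/q}]^{q/p}$, after replacing $a$ by a suitable power. I would therefore apply Condition~($\textup{Z}_0$) to the appropriate power of the element, namely $c=a^{\alpha}$ or a density-adjusted version of it. This yields~\eqref{eq:ST.spectral-asymptotics-pos2} for $q=\alpha$ on a set of the form $\sA_{++}^{\gamma}$.

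Lemma~\ref{lem:STC.density} and Proposition~\ref{prop:NCInt.Bir-Sol} then extend the result to all of $\overline{\sA}_+$, exactly as in the proof of Lemma~\ref{lem:ST.equiv-conformal-Weyl}. The argument of the proof of Proposition~\ref{prop:Intro.Weyl-BirS} then passes from $q=\alpha$ to $q=2$, and Lemma~\ref{lem:ST.equiv-conformal-Weyl}(iii)$\Rightarrow$(i) gives Condition~(W).

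\textbf{Main obstacle.} I expect Step 2 to be the hardest. The trace $\Tr[X^s]$ must be presented as a Laplace--Stieltjes transform of a monotone function, and Ikehara's theorem requires only continuity, not analyticity, up to the line $\Re s=p$. That continuity has to be verified from the combination of Condition~($\textup{Z}_0$) with the holomorphic extension in Lemma~\ref{lem:Intro.extension}, which is available because $p-\delta<p$. The second delicate point is matching the exponents of $a$ between Condition~($\textup{Z}_0$) and Lemma~\ref{lem:Intro.extension} when $\alpha\neq1$.
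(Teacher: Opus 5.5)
Your architecture is the paper's: Lemma~\ref{lem:Intro.extension}, Condition~($\textup{Z}_0$) and Ikehara's theorem give a Weyl law for $a^{\frac12}|D|^{-\alpha}a^{\frac12}$. Then \eqref{eq:ST.refined-CSZ-limits} with $s=2/\alpha$, density, and Lemma~\ref{lem:ST.equiv-conformal-Weyl} give Condition~(W). When $p>1$ and you take $\alpha=1$, everything you wrote is correct.

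The gap is the case $p\leq 1$, where $\alpha<1$ is forced, and your claim that ``the general case only rescales $s$'' is false. Lemma~\ref{lem:Intro.extension} compares $\Tr[(a^{\frac12}|D|^{-\alpha}a^{\frac12})^{s}]$ with $\Tr[a^{s}|D|^{-\alpha s}]$. Condition~($\textup{Z}_0$) for $a$, after $s\mapsto \alpha s$, controls $\Tr[a^{\alpha s}|D|^{-\alpha s}]$ instead. The powers of $a$ do not match, so nothing gives a continuous extension of $\Tr[X_\alpha^s]-\alpha^{-1}p\,\tau[a^p](s-\alpha^{-1}p)^{-1}$. The constants you derive are therefore wrong: $\tau[a^p]^{\alpha/p}$ in Step~2, and in Step~3 $\lim_j j^{2/p}\lambda_j(bD^{-2}b)=\tau[b^{p\alpha}]^{2/p}$. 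The latter is not \eqref{eq:ST.conforrmal-Weyl-neg}, so Lemma~\ref{lem:ST.equiv-conformal-Weyl} cannot be applied. Your suggested repair, applying ($\textup{Z}_0$) to $c=a^{\alpha}$, goes the wrong way.

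What is needed, and what the paper does, is to apply Condition~($\textup{Z}_0$) to $a^{1/\alpha}$.
\begin{itemize}
\item Since $(a^{1/\alpha})^{\alpha s}=a^s$, the function $\Tr[a^s|D|^{-\alpha s}]-\alpha^{-1}p\,\tau[a^{p/\alpha}](s-\alpha^{-1}p)^{-1}$ extends continuously to $\Re s\geq \alpha^{-1}p$.
\item Ikehara then gives $\lim_j j^{\alpha/p}\lambda_j(a^{\frac12}|D|^{-\alpha}a^{\frac12})=\tau[a^{p/\alpha}]^{\alpha/p}$.
\item \eqref{eq:ST.refined-CSZ-limits} with $s=2/\alpha$ yields $\lim_j j^{2/p}\lambda_j(bD^{-2}b)=\tau[b^{p}]^{2/p}$ for $b=a^{1/\alpha}$.
\end{itemize}
This is condition (iii) of Lemma~\ref{lem:ST.equiv-conformal-Weyl} on $\sA_{++}^{1/\alpha}$, which is dense in $\overline{\sA}_+$ by Lemma~\ref{lem:STC.density}.

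Condition~($\textup{Z}_0$) is only assumed on $\sA_{++}$, and $a^{1/\alpha}$ need not lie in $\sA$. The clean choice is $\alpha=1/k$ with $k\in\N$ and $k>1/p$. Then $\delta=\min(1,p-\alpha)>0$ and $a^{1/\alpha}=a^k\in\sA_{++}$ automatically.

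Two minor points:
\begin{itemize}
\item The Stieltjes representation is $\Tr[X^s]=\int_0^\infty t^{-s}\,dN(t)$, not with $t^{s}$.
\item The continuity up to the critical line, which you flag as the main obstacle, is immediate once the exponents match. The difference supplied by Lemma~\ref{lem:Intro.extension} is analytic on a strictly larger half-plane.
\end{itemize}
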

\begin{proof}
 Assume Condition~$\textup{(Z}_0\textup{)}$ holds. Pick some $\alpha \in (0,1]$ such that $\delta:=\min(1, p-\alpha)>0$, and let $a\in {\sA}_{++}$. Define
  \begin{equation*}
 N(\lambda):= \#\left\{ j; \ \lambda_j\big(a^{\frac12}|D|^{-\alpha} a^{\frac12}\big)>\lambda^{-1}\right\}, \qquad \lambda>0. 
\end{equation*}
Alternatively, $N(\lambda^{-1})$ is the counting function of $a^{1/2}|D|^{-\alpha} a^{1/2}$. In particular, $N(\lambda)$ is a non-negative non-decreasing step function such that $N(\lambda)=0$ for $\lambda<\lambda_0(a^{1/2}|D|^{-\alpha} a^{1/2})^{-1}$. As $dN(\lambda)=\sum \delta_{\lambda_j(a^{1/2}|D|^{-\alpha} a^{1/2})^{-1}}$, for $\Re s>\alpha^{-1}p$ we have
\begin{equation*}
 \Tr\left[ \big(a^{\frac12}|D|^{-\alpha} a^{\frac12}\big)^{s}\right] = 
 \sum_{j\geq 0} \left(\lambda_j\big(a^{\frac12}|D|^{-\alpha} a^{\frac12}\big)^{-1}\right)^{-s}=  \int_0^\infty \lambda^{-s} dN(\lambda). 
\end{equation*}
Moreover, the fact that  $N(\lambda^{-1})$ is the counting function of $a^{1/2}|D|^{-\alpha} a^{1/2}$ further implies that
\begin{equation}\label{eq:Tauberian.counting-eigenvalues}
 \lim_{j\rightarrow \infty} j\lambda_j\big(a^{\frac12}|D|^{-\alpha} a^{\frac12}\big)^{\frac{p}{\alpha}}=
  \lim_{\lambda \rightarrow 0^+} \lambda^{\frac{p}{\alpha}}N(\lambda^{-1})=  
  \lim_{\lambda \rightarrow \infty} \lambda^{-\frac{p}{\alpha}}N(\lambda), 
\end{equation}
provided any of these limits exists.

Bearing this in mind, for $\Re s>\alpha^{-1}p$ set
\begin{gather*}
 F(s)= \Tr\left[ \big(a^{\frac12}|D|^{-\alpha} a^{\frac12}\big)^{s}\right]  -\Tr\big[a^{s}|D|^{-\alpha s}\big], \\
 G(s) = \Tr\big[a^{s}|D|^{-\alpha s}\big]- \alpha^{-1}p\tau\big[a^{\frac{p}{\alpha}}\big](s-\alpha^{-1}p)^{-1}.  
\end{gather*}
We then have
\begin{equation}\label{eq:Tauberian.Tr-FG}
\Tr\left[ \big(a^{\frac12}|D|^{-\alpha} a^{\frac12}\big)^{-s}\right] =\alpha^{-1}p\tau\big[a^{\frac{p}{\alpha}}\big](s-\alpha^{-1}p)^{-1} +  F(s) + G(s). 
\end{equation}
 
 By Lemma~\ref{lem:Intro.extension} the function $F(s)$ has an analytic extension to the half-plane $\Re s>\alpha^{-1}(p-\delta)$ with $\delta$ as above. Moreover, by assumption 
 $G(s)=\Tr[(a^{1/\alpha})^{ \alpha s }|D|^{-\alpha s}]- p\tau[(a^{1/\alpha})^p](\alpha s-p)^{-1}$ has a continuous extension to the half-plane $\Re s\geq \alpha^{-1}p$. It follows that the function 
 \begin{equation*}
\Tr\left[ \big(a^{\frac12}|D|^{-\alpha} a^{\frac12}\big)^{s}\right] -\alpha^{-1}p\tau\big[a^{\frac{p}{\alpha}}\big](s-\alpha^{-1}p)^{-1}, \qquad \Re s>\alpha^{-1}p,
\end{equation*}
 has a continuous extension to the half-plane $\Re s\geq \alpha^{-1}p$. Combining this with~(\ref{eq:Tauberian.Tr-FG}) and Ikehara's Tauberian theorem (see, e.g.,~\cite[Theorem~14.1]{Sh:Springer01}), and using~(\ref{eq:Tauberian.counting-eigenvalues}), we then obtain
 \begin{equation}\label{eq:Tauberian.Ikehara}
 \lim_{j\rightarrow \infty} j\lambda_j\big(a^{\frac12}|D|^{-\alpha} a^{\frac12}\big)^{\frac{p}{\alpha}}= \lim_{\lambda\rightarrow \infty} \lambda^{-\frac{\alpha}{p}} N(\lambda)= \tau\big[a^{\frac{p}{\alpha}}\big]. 
\end{equation}

In a similar way as we got~(\ref{eq:ST.spectral-asymptotics-pos2-q=<1}), specializing~(\ref{eq:ST.refined-CSZ-limits}) to $s=2\alpha^{-1}$ and using~(\ref{eq:Tauberian.Ikehara}) yields
\begin{equation*}
\lim_{j\rightarrow \infty} j^{\frac{2}{p}} \lambda_j\big(a^{\frac{1}{\alpha}}D^{-2} a^{\frac{1}{\alpha}}\big)=
\bigg[ \lim_{j\rightarrow \infty} j^{\frac{\alpha}{p}}\lambda_j\big(a^{\frac12}|D|^{-\alpha}a^{\frac12}\big)\bigg]^{\frac{2}{\alpha}} =  
\tau\big[a^{\frac{p}{\alpha}}\big]^{\frac{2}{p}}. 
\end{equation*}
Combining this with Lemma~\ref{lem:ST.ComparisonD2} we then get
\begin{equation*}
\lim_{j\rightarrow \infty} j^{-\frac{2}{p}} \lambda_j\big(a^{-\frac{1}{\alpha}}D^{2} a^{-\frac{1}{\alpha}}\big)= 
\left[\lim_{j\rightarrow \infty} j^{\frac{2}{p}} \lambda_j\big(a^{\frac{1}{\alpha}}D^{-2} a^{\frac{1}{\alpha}}\big)\right]^{-1} = \tau\big[a^{\frac{p}{\alpha}}\big]^{-\frac{2}{p}}
\end{equation*}
This shows that the Weyl law~(\ref{eq:ST.conf-Weyl}) holds for all $a\in {\sA}_{++}^{-1/\alpha}$, where ${\sA}_{++}^{-1/\alpha}$ is defined as in~(\ref{eq:STC.sA++q}). Lemma~\ref{lem:ST.equiv-conformal-Weyl} then ensures us that Condition~(W) is satisfied. The proof is complete.  
\end{proof}

\begin{remark}\label{rmk:Tauberian.comparison-MSZ}
 The equality~(\ref{eq:Tauberian.Ikehara}) is merely the contents of the Tauberian theorem of~\cite{MSZ:LMP22} in its version for spectral triples (i.e., \cite[Theorem~1.4]{MSZ:LMP22}). In~\cite{MSZ:LMP22} the result is obtained by assuming Lipschitz regularity and $p$-summability with $p>2$. Both assumptions are unnecessary in our approach.   
\end{remark}

\subsection{Condition~(Z)} We may replace Condition ($\textup{Z}_0$) by a more standard Tauberian condition, which is easier to check in practice. Incidentally, Condition~(Z) is the type of Tauberian condition sought by Alain Connes. 

In what follows by a \emph{Fr\'echet  subalgebra} of $\overline{\sA}$ we shall mean a subalgebra $\sB$ which is endowed with a Fr\'echet algebra topology which is stronger than the topology of $\sA$. Recall that if $\sB$ is a unital Fr\'echet  algebra, then the inverse map $x\rightarrow x^{-1}$ is continuous on its set of invertible elements $\sB^{-1}$ (see~\cite{Pf:PAMS, Ze:BAPS}). Therefore, if $\sB$ is a Fr\'echet subalgebra of $\overline{\sA}$ containing the unit, then it is holomorphically closed (i.e., it is closed under holomorphic functional calculus) if and only if it is inverse closed (i.e., $\sB^{-1}=\overline{\sA}^{-1}\cap \sB$).

Considering Fr\'echet subalgebras ensures that Banach-Steinhaus theorem holds. The need for holomorphic closedness stems from the following lemma.

\begin{lemma}\label{lem:Tauberian.hol-closed-powers}
 Let $\sB$ be a holomorphically closed Fr\'echet subalgebra of $\overline{\sA}$ containing $\sA$. For every $a\in \sA_{++}$, the family $a^s$, $s\in \C$, is a holomorphic family in $\sB$. 
 \end{lemma}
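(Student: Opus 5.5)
The plan is to write $a^s=\exp(s\log a)$ and show that $\log a$ lies in $\sB$; holomorphy then follows from the exponential series in the Fr\'echet algebra $\sB$, or more robustly from a Cauchy integral representation.

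First, let $a\in\sA_{++}\subseteq\sB$. Then $\Sp_{\overline{\sA}}(a)\subseteq[m,M]$ with $0<m\leq M$. Since $\sB$ is holomorphically closed (equivalently inverse closed), we have $\Sp_\sB(a)=\Sp_{\overline{\sA}}(a)$. Choose a contour $\Gamma$ in $\C\setminus(-\infty,0]$ surrounding $[m,M]$, for instance the boundary of a rectangle $[m/2,2M]\times[-1,1]$. For $\lambda\in\Gamma$, the element $\lambda-a$ is invertible in $\overline{\sA}$, hence in $\sB$. Inversion is continuous on $\sB^{-1}$, as recalled above for Fr\'echet algebras, so $\lambda\mapsto(\lambda-a)^{-1}$ is a continuous map $\Gamma\to\sB$. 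It is in fact holomorphic near $\Gamma$, by the standard resolvent identity argument combined with continuity of inversion.

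Next, define for $s\in\C$
\begin{equation*}
 b(s):=\frac{1}{2i\pi}\int_\Gamma \lambda^{s}(\lambda-a)^{-1}\,d\lambda,
\end{equation*}
where $\lambda^s$ uses the principal branch. The integrand is continuous in $(s,\lambda)$ with values in $\sB$ and depends entirely on $s$. Since $\sB$ is a Fr\'echet space, hence complete and locally convex, the Riemann integral converges in $\sB$. Holomorphy in $s$ follows by differentiating under the integral sign, checked seminorm by seminorm: for each continuous seminorm, the difference quotients converge uniformly on $\Gamma$ because $\partial_s\lambda^s=\lambda^s\log\lambda$ is jointly continuous and $\Gamma$ is compact. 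Alternatively, one can check weak holomorphy against the dual of $\sB$ and invoke the fact that weakly holomorphic maps into Fr\'echet spaces are holomorphic.

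Finally, we identify $b(s)$ with $a^s$. The inclusion $\sB\hookrightarrow\overline{\sA}$ is continuous, so $b(s)$ equals the same integral computed in $\overline{\sA}$. That integral is the holomorphic functional calculus of $\lambda\mapsto\lambda^s$ at $a$, which agrees with the continuous functional calculus $a^s$.

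The main point requiring care is the continuity and holomorphy of the resolvent in the $\sB$-topology, not merely in norm. This is exactly where holomorphic closedness is used (to get $(\lambda-a)^{-1}\in\sB$) and where the Fr\'echet structure is used (continuity of inversion, so that the integral converges in $\sB$). Everything else is routine vector-valued complex analysis.
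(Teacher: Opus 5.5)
Your proposal is correct and is essentially the paper's argument. You represent $a^s$ by the Cauchy integral $\frac{1}{2i\pi}\int_\Gamma \lambda^s(\lambda-a)^{-1}\,d\lambda$, regarded as a $\sB$-valued integral, and the resolvent $\Gamma\ni\lambda\mapsto(\lambda-a)^{-1}\in\sB$ is continuous thanks to inverse closedness and continuity of inversion in the Fr\'echet algebra $\sB$. The only difference is cosmetic: the paper checks holomorphy through the estimate $\fp(a^s-1-s\log a)\leq C_{\fp\Gamma}|s|^2$ near $s=0$ and then propagates it via $a^s=a^{s_0}a^{s-s_0}$, whereas you differentiate under the integral sign directly at every $s$.
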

\begin{proof}
Let $a\in \sA_{++}$. If $\sB=\overline{\sA}$, then the result is an immediate consequence of the power series expansion, 
 \begin{equation*}
 a^s=e^{s\log a}=\sum_{k\geq 0} \frac{1}{k!} s^k(\log a)^k,
\end{equation*}
since the above series converges normally in $\sB$ uniformly on compact subsets of $\C$. 

In general, we proceed as follows. As $a$ has positive spectrum, its spectrum is contained in some interval $[c_1,c_2]$ with $0<c_1<c_2$. Let $\Gamma$ be a direct oriented piecewise $C^1$ contour in the halfplane $\Re \lambda>0$ whose interior contains $[c_1,c_2]$. We then have
\begin{equation*}
 a^s= \frac{1}{2i\pi} \int_\Gamma \lambda^s (\lambda -a)^{-1}d\lambda, \qquad s\in \C. 
\end{equation*}
 Note that $\lambda \rightarrow (\lambda -a)^{-1}$ is a continuous map from $\Gamma$ to $\sB$. This allows us to regard the above integral as a $\sB$-valued integral.  
 
 For $\lambda \in \Gamma$ and $s\in \C$, we have
 \begin{equation*}
 \lambda^s-1-s\log \lambda = \sum_{k\geq 2} \frac{1}{k!} s^k(\log \lambda)^k = s^2h(s,\lambda), 
\end{equation*}
 where we have set 
 \begin{equation*}
 h(\lambda,s)= \sum_{k\geq 0} \frac{1}{(k+2)!} s^{k}(\log \lambda)^{k+2}, \qquad (\lambda,s)\in \Gamma\times \C. 
\end{equation*}
 Note that $h(\lambda,s)$ is a continuous function on $\Gamma\times \C$. We then have 
 \begin{align*}
 a^s-1-s\log a &=  \frac{1}{2i\pi} \int_\Gamma ( \lambda^s-1-s\log \lambda )(\lambda -a)^{-1}d\lambda\\ 
 &= \frac{s^2}{2i\pi } \int_\Gamma h(\lambda,s)(\lambda -a)^{-1}d\lambda.  
\end{align*}
 The integrand $ h(\lambda,s)(\lambda -a)^{-1}$ is a continuous map from $\Gamma\times \C$ to $\sB$, and hence it is bounded in $\sB$ on $\Gamma\times \overline{D(0,1)}$, where $\overline{D(0,1)}$ is the closed unit disk. Therefore, given any continuous semi-norm $\fp$ on $\sB$, there is a constant $C_{\fp\Gamma}>0$ such that
 \begin{equation*}
 \fp\left(a^s-1-s\log a\right)\leq \frac{|s|^2}{2\pi} \int_\Gamma \fp\left(h(\lambda,s)(\lambda -a)^{-1}\right)|d\lambda|\leq C_{\fp\Gamma}|s|^2. 
\end{equation*}
 Thus, near $s=0$, we have 
  \begin{equation*}
 a^s-1-s\log a=\op{O}(|s|^2) \qquad \text{in}\ \sB. 
\end{equation*}
This shows that the $\sB$-valued family $a^s$, $s\in \C$, is holomorphic at $s=0$ (with respect to the $\sB$-topology). The 1-parameter group property $a^s=a^{s_0}a^{s-s_0}$ further ensures it is holomorphic at every $s_0\in \C$. This proves the result. 
\end{proof}

We have a continuous embedding $T\rightarrow \varphi_T$ of $\sL_1$ into $\overline{\sA}'$ given by
\begin{equation}\label{eq:Tauberian.phiT}
 \varphi_T(a)=\Tr\big[a T\big], \qquad a \in \overline{\sA}, \quad T\in \sL_1. 
\end{equation}
In particular, as $|D|^{-s}$, $\Re s>p$, is a holomorphic family of trace-class operators (\emph{cf}.\ Lemma~\ref{lem:Tauberian.powers}), we get a 
holomorphic family  $\varphi_s$, $\Re s>p$, in $\overline{\sA}'$ given by
\begin{equation*}
 \varphi_s(a):=\varphi_{|D|^{-s}}(a)=\Tr\big[a|D|^{-s}\big], \qquad a\in \overline{\sA}, \quad \Re s>p. 
\end{equation*}
If $\sB$ is any Fr\'echet subalgebra of $\overline{\sA}$ with strong dual $\sB'$, then, as we have a continuous embedding of $\overline{\sA}'$ into $\sB'$, we see that 
$(\varphi_s)_{|\sB}$, $\Re s>p$, is a holomorphic family in $\sB'$.

In what follows we denote by $\Omega_p$ the open halfplane $\{\Re z>p\}$ with closure $\overline{\Omega}_p:=\{\Re z\geq p\}$. 

\begin{condition}[Condition~\textup{(Z)}]
There is a holomorphically closed Fr\'echet subalgebra  $\sB\subseteq \overline{\sA}$ containing $\sA$ such that, for every $a\in \sB$,  
the function
 \begin{equation}\label{eq:Tauberian.phis}
\Omega_p\ni s \longrightarrow \varphi_s(a) -p\tau(a)(s-p)^{-1}
\end{equation}
 extends continuously to $\overline{\Omega}_p$. 
\end{condition}

\begin{remark}
 In practice we often may take $\sB=\overline{\sA}$, or $\sB=\sA$ if $\sA$ is itself a holomorphically closed Fr\'echet subalgebra. 
\end{remark}

\begin{remark}\label{rmk:Tauberian.(Z)}
 Condition~(Z) means that, given any $a\in \sB$, for all $s\in \partial \Omega_p$, we have a limit
 \begin{equation*}
 \psi_s(a):=\lim_{\substack{z\rightarrow s\\ z\in \Omega_p}} \psi_z(a),
\end{equation*}
in such a way that the function $\overline{\Omega}_p\ni s\rightarrow \psi_s(a)$ is continuous. By Banach-Steinhaus theorem each $\psi_s$, $s\in \partial \Omega$, is a continuous linear functional on $\sB$. It follows that $(\psi_s)_{s\in \overline{\Omega}_p}$ is a continuous family in $\sB'_{\textup{weak}}$. 
\end{remark}

We have the following relationship between Condition~(Z) and Condition~$\textup{(Z}_0\textup{)}$. 

\begin{lemma}\label{lem:Tauberian.Z-Z0}
Condition~\textup{(Z)} implies Condition~$\textup{(Z}_0\textup{)}$.
\end{lemma}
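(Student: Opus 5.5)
The plan is to write the function in Condition~($\textup{Z}_0$) in terms of the family of functionals from Remark~\ref{rmk:Tauberian.(Z)}. Let $\sB$ be the holomorphically closed Fr\'echet subalgebra given by Condition~(Z). For $z\in\Omega_p$ and $b\in\sB$, set $\psi_z(b):=\varphi_z(b)-p\tau(b)(z-p)^{-1}$. By Condition~(Z) and Remark~\ref{rmk:Tauberian.(Z)}, this family extends to a family $(\psi_s)_{s\in\overline{\Omega}_p}$ of continuous linear functionals on $\sB$. The map $s\rightarrow \psi_s(b)$ is continuous on $\overline{\Omega}_p$ for every $b\in \sB$.

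Fix $a\in\sA_{++}$. For $\Re s>p$, since $\Tr[a^s|D|^{-s}]=\varphi_s(a^s)$, we decompose
\begin{equation*}
 \Tr\big[a^{s}|D|^{-s}\big]-p\tau\big[a^p\big](s-p)^{-1}
 = \psi_s\big(a^s\big) + p\,\frac{\tau(a^s)-\tau(a^p)}{s-p}.
\end{equation*}
The second term is handled first. By Lemma~\ref{lem:Tauberian.hol-closed-powers} (or simply the power series in $\overline{\sA}$), the map $s\rightarrow a^s$ is holomorphic in $\overline{\sA}$. Since $\tau$ is a continuous linear functional, $s\rightarrow\tau(a^s)$ is entire. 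Hence the difference quotient $(\tau(a^s)-\tau(a^p))(s-p)^{-1}$ has a removable singularity at $s=p$ and extends to an entire function. This term is therefore harmless.

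It remains to show that $s\rightarrow\psi_s(a^s)$ is continuous on $\overline{\Omega}_p$. Once this holds, the function in Condition~($\textup{Z}_0$) has a continuous extension. The extension is unique because $\Omega_p$ is dense in $\overline{\Omega}_p$. By Lemma~\ref{lem:Tauberian.hol-closed-powers}, the map $s\rightarrow a^s$ is continuous (indeed holomorphic) with values in $\sB$. This is where holomorphic closedness of $\sB$ is used.

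I expect the main obstacle to be the passage from separate continuity to joint continuity. Let $s_n\rightarrow s$ in $\overline{\Omega}_p$ and write
\begin{equation*}
 \psi_{s_n}(a^{s_n})-\psi_s(a^s)=\psi_{s_n}\big(a^{s_n}-a^s\big)+\big(\psi_{s_n}(a^s)-\psi_s(a^s)\big).
\end{equation*}
The second difference tends to $0$ by the pointwise continuity in Condition~(Z). For the first, note that for every $b\in\sB$ the sequence $(\psi_{s_n}(b))$ converges, hence is bounded. Since $\sB$ is Fr\'echet, hence barrelled, the Banach--Steinhaus theorem makes $\{\psi_{s_n}\}$ equicontinuous. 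So there are a continuous seminorm $\fp$ on $\sB$ and a constant $C$ such that $|\psi_{s_n}(b)|\leq C\fp(b)$ for all $n$ and all $b\in\sB$. Then $|\psi_{s_n}(a^{s_n}-a^s)|\leq C\fp(a^{s_n}-a^s)\rightarrow 0$, by continuity of $s\rightarrow a^s$ in $\sB$.

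This gives sequential continuity, and hence continuity, of $s\rightarrow\psi_s(a^s)$ on $\overline{\Omega}_p$. It follows that Condition~($\textup{Z}_0$) holds.
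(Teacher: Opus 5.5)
Your proposal is correct and follows essentially the same route as the paper. You split $\Tr[a^s|D|^{-s}]=\varphi_s(a^s)$ into the pole term, an entire term coming from $a^s-a^p$ (your difference quotient in place of the paper's series $b(s)$), and $\psi_s(a^s)$, whose continuity comes from holomorphy of $s\rightarrow a^s$ in $\sB$ together with Banach--Steinhaus equicontinuity. The only cosmetic difference is that you apply Banach--Steinhaus to a convergent sequence $(\psi_{s_n})$, whereas the paper applies it to the family $(\psi_s)_{s\in K}$ over a compact neighbourhood $K$ of $s_0$.
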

\begin{proof}
As Condition~\textup{(Z)} holds, it follows from Remark~\ref{rmk:Tauberian.(Z)} that there is a holomorphically closed Fr\'echet subalgebra $\sB\subseteq \overline{\sA}$ and a continuous family $(\psi_s)_{s\in \overline{\Omega}_p}$ in $\sB'_{\textup{weak}}$ such that
\begin{equation}\label{eq:Tauberian.psis}
 \varphi_s(a)=p\tau(a)(s-p)^{-1} +\psi_s(a) \qquad \forall a\in \sB \quad \forall s\in \Omega_p. 
\end{equation}
 
 Let $a\in {\sA}_{++}$.  We have
 \begin{equation}\label{eq:Tauberian.as-ap}
 a^s-a^p=a^p\big(e^{(s-p)\log a}-1\big)=a^p \sum_{n\geq 1} \frac1{n!}(s-p)^n (\log a)^n=(s-p)b(s), 
\end{equation}
 where we have set
 \begin{equation*}
 b(s)=\sum_{n\geq 0} \frac{1}{(n+1)!} (s-p)^n a^p(\log a)^{n+1}, \qquad s\in \C. 
\end{equation*}
 In particular, $b(s)$, $s\in \C$, is an analytic family in $\overline{\sA}$. 
 
 Combining~(\ref{eq:Tauberian.psis}) with~(\ref{eq:Tauberian.as-ap}) shows that, for $\Re s>p$, we have
 \begin{align}
 \Tr\big[a^{s}|D|^{-s}\big]=\varphi_s(a^s) & = p\tau\big[a^{s}\big](s-p)^{-1} + \psi_s\big[a^{s}\big]\\
 &= p\tau\big[a^{p}\big](s-p)^{-1}+  p\tau\big[b(s)\big] +  \psi_s\big[a^{s}\big]. 
\end{align}
 
 The continuity of $\tau$ ensures that $\tau[b(s)]$ is an entire function on $\C$. Moreover, we know from Lemma~\ref{lem:Tauberian.hol-closed-powers} that $a^s$, $s\in \C$, is a holomorphic family in $\sB$. Let $s_0\in \overline{\Omega}_p$ and $K$ a compact neighborhood of $s$. As $(\psi_s)_{s\in\overline{\Omega}_p}$ is a continuous family in $\sB'_{\textup{weak}}$, its restriction $(\psi_s)_{s\in K}$ is weakly bounded, and hence is equicontinuous by the Banach-Steinhaus theorem. This means there is continuous seminorm $\fp$ on $\sB$ such that
\begin{equation*}
 |\psi_s(x)|\leq \fp(x) \qquad \forall x\in \sB \quad \forall s\in  K. 
\end{equation*}
Thus, for all $s\in K$, we have
\begin{align*}
 \left| \psi_s(a^s)-\psi_{s_0}(a^{s_0})\right| &\leq \left| \psi_s(a^s-a^{s_0})\right|+ \left| \psi_s(a^{s_0})-\psi_{s_0}(a^{s_0})\right| \\
 & \leq \fp(a^s-a^{s_0})+  \left| \psi_s(a^{s_0})-\psi_{s_0}(a^{s_0})\right|. 
\end{align*}
As $ \psi_s(a^{s_0})\rightarrow \psi_{s_0}(a^{s_0})$ and $a^{s}\rightarrow a^{s_0}$ in $\sB$ as $s\rightarrow s_0$, we then deduce that $\psi_s(a^s)\rightarrow \psi_{s_0}(a^{s_0})$ as $s\rightarrow s_0$. This shows that the function $s\rightarrow \psi_s(a^s)$ is continuous on $\overline{\Omega}_p$. 

It follows from all this that, for every $a\in \sA_{++}$, the function
 \begin{equation*}
\Omega_p\ni s \longrightarrow  \Tr\big[a^{s}|D|^{-s}\big] - p\tau\left[a^{p}\right](s-p)^{-1}
\end{equation*}
extends continuously to $\overline{\Omega}_p$.  That is, Condition~$\textup{(Z}_0\textup{)}$ is satisfied, proving the result.  
\end{proof}

\subsection{Condition~(H)} There is a close relationship between singularities of zeta functions and short-time behaviours of traces of heat semigroups (see, e.g., \cite{GS:JGA96}). In various examples the latter are often simpler to study. Therefore, it stands to reason to seek a heat semigroup version of Condition~$\textup{(Z)}$.

\begin{lemma}\label{lem:Tauberian.heat-semigroup}
 The heat semigroup $e^{-tD^2}$, $t>0$, yields a continuous family of trace-class operators such that
\begin{gather}\label{eq:Tauberian.heat-large-time}
 \big\| (1-\Pi_0)e^{-tD^2}\big\|_1 =\op{O}\big(e^{-t\lambda_0(D^2)}\big) \quad \text{as $t\rightarrow \infty$},\\
  \big\| e^{-tD^2}\big\|_1 =\op{O}\big(t^{-\frac{p}{2}}\big) \quad \text{as $t\rightarrow 0^+$}.
  \label{eq:Tauberian.heat-short-time}
\end{gather}
\end{lemma}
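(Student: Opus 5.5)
We work throughout in an orthonormal eigenbasis $(\xi_\ell)_{\ell\geq 0}$ of $D$, with $D\xi_\ell=\lambda_\ell\xi_\ell$. In this basis $e^{-tD^2}$ is diagonal with eigenvalues $e^{-t\lambda_\ell^2}>0$. Set $\mu_j:=\lambda_j(D^2)$, the positive eigenvalues of $D^2$ listed in non-decreasing order with multiplicity, and write $N_0=\rk\Pi_0<\infty$. Then
\begin{equation*}
\big\|e^{-tD^2}\big\|_1=\Tr\big[e^{-tD^2}\big]=N_0+\sum_{j\geq 0}e^{-t\mu_j},
\qquad
\big\|(1-\Pi_0)e^{-tD^2}\big\|_1=\sum_{j\geq 0}e^{-t\mu_j}.
\end{equation*}
By $p$-summability, $\mu_j^{-1}=\lambda_j(D^{-2})\leq C(j+1)^{-2/p}$, so $\mu_j\geq c\,(j+1)^{2/p}$ for some $c>0$. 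Both estimates, as well as trace-class membership and continuity, reduce to elementary bounds on the series $\sum_j e^{-t\mu_j}$.

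\emph{Trace class and continuity.} Fix $t_0>0$. For every $t\geq t_0$ we have $e^{-t\mu_j}\leq e^{-t_0c(j+1)^{2/p}}$, which is summable in $j$. Hence $e^{-tD^2}\in\sL_1$. For $t,s\geq t_0$,
\begin{equation*}
\big\|e^{-tD^2}-e^{-sD^2}\big\|_1=\sum_j\big|e^{-t\mu_j}-e^{-s\mu_j}\big|,
\end{equation*}
and each term tends to $0$ as $s\to t$ while being bounded by the summable majorant $2e^{-t_0c(j+1)^{2/p}}$. Dominated convergence then gives continuity of $t\mapsto e^{-tD^2}$ in $\sL_1$ on $[t_0,\infty)$, and hence on $(0,\infty)$ since $t_0>0$ is arbitrary. (Alternatively, one can factor $e^{-tD^2}=e^{-t_0D^2}e^{-(t-t_0)D^2}$ and use strong continuity.)

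\emph{Large time.} For $t\geq 1$ we write $e^{-t\mu_j}=e^{-t\mu_0}e^{-(t-1)(\mu_j-\mu_0)}e^{-(\mu_j-\mu_0)}\leq e^{-t\mu_0}e^{\mu_0}e^{-\mu_j}$, using $\mu_j\geq\mu_0$. Summing over $j$ gives
\begin{equation*}
\big\|(1-\Pi_0)e^{-tD^2}\big\|_1\leq e^{\mu_0}\,\Tr\big[(1-\Pi_0)e^{-D^2}\big]\,e^{-t\mu_0},
\end{equation*}
which is exactly~(\ref{eq:Tauberian.heat-large-time}) with $\mu_0=\lambda_0(D^2)$.

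\emph{Short time.} This is the only step needing any care. We bound
\begin{equation*}
\sum_j e^{-t\mu_j}\leq\sum_{j\geq 0}e^{-ct(j+1)^{2/p}}\leq\int_0^\infty e^{-ctx^{2/p}}\,dx=(ct)^{-\frac{p}{2}}\int_0^\infty e^{-y^{2/p}}\,dy=C\,t^{-\frac{p}{2}},
\end{equation*}
where the middle inequality is the comparison of a decreasing function's sum with its integral (shifting the index by one), and the last step is the substitution $y=(ct)^{p/2}x$. Adding the constant $N_0$, which is itself $\op{O}(t^{-p/2})$ as $t\to0^+$, yields~(\ref{eq:Tauberian.heat-short-time}). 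The main point to get right is converting the weak-Schatten bound on $D^{-1}$ into the lower bound $\mu_j\gtrsim j^{2/p}$ and then using monotonicity for the sum–integral comparison; everything else is routine.
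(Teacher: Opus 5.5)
Your proof is correct and follows essentially the same route as the paper: expansion in an eigenbasis of $D$, the lower bound $\lambda_j(D^2)\geq c\,(j+1)^{2/p}$ from $p$-summability giving a summable majorant for trace-class membership and continuity, and the splitting at $t=1$ for the large-time bound. Your scalar inequality there is just the diagonal form of the paper's estimate $\|(1-\Pi_0)e^{-tD^2}\|_1\leq \|(1-\Pi_0)e^{-(t-1)D^2}\|\,\|e^{-D^2}\|_1$, and the short-time bound is the same sum--integral comparison, with your constants handled slightly more carefully than in the paper's write-up.
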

\begin{proof}
 This result is standard. We sketch a proof for the reader's convenience. Let $(\xi_j)_{j\geq 0}$ be an orthonormal family in $\sH$ such that $D^2\xi_j=\lambda_j(D^2)\xi_j$. For all $t>0$, we have
 \begin{equation}\label{eq:Tauberian.Schmidt-heat}
 e^{-tD^2} = \Pi_0 + \sum_{j\geq 0}e^{-t\lambda_j(D^2)} |{\xi_j}\rangle\langle \xi_j|, 
\end{equation}
where the series converges in $\sK$. As $D^{-1}\in \sL_{p,\infty}$, setting $C=\|D^{-1}\|_{p,\infty}$ we get
\begin{equation*}
 \lambda_j\big(D^2\big)=\mu_j(D)^{-2}\geq C(j+1)^{\frac{2}{p}} \quad \text{for all $j\geq 0$}. 
\end{equation*}
Thus, given any $b>0$, for all $j\geq 0$ and $t\geq b$, we have
\begin{equation}\label{eq:Tauberian.trace-norm-xij}
 \big\|e^{-t\lambda_j(D^2)} |{\xi_j}\rangle\langle \xi_j|\big\|_1 = e^{-t\lambda_j(D^2)} \leq e^{-bC(j+1)^{\frac{2}{p}}}. 
\end{equation}
This shows that the Schmidt series $\sum_{j\geq 0}e^{-t\lambda_j(D^2)} |{\xi_j}\rangle\langle \xi_j|$ converges normally in the Banach space $C_0([b,\infty); \sL_1)$ for all $b>0$. It then follows that 
$e^{-tD^2}$, $t>0$, is a  continuous family in $\sL_1$. 

Note that $\|(1-\Pi_0)e^{-tD^2}\|=e^{-t\lambda_0(D^2)}$ for all $t>0$. Thus, for all $t>1$, by using the semigroup property $ e^{-tD^2}=e^{-(t-1)D^2}e^{-D^2}$, we get
\begin{align*}
  \big\| (1-\Pi_0)e^{-tD^2}\big\|_1 &= \big\| (1-\Pi_0)e^{-(t-1)D^2}e^{-D^2}\big\|_1 \\
  & \leq \big\| (1-\Pi_0)e^{-(t-1)D^2}\| \|e^{-D^2}\big\|_1\\
  & \leq  \|e^{-D^2}\big\|_1e^{-(t-1)\lambda_0(D^2)}. 
\end{align*}
This gives~(\ref{eq:Tauberian.heat-large-time}). 

Moreover, given any  $t>0$, by using~(\ref{eq:Tauberian.Schmidt-heat}) and~(\ref{eq:Tauberian.trace-norm-xij}) we get
\begin{equation*}
 \big\| e^{-tD^2}\big\|_1 \leq \|\Pi_0\|_1 + \sum_{j\geq 0}  \big\|e^{-t\lambda_j(D^2)} |{\xi_j}\rangle\langle \xi_j|\big\|_1 \leq \dim \ker D + 
  \sum_{j\geq 0} e^{-bC(j+1)^{\frac{2}{p}}}.
\end{equation*}
Note that
\begin{equation*}
  \sum_{j\geq 0} e^{-bC(j+1)^{\frac{2}{p}}} \leq \int_0^\infty e^{-bCx^{\frac{2}{p}}}dx = (Ct)^{-\frac{p}{2}} \int_0^\infty e^{-x^{\frac{2}{p}}}dx.
\end{equation*}
This gives~(\ref{eq:Tauberian.heat-short-time}). The proof is complete. 
\end{proof}

As $e^{-tD^2}$, $t>0$, is a continuous family in $\sL_1$, it gives rise to a continuous family $\theta_t$, $t>0$, in $\overline{\sA}'$ given by
\begin{equation}\label{eq:Tauberian.thetat}
 \theta_t(a)= \Tr\big[ae^{-tD^2}\big], \qquad a \in \overline{\sA}, \quad t>0. 
\end{equation}
In particular, if $\sB$ is any Fr\'echet subalgebra of $\overline{\sA}$, then $(\theta_t)_{|\sB}$, $t>0$, is a continuous family in $\sB'$. 

The heat semigroup version of Condition~$\textup{(Z)}$ is provided by the following condition. 

\begin{condition}[Condition~$\textup{(H)}$]
There are $\delta>0$ and a holomorphically closed Fr\'echet subalgebra  $\sB\subseteq \overline{\sA}$ containing $\sA$ such that, for all $a\in \sB$, as $t\rightarrow 0^+$ we have 
\begin{equation*}
 \theta_t (a) = \Gamma\left(1+\frac{p}{2}\right) t^{-\frac{p}{2}}\left(\tau(a) + \op{O}\big(t^\delta\big)\right). 
\end{equation*}
\end{condition}

\begin{remark}\label{rmk:Tauberian.(H)}
 Equivalently, Condition~(H) means that the family of linear functionals, 
 \begin{equation*}
 \varepsilon_t(a):=t^{-\delta}\left[ \theta_t(a)- \Gamma\left(1+\frac{p}{2}\right)\tau(a)\right], \qquad a\in \sB, \quad t>0,
\end{equation*}
is weakly bounded in $\sB'$ on every interval $(0,c]$, $c>0$. Note also that $(\varepsilon_t)_{t>0}$ is a continuous family in $\sB'$. 
\end{remark}

\begin{lemma}\label{lem:Tauberian.H-Z}
 Condition~$\textup{(H)}$ implies  Condition~\textup{(Z)}.
\end{lemma}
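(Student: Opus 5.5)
We will pass from the heat traces $\theta_t$ to the zeta functions $\varphi_s$ through the Mellin transform, keeping the same holomorphically closed Fr\'echet subalgebra $\sB$ as in Condition~(H). For $\Re s>p$ and $a\in \overline{\sA}$, the spectral decomposition of $D^2$ gives
\begin{equation*}
 \Tr\big[a|D|^{-s}\big]=\frac{1}{\Gamma(s/2)}\int_0^\infty t^{\frac{s}{2}-1}\Tr\big[a(1-\Pi_0)e^{-tD^2}\big]\,dt .
\end{equation*}
To justify this, we integrate the trace-class valued function $t\mapsto t^{s/2-1}(1-\Pi_0)e^{-tD^2}$ as a Bochner integral in $\sL_1$. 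It converges at $\infty$ by~(\ref{eq:Tauberian.heat-large-time}) and at $0$ by~(\ref{eq:Tauberian.heat-short-time}) when $\Re s>p$. The integral equals $\Gamma(s/2)|D|^{-s}$ by the eigenbasis computation, and we then apply the continuous functional $T\mapsto \Tr[aT]$.

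Next we split the integral at $t=1$. The piece $\int_1^\infty$ is entire in $s$ by~(\ref{eq:Tauberian.heat-large-time}). The term $\Tr[a\Pi_0]\int_0^1 t^{s/2-1}dt=2\Tr[a\Pi_0]/s$ is holomorphic near $\Re s\geq p>0$. For $\int_0^1 t^{s/2-1}\theta_t(a)\,dt$ we insert Condition~(H), written as $\theta_t(a)=\Gamma(1+\frac p2)t^{-p/2}\tau(a)+t^{-p/2+\delta}\varepsilon_t(a)$ with $\varepsilon_t$ as in Remark~\ref{rmk:Tauberian.(H)}. The main term contributes
\begin{equation*}
 \Gamma\big(1+\tfrac p2\big)\tau(a)\int_0^1t^{\frac{s-p}{2}-1}dt=\frac{2\Gamma(1+\frac p2)\tau(a)}{s-p}.
\end{equation*}
Dividing by $\Gamma(s/2)$ and expanding $1/\Gamma(s/2)$ about $s=p$, we get $p\tau(a)(s-p)^{-1}$ plus an entire correction, because $2\Gamma(1+\frac p2)/\Gamma(\frac p2)=p$. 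The error term $\int_0^1 t^{\frac{s-p}{2}+\delta-1}\varepsilon_t(a)\,dt$ converges absolutely and is holomorphic on $\Re s>p-2\delta$. This uses the fact that $|\varepsilon_t(a)|\leq C_a$ on $(0,1]$ and that $t\mapsto\varepsilon_t(a)$ is continuous. Holomorphy follows by dominated convergence or Morera.

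Collecting the pieces and using that $1/\Gamma(s/2)$ is entire, $\varphi_s(a)-p\tau(a)(s-p)^{-1}$ extends holomorphically to $\Re s>p-2\delta$. In particular it extends continuously to $\overline{\Omega}_p$ for every $a\in\sB$, which is Condition~(Z) with the same $\sB$.

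The main point needing care is the Mellin representation, specifically interchanging trace and integral and handling $\ker D$. This is routine via the $\sL_1$-Bochner integral and Lemma~\ref{lem:Tauberian.heat-semigroup}. The remaining steps are explicit computations. Note that only pointwise boundedness of $\varepsilon_t(a)$ for each fixed $a$ is needed, so no Banach–Steinhaus argument is required here.
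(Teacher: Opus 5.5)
Your proposal is correct and follows essentially the same route as the paper. Both arguments write $\Gamma(s/2)|D|^{-s}$ as an $\sL_1$-valued Mellin integral of $(1-\Pi_0)e^{-tD^2}$, split at $t=1$ (an explicit $\Pi_0$ term and an entire tail), insert Condition~(H) on $(0,1]$, and divide by $\Gamma(s/2)$ using $2\Gamma(1+\frac{p}{2})/\Gamma(\frac{p}{2})=p$. Your handling of the error term is accurate: it needs only pointwise boundedness of $\varepsilon_t(a)$, and it gives holomorphy on $\Re s>p-2\delta$, slightly sharper than the half-plane $\Re s>p-\delta$ stated in the paper.
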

\begin{proof}
 Assume Condition~$\textup{(H)}$ holds. For $\Re s>0$, we have
 \begin{align*}
  \Gamma\left(\frac{s}{2}\right)|D|^{-s} =  \Gamma\left(\frac{s}{2}\right)\left(D^2\right)^{-\frac{s}{2}}
  &=  \int_0^\infty t^{\frac{s}{2}}\left(1-\Pi_0\right)e^{-tD^2}\frac{dt}{t}\\
  & =  \int_0^1t^{\frac{s}{2}}e^{-tD^2}\frac{dt}{t} - \frac{1}{s} \Pi_0 + R(s), 
  \end{align*}
 where we have set  
 \begin{equation*}
 R(s)= \int_1^\infty t^{\frac{s}{2}}\left(1-\Pi_0\right)e^{-tD^2}\frac{dt}{t}. 
\end{equation*}
We have also used the fact that
\begin{equation*}
  \int_0^1 t^{\frac{s}{2}}\Pi_0e^{-tD^2}\frac{dt}{t}= \int_0^1t^{\frac{s}{2}}\Pi_0\frac{dt}{t}=\frac{2}{s}\Pi_0. 
\end{equation*}

The above integrals converge in $\sL(\sH)$. However, for $\Re s>p$, Lemma~\ref{lem:Tauberian.heat-semigroup} ensures that $ t^{-1+s/2}\left(1-\Pi_0\right)e^{-tD^2}$, $t>0$, is an $L^1$-family in $\sL_1$, and so the integrals actually converge in $\sL_1$. Thus, for all $a\in \overline{\sA}$, we have
\begin{equation*}
  \Gamma\left(\frac{s}{2}\right)\Tr\big[a|D|^{-s}\big]  =  \int_0^1t^{\frac{s}{2}}\Tr\big[ ae^{-tD^2}\big] \frac{dt}{t} - \frac{2}{s} \Tr\big[ a\Pi_0\big] + \Tr\big[ a R(s)\big], \quad \Re s >p.   
\end{equation*}
Equivalently, with the notation of~(\ref{eq:Tauberian.phiT}),~(\ref{eq:Tauberian.phis}) and~(\ref{eq:Tauberian.thetat}) we have
\begin{equation}\label{eq:Tauberian.vphis-int01-Rs}
  \Gamma\left(\frac{s}{2}\right)\varphi_s(a)  =  \int_0^1t^{\frac{s}{2}}\theta_t(a) \frac{dt}{t} - \frac{2}{s} \Tr\big[ a\Pi_0\big] + \varphi_{R(s)}(a), \quad \Re s >p.   
\end{equation}

Note that $(s,t)\rightarrow t^{-1+s/2}(1-\Pi_0)e^{-tD^2}$ is a continuous map from $\C\times [1,\infty)$ to $\sL_1$. Moreover, thanks to~(\ref{eq:Tauberian.heat-large-time}), for any compact $K\subseteq \C$, there are $\beta_K\in \R$ and $C_K>0$ such that, for all $s\in K$ and $t\geq 1$, we have
\begin{equation*}
\big\|t^{-1+s/2}(1-\Pi_0)e^{-tD^2}\big\|_1 \leq  t^{\beta_K} \big\|(1-\Pi_0)e^{-tD^2}\big\|_1\leq C_K t^{\beta_K}e^{-t\lambda_0(D^2)}. 
\end{equation*}
As $t^{\beta_K}e^{-t\lambda_0(D^2)} \in L^1[1,\infty)$, it follows that $R(s)$, $s\in \C$,  is a continuous family in $\sL_1$, and so 
$\varphi_{R(s)}$, $s\in \C$, is a continuous family in $\overline{\sA}'$. 

As Condition~(H) holds, we know from Remark~\ref{rmk:Tauberian.(H)}, there are $\delta>0$, a holomorphically closed Fr\'echet subalgebra $\sB\subseteq \overline{\sA}$, and a weakly bounded continuous family $(\varepsilon_{t})_{0<t\leq 1}$ in $\sB'$ such that, for all $a\in \sB$, we have
\begin{equation*}
 \theta_t(a) = \Gamma\left(1+\frac{p}{2}\right)t^{-\frac{p}{2}}\left[\tau(a)  + t^{\delta}\varepsilon_t(a)\right], \qquad 0<t\leq 1.  
\end{equation*}
Thus, if $a\in \sB$, then, for $\Re s >p$, we have
\begin{align}
\int_0^1t^{\frac{s}{2}}\theta_t(a) \frac{dt}{t} 
 &=  \Gamma\left(1+\frac{p}{2}\right)\int_0^1 t^{\frac{s-p}{2}}\tau(a)  \frac{dt}{t} +  \int_0^1t^{\frac{s-p}{2}}t^{\delta-1}\varepsilon_t(a) dt \nonumber\\
 &= p\Gamma\left(\frac{p}{2}\right)\tau(a)(s-p)^{-1}+ \hat{\varepsilon}_s(a), 
 \label{eq:Tauberian.int01thetat}
 \end{align}
where we have set $\hat{\varepsilon}_s(a):=\int_0^1 t^{(s-p+\delta)/2}\varepsilon_t (a)t^{-1}dt$. As $t^{\delta-1}\varepsilon_t(a)\in L^1((0,1])$, we see that 
$\hat{\varepsilon}_s(a)$ is a holomorphic function on the half-plane $\Re s>p-\delta$.

Combining~(\ref{eq:Tauberian.vphis-int01-Rs}) and~(\ref{eq:Tauberian.int01thetat}) shows that, for all $a\in \overline{\sA}$, we have
\begin{equation*}
 \Gamma\left(\frac{s}{2}\right)\varphi_s(a) = p\Gamma\left(\frac{p}{2}\right)\tau(a) (s-p)^{-1}+ \psi_s(a), \qquad \Re s>p. 
\end{equation*}
where we have set
\begin{equation*}
 \psi_s(a):= \hat{\varepsilon}_s(a) - \frac{2}{s} \Tr\big[ a\Pi_0\big] + \varphi_{R(s)}(a), \qquad  \Re s>p-\delta. 
\end{equation*}
Note that $s\rightarrow \psi_s(a)$ is a holomorphic function on the halfplane $\Re s\geq p-\delta$.

In addition, as $\Gamma(s/2)^{-1}$ is a holomorphic function on the  half-plane $\Re s>0$, we may write
\begin{equation*}
 \Gamma\left(\frac{s}{2}\right)^{-1} =\Gamma\left(\frac{p}{2}\right)^{-1}+(s-p)u(s), 
\end{equation*}
where $u(s)$ is a holomorphic function on the half-plane $\Re s>0$. Thus, for $\Re s>p$, we have
\begin{align*}
\varphi_s(a) & = p\Gamma\left(\frac{s}{2}\right)^{-1}\Gamma\left(\frac{p}{2}\right)\tau(a) (s-p)^{-1}+ 
 \Gamma\left(\frac{s}{2}\right)^{-1} \psi_s(a)\\
 &= p\tau(a) (s-p)^{-1} + pu(s)\Gamma\left(\frac{p}{2}\right)\tau(a) +  \Gamma\left(\frac{s}{2}\right)^{-1} \psi_s(a). 
\end{align*}
Equivalently,
\begin{equation*}
 \varphi_s(a)-p\tau(a) (s-p)^{-1} =  pu(s)\Gamma\left(\frac{p}{2}\right)\tau(a) +  \Gamma\left(\frac{s}{2}\right)^{-1} \psi_s(a). 
\end{equation*}
As the r.h.s.\ is a holomorphic function on the halfplane $\Re s>p-\delta$, this shows that the l.h.s.\ has a holomorphic extension to that halfplane for all $a\in \sB$. This ensures that  Condition~$\textup{(Z)}$ holds, proving the result. 
\end{proof}

\begin{remark}
 The proof above shows that Condition~(H) implies a slightly stronger form of Condition~(Z), since we obtain an analytic continuation of the functions~(\ref{eq:Tauberian.phis}) to the halfplane $\{\Re z>p-\delta\}$.  
\end{remark}

\subsection{Proof of Theorem~\ref{thm:Intro.Tauberian}}
 We know from Lemma~\ref{lem:Tauberian.Z0-W} that Condition $\textup{(Z}_0\textup{)}$ implies Condition~(W). By Lemma~\ref{lem:Tauberian.Z-Z0} and Lemma~\ref{lem:Tauberian.H-Z} Condition~(Z) and Condition~(H) both imply Condition $\textup{(Z}_0\textup{)}$, and so they imply  Condition~(W). This completes the proof of Theorem~\ref{thm:Intro.Tauberian}.\hfill$\Box$

\section{Double Integral Operators -- Proof of Lemma~\ref{lem:Intro.factorization}}\label{chap:DOIs} 
This section is devoted to proving Lemma~\ref{lem:Intro.factorization}. This lemma provides factorization formulas for fractional commutators. These formulas play crucial roles in the proof of Lemma~\ref{lem:Intro.Com-Dq} in Section~\ref{chap:Main-results},  and in the proofs of Lemma~\ref{lem:Intro.refined-CSZ} and Lemma~\ref{lem:Intro.extension} in Section~\ref{chap:CSZ}. Those  lemmas are the key ingredients in the proofs of the main results of  this monograph (see Section~\ref{chap:Main-results}  and Section~\ref{chap:Tauberian}).

The proof of Lemma~\ref{lem:Intro.factorization} relies on basic results for double operator-integrals (DOIs). These results were established by Birman and Solomyak in the 60s and 70s.

\subsection{The transformers $\Psi_{\delta,\delta'}^\alpha(T)$} 
The proof of Lemma~\ref{lem:Intro.factorization} relies on expressing commutators in terms of operators of the form, 
\begin{equation}\label{eq:Com.Psiadd'}
 \Psi_{\delta,\delta'}^\alpha(T) = \int_0^\infty \mu^{1+\alpha}|D|^\delta\left(D^2+\mu^2\right)^{-1}T |D|^{\delta'}\left(D^2+\mu^2\right)^{-1}d\mu, \qquad T\in \sL(\sH),
\end{equation}
where $\alpha \in (-2,2)$ and $\delta,\delta'\in [0,2]$ satisfy one of the following two conditions:
\begin{enumerate}
 \item[(i)] $\alpha+\delta+\delta'<2$, or
 
 \item[(ii)] $\alpha+\delta+\delta'=2$ and $\delta,\delta'\in (0,2)$. 
\end{enumerate}
These operators (when they make sense) are special instances of double operator integrals (DOIs) in the sense of Birman-Solomyak~\cite{BS:PMF66, BS:PMF73,BS:IEOT03} (see also~\cite{ST:LNM19}). In fact, in the case (ii), up to change of variable and scaling, the operator $ \Psi_{\delta,\delta'}^\alpha(T) $ is the operator $\Theta^{A_0,A_1}_{\frac{1}{2}\delta,\frac{1}{2}\delta'}(T)$ with $A_0=A_1=D^2$ from~\cite[Section 1]{BS:JSM92}. Therefore, at least in the case  $\alpha+\delta+\delta'=2$, the properties of our operators $\Psi_{\delta,\delta'}^\alpha(T)$ follow from standard properties of DOIs as described in~\cite{BS:PMF66, BS:PMF73,BS:IEOT03, ST:LNM19}. 
 
 For our purpose we only need the boundedness of DOIs on $\sL(\sH)$.  Moreover, it is important to incorporate condition~(i) as well, even though it is simpler. For these reasons, we provide a unified account that focuses on boundedness on $\sL(\sH)$ under our two conditions, and includes the original arguments of~\cite{BS:PMF73,BS:IEOT03} for our class of DOIs under condition~(ii).  
 
 Actually, the main issue at stake is to make sense of the integrals appearing in~(\ref{eq:Com.Psiadd'}). To ease the exposition we introduce some notation. For $0\leq \delta \leq 2$, set 
 \begin{equation*}
 R_\delta(\mu)=|D|^\delta\left(D^2+\mu^2\right)^{-1}, \qquad \mu\geq 0.  
\end{equation*}
with the convention that $|D|^0=1-\Pi_0$. Note that $R_\delta(\mu)$ is a bounded operator which is compact for $0\leq \delta<2$. For $0\leq \delta,\delta' \leq 2$, we also define
 \begin{align}
 \Delta_{\delta,\delta'}(T;\mu):= &  R_\delta(\mu)TR_{\delta'}(\mu)\nonumber \\
=& |D|^\delta\left(D^2+\mu^2\right)^{-1}T |D|^{\delta'}\left(D^2+\mu^2\right)^{-1}, \quad \mu\geq 0, \ T\in \sL(\sH). 
\label{eq:Com.Deltadd'mu} 
\end{align}
Note that $\Delta_{\delta,\delta'}(T;\mu)$ is always a bounded operator and is compact whenever $\min(\delta,\delta')<2$.

 \begin{lemma} \label{lem:Com.estimate-Delta}
 Let $\alpha \in (-2,2)$ and $\delta,\delta'\in [0,2]$. The following hold.
\begin{enumerate}
 \item[(i)] If  $\alpha+\delta+\delta'<2$, then there is $C_{\alpha \delta \delta'}>0$ such that
 \begin{equation}\label{eq:Com.estimate-Delta1}
 \int_0^\infty\mu^{1+\alpha} \left\| \Delta_{\delta,\delta'}(T;\mu)\right\| d\mu \leq C_{\alpha \delta \delta'}\|T\| \qquad \forall T\in \sL(\sH). 
\end{equation}
In particular, the map $\mu \rightarrow \mu^{1+\alpha}\Delta_{\delta,\delta'}(T;\mu)$ is integrable in $\sL(\sH)$ in Bochner's sense. 

\item[(ii)] If $\alpha+\delta+\delta'=2$ and $\delta,\delta'\in (0,2)$, then there is $C_{\delta \delta'}>0$ such that
 \begin{equation}\label{eq:Com.estimate-Delta2}
 \int_0^\infty\mu^{1+\alpha} \left|\bigscal{\Delta_{\delta,\delta'}(T;\mu)\xi}{\eta}\right| d\mu \leq C_{\delta \delta'}\|T\|\|\xi\|\|\eta\| \qquad \forall T\in \sL(\sH), \ \forall \xi,\eta\in \sH.  
\end{equation}
In particular, the map $\mu \rightarrow \mu^{1+\alpha}\Delta_{\delta,\delta'}(T;\mu)$ is integrable with respect to the weak operator topology of $\sL(\sH)$.\end{enumerate}
 \end{lemma}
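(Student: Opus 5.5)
For (i), we estimate the operator norm of $\Delta_{\delta,\delta'}(T;\mu)$ via the spectral theorem and then integrate in $\mu$. Since $\|\Delta_{\delta,\delta'}(T;\mu)\|\leq \|R_\delta(\mu)\|\,\|T\|\,\|R_{\delta'}(\mu)\|$, everything reduces to bounding
\begin{equation*}
\|R_\delta(\mu)\| = \sup_{\lambda\in \Sp(|D|)\setminus\{0\}} \lambda^\delta(\lambda^2+\mu^2)^{-1}.
\end{equation*}
Let $c>0$ be the smallest nonzero eigenvalue of $|D|$, which exists since $D$ has compact resolvent. Two elementary bounds apply. First, for $\lambda>0$ and $0\leq\delta\leq 2$, the scaling $\lambda=\mu x$ gives $\lambda^\delta(\lambda^2+\mu^2)^{-1}\leq C_\delta\,\mu^{\delta-2}$. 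Second, restricting to $\lambda\geq c$, we have $\lambda^\delta(\lambda^2+\mu^2)^{-1}\leq \lambda^{\delta-2}\leq c^{\delta-2}$; the case $\delta=0$ is covered by the convention $|D|^0=1-\Pi_0$. Hence $\|R_\delta(\mu)\|\leq C\min(1,\mu^{\delta-2})$, where the constant depends on $D$ through $c$. Therefore
\begin{equation*}
\mu^{1+\alpha}\|\Delta_{\delta,\delta'}(T;\mu)\|\leq C\|T\|\,\mu^{1+\alpha}\min(1,\mu^{\delta+\delta'-4}).
\end{equation*}
Near $\mu=0$ this is integrable because $\alpha>-2$. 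Near $\infty$ the exponent is $\alpha+\delta+\delta'-3<-1$, precisely because $\alpha+\delta+\delta'<2$. This proves~(\ref{eq:Com.estimate-Delta1}). Bochner integrability follows because $\mu\mapsto\Delta_{\delta,\delta'}(T;\mu)$ is norm-continuous: the resolvent is norm-continuous in $\mu$, and so is $|D|^\delta(D^2+\mu^2)^{-1}$, by the same spectral estimates.

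For (ii), the borderline case, the norm bound gives only a $\mu^{-1}$ integrand at infinity, so I will use a Cauchy--Schwarz argument in the style of Birman--Solomyak. Write
\begin{equation*}
\bigscal{\Delta_{\delta,\delta'}(T;\mu)\xi}{\eta} = \bigscal{T R_{\delta'}(\mu)\xi}{R_\delta(\mu)\eta},
\end{equation*}
using that $R_\delta(\mu)$ is selfadjoint. Then
\begin{equation*}
\int_0^\infty \mu^{1+\alpha}\big|\bigscal{\Delta_{\delta,\delta'}(T;\mu)\xi}{\eta}\big|\,d\mu
\leq \|T\|\Big(\int_0^\infty \mu^{1+2a}\|R_{\delta'}(\mu)\xi\|^2d\mu\Big)^{\frac12}\Big(\int_0^\infty\mu^{1+2b}\|R_\delta(\mu)\eta\|^2d\mu\Big)^{\frac12},
\end{equation*}
where $a+b=\alpha$. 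The natural split is $a=1-\delta'$ and $b=1-\delta$, which works because $\alpha=2-\delta-\delta'$.

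By the spectral theorem,
\begin{equation*}
\int_0^\infty \mu^{3-2\delta'}\|R_{\delta'}(\mu)\xi\|^2d\mu=\int_{\Sp(|D|)\setminus 0}\Big(\int_0^\infty \mu^{3-2\delta'}\lambda^{2\delta'}(\lambda^2+\mu^2)^{-2}d\mu\Big)\,d\langle E_\lambda\xi,\xi\rangle.
\end{equation*}
The substitution $\mu=\lambda x$ turns the inner integral into $\int_0^\infty x^{3-2\delta'}(1+x^2)^{-2}dx$, which is independent of $\lambda$. This integral is finite exactly when $3-2\delta'>-1$ and $3-2\delta'-4<-1$, that is, when $0<\delta'<2$. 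Hence the whole expression is bounded by $C_{\delta'}\|\xi\|^2$. The same computation for $\eta$ with $\delta$ gives $C_{\delta}\|\eta\|^2$, which proves~(\ref{eq:Com.estimate-Delta2}). Weak integrability then follows immediately, since each matrix coefficient is a measurable function of $\mu$ (continuous, in fact) with integrable absolute value, and the bound is uniform in $\|\xi\|\|\eta\|$.

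The main obstacle is the choice of splitting in (ii). It has to make both scalar integrals scale-invariant in $\lambda$, which is what removes any dependence on the spectrum and lets the constant depend only on $\delta,\delta'$. The strict conditions $\delta,\delta'\in(0,2)$ are used exactly at the point where the inner integral must converge at both ends.
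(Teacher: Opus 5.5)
Your proof is correct and follows the paper's argument essentially verbatim. In (i) you use the operator-norm bound $\|R_\delta(\mu)\|=\op{O}(\mu^{\delta-2})$ at infinity together with $1+\alpha>-1$ near $0$. In (ii) you use the Birman--Solomyak Cauchy--Schwarz splitting with weights $\mu^{3-2\delta'}$ and $\mu^{3-2\delta}$, then the spectral theorem and the scale-invariant integral $\int_0^\infty x^{3-2\delta}(1+x^2)^{-2}dx$, which is finite exactly for $\delta\in(0,2)$. Two points the paper leaves implicit are made explicit in your version: the small-$\mu$ bound $\|R_\delta(\mu)\|\leq C\min(1,\mu^{\delta-2})$ via the spectral gap of $D$, and norm-continuity in $\mu$ as the justification for Bochner measurability.
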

 \begin{proof}[Proof of Lemma~\ref{lem:Com.estimate-Delta}] 
Let $T\in \sL(\sH)$. We have 
\begin{equation}\label{eq:Com.estimate-int-Delta1}
  \int_0^\infty  \mu^{1+\alpha} \left\| \Delta_{\delta,\delta'}(T;\mu)\right\| d\mu \leq \|T\|  \int_0^\infty  \mu^{1+\alpha} \| R_{\delta}(\mu)\| \|R_{\delta'}(\mu)\|d\mu. 
\end{equation}
 Moreover, given any $\mu>0$ and $\delta \in [0,2]$, we have
\begin{equation*}
\left \|R_\delta(\mu)\right\|= \mu^{\delta-2}\big\||(\mu^{-1}D)|^{\delta}\big((\mu^{-1}D)^2+1\big)^{-1}\big\|\leq \mu^{-2+\delta}\max_{t\geq 0} t^{\delta}(t^2+1)^{-1}.
\end{equation*}
In particular, we see that
\begin{equation}\label{eq:Com.Rd-bigO}
 \left  \|R_\delta(\mu)\right\|=\op{O}\left(\mu^{-2+\delta}\right)\qquad \text{as}\ \mu \rightarrow \infty. 
\end{equation}
 
 If $\alpha+\delta+\delta'<2$, then~(\ref{eq:Com.Rd-bigO}) ensures that $\mu^{1+\alpha}\| R_{\delta}(\mu)\| \|R_{\delta'}(\mu)\|=\op{O}(\mu^r)$, where 
 \begin{equation*}
 r=(1+\alpha)+(-2+\delta)+(-2+\delta')=-1-[2-(\alpha+\delta+ \delta')]<-1.
\end{equation*}
 Together with the fact that $\alpha>-2$, i.e.,  $1+\alpha>-1$, we deduce that
 \begin{equation*}
  \int_0^\infty  \mu^{1+\alpha} \| R_{\delta}(\mu)\| \|R_{\delta'}(\mu)\|d\mu <\infty.
\end{equation*}
Combining this with~(\ref{eq:Com.estimate-int-Delta1}) gives the estimate~(\ref{eq:Com.estimate-Delta1}). 

To prove the estimate~(\ref{eq:Com.estimate-Delta2}) we follow the arguments of Birman-Solomyak~\cite{BS:PMF73,BS:IEOT03}. Suppose that   $\alpha+\delta+\delta'=2$ and $\delta, \delta'\in (0,2)$. Let $\xi,\eta\in \sH$. We have
\begin{equation*}
 \left| \bigscal{\Delta_{\delta,\delta'}(T;\mu)\xi}{\eta}\right| = \left|\bigscal{TR_{\delta'}(\mu)\xi}{R_\delta(\mu)\eta}\right|\leq 
 \|T\| \|R_{\delta'}(\mu)\xi\|\|R_{\delta}(\mu)\eta\|. 
\end{equation*}
 Moreover, as  $\alpha+\delta+\delta'=2$, we have $1+\alpha=3-(\delta+\delta')$. Thus, 
 \begin{multline}\label{eq:Com.estimate-int-Delta-xieta}
 \int_0^\infty \mu^{1+\alpha} \left|\bigscal{\Delta_{\delta,\delta'}(T;\mu)\xi}{\eta}\right| d\mu 
 \leq \|T\| \int_0^\infty \mu^{3-(\delta+\delta')} \|R_{\delta'}(\mu)\xi\|\|R_{\delta}(\mu)\eta\| d\mu  \\
 \leq \|T\| \left(\int_0^\infty \mu^{3-2\delta'} \|R_{\delta'}(\mu)\xi\|^2 d\mu\right)^{\frac12} 
 \left(\int_0^\infty \mu^{3-2\delta} \|R_{\delta}(\mu)\eta\|^2 d\mu\right)^{\frac12}.  
\end{multline}
 
Bearing this in mind, set $c=\min\{|\lambda|; \ \lambda \in \Sp(D)\setminus 0\}$. Note that $c>0$. Denote by $\chi(\lambda)$ the characteristic function of 
$(-\infty,-c]\cup[c,\infty)$, and, for $0\leq \delta \leq 2$, set
\begin{equation*}
 r_\delta(\lambda;\mu)=\chi(\lambda)|\lambda|^\delta \left(\lambda^2+\mu^2\right)^{-1}, \qquad \lambda\in \R, \ \mu\in \R. 
\end{equation*}
 We then have $R_\delta(\mu)=r_\delta(D;\mu)$, i.e., 
 \begin{equation*}
 R_\delta(\mu) = \int_0^\infty r_\delta(\lambda;\mu)dE(\lambda), \qquad \mu\in \R,
\end{equation*}
 where $E(\lambda)$ is the spectral measure of $D$. Therefore, given any $\xi\in \sH$, we have 
\begin{equation*}
 \left\|R_\delta(\mu)\xi\right\|^2=\scal{R_\delta(\mu)^2\xi}{\xi}=\int r_\delta(\lambda;\mu)^2 d\scal{E(\lambda)\xi}{\xi}= \int  
 \chi(\lambda)|\lambda|^{2\delta} \left(\lambda^2+\mu^2\right)^{-2}d\scal{E(\lambda)\xi}{\xi}.  
\end{equation*}
Thus, 
\begin{align*}
 \int_0^\infty \mu^{3-2\delta} \|R_\delta(\mu)\xi\|^2 d\mu&  = 
\int_0^\infty \int_\R  \mu^{3-2\delta}\chi(\lambda) \left(\lambda^2+\mu^2\right)^{-2}d\scal{E(\lambda)\xi}{\xi}d\mu\\ 
 & =\int_\R \chi(\lambda) |\lambda|^{2\delta} \left(\int_0^\infty  \mu^{3-2\delta}\left(\lambda^2+\mu^2\right)^{-2}d\mu\right)d\scal{E(\lambda)\xi}{\xi}. 
\end{align*}
Note that, for $\lambda\neq 0$, we have  
\begin{equation*}
\int_0^\infty  \mu^{3-2\delta}\left(\lambda^2+\mu^2\right)^{-2}d\mu = |\lambda|^{-2\delta} \int_0^\infty \mu^{3-2\delta}\left(1+\mu^2\right)^{-2}d\mu. 
\end{equation*}
Therefore, we get 
\begin{equation}\label{eq:Com.int-Rdelta2}
 \int_0^\infty \mu^{3-2\delta} \|R_\delta(\mu)\xi\|^2 d\mu
 = \int_0^\infty  \mu^{3-2\delta}\left(1+\mu^2\right)^{-2}d\mu \int_\R \chi(\lambda)d\scal{E(\lambda)\xi}{\xi}. 
\end{equation}

We observe that
\begin{equation}\label{eq:Com.int-chi}
 \int_\R \chi(\lambda)d\scal{E(\lambda)\xi}{\xi}=\scal{\chi(D)\xi}{\xi}=\scal{(1-\Pi_0)\xi}{\xi}\leq \|\xi\|^2
\end{equation}
Moreover, the integral $I(\delta):= \int_0^\infty  \mu^{3-2\delta}\left(1+\mu^2\right)^{-2}d\mu$ is finite if and only if the following two conditions are satisfied:
\begin{itemize}
 \item $3-2\delta>-1$, i.e., $\delta<2$. 
 
 \item $(3-2\delta)-4=-1-2\delta<-1$, i.e., $\delta>0$. 
\end{itemize}
 Therefore, if $\delta\in (0,2)$, then $I(\delta)<\infty$, and so by using~(\ref{eq:Com.int-Rdelta2}) and~(\ref{eq:Com.int-chi}) we get
 \begin{equation}\label{eq:Com.estimate-int-muRdxi}
 \int_0^\infty \mu^{3-2\delta} \|R_\delta(\mu)\xi\|^2 d\mu \leq I(\delta) \|\xi\|^2<\infty. 
\end{equation}
It follows from this that, if $\alpha+\delta+\delta'=2$ and $\delta, \delta'\in (0,2)$, then by~(\ref{eq:Com.estimate-int-Delta-xieta}) and~(\ref{eq:Com.estimate-int-muRdxi}) we have
\begin{equation*}
  \left| \bigscal{\Delta_{\delta,\delta'}(T;\mu)\xi}{\eta}\right| \leq \sqrt{I(\delta)I(\delta')}\|T\|\|\xi\|\|\eta\|<\infty.  
\end{equation*}
This yields the estimate~(\ref{eq:Com.estimate-Delta2}). The proof is complete. 
\end{proof}

\begin{remark}
 The arguments of the proof of the estimate~(\ref{eq:Com.estimate-Delta1}) are reminiscent of standard arguments that are used to show that spectral triples give rise to bounded Fredholm modules (see, e.g., \cite{BJ:CRAS83, CP:CJM98, CM:CRAS86, SWW:CRAS98}).  
 \end{remark}

\begin{remark}
The estimate~(\ref{eq:Com.estimate-Delta2}) is a special case of an estimate for a larger class of DOIs for which we have a separation of variables~\cite{BS:IEOT03, ST:LNM19}. Once again, the proof of this estimate given above reproduces the original arguments of Birman-Solomyak~\cite{BS:PMF73,BS:IEOT03}.  
\end{remark}

 It follows from Lemma~\ref{lem:Com.estimate-Delta} that in~(\ref{eq:Com.Psiadd'}) the integral converges in $\sL(\sH)$ in Bochner's sense if $\alpha+\delta+\delta'<2$, and it converges in the sense of the weak operator topology if $\alpha+\delta+\delta'=2$ and $\delta, \delta'\in (0,2)$. In the latter case, this means that $\Psi_{\delta,\delta'}^\alpha(T)$ is defined by
 \begin{equation}\label{eq:Com.def-Psidd'-weak}
 \scal{\Psi_{\delta,\delta'}^\alpha(T)\xi}{\eta}=\int_0^\infty\mu^{1+\alpha} \bigscal{\Delta_{\delta,\delta'}(T;\mu)\xi}{\eta} d\mu \qquad \forall \xi,\eta\in \sH. 
\end{equation}
Moreover, the estimates~(\ref{eq:Com.estimate-Delta1})--(\ref{eq:Com.estimate-Delta2}) ensure there is $C_{\alpha \delta\delta'}>0$ such that
\begin{equation*}
\left\|\Psi_{\delta,\delta'}^\alpha(T)\right\|  \leq C_{\alpha \delta \delta'}\|T\| \qquad \forall T\in \sL(\sH). 
\end{equation*}
 That is, $T\rightarrow \Psi_{\delta,\delta'}^\alpha(T)$ is a bounded linear operator from $\sL(\sH)$ to itself. 
 
In addition, as $\alpha>-2$, the condition $\alpha+\delta+\delta'<2$ ensures that  $\delta+\delta'<2-\alpha<4$, and hence $\min(\delta,\delta')<2$. In this case one of the operators $R_\delta(\mu)$ or $R_{\delta'}(\mu)$ is compact, and so $\Delta_{\delta,\delta'}(T;\mu)$ is a compact operator for all $\mu\geq 0$. It follows that in this case the integral~(\ref{eq:Com.Psiadd'}) actually converges in $\sK$, and so we get a bounded operator $\Psi_{\delta,\delta'}^\alpha:\sL(\sH)\rightarrow \sK$. 
 
We will need the following factorization properties.

\begin{lemma}\label{lem:Com.factorization-Psidd'}
Suppose that $\alpha \in (-2,2)$ and  $\delta,\delta',\epsilon,\epsilon'\in [0,2]$ are such that, either $\alpha+\delta+\delta'+\epsilon+\epsilon'<2$, or 
  $\alpha+\delta+\delta'+\epsilon+\epsilon'=2$ with $\delta+\epsilon$ and  $\delta'+\epsilon'$ in $(0,2)$. Then, we have
 \begin{equation}\label{eq:Com.factorization-Psidd'}
 \Psi_{\delta,\delta'}^\alpha(T) = |D|^{-\epsilon} \Psi_{\delta+\epsilon,\delta'+\epsilon'}^\alpha(T) |D|^{-\epsilon'} \qquad \forall T\in \sL(\sH). 
\end{equation}
\end{lemma}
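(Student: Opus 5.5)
First I would check that both sides of~(\ref{eq:Com.factorization-Psidd'}) make sense under the stated hypotheses. Since $\epsilon,\epsilon'\geq 0$, the hypothesis on $(\alpha,\delta+\epsilon,\delta'+\epsilon')$ is exactly condition (i) or (ii) of Lemma~\ref{lem:Com.estimate-Delta}, so $\Psi_{\delta+\epsilon,\delta'+\epsilon'}^\alpha(T)$ is well defined. The triple $(\alpha,\delta,\delta')$ must also satisfy (i) or (ii). If $\epsilon+\epsilon'>0$, then $\alpha+\delta+\delta'<2$ and we are in case (i). If $\epsilon=\epsilon'=0$ the identity is trivial. So the left-hand side is always a Bochner integral in $\sL(\sH)$, except in the trivial case.

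The key pointwise identity is $R_\delta(\mu)=|D|^{-\epsilon}R_{\delta+\epsilon}(\mu)$ for every $\mu\geq 0$. This follows from the Borel functional calculus for $D$, using the conventions that $|D|^{-\epsilon}=0$ on $\ker D$ and $|D|^0=1-\Pi_0$. On $\ker D$ both sides vanish, since $R_\delta(\mu)$ carries the factor $|D|^\delta$, or $1-\Pi_0$ when $\delta=0$. On $(\ker D)^\perp$ both sides equal $|\lambda|^{\delta}(\lambda^2+\mu^2)^{-1}$ spectrally. Likewise $R_{\delta'}(\mu)=R_{\delta'+\epsilon'}(\mu)|D|^{-\epsilon'}$, because these operators commute. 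Hence
\begin{equation*}
\Delta_{\delta,\delta'}(T;\mu)=|D|^{-\epsilon}\,\Delta_{\delta+\epsilon,\delta'+\epsilon'}(T;\mu)\,|D|^{-\epsilon'}\qquad \forall \mu\geq 0.
\end{equation*}

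Next I would multiply by $\mu^{1+\alpha}$ and integrate, pulling the bounded operators $|D|^{-\epsilon}$ and $|D|^{-\epsilon'}$ out of the integral. Two cases arise.

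\begin{itemize}
\item If the inner integral is Bochner in $\sL(\sH)$, i.e.\ case (i) for the shifted triple, then left and right multiplication by bounded operators are bounded linear maps on $\sL(\sH)$. They therefore commute with Bochner integration, and~(\ref{eq:Com.factorization-Psidd'}) follows at once.
\item In the borderline case $\alpha+\delta+\delta'+\epsilon+\epsilon'=2$, the integral defining $\Psi_{\delta+\epsilon,\delta'+\epsilon'}^\alpha(T)$ is only a weak integral, as in~(\ref{eq:Com.def-Psidd'-weak}). I would pair with vectors $\xi,\eta\in\sH$ and use selfadjointness of $|D|^{-\epsilon}$:
\begin{equation*}
\scal{\Delta_{\delta,\delta'}(T;\mu)\xi}{\eta}=\bigscal{\Delta_{\delta+\epsilon,\delta'+\epsilon'}(T;\mu)|D|^{-\epsilon'}\xi}{|D|^{-\epsilon}\eta}.
\end{equation*}
Integrating against $\mu^{1+\alpha}d\mu$ gives $\scal{\Psi_{\delta,\delta'}^\alpha(T)\xi}{\eta}=\scal{|D|^{-\epsilon}\Psi_{\delta+\epsilon,\delta'+\epsilon'}^\alpha(T)|D|^{-\epsilon'}\xi}{\eta}$. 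On the left I use that a Bochner integral agrees with its weak integral. On the right I use~(\ref{eq:Com.def-Psidd'-weak}) with the vectors $|D|^{-\epsilon'}\xi$ and $|D|^{-\epsilon}\eta$, which is legitimate by the estimate~(\ref{eq:Com.estimate-Delta2}). Since $\xi,\eta$ are arbitrary, the identity follows.
\end{itemize}

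The main obstacle is only bookkeeping. One must confirm that the left-hand triple is admissible, as verified above. One must also handle the kernel conventions carefully in the pointwise identity, in particular when $\delta=0$ or $\delta'=0$, where $R_0(\mu)$ carries the factor $1-\Pi_0$ and the identity still holds because $|D|^{-\epsilon}$ also vanishes on $\ker D$.
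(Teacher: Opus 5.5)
Your proposal is correct and follows essentially the same route as the paper. Both arguments rest on the pointwise identity $\Delta_{\delta,\delta'}(T;\mu)=|D|^{-\epsilon}\Delta_{\delta+\epsilon,\delta'+\epsilon'}(T;\mu)|D|^{-\epsilon'}$, then pull the bounded factors out of the Bochner integral in the strict case and pair with $\xi,\eta$ in the borderline case. One small caveat: the case $\epsilon=\epsilon'=0$ is not literally trivial, because with the convention $|D|^0=1-\Pi_0$ it asserts $\Psi^\alpha_{\delta,\delta'}(T)=(1-\Pi_0)\Psi^\alpha_{\delta,\delta'}(T)(1-\Pi_0)$; your general argument covers it anyway.
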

\begin{proof}
 Let $T\in \sL(\sH)$. Note that $R_\delta(\mu)=|D|^{-\epsilon}R_{\delta+\epsilon}(\mu)$. Likewise, $R_{\delta'}(\mu)=|D|^{-\epsilon'}R_{\delta'+\epsilon'}(\mu)=R_{\delta'+\epsilon'}(\mu)|D|^{-\epsilon'}$. Thus, 
\begin{equation}\label{eq:Com.factorization-Deltadd'}
 \Delta_{\delta,\delta'}(T;\mu)=  |D|^{-\epsilon}  \Delta_{\delta+\epsilon,\delta'+\epsilon'}(T;\mu)|D|^{-\epsilon'}, \qquad \mu\geq 0. 
\end{equation}

If $\alpha +\delta+\delta'+\epsilon+\epsilon'<2$, then  $\Psi_{\delta,\delta'}^\alpha(T)$ and $\Psi_{\delta+\epsilon,\delta'+\epsilon'}^\alpha(T)$ are both given by Bochner integrals, and so we have
\begin{align*}
 \Psi_{\delta,\delta'}^\alpha(T) & = \int_0^\infty \mu^{1+\alpha} |D|^{-\epsilon}  \Delta_{\delta+\epsilon,\delta'+\epsilon'}(T;\mu)|D|^{-\epsilon'}d\mu \\
 &=|D|^{-\epsilon} \left( \int_0^\infty \mu^{1+\alpha}  \Delta_{\delta+\epsilon,\delta'+\epsilon'}(T;\mu)d\mu\right)|D|^{-\epsilon'}\\
 & = |D|^{-\epsilon} \Psi_{\delta+\epsilon,\delta'+\epsilon'}^\alpha(T) |D|^{-\epsilon'}. 
\end{align*}

Suppose now that $\alpha+\delta+\delta'+\epsilon+\epsilon'=2$ with $\delta+\epsilon$ and  $\delta'+\epsilon'$ in $(0,2)$. Let $\xi ,\eta\in \sH$. In view of~(\ref{eq:Com.def-Psidd'-weak}) we have
\begin{align*}
 \bigscal{ |D|^{-\epsilon} \Psi_{\delta+\epsilon,\delta'+\epsilon'}^\alpha(T) |D|^{-\epsilon'}\xi}{\eta} & = 
  \bigscal{ \Psi_{\delta+\epsilon,\delta'+\epsilon'}^\alpha(T)|D|^{-\epsilon'}\xi}{|D|^{-\epsilon} \eta}\\
  &= \int_0^\infty\mu^{1+\alpha} \bigscal{\Delta_{\delta+\epsilon,\delta'+\epsilon'}(T;\mu)|D|^{-\epsilon'}\xi}{|D|^{-\epsilon}\eta} d\mu. 
\end{align*}
Thanks to~(\ref{eq:Com.factorization-Deltadd'}) we have
\begin{equation*}
  \bigscal{\Delta_{\delta+\epsilon,\delta'+\epsilon'}(T;\mu)|D|^{-\epsilon'}\xi}{|D|^{-\epsilon}\eta} =
   \bigscal{|D|^{-\epsilon}\Delta_{\delta+\epsilon,\delta'+\epsilon'}(T;\mu)|D|^{-\epsilon'}\xi}{\eta}=  
   \bigscal{\Delta_{\delta,\delta'}(T;\mu)\xi}{\eta}. 
\end{equation*}
Thus, 
\begin{equation*}
  \bigscal{ |D|^{-\epsilon} \Psi_{\delta+\epsilon,\delta'+\epsilon'}^\alpha(T) |D|^{-\epsilon'}\xi}{\eta} =  
  \int_0^\infty\mu^{1+\alpha} \bigscal{\Delta_{\delta,\delta'}(T;\mu)\xi}{\eta} d\mu = 
   \bigscal{  \Psi_{\delta,\delta'}^\alpha(T)\xi}{\eta}. 
\end{equation*}
This gives the equality~(\ref{eq:Com.factorization-Psidd'}). The proof is complete. 
\end{proof}

\begin{remark}
 As $|D|^0=1-\Pi_0$, the factorization~(\ref{eq:Com.factorization-Psidd'}) for $\epsilon=\epsilon'=0$ means that, for all $T\in \sL(\sH)$, we have
 \begin{equation*}
 (1-\Pi_0)\Psi^\alpha_{\delta,\delta'}(T)(1-\Pi_0)=\Psi^\alpha_{\delta,\delta'}(T).
\end{equation*}
That is, $\ker D\subseteq \ker (\Psi_{\delta,\delta'}(T))$ and $\ran(\Psi_{\delta,\delta'}(T))\subseteq (\ker D)^\perp=\overline{\ran D}$. 
\end{remark}

 Recall  that $(\sA, \sH, D)$ is $p$-summable, and so $|D|^{-q}\in \sL_{q^{-1}p,\infty}$ for all $q>0$. We then have the following consequence of Lemma~\ref{lem:Com.factorization-Psidd'}. 
 
\begin{lemma}\label{lem:Com.Psidd'.sLr}
 Suppose that $\alpha+\delta+\delta'<2$. 
 \begin{enumerate}
 \item[(i)] If $\max(\delta,\delta')<2$, then $\Psi^{\alpha}_{\delta,\delta'}$ gives rise to a bounded operator, 
\begin{equation*}
 \Psi_{\delta,\delta'}^\alpha: \sL(\sH)\longrightarrow \sL_{r,\infty}, \qquad \textup{with}\ r=(2-\alpha-\delta-\delta')^{-1}p. 
\end{equation*}

\item[(ii)] If $\max(\delta,\delta')=2$, then the same result holds for any $r>(2-\alpha-\delta-\delta')^{-1}p$.   
\end{enumerate}
\end{lemma}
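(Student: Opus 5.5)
The plan is to use the factorization of Lemma~\ref{lem:Com.factorization-Psidd'} to pull negative powers of $|D|$ out of $\Psi^\alpha_{\delta,\delta'}(T)$. Those powers carry weak Schatten membership, since $|D|^{-q}\in\sL_{q^{-1}p,\infty}$, while the remaining transformer is only used as a bounded map on $\sL(\sH)$. Set $\kappa:=2-\alpha-\delta-\delta'>0$.

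\emph{Case (i): $\max(\delta,\delta')<2$.} We want $\epsilon,\epsilon'\ge 0$ with $\epsilon+\epsilon'=\kappa$ and $\delta+\epsilon$, $\delta'+\epsilon'\in(0,2)$. Then
$\alpha+(\delta+\epsilon)+(\delta'+\epsilon')=2$, which is exactly the boundary case (ii) of Lemma~\ref{lem:Com.factorization-Psidd'}, and it gives
\begin{equation*}
 \Psi^\alpha_{\delta,\delta'}(T)=|D|^{-\epsilon}\,\Psi^\alpha_{\delta+\epsilon,\delta'+\epsilon'}(T)\,|D|^{-\epsilon'} .
\end{equation*}
Such a split exists. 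The admissible pairs form the segment $\epsilon\in[0,\kappa]$, $\epsilon'=\kappa-\epsilon$, and we need $\delta+\epsilon<2$ and $\delta'+\kappa-\epsilon<2$. These say $\epsilon\in(\kappa+\delta'-2,\,2-\delta)$, which is a nonempty interval because $\kappa+\delta+\delta'-4=-2-\alpha<0$. Its intersection with $[0,\kappa]$ is nonempty since $2-\delta>0$ and $\kappa+\delta'-2<\kappa$. To ensure positivity, $\delta+\epsilon>0$ and $\delta'+\epsilon'>0$, we choose $\epsilon$ in the interior of $(0,\kappa)$ when the interval allows it, which it does because the interval is open. (One could also take $\epsilon=\epsilon'=\kappa/2$ when this is admissible.)

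By Lemma~\ref{lem:Com.estimate-Delta}(ii) the middle transformer is bounded on $\sL(\sH)$. If $\epsilon,\epsilon'>0$, H\"older's inequality (Proposition~\ref{prop:Hoelder}) combined with $|D|^{-\epsilon}\in\sL_{\epsilon^{-1}p,\infty}$ and $|D|^{-\epsilon'}\in\sL_{\epsilon'^{-1}p,\infty}$ yields
\begin{equation*}
 \|\Psi^\alpha_{\delta,\delta'}(T)\|_{r,\infty}\le C\,\|T\| ,\qquad r^{-1}=\epsilon p^{-1}+\epsilon' p^{-1}=\kappa p^{-1}.
\end{equation*}
If one of $\epsilon,\epsilon'$ vanishes, the factor $|D|^0=1-\Pi_0$ is simply bounded and the same conclusion holds.

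\emph{Case (ii): $\max(\delta,\delta')=2$, say $\delta=2$.} Now $\alpha+\delta'<0$. Given $r>\kappa^{-1}p$, pick $\kappa'\in(0,\kappa)$ with $r=\kappa'^{-1}p$. We apply Lemma~\ref{lem:Com.factorization-Psidd'} in its strict-inequality form, with $\epsilon=0$ and $\epsilon'=\kappa'$. The constraint $\alpha+2+\delta'+\kappa'<2$ holds since $\kappa'<\kappa$, and $\delta'+\kappa'\le 2$ since $\alpha>-2$ gives $\delta'+\kappa=-\alpha<2$. This gives
\begin{equation*}
 \Psi^\alpha_{2,\delta'}(T)=\Psi^\alpha_{2,\delta'+\kappa'}(T)\,|D|^{-\kappa'} ,
\end{equation*}
where the first factor is bounded by Lemma~\ref{lem:Com.estimate-Delta}(i). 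Hence $\Psi^\alpha_{2,\delta'}(T)\in\sL_{r,\infty}$ with norm at most $C\|T\|$. The case $\delta'=2$ is symmetric, and if $\delta=\delta'=2$ we split $\kappa'$ between the two sides in the same way.

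\emph{Expected obstacle.} The main point is the parameter bookkeeping in case (i): we must land exactly on the boundary $\alpha+\delta+\delta'+\epsilon+\epsilon'=2$, with both shifted exponents strictly inside $(0,2)$, so as to obtain the sharp exponent $r$. Case (ii) loses an $\varepsilon$ because the boundary version is unavailable when an index equals $2$.
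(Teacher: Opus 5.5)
Your proposal is correct and follows essentially the same route as the paper. In (i) you use Lemma~\ref{lem:Com.factorization-Psidd'} to factor out $|D|^{-\epsilon}$ and $|D|^{-\epsilon'}$ so as to land exactly on the boundary case $\alpha+\delta+\delta'+\epsilon+\epsilon'=2$, and your choice $\epsilon\in(0,\kappa)$ also secures the requirement $\delta+\epsilon,\ \delta'+\epsilon'>0$, which the paper leaves implicit. In (ii) you use a one-sided shift in the strict case, and both cases conclude with H\"older. The only blemish is your closing remark on $\delta=\delta'=2$: that case is vacuous, since $\alpha>-2$ forces $\delta+\delta'<4$, and in any event an index already equal to $2$ could not be shifted further.
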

\begin{proof}
 Suppose that $\max(\delta,\delta')<2$. The function $(\epsilon,\epsilon')\rightarrow \alpha+\delta+\delta'+\epsilon+\epsilon'$ is continuous and maps $[0,2-\delta)\times [0,2-\delta')$ to $[\alpha+\delta+\delta',\alpha+2)$. As the latter interval contains $\alpha+2$, we can always find $(\epsilon,\epsilon')\in [0,2]$ such that $\alpha+\delta+\delta'+\epsilon+\epsilon'=2$ such that $\max(\delta+\epsilon, \delta'+\epsilon')<2$. By Lemma~\ref{lem:Com.factorization-Psidd'} we then have
 \begin{equation*}
 \Psi_{\delta,\delta'}^\alpha(T) = |D|^{-\epsilon} \Psi_{\delta+\epsilon,\delta'+\epsilon'}^\alpha(T) |D|^{-\epsilon'},  \qquad T\in \sL(\sH). 
\end{equation*}
Here $ \Psi_{\delta+\epsilon,\delta'+\epsilon'}^\alpha:\sL(\sH)\rightarrow \sL(\sH)$ is bounded since  $\max(\delta+\epsilon, \delta'+\epsilon')<2$. Moreover, the operators $|D|^{-\epsilon}$ and $|D|^{-\epsilon'}$ are contained in $\sL_{\epsilon^{-1}p,\infty}$ and  $\sL_{(\epsilon')^{-1}p,\infty}$, respectively. By using H\'older's inequality (Proposition~\ref{prop:Hoelder}) we then deduce that $\Psi_{\delta,\delta'}^\alpha$ maps continuously $\sL(\sH)$ to $\sL_{r,\infty}$, with  
$r=(\epsilon+\epsilon')^{-1}p=(2-\alpha-\delta-\delta')^{-1}p$. This proves the first part. 

Suppose that $\max(\delta, \delta')=2$. Assume that $\delta'=2$. The inequality $\alpha+\delta+\delta'<2$ then means that $\alpha+\delta<0$. Let $\epsilon \in (0,-(\alpha+\delta)]$, and set $\epsilon_1=-(\alpha+\delta)-\epsilon$. Then $\epsilon_1\in [0,-(\alpha+\delta))\subseteq [0,2)$ and $\alpha+\delta+\delta'+\epsilon_1=2-\epsilon<2$. Therefore, by Lemma~\ref{lem:Com.factorization-Psidd'} we have 
\begin{equation*}
  \Psi_{\delta,\delta'}^\alpha(T) = |D|^{-\epsilon_1} \Psi_{\delta+\epsilon,\delta'+\epsilon'}^\alpha(T) ,  \qquad T\in \sL(\sH). 
\end{equation*}
As $|D|^{-\epsilon_1}\in \sL_{\epsilon_1^{-1},\infty}$, in the same way as above we see that   
$\Psi_{\delta,\delta'}^\alpha$ maps continuously $\sL(\sH)$ to $\sL_{r,\infty}$, with  
$r=\epsilon_1^{-1}p= (-\alpha-\delta-\epsilon)^{-1}p=(2-\alpha-\delta-\delta'-\epsilon)^{-1}p$. Here $\epsilon$ ranges over $(0,-(\alpha+\delta)]$, and so $r$ ranges 
over $(2-\alpha-\delta-\delta')^{-1}p,\infty)$. The same results holds in case $\delta=2$. This gives the 2nd part and completes the proof.  
\end{proof}

\begin{remark}\label{rmk:Com.Psidd'.sLr2}
 Suppose that $\alpha+\delta+\delta'=2$ and  $\delta,\delta' \in (0,2)$. A well-known result of the theory of DOI (see~\cite{BS:PMF73,BS:IEOT03}) asserts that DOIs like $\Psi_{\delta,\delta'}^\alpha$ induce bounded operators on $\sL_1$. By interpolation, we then obtain the boundedness of $\Psi_{\delta,\delta'}^\alpha$ on all weak Schatten classes $\sL_{q,\infty}$ with $q>1$ (see~\cite{BS:JSM92, BS:IEOT03,ST:LNM19}). 
\end{remark}

\begin{remark}
 Suppose that $\alpha+\delta+\delta'<2$. By using Remark~\ref{rmk:Com.Psidd'.sLr2} and arguing as in the proof of Lemma~\ref{lem:Com.Psidd'.sLr} we obtain that if $\max(\delta, \delta')<2$, then, given any $q>1$, the operator $\Psi_{\delta,\delta'}^{\alpha}$ induces  a bounded operator, 
\begin{equation*}
 \Psi_{\delta,\delta'}^\alpha: \sL_{q,\infty}\longrightarrow \sL_{r,\infty}, \qquad r^{-1}=q^{-1}+(2-\alpha-\delta-\delta')p^{-1}. 
\end{equation*}
If $\max(\delta,\delta')=2$, the same result holds for any $r>(q^{-1}+(2-\alpha-\delta-\delta')p^{-1})^{-1}$.
\end{remark}

\subsection{Proof of Lemma~\ref{lem:Intro.factorization}} 
Throughout this section we let $(\sA,\sH,D)$ be a spectral triple. We also set
\begin{equation*}
 c(\alpha) = \frac{2}{\pi} \sin \left(\frac{\pi \alpha}{2}\right), \qquad \alpha \geq 0. 
\end{equation*}

\begin{lemma} \label{lem:Com.Pi0Com|D|m-Psi}
Let $\alpha \in [0,1]$. Given any $a\in \sA$, we have 
 \begin{equation}\label{eq:Com.Pi0-Com-|D|mleq0}
  (1-\Pi_0)\left[ |D|^{-\alpha},a\right](1-\Pi_0)  =-c(\alpha)F\Psi_{1,0}^{-\alpha}\left([D,a]\right) - c(\alpha)\Psi_{0,1}^{-\alpha}\left([D,a]\right)F. 
\end{equation}
 \end{lemma}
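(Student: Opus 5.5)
The plan is to prove the identity by combining a Stieltjes-type integral representation of $|D|^{-\alpha}$ with the resolvent commutator formula. The case $\alpha=0$ is trivial. Here $c(0)=0$ and $|D|^0=1-\Pi_0$, and $(1-\Pi_0)[1-\Pi_0,a](1-\Pi_0)=0$, so both sides of~(\ref{eq:Com.Pi0-Com-|D|mleq0}) vanish. So assume $\alpha\in(0,1]$.

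\textbf{Step 1 (integral formula).} For $\lambda>0$ and $0<\alpha<2$ the scaling $\mu\to\lambda\mu$ together with the Beta integral gives
\begin{equation*}
\int_0^\infty \mu^{1-\alpha}(\lambda^2+\mu^2)^{-1}d\mu=\lambda^{-\alpha}\,\frac{\pi}{2\sin(\pi\alpha/2)} .
\end{equation*}
By the functional calculus for $D$ this yields
\begin{equation*}
|D|^{-\alpha}=c(\alpha)\int_0^\infty \mu^{1-\alpha}R_0(\mu)\,d\mu, \qquad R_0(\mu)=(1-\Pi_0)(D^2+\mu^2)^{-1}.
\end{equation*}
The integral converges in norm (Bochner sense), since $\|R_0(\mu)\|\leq \min(c^{-2},\mu^{-2})$ with $c$ the smallest nonzero $|\lambda|$ in $\Sp(D)$, and $\mu^{1-\alpha}\min(c^{-2},\mu^{-2})$ is integrable on $(0,\infty)$ for $\alpha\in(0,1]$. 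Hence we may pass the commutator with $a$ and the compressions by $1-\Pi_0$ under the integral:
\begin{equation*}
(1-\Pi_0)[|D|^{-\alpha},a](1-\Pi_0)=c(\alpha)\int_0^\infty\mu^{1-\alpha}(1-\Pi_0)\big[(D^2+\mu^2)^{-1},a\big](1-\Pi_0)\,d\mu .
\end{equation*}
(For $\mu>0$ the projections $\Pi_0$ inside $R_0(\mu)$ disappear after compressing, since $\Pi_0$ commutes with $(D^2+\mu^2)^{-1}$.)

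\textbf{Step 2 (resolvent commutator).} This is the main technical point, because $D$ is unbounded. I would factor $(D^2+\mu^2)^{-1}=(D+i\mu)^{-1}(D-i\mu)^{-1}$. Since $a(\dom D)\subseteq\dom D$ and $[D,a]$ is bounded, we have $[(D\pm i\mu)^{-1},a]=-(D\pm i\mu)^{-1}[D,a](D\pm i\mu)^{-1}$ as bounded operators. Combining these two identities gives
\begin{equation*}
[(D^2+\mu^2)^{-1},a]=-(D^2+\mu^2)^{-1}\big(D[D,a]+[D,a]D\big)(D^2+\mu^2)^{-1},
\end{equation*}
where each term is read as a bounded operator; for instance $(D^2+\mu^2)^{-1}D$ extends boundedly. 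This expansion should be checked on the weak level, with vectors in $\dom D$.

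\textbf{Step 3 (identification).} Write $D=F|D|$ and compress by $1-\Pi_0$. Then
\begin{equation*}
(1-\Pi_0)(D^2+\mu^2)^{-1}D=F R_1(\mu), \qquad D(D^2+\mu^2)^{-1}(1-\Pi_0)=R_1(\mu)F,
\end{equation*}
while $(1-\Pi_0)(D^2+\mu^2)^{-1}=R_0(\mu)$. The integrand therefore becomes
\begin{equation*}
-\mu^{1-\alpha}\big(F\Delta_{1,0}([D,a];\mu)+\Delta_{0,1}([D,a];\mu)F\big).
\end{equation*}
Since $\mu^{1-\alpha}=\mu^{1+(-\alpha)}$, integrating term by term gives exactly
\begin{equation*}
-c(\alpha)F\Psi^{-\alpha}_{1,0}([D,a])-c(\alpha)\Psi^{-\alpha}_{0,1}([D,a])F .
\end{equation*}
Splitting the integral into these two pieces and pulling the bounded operator $F$ outside are justified by Lemma~\ref{lem:Com.estimate-Delta}(i). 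Indeed, the parameters are $-\alpha+1+0=1-\alpha<2$ with $-\alpha\in(-2,2)$, so both integrals converge in the Bochner sense in $\sL(\sH)$. This gives~(\ref{eq:Com.Pi0-Com-|D|mleq0}).
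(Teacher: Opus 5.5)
Your proposal is correct and follows essentially the paper's route: the integral representation $|D|^{-\alpha}=c(\alpha)\int_0^\infty\mu^{1-\alpha}(1-\Pi_0)(D^2+\mu^2)^{-1}\,d\mu$, then the resolvent commutator formula, then the identification of the integrand with $-F\Delta_{1,0}([D,a];\mu)-\Delta_{0,1}([D,a];\mu)F$. The only differences are that the paper quotes the resolvent formula from the literature whereas you re-derive it from $[(D\pm i\mu)^{-1},a]=-(D\pm i\mu)^{-1}[D,a](D\pm i\mu)^{-1}$ (the cross terms $\pm i\mu(D^2+\mu^2)^{-1}[D,a](D^2+\mu^2)^{-1}$ cancel), and that you treat $\alpha=1$ explicitly, whereas the paper's written proof only states $0<\alpha<1$ although the same argument covers $\alpha=1$.
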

\begin{proof}
If $\alpha=0$ then $c(\alpha)=0$, and so the r.h.s.\ of~(\ref{eq:Com.Pi0-Com-|D|mleq0}) vanishes. The l.h.s.\ vanishes as well, since, as $|D|^0=1-\Pi_0$, we have
\begin{equation*}
 (1-\Pi_0)\left[ |D|^{0},a\right](1-\Pi_0)= (1-\Pi_0)\left[ (1-\Pi_0),a\right](1-\Pi_0)=0. 
\end{equation*}
Thus, Eq.~(\ref{eq:Com.Pi0-Com-|D|mleq0}) holds trivially for $\alpha=0$. 

Suppose that $0 < \alpha <1$.  If $x>0$ and $0<\beta <1$, then
\begin{equation*}
 x^{-\beta} = \frac{\sin (\pi \beta)}{\pi} \int_0^\infty \mu^{-\beta}(x+\mu)^{-1}d\mu. 
\end{equation*}
This formula is well-known. Recall it follows from the identities, 
\begin{equation*}
 \int_0^\infty \mu^{-\beta} (1+\mu)^{-1}d\mu = \int_0^1(1-\lambda)^{-\beta}\lambda^{\beta-1}d\lambda =\Gamma(1-\beta)\Gamma(\beta)= \frac{\pi}{\sin (\pi \beta)},
\end{equation*}
the last identity being Euler's reflection formula. Thus, if $x\neq 0$, and we take $\alpha=\beta/2$, then
\begin{equation*}
 |x|^{-\alpha}=(x^2)^{-\frac{\alpha}{2}}=  \frac{\sin (\pi \alpha/2)}{\pi}\int_0^\infty \mu^{-\frac{\alpha}{2}}(x^2+\mu)^{-1}d\mu 
 =c(\alpha) \int_0^\infty \mu^{1-\alpha}(x^2+\mu^2)^{-1}d\mu. 
\end{equation*}
It then follows that
\begin{equation*}
 |D|^{-\alpha} = c(\alpha) \int_0^\infty \mu^{1-\alpha}(1-\Pi_0)\left(D^2+\mu^2\right)^{-1}d\mu.  
\end{equation*}
Thus, 
\begin{align}
 (1-\Pi_0)\left[ |D|^{-\alpha},a\right](1-\Pi_0) & =  c(\alpha) \int_0^\infty \mu^{1-\alpha} (1-\Pi_0)\big[(1-\Pi_0) \left(D^2+\mu^2\right)^{-1}\!\!,a\big](1-\Pi_0)d\mu  \nonumber \\
 &= c(\alpha) \int_0^\infty \mu^{1-\alpha} (1-\Pi_0)\left[\left(D^2+\mu^2\right)^{-1},a\right](1-\Pi_0)d\mu. 
 \label{eq:Com.Com-Da-resolvent}
\end{align}

Given any $\mu>0$,  by~\cite[Lemma~2.3]{CP:CJM98} we have
\begin{align*}
\big[\left(D^2+\mu^2\right)^{-1},a\big] = &  
 -D\left(D^2+\mu^2\right)^{-1}[D,a] \left(D^2+\mu^2\right)^{-1}\\
 &  - \left(D^2+\mu^2\right)^{-1}[D,a] D\left(D^2+\mu^2\right)^{-1}. 
\end{align*}
Note that, as it is not assumed that $a(\dom (D^2))\subseteq \dom (D^2)$, a bit of care is needed in order to get the above equality (see~\cite{CP:CJM98}). Thus, in terms of the operators $\Delta_{\delta,\delta'}(T;\mu)$ defined in~(\ref{eq:Com.Deltadd'mu}) (and recalling that $|D|^0=1-\Pi_0$) we get 
\begin{align*}
 (1-\Pi_0)\left[a, \left(D^2+\mu^2\right)^{-1}\right](1-\Pi_0) = & \  F|D|\left(D^2+\mu^2\right)^{-1}[D,a] \left(D^2+\mu^2\right)^{-1}|D|^0\\
 & + |D|^0\left(D^2+\mu^2\right)^{-1}[D,a] F |D|\left(D^2+\mu^2\right)^{-1}\\
 = & \ F\Delta_{1,0}\left([D,a];\mu\right) + \Delta_{0,1}\left([D,a];\mu\right)F. 
\end{align*}
Combining this with~(\ref{eq:Com.Com-Da-resolvent}) then gives 
\begin{align}
 (1-\Pi_0)\left[ |D|^{-\alpha},a\right](1-\Pi_0)  = & \ - c(\alpha) \int_0^\infty \mu^{1-\alpha} F\Delta_{1,0}\left([D,a];\mu\right)d\mu \nonumber\\
 & -c(\alpha) \int_0^\infty \mu^{1-\alpha} \Delta_{0,1}\left([D,a];\mu\right)d\mu \\ 
 = &\ -c(\alpha)F\Psi_{1,0}^{-\alpha}\left([D,a]\right) - c(\alpha)\Psi_{0,1}^{-\alpha}\left([D,a]\right)F, \nonumber
\end{align}
where the integrals converge in $\sL(\sH)$ in Bochner's sense (\emph{cf}.~Lemma~\ref{lem:Com.estimate-Delta}). This proves the result for $0<\alpha<1$. The proof is complete. 
\end{proof}

We are now in a position to prove Lemma~\ref{lem:Intro.factorization}.

\begin{proof}[Proof of Lemma~\ref{lem:Intro.factorization}]
Assume that the spectral triple $(\sA, \sH, D)$ is $p$-summable with $p>0$ arbitrary. Let $\alpha\in [0,1]$. We have to show that if $\beta,\gamma\in [0,1]$ are such that, either $\beta+\gamma<\alpha+1$, or $\beta+\gamma=\alpha+1$ with $\max(\beta,\gamma)<1$, then there is a bounded operator $\Phi_{\beta,\gamma}^{\alpha}:\sL(\sH)\rightarrow \sL(\sH)$ such that, for all $a\in \sA$, we have
 \begin{equation}\label{eq:Com.factorization}
  \left[ |D|^{-\alpha},a\right]  =|D|^{-\beta}\Phi_{\beta,\gamma}^{\alpha}\left([D,a]\right)|D|^{-\gamma} 
  + |D|^{-(\alpha+1)}F[D,a]\Pi_0 +\Pi_0[D,a]F|D|^{-(\alpha+1)}. 
\end{equation}

Let $a\in \sA$. As $\Pi_0|D|^{-\alpha}=|D|^{-\alpha}\Pi_0=0$, we have
 \begin{align}
 \left[ |D|^{-\alpha},a\right]&=(1-\Pi_0)\left[ |D|^{-\alpha},a\right](1-\Pi_0)+ \Pi_0 \left[ |D|^{-\alpha},a\right] + (1-\Pi_0)\left[ |D|^{-\alpha},a\right]\Pi_0 \nonumber \\
 &= (1-\Pi_0)\left[ |D|^{-\alpha},a\right](1-\Pi_0)+ |D|^{-\alpha}a\Pi_0 -\Pi_0a|D|^{-\alpha} .
  \end{align}
 Note that
 \begin{equation}
 |D|^{-\alpha}a\Pi_0=|D|^{-(\alpha+1)}FDa\Pi_0=|D|^{-(\alpha+1)}F[D,a]\Pi_0.
\end{equation}
Likewise, we have 
\begin{equation*}
 -\Pi_0a|D|^{-\alpha}= -\Pi_0aDF|D|^{-(\alpha+1)}= \Pi_0[D,a]F|D|^{-(\alpha+1)}.
\end{equation*}
Thus,
\begin{equation}\label{eq:Com.Com-|D|ml-Pi0F}
 \left[ |D|^{-\alpha},a\right]=(1-\Pi_0)\left[ |D|^{-\alpha},a\right](1-\Pi_0)+ |D|^{-(\alpha+1)}F[D,a]\Pi_0 + \Pi_0[D,a]F|D|^{-(\alpha+1)}. 
\end{equation}

By Lemma~\ref{lem:Com.Pi0Com|D|m-Psi} we have
\begin{equation*}
 (1-\Pi_0)\left[|D|^{-\alpha},a\right](1-\Pi_0) = - c(\alpha)F\Psi_{1,0}^{-\alpha}\left([D,a]\right) - c(\alpha)\Psi_{0,1}^{-\alpha}\left([D,a]\right)F. 
\end{equation*}
Let $\beta,\gamma\in [0,1]$ be such that, either $\beta+\gamma<\alpha+1$, or $\beta+\gamma=\alpha+1$ with $\max(\beta,\gamma)<1$. Then either $-\alpha+(\beta+1)+\gamma<2$, or $-\alpha+(\beta+1)+\gamma=2$ with $\max(1+\beta,\gamma)<2$. In either case by Lemma~\ref{lem:Com.factorization-Psidd'} we have
\begin{equation*}
 \Psi^{-\alpha}_{1,0}(T)=|D|^{-\beta}\Psi^{-\alpha}_{\beta+1,\gamma}(T)|D|^{-\gamma}, \qquad T\in \sL(\sH). 
\end{equation*}
Likewise, we have 
\begin{equation*}
 \Psi^{-\alpha}_{0,1}(T)=|D|^{-\beta}\Psi^{-\alpha}_{\beta,\gamma+1}(T)|D|^{-\gamma}, \qquad T\in \sL(\sH). 
\end{equation*}
Therefore, we may write 
\begin{equation*}
 (1-\Pi_0)\left[|D|^{-\alpha},a\right](1-\Pi_0) =|D|^{-\beta}\Phi^{\alpha}_{\beta,\gamma}\left([D,a]\right)|D|^{-\gamma},
\end{equation*}
where $\Phi_{\beta,\gamma}^{\alpha}:\sL(\sH)\rightarrow \sL(\sH)$ is the bounded linear operator defined by
\begin{equation}\label{eq:Com.Phimbgam1}
 \Phi_{\beta,\gamma}^{\alpha}(T)=-c(\alpha)F\Psi^{-\alpha}_{\beta+1,\gamma}(T)-c(\alpha)\Psi^{-\alpha}_{\beta,\gamma+1}(T)F,  \qquad T\in \sL(\sH). 
\end{equation}
Combining this with~(\ref{eq:Com.Com-|D|ml-Pi0F}) yields the equality~(\ref{eq:Com.factorization}). 

Finally, to get the 2nd part of Lemma~\ref{lem:Intro.factorization} suppose that $\beta+\gamma<\alpha+1$, i.e., $-\alpha+\beta+\gamma+1<2$. If $\max(\beta, \gamma)<1$, then $\max(\beta+1,\gamma)$ and $\max(\beta,\gamma+1)$ are~$<2$. Lemma~\ref{lem:Intro.factorization} then ensures that $\Psi^{-\alpha}_{\beta+1,\gamma}$ and $\Psi^{-\alpha}_{\beta,\gamma+1}$ both maps continuously $\sL(\sH)$ to $\sL_{r,\infty}$ with $r=(2+\alpha-\beta-\gamma -1)^{-1}p=(1+\alpha-\beta-\gamma)^{-1}p$, and so $\Phi_{\beta,\gamma}^{\alpha}$ is a continuous  operator from $\sL(\sH)$ to $\sL_{r,\infty}$ as well. If $\max(\beta, \gamma)=1$, then the same result holds for any 
$r>(1+\alpha-\beta-\gamma)^{-1}p$. This gives the 2nd part. The proof of Lemma~\ref{lem:Intro.factorization} is complete. 
\end{proof}

\begin{remark}
 Lemma~\ref{lem:Intro.factorization} is stated in much more generality than what is needed for  this monograph. In fact, for our purpose we only need the result in the following cases:
\begin{itemize}
 \item[(i)] $0<\alpha\leq 1$ and $\beta=\gamma =0$. 
 
 \item[(ii)] $0 <\alpha\leq 1$, $\gamma =|m|$ and $0<\beta<1$.  
\end{itemize}
The first case is used in the proof of Lemma~\ref{lem:Intro.Com-Dq}. The second case is needed in the proof of Lemma~\ref{lem:Intro.refined-CSZ}, for which it is absolutely crucial to be able to take $\gamma=\alpha$, including for $\alpha=1$ (see Eqs.~(\ref{eq:CSZ.Fst1}) and~(\ref{eq:CSZ.Fst2}) and Remark~\ref{rmk:CSZ.alpha} on this point). 
\end{remark}

\section{Proofs of Lemma~\ref{lem:Intro.refined-CSZ} and Lemma~\ref{lem:Intro.extension}}\label{chap:CSZ} 
This section is devoted to proving Lemma~\ref{lem:Intro.refined-CSZ} and Lemma~\ref{lem:Intro.extension}, which are two the main key technical lemmas used 
the proof of Proposition~\ref{prop:Intro.Weyl-BirS} in Section~\ref{chap:Main-results} and in the proof of Theorem~\ref{thm:Intro.Tauberian} in Section~\ref{chap:Tauberian}. Those results lead to the main results of this monograph. 
 
Throughout this section we let $(\sA, \sH, D)$ be a $p$-summable spectral triple with $p>0$ arbitrary. We continue using the notation of the previous sections.

\subsection{Preliminary lemmas} 
In this section, we prove several lemmas that will be used in the proofs of  Lemma~\ref{lem:Intro.refined-CSZ} and Lemma~\ref{lem:Intro.extension}. 

The following lemma is a version of Lemma~\ref{lem:Tauberian.powers} for imaginary powers.  

\begin{lemma}\label{lem:Com.powers}
 If $X$ is a (non-zero) positive compact operator on $\sH$, then $X^{it}$, $t\in \R$, is a bounded strongly continuous family in $\sL(\sH)$. 
\end{lemma}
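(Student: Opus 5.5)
We will argue directly from the spectral decomposition of $X$, exactly as in the proof of Lemma~\ref{lem:Tauberian.powers}. Let $(\xi_\ell)_{\ell\geq 0}$ be an orthonormal eigenfamily with $X\xi_\ell=\lambda_\ell\xi_\ell$, $\lambda_\ell>0$, spanning $(\ker X)^\perp$. Let $P$ denote the orthogonal projection onto $(\ker X)^\perp$. With the convention~(\ref{eq:Com.Xz-definition}), $X^{it}=0$ on $\ker X$ and $X^{it}\xi_\ell=\lambda_\ell^{it}\xi_\ell$. Thus
\begin{equation*}
 X^{it}\xi=\sum_{\ell\geq 0}\lambda_\ell^{it}\scal{\xi_\ell}{\xi}\,\xi_\ell, \qquad \xi\in\sH,
\end{equation*}
where the series converges in $\sH$ because $|\lambda_\ell^{it}|=1$.

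For boundedness, we note that $|\lambda_\ell^{it}|=1$ for all $\ell$. Parseval then gives $\|X^{it}\xi\|^2=\sum_\ell|\scal{\xi_\ell}{\xi}|^2=\|P\xi\|^2\leq\|\xi\|^2$. Hence $X^{it}$ is a partial isometry with $\|X^{it}\|\leq 1$ for all $t\in\R$. Equivalently, $X^{it}=f_t(X)$ with $f_t(\lambda)=\car_{(0,\infty)}(\lambda)\lambda^{it}$, a Borel function of modulus at most $1$, so the uniform bound also follows from the Borel functional calculus.

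For strong continuity, we fix $\xi\in\sH$ and $t_0\in\R$. Then
\begin{equation*}
 \|X^{it}\xi-X^{it_0}\xi\|^2=\sum_{\ell\geq 0}\big|\lambda_\ell^{it}-\lambda_\ell^{it_0}\big|^2|\scal{\xi_\ell}{\xi}|^2 .
\end{equation*}
Each term tends to $0$ as $t\rightarrow t_0$, since $t\rightarrow e^{it\log\lambda_\ell}$ is continuous. Each term is also dominated by $4|\scal{\xi_\ell}{\xi}|^2$, which is summable. Dominated convergence for series therefore gives $X^{it}\xi\rightarrow X^{it_0}\xi$. Alternatively, one can split the sum into a finite part $\ell<N$, which converges trivially, and a tail bounded by $4\sum_{\ell\geq N}|\scal{\xi_\ell}{\xi}|^2<\epsilon$.

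The only point needing care is that we must not claim norm continuity, which generally fails when the $\lambda_\ell$ accumulate at $0$ (so $\log\lambda_\ell\rightarrow-\infty$); this is why only strong continuity is asserted. Otherwise the argument is routine, and no step poses a real obstacle.
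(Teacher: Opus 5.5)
Your proof is correct and follows essentially the same route as the paper: you expand $X^{it}\xi$ in an orthonormal eigenfamily of $X$, use $|\lambda_\ell^{it}|=1$ for the uniform bound, and obtain strong continuity because the series converges uniformly in $t$ (your tail-splitting argument is exactly this). The only cosmetic difference is that the paper records $\|X^{it}\|=1$, using $X\neq 0$ to guarantee a positive eigenvalue, whereas you only need and prove $\|X^{it}\|\leq 1$.
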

\begin{proof}
 As with Lemma~\ref{lem:Tauberian.powers}, the above result is standard. We include a proof for the reader's convenience. 
  Let $(\xi_j)_{j\geq 0}$ be an orthonormal eigenfamily of $X$ such that $X\xi_j=\lambda_j(X)\xi_j$ for all $j\geq 0$, where 
 $\lambda_0(X)\geq \lambda_1(X)\geq \cdots $ are the eigenvalues of $X$. The family $X^{it}$, $t\in \R$, then is a strongly continuous family in $\sL(\sH)$, since,  for every $\xi\in \sH$, we have
 \begin{equation*}
 X^{it}\xi= \sum_{\lambda_j(X) >0} \lambda_j(X)^{it} \scal{\xi_j}{\xi}\xi_j, \qquad t\in \R, 
\end{equation*}
where the series converges in $\sH$ uniformly on compact sets of $\R$. Note also that $\|X^{it}\|=1$, since $|\lambda_j(X)^{it}|=1$ if $\lambda_j(X)>0$ (such an eigenvalue exists, since $X\neq 0$). In particular, the family $X^{it}$, $t\in \R$, is bounded in $\sL(\sH)$.  The result is proved. 
\end{proof}

In what follows, given a Banach space $(\sE, \|\cdot \|_\sE)$, we denote by $C^{(m)}(\R;\sE)$, $m\in \R$, the space of continuous maps $F:\R \rightarrow \sE$ for which there is a constant $C>0$ such that
\begin{equation*}
 \|F(t)\|_{\sE}\leq C \left(1+|t|\right)^m \qquad \forall t\in \R. 
\end{equation*}
This is a Banach space with respect to the norm,
\begin{equation*}
 \left\| F\right\|^{(m)}_\sE:=\sup_{t\in \R} (1+|t|)^{-m} \|F(t)\|_\sE, \qquad F(t) \in C^{(m)}(\R;\sE).  
\end{equation*}

\begin{lemma}[compare~\cite{CSZ:MS17}]\label{lem:CSZ.X1itFX2it}
 Suppose that $X$ and $Y$ are positive compact operators on $\sH$.  
 \begin{enumerate}
 \item[(i)] If $F(t)\in C^{(m)}(\R;\sK)$, $m<-1$, then $X^{it}F(t)Y^{-it}$, $t\in \R$,  is an $L^1$-family in $\sK$.  
 
 \item[(ii)] We define a continuous linear operator $\Theta_{X,Y}:C^{(m)}(\R;\sK)\rightarrow \sK$ by
\begin{equation}\label{eq:CSZ.ThetaXY}
 \Theta_{X,Y}(F) = \int_\R X^{it}F(t)Y^{-it}dt, \qquad F(t)\in C^{(m)}(\R;\sK), 
\end{equation}
where the integral converges in $\sK$ in the Bochner's sense. 
\end{enumerate}
\end{lemma}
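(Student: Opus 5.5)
The proof is a direct application of Bochner integration, with Lemma~\ref{lem:Com.powers} as the key input. Fix $F\in C^{(m)}(\R;\sK)$ with $m<-1$. Part~(i) has two requirements: strong measurability of $t\mapsto X^{it}F(t)Y^{-it}$ as a $\sK$-valued map, and integrability of its norm. The norm bound is immediate. By Lemma~\ref{lem:Com.powers}, $\|X^{it}\|\leq 1$ and $\|Y^{-it}\|\leq 1$, so
\begin{equation*}
\big\|X^{it}F(t)Y^{-it}\big\|\leq \|F(t)\|\leq \|F\|^{(m)}_{\sK}(1+|t|)^{m},
\end{equation*}
and the right-hand side lies in $L^1(\R)$ because $m<-1$.

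The main step is to show that $t\mapsto X^{it}F(t)Y^{-it}$ is norm-continuous into $\sK$; this gives measurability in Bochner's sense. The difficulty is that $X^{it}$ is only strongly continuous, not norm-continuous. The compactness of $F(t)$ resolves this. Strong continuity combined with uniform boundedness yields norm convergence on compact sets, and hence $\|(X^{it}-X^{it_0})K\|\to 0$ for any compact $K$. Taking adjoints (note $(Y^{-it})^*=Y^{it}$, which is also strongly continuous), we likewise get $\|K(Y^{-it}-Y^{-it_0})\|\to0$.

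To apply this, fix $t_0$ and expand the difference:
\begin{equation*}
X^{it}F(t)Y^{-it}-X^{it_0}F(t_0)Y^{-it_0}=X^{it}\big(F(t)-F(t_0)\big)Y^{-it}+\big(X^{it}-X^{it_0}\big)F(t_0)Y^{-it}+X^{it_0}F(t_0)\big(Y^{-it}-Y^{-it_0}\big).
\end{equation*}
The first term tends to $0$ because $F$ is continuous and the powers are bounded by $1$. The second and third terms tend to $0$ by the compactness argument above, applied with $K=F(t_0)$. Each product $X^{it}F(t)Y^{-it}$ lies in $\sK$ since $\sK$ is an ideal, and $\sK$ is a closed subspace of $\sL(\sH)$, hence a Banach space. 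A continuous $\sK$-valued function with integrable norm is therefore Bochner integrable, which proves~(i).

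For~(ii), the integral converges in $\sK$ by~(i), and $\Theta_{X,Y}$ is clearly linear. Continuity follows from the same norm bound:
\begin{equation*}
\|\Theta_{X,Y}(F)\|\leq \int_\R\|F(t)\|\,dt\leq \Big(\int_\R(1+|t|)^m dt\Big)\|F\|^{(m)}_{\sK}.
\end{equation*}
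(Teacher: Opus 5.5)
Your proof is correct, but it reaches measurability by a different route than the paper.

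The paper proves only that $t\mapsto X^{it}F(t)Y^{-it}$ is continuous for the weak operator topology, using the strong continuity from Lemma~\ref{lem:Com.powers}. It then upgrades this to weak continuity in the Banach space $\sK$, using the boundedness of the family, the duality $\sK'\simeq\sL_1$, and the density of finite-rank operators in $\sL_1$. Finally it invokes Pettis's measurability theorem, which relies on the separability of $\sK$.

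You prove norm continuity directly. Compactness of $F(t_0)$ lets you use the fact that a uniformly bounded, strongly convergent family converges uniformly on compact sets, which gives $\|(X^{it}-X^{it_0})F(t_0)\|\to 0$. The adjoint identity $(Y^{-it})^*=Y^{it}$, valid with the convention that the powers vanish on $\ker Y$, then handles the right-hand factor in your three-term splitting.

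Your argument is more elementary and yields a stronger conclusion: norm continuity, so the integral is even an improper Riemann integral in $\sK$. It needs neither Pettis's theorem nor separability. The paper's argument avoids the splitting but relies on that heavier measure-theoretic machinery. Your norm bound and your proof of part~(ii) are the same as the paper's.
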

\begin{proof}
Let $F(t)\in C^{(m)}(\R;\sK)$, $m<-1$. We know from Lemma~\ref{lem:Com.powers} that the families $X^{it}$, $t\in \R$, and $Y^{-it}$, $t\in \R$, are bounded strongly continuous  in $\sL(\sH)$.   Thus,  $X^{it}F(t)Y^{-it}$, $t\in \R$, is a bounded strongly continuous family in $\sK$. In particular, it is continuous with respect to the weak operator topology, i.e., it is a continuous with respect to the topology defined by the semi-norms $T\rightarrow \scal{T\xi}{\eta}$, $\xi, \eta\in \sH$. Equivalently, for every finite rank operator $R$, the map  $t\rightarrow \Tr[RX^{it}F(t)Y^{-it}]$ is continuous on $\R$. Here $\sK'\simeq \sL_1$ and finite-rank operators are dense in $\sL_1$.  
Combining this with the boundedness of the family $X^{it}F(t)Y^{-it}$, $t\in \R$, we deduce this family is continuous with respect to the weak topology of $\sK$, and hence it is weakly measurable. As $\sK$ is separable, Pettis theorem (see, e.g., \cite{HP:AMS74}) then ensures that the family is measurable with respect to the norm topology of $\sK$.   

Moreover, as $m<-1$ and $\|X^{it}\|=\|Y^{-it}\|=1$, we have 
\begin{equation}
 \int_\R \left\|X^{it}F(t)Y^{-it}\right\| dt \leq  \int_\R \left\|F(t)\right\| dt \leq C\|F\|^{(m)}, 
\end{equation}
where we have set $C:= \int (1+|t|)^{m}dt<\infty$. This shows that $X^{it}F(t)Y^{-it}$, $t\in \R$,  is an $L^1$-family in $\sK$ in Bochner's sense. The above inequality further ensures that
\begin{equation*}
 \left\|\int_\R X^{it}F(t)Y^{-it} dt\right\| \leq  \int_\R \left\|X^{it}F(t)Y^{-it}\right\| dt \leq  C\|F\|^{(m)} \qquad \forall F\in C^{(m)}(\R;\sK). 
\end{equation*}
This shows that~(\ref{eq:CSZ.ThetaXY}) defines a bounded operator $\Theta_{X,Y}:C^{(m)}(\R;\sK)\rightarrow \sK$. The proof is complete. 
\end{proof}

\begin{remark}
 A small bit of extra work shows that $\Theta_{X,Y}$ induces a bounded operator $\Theta_{X,Y}:C^{(m)}(\R;\sL_1)\rightarrow \sL_1$, and so by interpolation we get bounded operators $\Theta_{X,Y}:C^{(m)}(\R;\sL_{q,\infty})\rightarrow\sL_{q,\infty}$ for all $q>1$. 
\end{remark}

 As mentioned above, the assumption that $\sA$ is closed under holomorphic functional calculus ensures that, if $a\in {\sA}_{++}$, then the powers $a^z$, $z\in \C$, are in $\sA$, and so the commutators $[D,a^z]$ are bounded. We actually have the following result.
 
\begin{lemma}\label{lem:CSZ.Daz}
 If $a\in {\sA}_{++}$, then $[D,a^z]$, $z\in \C$, is a holomorphic family in $\sL(\sH)$. Moreover, for any closed vertical strip $\Sigma=\{c_1\leq \Re z \leq c_2\}\subseteq \C$, there is a constant $C_\Sigma(a)>0$, such that
 \begin{equation}\label{eq:App.boundedness-comm-vertical}
 \left\| [D,a^z]\right\| \leq C_{\Sigma}(a) \left(1+|\Im z|\right) \qquad \forall z\in \Sigma. 
\end{equation}
\end{lemma}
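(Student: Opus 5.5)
The plan is to avoid any holomorphic-closedness assumption on $\sA$ and work with the derivation $\delta_D(T):=[D,T]$. Its domain $\dom\delta_D$ consists of those $T\in\sL(\sH)$ with $T(\dom D)\subseteq \dom D$ and $[D,T]$ bounded. Two standard properties will be used throughout. First, $\delta_D$ is a \emph{closed} derivation: if $T_n\to T$ in norm, $T_n\in\dom\delta_D$, and $\delta_D(T_n)\to S$ in norm, then $T\in \dom\delta_D$ and $\delta_D(T)=S$. This follows from the closedness of $D$ applied to $T_n\xi$, $\xi\in\dom D$. Second, $\delta_D$ satisfies the Leibniz rule, so $\delta_D(b^k)=\sum_{j<k}b^j\delta_D(b)b^{k-1-j}$, which gives $\|\delta_D(b^k)\|\leq k\|b\|^{k-1}\|\delta_D(b)\|$.

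\emph{Step 1: $\log a\in\dom\delta_D$.} Let $a\in\sA_{++}$ with spectrum in $[c_1,c_2]$, $0<c_1$, and choose $c>c_2$. Set $b=1-c^{-1}a\in\sA$. Then $\Sp(b)\subseteq[1-c_2/c,1-c_1/c]\subseteq(0,1)$, so $r:=\|b\|<1$. We have
\begin{equation*}
 \log a=\log c-\sum_{k\geq1}\frac{b^k}{k},
\end{equation*}
with norm convergence. The differentiated series satisfies $\sum_k k^{-1}\|\delta_D(b^k)\|\leq \sum_k r^{k-1}\|[D,b]\|<\infty$. Closedness of $\delta_D$ therefore gives $L:=\log a\in\dom\delta_D$.

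\emph{Step 2: holomorphy.} We write $a^z=e^{zL}=\sum_k z^kL^k/k!$. The bound $\|\delta_D(L^k)\|\leq k\|L\|^{k-1}\|\delta_D(L)\|$ shows that $\sum_k z^k\delta_D(L^k)/k!$ converges in $\sL(\sH)$ uniformly on compact subsets of $\C$. Closedness again yields $a^z\in\dom\delta_D$ and
\begin{equation*}
 [D,a^z]=\sum_k \frac{z^k}{k!}\delta_D(L^k).
\end{equation*}
This is a locally uniformly convergent power series, hence an entire $\sL(\sH)$-valued function.

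\emph{Step 3: the growth estimate.} This is the only step needing care, because the naive bound from Step 2, $\|[D,a^z]\|\leq |z|e^{|z|\|L\|}\|\delta_D(L)\|$, grows exponentially in $|\Im z|$; the binomial series has the same defect. To fix this, write $z=x+iy$ and use $a^z=a^xa^{iy}$ together with the Leibniz rule:
\begin{equation*}
 [D,a^z]=[D,a^x]a^{iy}+a^x[D,a^{iy}].
\end{equation*}
Since $a^{iy}$ is unitary and $\|a^x\|\leq \max(c_1^{x},c_2^{x})$, it suffices to bound $\sup_{x\in[c_1,c_2]}\|[D,a^x]\|$ (bounded since $x\mapsto [D,a^x]$ is continuous by Step 2) and to show $\|[D,a^{iy}]\|\leq|y|\,\|[D,L]\|$. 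For the latter, I will establish the Duhamel formula
\begin{equation*}
 \delta_D(e^{wL})=w\int_0^1e^{swL}\,\delta_D(L)\,e^{(1-s)wL}\,ds .
\end{equation*}
It is obtained termwise from Step 2: expanding both sides in powers of $w$, the identity $\delta_D(L^k)=\sum_{j<k}L^j\delta_D(L)L^{k-1-j}$ and the Beta integrals $\int_0^1s^j(1-s)^{k-1-j}ds=j!(k-1-j)!/k!$ match the coefficients. Taking $w=iy$, the factors $e^{isyL}$ and $e^{i(1-s)yL}$ are unitaries, which gives $\|[D,a^{iy}]\|\leq |y|\|[D,\log a]\|$. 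Combining these bounds gives (\ref{eq:App.boundedness-comm-vertical}) with $C_\Sigma(a)$ depending only on $\Sigma$, $c_1$, $c_2$ and $\|[D,\log a]\|$.
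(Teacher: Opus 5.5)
Your proof is correct, but it takes a different route from the paper's.

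\textbf{The paper's route.} The paper writes $a^z=\frac{1}{2i\pi}\int_\Gamma\lambda^z(\lambda-a)^{-1}d\lambda$ over a contour $\Gamma$ in $\{\Re\lambda>0\}$. It gets holomorphy from $[D,a^z]=\frac{1}{2i\pi}\int_\Gamma\lambda^z(\lambda-a)^{-1}[D,a](\lambda-a)^{-1}d\lambda$. For the growth bound it reduces, as you do, to the imaginary axis via $[D,a^z]=[D,a^x]a^{iy}+a^x[D,a^{iy}]$. It then uses only the group law and a telescoping sum, $[D,a^{i\ell}]=\sum_{0\leq j<\ell}a^{ij}[D,a^{i}]a^{i(\ell-1-j)}$. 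With this, boundedness of $[D,a^{it}]$ for $t\in[-1,1]$ already gives $\|[D,a^{iy}]\|\leq C(1+|y|)$.

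\textbf{Your route.} You instead place $\log a$ in the domain of the closed derivation $\delta_D$ using a norm-convergent series. Positivity of $\Sp(a)$ is what makes $\|1-c^{-1}a\|<1$. You then exponentiate and bound $[D,a^{iy}]$ with a Duhamel formula whose outer factors are unitaries.

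\textbf{What each approach buys.} Your argument is self-contained about domains. The paper's identity $[D,(\lambda-a)^{-1}]=(\lambda-a)^{-1}[D,a](\lambda-a)^{-1}$ tacitly needs $(\lambda-a)^{-1}(\dom D)\subseteq\dom D$ for all $\lambda\in\Gamma$. That is, it relies on inverse-closedness of $\dom\delta_D$, or on the holomorphic closedness of $\sA$ invoked just before the lemma, which is not a standing assumption. You use only closedness of $\delta_D$ and the Leibniz rule. You also get the explicit bound $\|[D,a^{iy}]\|\leq|y|\,\|[D,\log a]\|$. The paper's telescoping step is lighter: it needs no Duhamel formula, only boundedness on a compact set plus the group property.

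\textbf{A notational slip.} In Step 3 you reuse $c_1,c_2$, your spectral bounds for $a$, as the bounds of the strip. The suprema of $\|[D,a^x]\|$ and $\|a^x\|$ should be taken over the strip's $x$-interval. There $\|a^x\|$ is controlled by the spectral bounds of $a$.
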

 \begin{proof}
Given $a\in {\sA}_{++}$, let $\Gamma$ be an outward-oriented rectifiable closed contour in the half-plane $\Re \lambda>0$ whose interior contains $\Sp(a)$. 
For all $z\in \C$, we have
\begin{equation*}
 a^z = \frac{1}{2i\pi} \int_{\Gamma} \lambda^z (\lambda -a)^{-1}d\lambda. 
\end{equation*}
Thus,
\begin{align*}
 [D,a^z] &=\frac{1}{2i\pi} \int_{\Gamma} \lambda^z \left[D,(\lambda -a)^{-1}\right] d\lambda\\  
 & = \frac{1}{2i\pi} \int_{\Gamma} \lambda^z (\lambda -a)^{-1}\left[D,a\right](\lambda -a)^{-1} d\lambda . 
\end{align*}
In the last integral the integrand $(\lambda,z)\rightarrow \lambda^z (\lambda -a)^{-1}[D,a](\lambda -a)^{-1}$ is a continuous map from 
$\Gamma\times \C$ to $\sL(\sH)$ which is holomorphic with respect to $z$. 
This ensures that $[D,a^z]$, $z\in \C$, is a holomorphic family in $\sL(\sH)$. Indeed, given any $z\in \C$, as $h\rightarrow 0$ we have
\begin{align*}
 [D,a^{z+h}]-[D,a^{z}] &= \frac{1}{2i\pi} \int_{\Gamma} (\lambda^{z+h}-\lambda^z) (\lambda -a)^{-1}\left[D,a\right](\lambda -a)^{-1} d\lambda \\
& = \frac{h}{2i\pi} \int_{\Gamma}(\log \lambda) \lambda^z (\lambda -a)^{-1}\left[D,a\right](\lambda -a)^{-1} d\lambda + \op{o}(|h|). 
\end{align*}

The above property implies that, for any compact $K\subseteq \C$, the family $[D,a^z]$, $z\in K$, is bounded in $\sL(\sH)$. Moreover, if $z=x+iy$ with $x,y\in \R$, then 
\begin{equation*}
  \big[D,a^z] =  \big[D,a^x]a^{iy} +  a^x\big[D,a^{iy}]. 
\end{equation*}
We observe that if $x$ remains in a closed interval $[\alpha,\beta]$ 
and $y$ varies over $\R$, then $a^x$ and $a^{iy}$ remains in bounded subsets of $\sL(\sH)$ and $[D,a^x]$ remains in a bounded subset of $\sL(\sH)$. Therefore, in order to prove the estimates~(\ref{eq:App.boundedness-comm-vertical}) for closed vertical stripes it is enough to do it for the imaginary line $\Sigma =i\R$. 

As $i[-1,1]$ is a compact subset of $\C$, there is $C>0$ such that
\begin{equation*}
 \left\| [D,a^{it}]\right\| \leq C \qquad \forall t\in [-1,1]. 
\end{equation*}
If $y\geq 1$ and we write $y=t+\ell$ with $t\in [0,1]$ and $\ell \in \N$, then 
\begin{align*}
   [D,a^{iy}]  & = a^{it}   [D,a^{i\ell} ] +   [D,a^{it}] a^{i\ell} \\
   & = \sum_{0\leq j \leq \ell-1} a^{i(t+j)}   [D,a^{i\ell} ]a^{i(\ell-j)} +  [D,a^{it}] a^{i\ell}. 
\end{align*}
Thus,
\begin{equation*}
  \left\| [D,a^{iy}]\right\| \leq \ell  \left\| [D,a^{i}]\right\| +  \left\| [D,a^{it}]\right\|\leq C(\ell+1)\leq C(|y|+1). 
\end{equation*}
Likewise, if $x\leq -1$, then  $ \| [D,a^{iy}]\| \leq C(1+|y|)$. This proves~(\ref{eq:App.boundedness-comm-vertical}) for $\Sigma=i\R$. The proof is complete. 
\end{proof}

 In what follows for $\Re s>1$ we let $g_s:\R\rightarrow \R$ be the function defined by
\begin{equation}\label{eq:CSZ.gs}
 g_s(t) = -\frac{1}{2} \frac{\sinh \left[ (s-2)t/2\right]}{\sinh(t/2) \cosh\left[(s-1)t/2\right]}, \quad t\neq 0, \qquad g_s(0)= -\frac{1}{2}(s-2). 
\end{equation}
  
\begin{lemma}[compare~\cite{MSZ:LMP22, SZ:Ast23}]\label{lem:CSZ.gs}
 The family $g_s(t)$, $\Re s>1$, is a holomorphic family in $\sS(\R)$. 
\end{lemma}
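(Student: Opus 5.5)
Let $S=\{s\in\C;\ \Re s>1\}$. The plan is to show three things: $g_s(t)$ is smooth in $t$ and jointly holomorphic in $s$; every Schwartz seminorm $\sup_t(1+|t|)^N|\partial_t^k g_s(t)|$ is bounded locally uniformly in $s$; and from these, $s\mapsto g_s$ is holomorphic as an $\sS(\R)$-valued map. For the last step I would use the standard fact that a map into a Fréchet space (or any quasi-complete locally convex space) which is locally bounded and weakly holomorphic is holomorphic. Weak holomorphy can be tested against tempered distributions. Equivalently, one can write $g_s$ via Cauchy's integral formula on small circles in $S$ and note that the Cauchy integral converges in $\sS(\R)$.

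\emph{Joint holomorphy.} Write
\begin{equation*}
g_s(t)=-\frac12\,\frac{\sinh[(s-2)t/2]}{t/2}\cdot\frac{t/2}{\sinh(t/2)}\cdot\frac{1}{\cosh[(s-1)t/2]}.
\end{equation*}
The first factor is entire in $(s,t)$ and equals $s-2$ at $t=0$, which matches the stated value $g_s(0)$. The second factor is analytic in $t$ on the strip $|\Im t|<2\pi$. For the third, $\cosh[(s-1)t/2]$ vanishes only when $(s-1)t/2\in i\pi(\tfrac12+\Z)$. For real $t\neq0$ and $\Re s>1$ we have $\Re[(s-1)t/2]\neq0$, and at $t=0$ the value is $1$, so it never vanishes on $S\times\R$. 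Hence $g_s(t)$ is jointly holomorphic on a complex neighbourhood of $S\times\R$.

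\emph{Uniform Schwartz bounds (the main step).} Fix a compact set $K\subset S$ and put $\sigma=\min_K\Re s-1>0$. I would estimate $g_s(t)$ on the complex strip $|\Im t|\le\eta$, with $\eta>0$ small depending on $K$, and for $|\Re t|\ge1$.
\begin{itemize}
\item Using $|\sinh w|\le e^{|\Re w|}$, the numerator is at most $e^{|\Re(s-2)\Re t|/2+O(\eta)}$.
\item Using $|\sinh(t/2)|\gtrsim e^{|\Re t|/2}$, the first denominator factor is of that size.
\item Using $|\cosh w|\ge|\sinh(\Re w)|$, we get $|\cosh[(s-1)t/2]|\gtrsim e^{(\sigma|\Re t|-C\eta)/2}$ once $|\Re t|$ is large.
\end{itemize}
Combining these gives
\begin{equation*}
|g_s(t)|\lesssim \exp\Big(\tfrac12\big(|\Re s-2|-1-(\Re s-1)\big)|\Re t|+C\eta\Big).
\end{equation*}
The exponent coefficient is $-\sigma'$ for some $\sigma'>0$: it equals $-2(\Re s-1)$ when $\Re s\ge2$, and $-1$ when $1<\Re s<2$ (the case $\Re s<2$ is the one that needs checking). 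Near $t=0$ the function is bounded on the compact set by continuity. So $|g_s(t)|\le C_K e^{-\sigma'|\Re t|}$ uniformly on $K\times\{|\Im t|\le\eta\}$. Cauchy estimates in $t$ on discs of radius $\eta$ then give $|\partial_t^k g_s(t)|\le C_{K,k}e^{-\sigma'|t|}$ for real $t$. This bounds every Schwartz seminorm uniformly on $K$.

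\emph{Conclusion.} Given the bounds, for a closed disc $\overline{B}\subset S$ the Cauchy formula $g_s=\frac1{2i\pi}\oint g_z(z-s)^{-1}dz$ holds pointwise in $t$ and for all $t$-derivatives. The integrand is a continuous, bounded $\sS(\R)$-valued function of $z$; continuity follows from the uniform bounds together with dominated convergence. So the formula exhibits $s\mapsto g_s$ as holomorphic in $\sS(\R)$. The only delicate point is the careful bookkeeping of the exponential decay rate in the second step, particularly for $1<\Re s<2$.
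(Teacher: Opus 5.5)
Your proof is correct, but it obtains the Schwartz bounds differently from the paper. The first step is the same as the paper's: it also factors $g_s(t)$ through $h(z)=\sinh(z)/z$ and uses the non-vanishing of $\cosh[(s-1)t/2]$ on $\{\Re s>1\}\times\R$ to get holomorphy in $C^\infty(\R)$.

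For the decay, the paper stays on the real line. Using evenness, it rewrites $g_s(t)$ for $t>0$ as a rational expression in $e^{-t}$ and $e^{-(s-1)t}$. It then shows by induction that $\partial_t^\ell g_s(t)$ is a combination of $e^{-t}$ and $e^{-(s-1)t}$, with coefficients polynomial in $s$, $e^{-t}$, $e^{-(s-1)t}$, divided by a power of the non-vanishing denominator. This gives $|\partial_t^\ell g_s(t)|\le C_{\ell KN}|t|^{-N}$ for $s\in K$ and $|t|\ge1$, and the passage to holomorphy in $\sS(\R)$ is then only asserted.

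You instead continue $g_s$ to a strip $|\Im t|\le\eta$, bound $g_s$ itself there with elementary $\sinh$/$\cosh$ inequalities, and let Cauchy estimates in $t$ control all derivatives at once. This avoids the derivative bookkeeping and gives the exact exponential rate. Your Cauchy-integral argument in $s$ also supplies the final step that the paper leaves implicit. The price is a routine compactness check: a single $\eta>0$ (with $\eta<2\pi$) must keep $K\times\{|\Im t|\le\eta\}$ free of zeros of $\cosh[(s-1)t/2]$. The bounded region $|\Re t|\le R_K$ is then handled by continuity.

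One arithmetic correction. The coefficient $\tfrac12\big(|\Re s-2|-1-(\Re s-1)\big)$ equals $-1$ for $\Re s\ge2$ and $-(\Re s-1)$ for $1<\Re s<2$, that is, $-\min(1,\Re s-1)$. You have the two cases interchanged, with a spurious factor $2$. This agrees with $g_s(t)=\big(e^{-(s-1)t}-e^{-t}\big)/\big((1-e^{-t})(1+e^{-(s-1)t})\big)$ for $t>0$, and your conclusion holds with $\sigma'=\min(1,\min_K\Re s-1)$.

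Also, in the combined estimate you need the $s$-dependent bound $|\cosh[(s-1)t/2]|\gtrsim e^{((\Re s-1)|\Re t|-C\eta)/2}$, valid for $|\Re t|\ge R_K$ uniformly on $K$. Your display does use this bound. The bullet's version with $\sigma$ alone would not make the exponent negative when $K$ contains points with large $\Re s$.
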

\begin{proof}
 Set $\Omega=\{\Re s>1\}$. Note that each function $g_s(t)$, $s\in \Omega$, is even. In addition, let $h:\C\rightarrow \C$ be the function defined by
 \begin{equation*}
 h(z)= \frac{\sinh (z)}{z}, \quad z\neq 0, \qquad h(0)=1. 
\end{equation*}
This is an entire function on $\C$ which is~$>0$ on $\R$ and has $i\pi \Z$ as zero-set.  Moreover, we have
\begin{equation*}
 g_s(t)= - \frac{1}{2}(s-2) \frac{h \left[ (s-2)t/2\right]} {h(t/2) \cosh\left[(s-1)t/2\right]}, \qquad \Re s>1, \ t \in \R. 
\end{equation*}
As $\cosh(z)$ is a non-vanishing holomorphic function on the half-plane $\Re z>0$, it follows that $(s,t)\rightarrow g_s(t)$ is a $C^\infty$-function on $\Omega \times \R$ which is holomorphic with respect to $s$. This ensures that $g_s(t)$, $s  \in \Omega$, is at least a holomorphic family in $C^\infty(\R)$. 

In view of~(\ref{eq:CSZ.gs}), for $t>0$ and $s\in \Omega$, we have
\begin{equation*}
 g_s(t)  = \frac{ e^{-(s-2)t/2} -  e^{(s-2)t/2}}{e^{st/2}(1-e^{-t})(1+e^{-(s-1)t})} =  \frac{ e^{-(s-1)t} -  e^{-t}}{1-e^{-t}-e^{-(s-1)t}+e^{-st}}
\end{equation*}
An induction then shows that, for $\ell =0,1, \ldots $ we may write 
\begin{equation*}
 \partial_t^\ell g_s(t)  = \frac{e^{-t}u_s^{(\ell)}(t)+ e^{-(s-1)t}v_s^{(\ell)}(t)}{(1+f_s(t))^{-(\ell+1)}}, \qquad t>0, \ s\in \Omega,
\end{equation*}
where we have set $f_s(t)=e^{-st}-e^{-t}-e^{-(s-1)t}$ and $u_s^{(\ell)}(t)$ and $v_s^{(\ell)}(t)$ are polynomials in $s$, $e^{-t}$ and $e^{-(s-1)t}$. It follows from this (and the fact that $g_s(t)$ is an even function) that, given any $N\geq 1$ and compact $K\subseteq \Omega$, there is a constant $C_{\ell KN}>0$ such that
\begin{equation*}
 \left| \partial_t^\ell g_s(t)\right| \leq C_{\ell KN} |t|^{-N} \qquad \forall (s,t) \in K\times ( \R\setminus (-1,1)). 
\end{equation*}
These estimates together with the fact that $g_s(t)$, $s  \in \Omega$, is a holomorphic family in $C^\infty(\R)$ ensure that $g_s(t)$, $s\in \Omega$, is a holomorphic family in $\sS(\R)$. The proof is complete. 
\end{proof}

\begin{remark}\label{rmk:CSZ.gs}
 As $g_s(t)$, $\Re s>1$, is a Schwartz-class function, we may define its Fourier transform by
 \begin{equation}
\hat{g}_s(t)= \frac{1}{\sqrt{\pi}}\int_{-\infty}^\infty g_s(\xi)e^{-it\xi}d\xi, \qquad t\in \R. 
\end{equation}
The family $\hat{g}_s(t)$, $\Re s>1$, then is a holomorphic family in $\sS(\R)$.  
\end{remark}

\subsection{Proof of Lemma~\ref{lem:Intro.refined-CSZ}}  
  We shall actually prove a more general result. Namely, given any $a\in {\sA}_{++}$ and $\alpha\in (0,1]$, we seek to show that, for $\Re s>1$, we have
 \begin{equation*}
 \left( a^{\frac12}|D|^{-\alpha} a^{\frac12}\right)^{s} - |D|^{-\alpha s} a^{s} \in \sL_{(\alpha s +1-\epsilon)^{-1}p,\infty} \qquad \forall \epsilon\in (0,1). 
\end{equation*}
Specializing this to $s\in (1,\infty)$ will give Lemma~\ref{lem:Intro.refined-CSZ}. As we shall see, an elaboration of the arguments of its proof will lead to a holomorphic version of this lemma (see Lemma~\ref{lem:CSZ.hol-refined-CSZ}). Lemma~\ref{lem:Intro.extension} will then be a by-product of that lemma.
  
For $\Re s>1$, set
\begin{equation*}
 T(s):=  \big( a^{\frac12}|D|^{-\alpha} a^{\frac12}\big)^{s} - |D|^{-\alpha s} a^{s}.   
\end{equation*}
We then need to show that
\begin{equation}\label{eq:Com.refined-CSZ1-Ts}
 T(s)\in \sL_{(\alpha \Re s+1-\epsilon)^{-1}p,\infty} \qquad \forall \epsilon \in (0,1). 
\end{equation}

Set $X=|D|^{-\alpha}$ and $Y=a^{1/2}|D|^{-\alpha} a^{1/2}$. As mentioned above, our approach relies on the integral representation for differences 
$(A^{1/2}BA^{1/2})^z-B^zA^z$, $\Re z>1$,  established by Connes-Sukochev-Zanin~\cite[Lemma~5.2]{CSZ:MS17} and Sukochev-Zanin~\cite[Theorem~5.2.1]{SZ:Ast23}. Applying this formula to $A=a$ and $B=X$ gives
\begin{equation}\label{eq:CSZ.Ts}
 T(s)=F_s(0) - \int_\R \hat{g}_s(t)X^{it}F_s(t)Y^{-it}dt, \qquad \Re s>1, 
\end{equation}
where $F_s(t)=F_{1,s}(t)+F_{2,s}(t)$, with 
\begin{equation*}
 F_{1,s}(t):= X^{s-1}\big[X,a^{s-\frac12+it}\big] a^{\frac12},  \qquad F_{2,s}(t):=\big[X,a^{\frac12+it}\big] a^{\frac12} Y^{s-1}.
\end{equation*}
In~\cite{CSZ:MS17, SZ:Ast23} the integral in~(\ref{eq:CSZ.Ts}) is meant in the sense of the weak operator topology. However, the arguments below show that the integral actually converges in $\sK$ in the Bochner's sense. 

Bearing this in mind, let $\epsilon \in (0,1)$. Applying Lemma~\ref{lem:Intro.factorization} for $\beta=1-\epsilon$ and $\gamma=\alpha$ shows there is a bounded operator $\Phi^{(\alpha)}_\epsilon=\Phi^{-\alpha}_{1-\epsilon,\alpha}:\sL(\sH)\rightarrow \sK$ such that, for all $b\in \sA$, we have
\begin{align}
[X,b] &= \big[|D|^{-\alpha},b\big] \nonumber \\
& =  |D|^{-\beta}\Phi^{(\alpha)}_\epsilon\left([D,b]\right)|D|^{-\alpha}
                                                  +|D|^{-(1+\alpha)}F[D,b]\Pi_0 +\Pi_0[D,b] F|D|^{-(1+\alpha)},   
\label{eq:CSZ.Dab0}
\end{align}
where $F=D|D|^{-1}$ is the sign of $D$ and  $\Pi_0$ is the orthogonal projection onto $\ker D$. More precisely, in the notation of~(\ref{eq:Com.Phimbgam1}) we have 
\begin{equation*}
 \Phi^{(\alpha)}_\epsilon =-c(\alpha)F\Psi^{-\alpha}_{2-\epsilon,\alpha}(T)-c(\alpha)F\Psi^{-\alpha}_{1-\epsilon,\alpha+1}(T)F, \qquad T\in \sL(\sH),
\end{equation*}
where $\Psi^{-\alpha}_{2-\epsilon,\alpha}(T)$ and $\Psi^{-\alpha}_{1-\epsilon,\alpha+1}(T)$ are DOIs given by~(\ref{eq:Com.Psiadd'}). As $|D|^{-1}=X^{1/\alpha}$ and $ |D|^{-\alpha}=a^{-1/2}Ya^{-\frac12}$, we may rewrite the above formula in the form,
\begin{align}
[X,b] =  X^{-\beta/\alpha}\Phi^{(\alpha)}_\epsilon\left([D,b]\right)a^{-1/2}Ya^{-\frac12}
                                                  +X^{1+(1/\alpha)}F[D,b]\Pi_0 + \Pi_0[D,b] FX^{1+(1/\alpha)}.   
\label{eq:CSZ.Dab}
\end{align}

By using~(\ref{eq:CSZ.Dab}) and the vanishing of $X^{s-1}$ on $\ran \Pi_0=\ker D$ we get
\begin{equation}\label{eq:CSZ.Fst1}
 F_{1,s}(t)=  X^{(\beta/\alpha)+s-1}\Phi^{(\alpha)}_\epsilon\big([D,a^{s-\frac12 +it}]\big)a^{-\frac12} Y + X^{(1/\alpha)+s}F[D,a^{s-\frac12 +it}]\Pi_0a^{\frac12}. 
\end{equation}
It is convenient to rewrite this in the form, 
\begin{equation}\label{eq:CSZ.F1s}
  F_{1,s}(t)=  X^{(\beta/\alpha)+s-1}G_{1,s}(t) Y + X^{(1/\alpha)+s}H_{1,s}(t), 
\end{equation}
where we have set
\begin{equation*}
 G_{1,s}(t)=\Phi^{(\alpha)}_\epsilon\big([D,a^{s-\frac12 +it}]\big)a^{-\frac12}, \qquad H_{1,s}(t)= F[D,a^{s-\frac12 +it}]\Pi_0a^{\frac12}. 
\end{equation*}

Similarly, we obtain 
\begin{align}
 F_{2,s}(t) = & \  X^{(\beta/\alpha)}\Phi^{(\alpha)}_\epsilon\big([D,a^{\frac12 +it}]\big)a^{-\frac12} Y^s + X^{(1/\alpha)}F[D,a^{\frac12 +it}]\Pi_0a^{\frac12}Y^{s-1} \nonumber\\
 & + \Pi_0[D,b] FX^{1+(1/\alpha)}a^{\frac12}Y^{s-1}. 
 \label{eq:CSZ.Fst2}
 \end{align}
Equivalently, 
\begin{equation}\label{eq:CSZ.F2s}
 F_{2,s}(t)=\tilde{F}_{2,s}(t)+\Pi_0 R_s(t)Y^{s-1}, \qquad \tilde{F}_{2,s}(t):= X^{(\beta/\alpha)} G_2(t)Y^s+  X^{(1/\alpha)}H_2(t)Y^{s-1},  
\end{equation}
where we have set 
\begin{equation*}
 G_2(t)= \Phi^{(\alpha)}_\epsilon\big([D,a^{\frac12 +it}]\big)a^{-\frac12}, \quad H_2(t)=F[D,a^{\frac12 +it}]\Pi_0a^{\frac12}, \quad 
 R_s(t)=[D,a^{\frac12 +it}] FX^{1+(1/\alpha)}a^{\frac12}. 
\end{equation*}
Note that as $X^{it}\Pi_0=0$, we have
\begin{equation}\label{eq:CSZ.XF2Y}
 X^{it}F_{2,s}(t)Y^{-it}= X^{it}\tilde{F}_{2,s}(t)Y^{-it}. 
\end{equation}

Given any $z\in \C$, Lemma~\ref{lem:CSZ.Daz} implies that $[D,a^{z+it}]$, $t\in \R$, is a family in $C^{(1)}(\R;\sL(\sH))$. As $\Pi_0$ is bounded and has finite-rank, and hence is compact, we deduce that $H_{1,s}(t)$, $t\in \R$,  and $H_2(t)$, $t\in \R$,  are families in  $C^{(1)}(\R;\sK)$. As $\hat{g}_s(t)$ is in $\sS(\R)$, we further see that 
$\hat{g}_s(t)H_{1,s}(t)$ and $\hat{g}_s(t)H_{2}(t)$ are in $C^{(m)}(\R;\sK)$ for any $m<-1$. Lemma~\ref{lem:CSZ.X1itFX2it} then ensures that 
$\hat{g}_s(t)X^{it}H_{1,s}(t)Y^{-it}$ and $\hat{g}_s(t)X^{it}H_{2}(t)Y^{-it}$ are in $L^1(\R;\sK)$. It follows that  
$\hat{g}_s(t)X^{it}X^{(1/\alpha)+s}H_{1,s}(t)Y^{-it}$ is in $L^1(\R;\sK)$ and, using the notation of Lemma~\ref{lem:CSZ.X1itFX2it}, we have
\begin{equation}\label{eq:CSZ.Theta-H1s}
 \int_\R\hat{g}_s(t)X^{it}\big(X^{(1/\alpha)+s}H_{1,s}(t)\big)Y^{-it}dt= X^{(1/\alpha)+s} \Theta_{X,Y}\big[\hat{g}_sH_{1,s}\big]. 
\end{equation}
Likewise,  $\hat{g}_s(t)X^{it}X^{1+(1/\alpha)}H_{2}(t)Y^{s-1}Y^{-it}$ is in $L^1(\R;\sK)$, and we have
\begin{equation}\label{eq:CSZ.Theta-H2}
 \int_\R\hat{g}_s(t)X^{it}\big(X^{1+(1/\alpha)}H_{2}(t)Y^{s-1}\big)Y^{-it}dt= X^{1+(1/\alpha)} \Theta_{X,Y}\big[\hat{g}_sH_{2}\big]Y^{s-1}. 
\end{equation}

In addition, as $\Phi^{(\alpha)}_\epsilon$ is a bounded operator from $\sL(\sH)$ to $\sK$, we see that $\Phi^{(\alpha)}_\epsilon ([D,a^{z+it}])$ is a family in $C^{(1)}(\R;\sK)$, and hence $G_{1,s}(t)$ and $G_2(t)$ are families in $C^{(1)}(\R;\sK)$. As above this implies that 
$\hat{g}_s(t)X^{it}X^{(\beta/\alpha)+s-1}G_{1,s}(t)YY^{-it}$ is in $L^1(\R;\sK)$, and we have
\begin{equation}\label{eq:CSZ.Theta-G1s}
 \int_\R\hat{g}_s(t)X^{it}\big(X^{(\beta/\alpha)+s-1}G_{1,s}(t)Y\big)Y^{-it}dt= X^{(\beta/\alpha)+s-1} \Theta_{X,Y}\big[\hat{g}_sG_{1,s}\big]Y. 
\end{equation}
Similarly, $\hat{g}_s(t)X^{it}X^{\beta/\alpha}G_{2}(t)Y^{s}Y^{-it}$ is in $L^1(\R;\sK)$, and we have
\begin{equation}\label{eq:CSZ.Theta-G2}
 \int_\R\hat{g}_s(t)X^{it}\big(X^{\beta/\alpha}G_{2}(t)Y^{s}\big)Y^{-it}dt=X^{\beta/\alpha} \Theta_{X,Y}\big[\hat{g}_sG_{2}\big]Y^s. 
\end{equation}

Combining~(\ref{eq:CSZ.F1s}) with~(\ref{eq:CSZ.Theta-H1s}) and~(\ref{eq:CSZ.Theta-G1s}) gives
\begin{equation*}
 \int_\R \hat{g}_s(t)X^{it}F_{1,s}(t)Y^{-it}dt = X^{(\beta/\alpha)+s-1} \Theta_{X,Y}\big[\hat{g}_s G_{1,s}\big]Y + X^{(1/\alpha)+s} \Theta_{X,Y}\big[\hat{g}_s H_{1,s}\big]. 
\end{equation*}
Similarly, by combining~(\ref{eq:CSZ.F2s})--(\ref{eq:CSZ.XF2Y}) with~(\ref{eq:CSZ.Theta-H2}) and~(\ref{eq:CSZ.Theta-G2}) we get
\begin{align*}
 \int_\R \hat{g}_s(t)X^{it}F_{2,s}(t)Y^{-it}dt& =  \int_\R \hat{g}_s(t)X^{it}\tilde{F}_s(t)Y^{-it}dt\\
 & =  X^{\beta/\alpha} \Theta_{X,Y}\big[\hat{g}_s G_{2}\big]Y^s + X^{1+(1/\alpha)} \Theta_{X,Y}\big[\hat{g}_s H_{2}\big]Y^{s-1}. 
\end{align*}
As $F_s(t)=F_{1,s}(t)+F_{2,s}(t)$, we then obtain
\begin{equation}\label{eq:CSZ.ThetaF}
  \int_\R \hat{g}_s(t)X^{it}F_{s}(t)Y^{-it}dt = X^{\beta/\alpha}A(s)Y +  X^{1+(1/\alpha)}B(s),
\end{equation}
where we have set
\begin{equation*}
 A(s)= X^{s-1} \Theta_{X,Y}\big[\hat{g}_s G_{1,s}\big]+\Theta_{X,Y}\big[\hat{g}_s G_{2}\big]Y^{s-1}, \quad 
 B(s)=X^{s-1} \Theta_{X,Y}\big[\hat{g}_s H_{1,s}\big]+\Theta_{X,Y}\big[\hat{g}_s H_{2}\big]Y^{s-1}. 
\end{equation*}

Moreover, setting $t=0$ in (\ref{eq:CSZ.F1s}) and~(\ref{eq:CSZ.F2s}) gives
\begin{gather*}
  F_{1,s}(0)=  X^{(\beta/\alpha)+s-1}G_{1,s}(0) Y + X^{(1/\alpha)+s}H_{1,s}(0),\\
  F_{2,s}(0)= X^{(\beta/\alpha)} G_2(0)Y^s+  X^{(1/\alpha)}H_2(0)Y^{s-1} + \Pi_0 R_s(0)Y^{s-1}. 
\end{gather*}
Thus, 
\begin{align}
 F_s(0)=F_{1,s}(0)+F_{2,s}(0) &=X^{\beta/\alpha}\left( X^{s-1}G_{1,s}(0)+G_2(0)Y^{s-1}\right) Y \nonumber \\ 
 & + 
 X^{1+(1/\alpha)}\left( X^{s-1}H_{1,s}(0)+H_2(0)Y^{s-1}\right) + \Pi_0 R_s(0)Y^{s-1}. 
 \label{eq:CSZ.F(0)}
\end{align}

For $\Re s>1$, let $\tilde{\Theta}_{X,Y}^{(s)}:C^{(1)}(\R;\sK) \rightarrow \sK$ be the operator defined by
\begin{equation}\label{eq:CSZ.tThetaXY}
 \tilde{\Theta}_{X,Y}^{(s)}[K]= K(0)-{\Theta}_{X,Y}[\hat{g}_s K], \qquad K(t)\in C^{(1)}(\R;\sK). 
\end{equation}
Combining~(\ref{eq:CSZ.Ts}) with~(\ref{eq:CSZ.ThetaF}) and~(\ref{eq:CSZ.F(0)}) gives 
\begin{align}
 T(s) & =F(0)-  \int_\R \hat{g}_s(t)X^{it}F_{s}(t)Y^{-it}dt \nonumber\\
 &  = X^{\beta/\alpha}\tilde{A}(s)Y +  X^{1+(1/\alpha)}\tilde{B}(s) + \Pi_0 R(s), 
 \label{eq:CSZ.Ts-As-Bs-Rs}
 \end{align}
where we have set
\begin{gather}\label{eq:CSZ.tAs}
 \tilde{A}(s):=X^{s-1} \tilde{\Theta}_{X,Y}^{(s)}\big[ G_{1,s}\big]+\tilde{\Theta}_{X,Y}^{(s)}\big[ G_{2}\big]Y^{s-1},  \\
 \tilde{B}(s):=X^{s-1} \tilde{\Theta}_{X,Y}^{(s)}\big[H_{1,s}\big]+\tilde{\Theta}_{X,Y}^{(s)}\big[ H_{2}\big]Y^{s-1},
 \label{eq:CSZ.tBs}\\
 R(s):=R_s(0)Y^{s-1}= [D,a^{\frac12}] FX^{1+(1/\alpha)}a^{\frac12}Y^{s-1}.
\end{gather}

Set $c=\Re s$. The operators  $\tilde{\Theta}_{X,Y}^{(s)}[G_{1,s}]$ and $\tilde{\Theta}_{X,Y}^{(s)}\big[ G_{2}\big]$ are both bounded. Likewise, 
$\tilde{\Theta}_{X,Y}^{(s)} [H_{1,s}]$ and $\tilde{\Theta}_{X,Y}^{(s)}\big[ H_{2}\big]$ are bounded operators. Moreover, as $X$ and $Y$ are operators in $\sL_{\alpha^{-1}p,\infty}$, the powers $X^{s-1}$ and $Y^{s-1}$ are both operators in $\sL_{(c\alpha-\alpha)^{-1}p,\infty}$. It then follows that $\tilde{A}(s)$ and $\tilde{B}(s)$ are operators in $\sL_{(c\alpha-\alpha)^{-1}p,\infty}$. 

The fact that $X$ is in $\sL_{\alpha^{-1}p,\infty}$ further implies that $X^{\beta/\alpha}\in \sL_{\beta^{-1}p,\infty}$. As
 $\tilde{A}(s)\in  \sL_{(c\alpha-\alpha)^{-1}p,\infty}$ and $Y\in \sL_{\alpha^{-1}p,\infty}$, by using H\"older's inequality (Proposition~\ref{prop:Hoelder}) we then get
\begin{equation}\label{eq:CSZ.As-sL}
 X^{\beta/\alpha}\tilde{A}(s)Y \in \sL_{(c\alpha+\beta)^{-1}p,\infty}. 
\end{equation}
Similarly, the operator $X^{1+(1/\alpha)}$ is in $\sL_{(\alpha+1)^{-1}p,\infty}$. As $\tilde{B}(s)\in \sL_{(c\alpha-\alpha)^{-1}p,\infty}$, by using H\"older's inequality we get 
\begin{equation}\label{eq:CSZ.Bs-sL}
 X^{1+(1/\alpha)}\tilde{B}(s) \in \sL_{(c\alpha+1)^{-1}p,\infty}\subseteq \sL_{(c\alpha+\beta)^{-1}p,\infty}. 
\end{equation}
In addition, the operator $\Pi_0 R(s)$ is a finite-rank bounded operator, and hence is contained in $\sL_{(c\alpha+\beta)^{-1}p,\infty}$. Combining this with~(\ref{eq:CSZ.Ts-As-Bs-Rs}) and 
(\ref{eq:CSZ.As-sL})--(\ref{eq:CSZ.Bs-sL}) we then see that
\begin{equation*}
 T(s)\in  \sL_{(c\alpha+\beta)^{-1}p,\infty}. 
\end{equation*}
 As $\beta=1-\epsilon$ this gives~(\ref{eq:Com.refined-CSZ1-Ts}). This completes the proof of Lemma~\ref{lem:Intro.refined-CSZ}.  \hfill $\Box$ 
 
 \begin{remark}\label{rmk:CSZ.alpha}
Eq.~(\ref{eq:CSZ.Fst1}) and Eq.~(\ref{eq:CSZ.Fst2}) are two of the main steps in the proof of Lemma~\ref{lem:Intro.refined-CSZ} above. For getting these results it was  crucial to be able to factor out $|D|^{-\alpha}$ in the factorization~(\ref{eq:CSZ.Dab0}) for $[|D|^{-\alpha},b]$. 
\end{remark}

\subsection{Holomorphic version of  Lemma~\ref{lem:Intro.refined-CSZ}} 
An elaboration of the arguments of the proof of Lemma~\ref{lem:Intro.refined-CSZ} leads to the following holomorphic version of that result. 

\begin{lemma}\label{lem:CSZ.hol-refined-CSZ}
 If we set $c:=\max(1,\alpha^{-1}(q^{-1}p-1))$, then 
\begin{equation*}
  T(s) \in \Hol\left( \Re s>c; \sL_{q}\right). 
\end{equation*}
\end{lemma}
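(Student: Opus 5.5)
We propose to redo the decomposition~(\ref{eq:CSZ.Ts-As-Bs-Rs}) while tracking holomorphy in $s$. Here $q>0$ is fixed, $a\in\sA_{++}$ and $\alpha\in(0,1]$. Fix $\epsilon\in(0,1)$, set $\beta=1-\epsilon$ and keep the same $\Phi^{(\alpha)}_\epsilon$. This gives, for $\Re s>1$,
\begin{equation*}
T(s)=X^{\beta/\alpha}\tilde A(s)Y+X^{1+(1/\alpha)}\tilde B(s)+\Pi_0R(s).
\end{equation*}
The point is to show that each of the three terms is a holomorphic $\sL_q$-valued family on $\Re s>c$. We use that $\sL_q$ is a quasi-Banach ideal and that multiplication by a fixed element of $\sL_{q_1}$ (or a bounded operator) is a continuous map. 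It is enough to work on every half-plane $\Re s>c'$ with $c'>c$.

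\textbf{Step 1: holomorphy of the scalar-like ingredients.} Lemma~\ref{lem:CSZ.gs} and Remark~\ref{rmk:CSZ.gs} give that $\hat g_s$ is a holomorphic family in $\sS(\R)$. Lemma~\ref{lem:CSZ.Daz} gives that $z\mapsto[D,a^z]$ is holomorphic, with the bound $O(1+|\Im z|)$ on strips. Consequently $s\mapsto \hat g_s G_{1,s}$, $\hat g_sG_2$, $\hat g_sH_{1,s}$ and $\hat g_sH_2$ are holomorphic families in $C^{(m)}(\R;\sK)$ for some $m<-1$. To check this, write the difference quotient in $s$ and use the Cauchy-type bounds, uniform on compact sets of $s$, for both factors; the weight $(1+|t|)$ coming from $[D,a^{z+it}]$ is absorbed by the Schwartz decay of $\hat g_s$ and its $s$-derivatives. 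Composing with the bounded linear map $\Theta_{X,Y}$ of Lemma~\ref{lem:CSZ.X1itFX2it}, and noting that evaluation at $t=0$ is also continuous, we conclude that $\tilde\Theta^{(s)}_{X,Y}[G_{1,s}]$ and the three other analogous families are holomorphic in $\sK\subseteq\sL(\sH)$ for $\Re s>1$.

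\textbf{Step 2: powers.} By Lemma~\ref{lem:Tauberian.powers}(ii), applied to $X,Y\in\sL_{\alpha^{-1}p,\infty}$, the families $X^{s-1}$ and $Y^{s-1}$ are holomorphic in $\sL_{r}$ with $r=(\alpha(c'-1))^{-1}p$ whenever $\Re s>c'$. It follows that $\tilde A(s)$ and $\tilde B(s)$ are holomorphic in $\sL_r$, since a product of a bounded holomorphic family and an $\sL_r$ holomorphic family is holomorphic in $\sL_r$. Now multiply by the fixed operators $X^{\beta/\alpha}\in\sL_{\beta^{-1}p,\infty}$, $Y\in\sL_{\alpha^{-1}p,\infty}$ and $X^{1+1/\alpha}\in\sL_{(1+\alpha)^{-1}p,\infty}$. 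By H\"older's inequality the first term $X^{\beta/\alpha}\tilde A(s)Y$ is then holomorphic in $\sL_{\rho}$ with $\rho^{-1}=p^{-1}(\alpha c'-\alpha+\beta+\alpha)-\eta$ for arbitrarily small $\eta>0$; the weak/strong Schatten mismatch costs an $\eta$, which is harmless since $\sL_{\rho,\infty}\subseteq\sL_{\rho'}$ for $\rho<\rho'$. The second term is treated the same way. The term $\Pi_0R(s)$ has rank at most $\dim\ker D$ and equals $\Pi_0[D,a^{1/2}]FX^{1+1/\alpha}a^{1/2}Y^{s-1}$, so it is holomorphic in every $\sL_q$ because $Y^{s-1}$ is holomorphic in $\sL(\sH)$.

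\textbf{Step 3: matching exponents.} We need $\rho\le q$, which amounts to $\alpha c'+\beta>q^{-1}p$. Given $\Re s>c\ge\alpha^{-1}(q^{-1}p-1)$, we have $\alpha\Re s+1>q^{-1}p$, so we may choose $\epsilon$ and $\eta$ small enough, depending on $c'$. We also need $c'>1$ so that Step 1 applies; this explains the $\max(1,\cdot)$ in the definition of $c$.

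The main obstacle is Step 1, namely justifying holomorphy of $s\mapsto\Theta_{X,Y}[\hat g_sK_s]$. The difficulty is that $K_s=G_{1,s}$ and $H_{1,s}$ themselves depend on $s$ through $a^{s-1/2+it}$, so one needs locally uniform $C^{(m)}$-control of the difference quotients. The plan is to prove this by the Cauchy integral formula on small disks: Lemma~\ref{lem:CSZ.Daz} bounds $[D,a^{z+it}]$ uniformly for $z$ in a disk, and the holomorphy of $\hat g_s$ in $\sS(\R)$ supplies the corresponding control in $t$.
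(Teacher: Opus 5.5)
Your proposal is correct and follows essentially the same route as the paper. You reuse the decomposition $T(s)=X^{\beta/\alpha}\tilde{A}(s)Y+X^{1+(1/\alpha)}\tilde{B}(s)+\Pi_0R(s)$ and get holomorphy of the $\tilde{\Theta}^{(s)}_{X,Y}$-terms from holomorphy of $\hat{g}_sK_s$ in $C^{(m)}(\R;\sK)$; you then combine Lemma~\ref{lem:Tauberian.powers} and H\"older's inequality on half-planes $\Re s>c'$, choosing $\epsilon$ depending on $c'$ (the paper takes $c'=c+2\epsilon\alpha^{-1}$). Your Cauchy-estimate justification of Step~1 is slightly more careful than the paper's, which simply asserts that locally uniform $(1+|t|)$-bounds upgrade holomorphy in $C(\R;\sL(\sH))$ to holomorphy in $C^{(1)}(\R;\sL(\sH))$.
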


Prior to proving Lemma~\ref{lem:CSZ.hol-refined-CSZ}, we will need the following lemma. 

\begin{lemma} The following holds. 
\begin{enumerate}
 \item[(i)]
 $\tilde{\Theta}_{X,Y}^{(s)}[ G_{1,s}]$, $\Re s>1$, and $\tilde{\Theta}_{X,Y}^{(s)}[G_2]$, $\Re s>1$, are
  holomorphic families of compact operators. 
 
  \item[(ii)] The same holds for the families $\tilde{\Theta}_{X,Y}^{(s)}[ H_{1,s}]$, $\Re s>1$, and $\tilde{\Theta}_{X,Y}^{(s)}[ H_{2}]$, $\Re s>1$,
\end{enumerate}
\end{lemma}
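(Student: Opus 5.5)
We need holomorphy in $s$ of $\tilde{\Theta}^{(s)}_{X,Y}[K_s]=K_s(0)-\Theta_{X,Y}[\hat g_s K_s]$ for the four families $K_s\in\{G_{1,s},G_2,H_{1,s},H_2\}$. The plan is to show that the map $s\mapsto \hat g_s K_s$ is holomorphic with values in the Banach space $C^{(m)}(\R;\sK)$ for some fixed $m<-1$. Once this is done, we compose with the bounded linear operator $\Theta_{X,Y}:C^{(m)}(\R;\sK)\to\sK$ of Lemma~\ref{lem:CSZ.X1itFX2it}. The evaluation term $K_s(0)$ is handled separately: it is holomorphic in $\sK$ because $[D,a^{z}]$ is holomorphic in $\sL(\sH)$ by Lemma~\ref{lem:CSZ.Daz}. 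We also use that $\Phi^{(\alpha)}_\epsilon:\sL(\sH)\to\sK$ is bounded, and that multiplication by the fixed compact $\Pi_0$ or by bounded operators preserves holomorphy.

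\textbf{Step 1: $G_2$ and $H_2$.} These families do not depend on $s$, so only $\hat g_s$ varies. By Remark~\ref{rmk:CSZ.gs}, $s\mapsto\hat g_s$ is holomorphic in $\sS(\R)$. Hence $s\mapsto (1+|t|)^{N}\hat g_s$ is holomorphic in $C_b(\R)$ for every $N$, since the weighted sup-norm is a continuous Schwartz seminorm. Since $K\in C^{(1)}(\R;\sK)$, the bilinear map $(h,K)\mapsto hK$ from $C^{(-2-1)}(\R;\C)\times C^{(1)}(\R;\sK)$ to $C^{(-2)}(\R;\sK)$ is continuous. It follows that $s\mapsto\hat g_sG_2$ and $s\mapsto\hat g_sH_2$ are holomorphic in $C^{(-2)}(\R;\sK)$, which settles these two families.

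\textbf{Step 2: $G_{1,s}$ and $H_{1,s}$.} Here the $s$-dependence also enters through $[D,a^{s-\frac12+it}]$. The plan is to show that $s\mapsto\big(t\mapsto[D,a^{s-\frac12+it}]\big)$ is holomorphic in $C^{(2)}(\R;\sL(\sH))$, at least locally on each strip $c_1<\Re s<c_2$. I would use the contour formula from the proof of Lemma~\ref{lem:CSZ.Daz}, or write the derivative in $s$ as $[D,a^{z}\log a]$. Difference quotients are then controlled by the linear-growth bound~(\ref{eq:App.boundedness-comm-vertical}) applied to $z=s-\frac12+it$ uniformly on strips. Concretely, the Cauchy integral formula over a small circle around $s_0$, combined with the uniform bound $C_\Sigma(1+|t|)$, gives holomorphy in the weighted space with weight $(1+|t|)^{-2}$; the loss of one power of $(1+|t|)$ absorbs the vanishing of the difference-quotient error. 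Composing with the bounded map $\Phi^{(\alpha)}_\epsilon$ and right-multiplying by $a^{\mp1/2}$, or multiplying by $F,\Pi_0$, we get holomorphy of $G_{1,s}$ and $H_{1,s}$ in $C^{(2)}(\R;\sK)$. Multiplying by $\hat g_s$ as in Step 1 then lands in $C^{(-2)}(\R;\sK)$ holomorphically.

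The main obstacle is Step 2, namely promoting the pointwise holomorphy of $[D,a^z]$ to holomorphy with values in a weighted sup-norm space uniformly in $t\in\R$. I would first try the Cauchy-formula argument: a locally bounded family that is holomorphic for each fixed $t$, with a uniform bound in the weighted norm on compacts in $s$, is holomorphic in the weighted space after a slight weakening of the weight. If that proves delicate, the fallback is to show weak holomorphy against the functionals $K\mapsto\Tr[R\,K(t)]$ and invoke local boundedness, which suffices for Banach-valued holomorphy.
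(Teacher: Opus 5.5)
Your proposal is correct and follows essentially the same route as the paper. You reduce to holomorphy of $s\mapsto \hat{g}_sK_s$ in $C^{(m)}(\R;\sK)$ with $m<-1$, compose with the bounded map $\Theta_{X,Y}$ of Lemma~\ref{lem:CSZ.X1itFX2it}, and treat $K_s(0)$ directly; for $G_{1,s},H_{1,s}$ you use the linear-growth bound of Lemma~\ref{lem:CSZ.Daz} to get weighted holomorphy of $[D,a^{s-\frac12+it}]$ before applying $\Phi^{(\alpha)}_\epsilon$, $F$, $\Pi_0$, $a^{\pm1/2}$. The only difference is that you spell out the Cauchy-formula justification the paper leaves implicit. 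Since those Cauchy estimates are uniform in $t$, you in fact get holomorphy in $C^{(1)}(\R;\sL(\sH))$ with no loss of weight, although dropping to $C^{(2)}$ is harmless because $\hat{g}_s$ is Schwartz.
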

\begin{proof}
Suppose that $K_s$, $\Re s>1$, is a holomorphic family in $C^1(\R; \sK)$. We know from Lemma~\ref{lem:CSZ.gs} and Remark~\ref{rmk:CSZ.gs} that 
$\hat{g}_s(t)$, $\Re s>1$, is a holomorphic family in $\sS(\R)$. 
Therefore, $\hat{g}_s K_s$, $\Re s>1$, is a holomorphic family in $C^{(m)}(\R;\sK)$ for any $m<-1$.  
As Lemma~\ref{lem:CSZ.X1itFX2it} asserts that $\Theta_{X,Y}:C^{(m)}(\R;\sK)\rightarrow \sK$ is a continuous linear operator, we see that $\Theta_{X,Y}[\hat{g}_s K_s]$, $\Re s>1$, is a holomorphic family in $\sK$. 
Clearly, $K_s(0)$, $\Re s>1$, is a holomorphic family in $\sK$, and so in view of~(\ref{eq:CSZ.tThetaXY}) we see that $\tilde{\Theta}_{X,Y}^{(s)}[ K_{s}]$, $\Re s>1$, is a holomorphic family of compact operators. 

Applying the above observation to $K_s=G_2$ and $K_s=H_2$ shows that $\tilde{\Theta}_{X,Y}^{(s)}[ G_{2}]$, $\Re s>1$, and $\tilde{\Theta}_{X,Y}^{(s)}[ H_{2}]$, $\Re s>1$, are  holomorphic families of compact operators. Moreover, in order to prove the claim for $\tilde{\Theta}_{X,Y}^{(s)}[ G_{1,s}]$ and $\tilde{\Theta}_{X,Y}^{(s)}[ H_{1,s}]$ it is enough to show that $G_{1,s}$, $\Re s>1$, and $H_{1,s}$, $\Re s>1$, are holomorphic families in $C^1(\R; \sK)$. 

Set $\Omega=\{\Re s>1\}$.  Lemma~\ref{lem:CSZ.Daz} asserts that $[D,a^z]$, $z\in \C$, is a holomorphic family in $\sL(\sH)$, and, for any (closed) vertical strip $\Sigma\subseteq \C$, there is $C_{\Sigma}(a)>0$ such that
\begin{equation*}
 \big\|[D,a^z]\big\| \leq C_{\Sigma}(a) \left( 1+|\Im z|\right) \qquad \forall z\in \Sigma.  
\end{equation*}
 Therefore, if we set $V_s(t)=[D,a^{s-1/2+it}]$, $\Re s>1$, $t\in \R$, then $V_s(t)$, $\Re s>1$, is a holomorphic family in $C(\R;\sL(\sH))$, and, for any compact $L\subseteq \Omega$, there is $C_{L}(a)>0$ such that 
\begin{equation*}
  \big\|V_{s}(t)\big\| \leq C_L(a) (1+|t|)\qquad \forall (s,t)\in L\times \R. 
\end{equation*}
This ensures that $V_s(t)$, $\Re s>1$, is a holomorphic family in $C^{(1)}(\R;\sL(\sH))$. 

By definition $H_{1,s}(t)=FV_{s}(t)\Pi_0a^{\frac12}$. Here $\Pi_0$ is a bounded finite-rank operator, and hence is compact, and so we see that $H_{1,s}$, $\Re s>1$,  is a holomorphic family in $C^{(1)}(\R;\sK)$. Moreover, as 
$\Phi_\epsilon^{(\alpha)}$ is a continuous linear operator from $\sL(\sH)$ to $\sK$ and $G_{1,s}=\Phi^{(\alpha)}_\epsilon[V_s(t)]a^{-\frac12}$, we  see that 
$G_{1,s}$, $\Re s>1$, is a holomorphic family in $C^{(1)}(\R;\sK)$ as well. This completes the proof.   
\end{proof}
  
 We are now in a position to prove Lemma~\ref{lem:CSZ.hol-refined-CSZ}.
  \begin{proof}[Proof of Lemma~\ref{lem:CSZ.hol-refined-CSZ}] 
  Let $c\geq 1$. As $X$ is a positive operator in $\sL_{\alpha^{-1}p,\infty}$, Lemma~\ref{lem:Tauberian.powers} ensures that $X^z$, $\Re z>c-1$, is a holomorphic family in $\sL_{(c\alpha-\alpha)^{-1}p,\infty}$, i.e., $X^{s-1}$, $\Re s>c$, is a holomorphic family in $\sL_{(c\alpha-\alpha)^{-1}p,\infty}$. 
 Likewise, $Y^{s-1}$, $\Re s>c$, is holomorphic family in $\sL_{(c\alpha-\alpha)^{-1}p,\infty}$. Combining this with~(\ref{eq:CSZ.tAs})--(\ref{eq:CSZ.tBs}) and the claim above shows that 
 \begin{equation}\label{eq:CSZ.tAs-tBs-Hol}
 \tilde{A}(s) \in \Hol\left( \Re s>c; \sL_{(c\alpha-\alpha)^{-1}p,\infty}\right) \quad \textup{and} \quad 
 \tilde{B}(s) \in \Hol\left( \Re s>c; \sL_{(c\alpha-\alpha)^{-1}p,\infty}\right).  
\end{equation}
 
 If $r_1^{-1}+r_2^{-1}=r^{-1}$, then H\"older's inequality  implies we have a continuous bilinear map from $\sL_{r_1,\infty}\times \sL_{r_2,\infty}$ to $\sL_{r,\infty}$. Thus, if $Z_1\in \sL_{r_1,\infty}$ and $Z_2(s)$, $\Re s>c$, is a holomorphic family in $\sL_{r_2,\infty}$, then $Z_1Z_2(s)$ is a holomorphic family in $\sL_{r,\infty}$. Using this remark and the fact that $X^{\beta/\alpha}\in \sL_{\beta^{-1}p,\infty}$ and $Y\in \sL_{\alpha^{-1}p,\infty}$ together with~(\ref{eq:CSZ.tAs-tBs-Hol}) shows that
 \begin{equation}\label{eq:CSZ.XAsY-Hol}
X^{\beta/\alpha} \tilde{A}(s)Y \in \Hol\left( \Re s>c; \sL_{(c\alpha+\beta)^{-1}p,\infty}\right). 
\end{equation}
 Similarly, as $X^{1+(1/\alpha)}\in \sL_{(\alpha+1)^{-1}p,\infty}$ we see that
 \begin{equation}
 X^{1+(1/\alpha)}\tilde{B}(s) \in \Hol\left( \Re s>c; \sL_{(c\alpha+1)^{-1}p,\infty}\right). 
\end{equation}
 Likewise, $R(s)=[D,a^{\frac12}] FX^{1+(1/\alpha)}a^{\frac12}Y^{s-1}$, $\Re s>c$, is a holomorphic family in $\sL_{(c\alpha+1)^{-1},p}$, and hence 
 \begin{equation}\label{eq:CSZ.PiRs-Hol}
 \Pi_0R(s)\in \Hol\left( \Re s>c; \sL_{(c\alpha+1)^{-1}p,\infty}\right).  
\end{equation}
 Combining~(\ref{eq:CSZ.Ts-As-Bs-Rs}) with (\ref{eq:CSZ.XAsY-Hol})--(\ref{eq:CSZ.PiRs-Hol}), and recalling that $\beta=1-\epsilon$, we then obtain
\begin{equation*}
 T(s) \in \Hol\left( \Re s>c; \sL_{(c\alpha+1-\epsilon)^{-1}p}\right) \qquad \forall \epsilon\in (0,1). 
\end{equation*}

Let $q>0$, and set $c=\max(1,\alpha^{-1}(q^{-1}p-1))$. The inequality $\alpha^{-1}(q^{-1}p-1)\leq c$ ensures that $(c\alpha+1)^{-1}p\leq q$. Let $\epsilon \in (0,1)$. Substituting $c+2\epsilon\alpha^{-1}$ for $c$ shows that  
$T(s)$,  $\Re s>c+2\epsilon\alpha^{-1}$, is a holomorphic family in $\sL_{(c\alpha+1+\epsilon)^{-1}p,\infty}$. As $(c\alpha+1+\epsilon)^{-1}p< (c\alpha+1)^{-1}p\leq q$, we have a continuous inclusion of $\sL_{(c\alpha+1+\epsilon)^{-1}p,\infty}$ into 
$\sL_{q}$. This allows us to regard $T(s)$,  $\Re s>c+2\epsilon\alpha^{-1}$, as  a holomorphic family in 
$ \sL_{q}$. As this is true for all $\epsilon\in (0,1)$, we deduce that  
\begin{equation*}
 T(s) \in \Hol\left( \Re s>c; \sL_q\right). 
\end{equation*}
This proves Lemma~\ref{lem:CSZ.hol-refined-CSZ}.
\end{proof}

 \subsection{Proof of Lemma~\ref{lem:Intro.extension}}
Let $a\in \sA_{++}$. It follows from Lemma~\ref{lem:Tauberian.powers} that $(a^{\frac{1}{2}}|D|^{-\alpha}a^{\frac{1}{2}})^{s}$ and $|D|^{-\alpha s}a^{s}$ are holomorphic families of trace-class operators for $\Re s>\alpha^{-1}p$. Therefore, the zeta functions $\Tr[(a^{1/2}|D|^{-\alpha}a^{1/2})^{s}]$ and $\Tr[a^{s}|D|^{-\alpha s}]$ are holomorphic functions on the half-plane $\Re s> \alpha^{-1}p$. 

In the special case $q=1$ Lemma~\ref{lem:Intro.refined-CSZ} shows that, if we set $c=\max(1,\alpha^{-1}(p-1))$, then we have
\begin{equation}
  \left( a^{\frac12}|D|^{-\alpha} a^{\frac12}\right)^{s} - |D|^{-\alpha s} a^{s} \in \Hol\left( \Re s>c; \sL_1\right). 
\end{equation}

Assume now that $\delta:=\min(1,p-\alpha)>0$. We then have
\begin{equation*}
 c=\alpha^{-1}p + \max\left(1-\alpha^{-1}p,-\alpha^{-1}\right)= \alpha^{-1}p - \alpha^{-1}\min\left(p-\alpha,1\right)= \alpha^{-1}(p-\delta). 
\end{equation*}
Thus, 
\begin{equation}
  \left( a^{\frac12}|D|^{-\alpha} a^{\frac12}\right)^{s} - |D|^{-\alpha s} a^{s} \in \Hol\left( \Re s>\alpha^{-1}(p-\delta); \sL_1\right).  
\end{equation}
 It then follows that the function $s\rightarrow \Tr[(a^{1/2}|D|^{-\alpha}a^{1/2})^{s}]-\Tr[a^{s}|D|^{-\alpha s}]$ is holomorphic on the half-plane $\Re s>\alpha^{-1}(p-\delta)$. This completes the proof of Lemma~\ref{lem:Intro.extension}.\hfill $\Box$

\section{Closed Riemannian Manifolds}\label{chap:CM}
The goal of this section is two-fold. In Section~\ref{sec:CM.SCWeyl-Int}, we explain how the main results of this monograph enable us to recover various known results regarding semiclassical Weyl laws and integration formulas on closed Riemannian manifolds. We work in a general setup with spectral triples associated with algebras of sections of endomorphism bundles and 1st order \psidos\ whose  squares are ``generalized Laplacians". In particular, in the example of the Dirichlet-to-Neumann operator this allows us to recover Weyl laws for Steklov eigenvalue problems (see Section~\ref{sec:CM.Steklov}).

These results are not new. As in the previous section, the main aim is to show that these well-known results can be recovered by combining the main results of this monograph with old results of Minakshisundaram-Pleijel~\cite{MP:CJM49}, which predate the aforementioned results.

Throughout this section, we let $(M^n,g)$, $n\geq 1$, be a closed Riemannian manifold, and let $E$ be a Hermitian vector bundle over $M$. 

\subsection{Semiclassical Weyl laws and integration formulas}\label{sec:CM.SCWeyl-Int}
Let $D_E:C^\infty(M,E)\rightarrow C^\infty(M,E)$ be a selfadjoint first order elliptic  \psido\ such that $(D_E)^2$ has the same principal symbol as the Laplacian, i.e., 
\begin{equation}
 \sigma_2\left((D_E)^2\right)(x,\xi)= |\xi|_g^2\op{id}_{E_x}, \qquad (x,\xi)\in T^*M\setminus 0. 
\end{equation}
We then get an $n$-summable spectral triple, 
\begin{equation}
 \left( C^\infty\left(M,\End(E)\right), L^2(M,E), D_E\right). 
\end{equation}
It is more usual to use the algebra $C^\infty(M)$ rather than $C^\infty(M,\End(E))$. However, using the latter allows us to consider Schr\"odinger operators with matrix-valued potentials (see below). 

This set-up encapsulates the following examples:
\begin{enumerate}
 \item[(i)] $M$ is spin, $E=\shS$ is the spinor bundle, and $D_E=\shD_g$ is the Dirac operator. 
 
 \item[(ii)] $E$ is the trivial line bundle, and $D_E=\sqrt{\Delta_g}$ is the square-root Laplacian. 
 
 \item[(iii)] $\nabla^E$ is a Hermitian connection on $E$ and $D_E=\sqrt{(\nabla^E)^*\nabla^E}$ is the square root of the connection Laplacian $(\nabla^E)^*\nabla^E$. 
 
 \item[(iv)] $M=\partial X$ is the boundary of a compact manifold $\overline{X}$, $E$ is the trivial line bundle, and $D_E=\Lambda_g$ is the Dirichlet-to-Neumann operator (see~\cite{LU:CPAM89, Ta:Book2}). 
\end{enumerate}
 
At least in the case of the square-root Laplacian spectral triple $(C^\infty(M), L^2(M), \sqrt{\Delta_g})$, the fact that Condition~\textup{(Z)} holds can be traced back to well-known results of Minakshisundaram-Pleijel~\cite{MP:CJM49} from the 40s (see also~\cite{Se:PSPM67, Sh:Springer01}). The results also hold for the square root $\sqrt{\Delta_E}$ of the connection Laplacian $\Delta_E:=(\nabla^E)^*\nabla^E$ associated with any Hermitian connection $\nabla^E$ on $E$. More precisely, let $(\xi_j)\subseteq C^\infty(M,E)$ be an orthonormal family such that $\Delta_E \xi_j= \lambda_j(\Delta_E)\xi_j$ for all $j\geq 0$. For $\Re s>n$, the series 
\begin{equation*}
 \rho_s(x)=\sum_{j\geq 0} \lambda_j\! \left(\Delta_E\right)^{-\frac{s}{2}} \scal{\ \cdot \ }{\xi_j(x)}_{\!x}
  \xi_j(x) \in \End(E_x), \qquad x\in M,
\end{equation*}
converges uniformly on $M$ and has a meromorphic extension to $\C$ (as an $C(M,\End(E))$-valued family) with at worst simple pole singularities on $-n+\N_0$. 
Here $\scal{ \, \cdot  \,}{ \,\cdot \,}_{x}$ is the Hermitian inner product on $E_x$. At $s=n$ the residue is given by 
\begin{equation*}
 \Res_{s=n}  \rho_s(x) = nc(n)\id_{E_x}, \qquad c(n):= (2\pi)^{-n}|\bB^{n}|. 
\end{equation*}

Denote by $\nu_g(x)$ the Riemannian measure. Given any $u\in C(M,\End(E))$, we have
\begin{equation*}
 \Tr\big[u\Delta_E^{-\frac{s}{2}}\big] = \sum_{j\geq 0} \lambda_j\left(\Delta_E\right)^{-\frac{s}{2}} \scal{u\xi_j}{\xi_j} = \int_M \tr_{E_x}\left[u(x)\rho_s(x)\right]d\nu_g(x). 
\end{equation*}
Therefore, we see that the function
\begin{equation*}
 \Tr\big[u\Delta_E^{-\frac{s}{2}}\big] - nc(n)(s-n)^{-1}\int_M  \tr_{E_x}\left[u(x)\right] d\nu_g(x), \quad \Re s>n,
\end{equation*}
has a continuous extension to the closed half-plane $\Re s\geq n$. This shows that the spectral triple $(C^\infty(M), L^2(M,E), \sqrt{\Delta_E})$ satisfies Condition~\textup{(Z)}, and hence Condition~\textup{(W)} holds, with 
\begin{equation*}
 \tau(u)= c(n)\int_M \tr_{E_x}\left[u(x)\right]d\nu_g(x), \qquad u\in C\left(M,\End(E)\right). 
\end{equation*}

This also gives the result for the spectral triple  $(C^\infty(M), L^2(M,E), D_E)$. Indeed, as $D_E$ and $\sqrt{\Delta_E}$ are 1st~order \psidos\ that have the same principal  symbol, their difference is a zeroth order \psido, and hence is bounded. As Condition~(W) is invariant under bounded perturbations (\emph{cf}.\ Lemma~\ref{lem:ST-bdd-perturbation}), we deduce that $(C^\infty(M), L^2(M,E), D_E)$ too satisfies Condition~(W) with the same functional $\tau$.  

Alternatively, Condition~(W) can be checked directly by using the Weyl law for positive elliptic \psidos, since $u(D_E)^2u$ is a positive elliptic 2nd order \psido\ with principal symbol $\sigma_2(x,\xi)=|\xi|_g^2u(x)^2$ (see, e.g., \cite{DG:IM75, Gu:AIM85}).

As with bounded domains, Condition~($\textup{C}_r$) is a consequence of several Cwikel-type estimates. More precisely,  it holds as follows:
\begin{enumerate}
 \item[(i)] For $r>1$ with $\sV_r=L^r(M,E)$ (see~\cite{Cw:AM77}). 
 
 \item[(ii)] For $r<1$ with $\sV_r=L^1(M,E)$ (see~\cite{BS:UMN77}). 
 
 \item[(iii)] For $r=1$ with $\sV_1=\LlogL(M,E)$ (see~\cite{So:IJM94, So:PLMS95, SZ:MS22}; see also~\cite{Po:MPAG22, Ro:JST22, RS:EMS21}). 
 \end{enumerate}
Here  $\LlogL(M,E)$ is the space of $\LlogL$-Orlicz sections of $E$, i.e., measurable sections $u(x)$ such that 
 \begin{equation*}
 \int_M \|u(x)\|_{E_x}\log\left( 1+ \|u(x)\|_{E_x}\right)d\nu_g(x) <\infty. 
\end{equation*}

Applying Proposition~\ref{prop:Intro.Weyl-BirS} yields the following spectral asymptotics. 

\begin{proposition}
 Assume that $0<q\neq n$ and $u(x)^*\in L^{r}(M,E)$ with $r= \max(nq^{-1},1)$, or $q=n$ and $u(x)\in \LlogL(M,E)$. Then, we have
 \begin{gather}\label{eq:Riem.Weyl-mu}
\lim_{j\rightarrow \infty}j^{\frac{q}{n}} \mu_j\big(|D_E|^{-\frac{q}2}u|D_E|^{-\frac{q}2}\big) 
= \left( c(n)\int_M \tr_{E_x}\big[|u(x)|^{\frac{n}{q}}\big]d\nu_g(x) \right)^{\frac{q}{n}},\\
 \lim_{j\rightarrow \infty} j^{\frac{q}{n}} \lambda_j^\pm\big(|D_E|^{-\frac{q}2}u|D_E|^{-\frac{q}2}\big) 
 =\left( c(n)\int_M \tr_{E_x}\big[u_{\pm}(x)^{\frac{n}{q}}\big]d\nu_g(x) \right)^{\frac{q}{n}} \quad (\text{if $u(x)^*=u(x)$}).
 \label{eq:Riem.Weyl-lambdapm} 
\end{gather}
\end{proposition}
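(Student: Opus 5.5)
This is an application of Proposition~\ref{prop:Intro.Lq-asymptotics} to the spectral triple $(C^\infty(M,\End(E)),L^2(M,E),D_E)$, with $p=n$ and $r=nq^{-1}$. The preceding discussion already supplies the two inputs that proposition requires.

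\textbf{Step 1 (Condition (W)).} By Minakshisundaram--Pleijel, the square-root connection Laplacian triple satisfies Condition~(Z) with $\sB=C^\infty(M,\End(E))$. This algebra is a holomorphically closed Fr\'echet subalgebra of $C(M,\End(E))$, since inverses and holomorphic functional calculus preserve smoothness. Here
\begin{equation*}
 \tau(u)=c(n)\int_M\tr_{E_x}[u(x)]\,d\nu_g(x).
\end{equation*}
Theorem~\ref{thm:Intro.Tauberian} then gives Condition~(W). Since $D_E-\sqrt{\Delta_E}$ is a zeroth order \psido, and hence bounded, Lemma~\ref{lem:ST-bdd-perturbation} transfers Condition~(W), with the same $\tau$, to $D_E$.

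\textbf{Step 2 (Condition ($\textup{C}_r$)).} Take $\sM=L^\infty(M,\End(E))$. The functional $\tau$ extends to a faithful positive finite trace on $\sM$, and $L_r(\sM)$ is the usual space of $L^r$ sections with norm $\bigl(\int\tr|u|^r\bigr)^{1/r}$. The norm $\|\cdot\|_{(r)}$ is chosen according to the range of $r$:
\begin{enumerate}
\item[(a)] For $r>1$, i.e.\ $q<n$, take the $L^r$ norm and use Cwikel's estimate for $(1+D_E^2)^{-q/4}u(1+D_E^2)^{-q/4}$.
\item[(b)] For $r<1$, take the $L^1$ norm and use Birman--Solomyak; then $\hat r=1$, which matches the hypothesis $u\in L^{\max(nq^{-1},1)}$.
\item[(c)] For $r=1$, i.e.\ $q=n$, take the $\LlogL$ Orlicz norm and use Solomyak's estimate. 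The inclusion $\LlogL\subseteq L^1$ gives part (i) of the condition.
\end{enumerate}
Two points need care here. First, the cited estimates are stated for $\Delta_g$ or for scalar potentials, whereas we need them for $D_E$ and $\End(E)$-valued potentials. Second, $|D_E|^{-q/2}$ must be compared with $(1+D_E^2)^{-q/4}$. For the second point, write $|D_E|^{-q/2}=(1+D_E^2)^{-q/4}B$ with $B$ bounded, using $\dim\ker D_E<\infty$; the weak Schatten quasi-norm then changes only by bounded factors. For the first point, reduce to scalar potentials by local trivializations and a partition of unity, dominating $|u|$ by $\|u(x)\|_{E_x}$. Compare $(1+D_E^2)^{-q/4}$ with $(1+\Delta_g)^{-q/4}\otimes\id$ modulo zeroth order \psidos\ of lower order.

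\textbf{Step 3 (conclusion).} For the completions, $\sV_r$ is $L^r$ or $L^1$, since smooth sections are dense, and for $r=1$ it is the closure of continuous sections in $\LlogL$, which contains the class considered. Proposition~\ref{prop:Intro.Lq-asymptotics} then yields~(\ref{eq:Riem.Weyl-mu})--(\ref{eq:Riem.Weyl-lambdapm}). On the right-hand side, $\tau[|u|^{n/q}]$ is the integral written there, once the extension of $\tau$ to $L_r(\sM)$ is identified with integration of the fibrewise trace.

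\textbf{Main obstacle.} I expect the hardest step to be the $r=1$ case of Step~2: obtaining the $\LlogL$ Cwikel estimate for the operator $D_E$ and for matrix-valued weights, since the literature states it only in the scalar, Laplacian setting. The localization argument sketched in Step~2 is what I would try first.
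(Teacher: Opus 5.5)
Your plan follows the paper's route. It obtains Condition~(W) for $\sqrt{\Delta_E}$ from Minakshisundaram--Pleijel via Condition~(Z) and Theorem~\ref{thm:Intro.Tauberian}, and transfers it to $D_E$ by Lemma~\ref{lem:ST-bdd-perturbation}. It then gets Condition~($\textup{C}_r$) from the cited Cwikel-type estimates and applies Proposition~\ref{prop:Intro.Lq-asymptotics}. That is the right result to invoke, rather than Proposition~\ref{prop:Intro.Weyl-BirS}, since $u$ need not be continuous.

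However, the transfer step in your Step~1, which you take over from the discussion preceding the proposition, is false in general. The hypothesis fixes $\sigma_2(D_E^2)=|\xi|_g^2\id_{E_x}$, not $\sigma_1(D_E)=|\xi|_g\id_{E_x}$. For the Dirac operator, $\sigma_1(\shD)(x,\xi)$ is given by Clifford multiplication and has eigenvalues $\pm|\xi|_g$, so $\shD-\sqrt{\Delta_{\shS}}$ is a genuinely first-order operator. Already on the circle, $-i\,d/dx-\sqrt{-d^2/dx^2}$ acts on $e^{ikx}$ by $k-|k|$, which is unbounded. So Lemma~\ref{lem:ST-bdd-perturbation}, which requires $A\in\sL(\sH)$, does not apply.

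You also inherit a related problem. With the matrix algebra, $(C^\infty(M,\End(\shS)),L^2(M,\shS),\shD)$ is not a spectral triple. Indeed $[\shD,a]$ has principal symbol $[\sigma_1(\shD)(x,\xi),a(x)]$, which is nonzero whenever $a(x)$ fails to commute with Clifford multiplication. Proposition~\ref{prop:Intro.Lq-asymptotics} rests on Lemmas~\ref{lem:Intro.Com-Dq} and~\ref{lem:Intro.refined-CSZ}, which use boundedness of $[D,a]$ for $a\in\sA$, so it cannot be applied to that triple.

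The fix is to run the whole argument with $|D_E|$ in place of $D_E$. This is harmless, because the statement, Condition~(W) and Condition~($\textup{C}_r$) involve only $|D_E|$ and $D_E^2$.
\begin{enumerate}
\item[(i)] By Seeley's theorem, $|D_E|=(D_E^2)^{1/2}$ is a classical first-order \psido\ with scalar principal symbol $|\xi|_g\id_{E_x}$.
\item[(ii)] Hence, for every $a\in C^\infty(M,\End(E))$, the commutator $[|D_E|,a]$ has zero principal symbol and is bounded. So $(C^\infty(M,\End(E)),L^2(M,E),|D_E|)$ is an $n$-summable spectral triple.
\item[(iii)] Moreover $|D_E|-\sqrt{\Delta_E}$ is a selfadjoint zeroth-order \psido, hence bounded. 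Lemma~\ref{lem:ST-bdd-perturbation} then legitimately transfers Condition~(W) with the same $\tau$.
\item[(iv)] Proposition~\ref{prop:Intro.Lq-asymptotics} applied to this triple yields both displayed asymptotics.
\end{enumerate}

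In Step~2, compare multiplicatively rather than modulo lower order. Write $(1+D_E^2)^{-q/4}=B(1+\Delta_E)^{-q/4}$ with $B:=(1+D_E^2)^{-q/4}(1+\Delta_E)^{q/4}$, a bounded zeroth-order \psido. Then $(1+D_E^2)^{-q/4}u(1+D_E^2)^{-q/4}=B\,[(1+\Delta_E)^{-q/4}u(1+\Delta_E)^{-q/4}]\,B^*$, and no remainder terms need estimating. For $r=1$, note that Solomyak's original $\LlogL$ estimate is only for even $n$, as the introduction remarks. This is why the paper also invokes \cite{SZ:MS22, Ro:JST22}, and \cite{Po:MPAG22} for the bundle setting.
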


\begin{remark}
 The above spectral asymptotics are not new in the sense that, for smooth sections $u(x)$ they are merely special cases of the Weyl laws for negative order \psidos\ of Birman-Solomyak~\cite{BS:VLU77, BS:VLU79, BS:SMJ79}). For $q\neq n$, their extensions to sections in $L^r(M,E)$ then follow from Birman-Solomyak's perturbation theory (i.e., Proposition~\ref{prop:NCInt.Bir-Sol}) and the Cwikel-type estimates of~\cite{BS:UMN77,Cw:AM77} mentioned above. For $q=n$ the extensions of the spectral asymptotics to sections in $\LlogL(M,E)$ are a consequence of the Cwikel estimates of~\cite{So:IJM94, So:PLMS95, SZ:MS22}. In the scalar case the spectral asymptotics~(\ref{eq:Riem.Weyl-lambdapm}) for $q=n$ are proved in~\cite{Ro:JST22, SZ:MS22}. Both sets of spectral asymptotics~(\ref{eq:Riem.Weyl-mu})--(\ref{eq:Riem.Weyl-lambdapm}) for $q=n$ are extended to $\LlogL(M,E)$ in the bundle setting in~\cite{Po:MPAG22}. 
\end{remark}

Applying Theorem~\ref{thm:Intro.SC-Weyl-lawDq} enables us to recover the following semiclassical Weyl laws. 

\begin{proposition}[\cite{BS:FAA70, Ro:SMD72, Ro:SM76,Ro:JST22,So:PLMS95}] 
 Assume that $q\neq n/2$ and $V(x)=V(x)^*\in L^{r}(M,E)$ with $r= \max(n(2q)^{-1},1)$, or $q=n/2$ and $V(x)=V(x)^*\in \LlogL(M,E)$. Then, for any energy level 
 $ \lambda\in \R$, we have
 \begin{equation}\label{eq:CM.SC-Weyl-law}
 \lim_{h\rightarrow 0^+} h^n N\left(h^{2q}|D_E|^{2q}+V; \lambda\right) = c(n) \int_M \tr_{E_x}\big[\left(V(x)-\lambda\right)_{-}^{\frac{n}{2q}}\big]d\nu_g(x). 
\end{equation}
\end{proposition}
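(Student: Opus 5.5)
The statement will follow by specializing Theorem~\ref{thm:Intro.SC-Weyl-law-sVr} to the spectral triple $(C^\infty(M,\End(E)), L^2(M,E), D_E)$, which is $n$-summable, so $p=n$. Two hypotheses have to be checked, Condition~(W) and Condition~($\textup{C}_r$) with $r=n(2q)^{-1}$. After that the conclusion~(\ref{eq:Intro.SC-Weyl-Dq-sVr1}) is exactly~(\ref{eq:CM.SC-Weyl-law}) once $\tau$ is identified.

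\emph{Condition~(W).} This is already established in the discussion above. The Minakshisundaram--Pleijel meromorphic continuation of the local zeta density $\rho_s(x)$, with residue $nc(n)\id_{E_x}$ at $s=n$, gives Condition~(Z) for $\sqrt{\Delta_E}$ with $\sB=C(M,\End(E))=\overline{\sA}$. This $\sB$ is trivially a holomorphically closed Fr\'echet subalgebra. Theorem~\ref{thm:Intro.Tauberian} then yields Condition~(W) with
\begin{equation*}
\tau(u)=c(n)\int_M\tr_{E_x}[u(x)]\,d\nu_g(x).
\end{equation*}
Lemma~\ref{lem:ST-bdd-perturbation} transfers Condition~(W) to $D_E$, because $D_E-\sqrt{\Delta_E}$ is a zeroth order \psido\ and hence bounded. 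This $\tau$ is the restriction of a faithful finite trace on $\sM=L^\infty(M,\End(E))$. The associated NC $L^r$-space is $L^r(M,\End(E))$ with norm $\big(\int_M\tr_{E_x}|u(x)|^r\,d\nu_g\big)^{1/r}$, up to the constant $c(n)$.

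\emph{Condition~($\textup{C}_r$).} Set $r=n(2q)^{-1}$. If $q<n/2$, then $r>1$, and I take $\|\cdot\|_{(r)}=\|\cdot\|_{L^r}$. The estimate~(\ref{eq:ST.Cwikel}) is then a Cwikel estimate for $(1+D_E^2)^{-n/4r}u(1+D_E^2)^{-n/4r}$, transplanted to $M$ by a partition of unity and local coordinates. The matrix-valued case reduces to the scalar case by working entrywise in local trivializations. If $q>n/2$, then $r<1$, and I take the $L^1$-norm with the Birman--Solomyak estimates. If $q=n/2$, then $r=1$, and I take the $\LlogL$ Orlicz norm with the Solomyak estimates. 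Item~(i) of the condition is clear in each case: $L^r\hookrightarrow L^{\hat r}$ for $r>1$, and $L^1$ and $\LlogL$ embed continuously in $L^1$. Finally, $(1+D_E^2)^{-s}$ differs from $(1+\Delta_E)^{-s}$ by a \psido\ of order $-2s-1$. By Proposition~\ref{prop:Hoelder} and the ideal property, this lets me reduce to the standard operator $(1+\Delta_E)^{-s}$ for which the cited estimates are stated.

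The main obstacle is this last verification of~(\ref{eq:ST.Cwikel}) in the bundle and \psido\ setting, especially in the critical $\LlogL$ case. My plan is simply to quote~\cite{So:PLMS95, SZ:MS22, Po:MPAG22}, which treat exactly this setting, rather than reprove it. The completion $\sV_r$ of $C(M,\End(E))$ is then the stated $L^r$ or $\LlogL$ space, so every $V\in L^r(M,E)$ (respectively $\LlogL(M,E)$) lies in $\sV_r$. Theorem~\ref{thm:Intro.SC-Weyl-law-sVr} now gives the limit $\tau[(V-\lambda)_-^{n/2q}]$, which equals the right-hand side of~(\ref{eq:CM.SC-Weyl-law}).
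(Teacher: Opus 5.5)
Your route is the paper's own route. You specialize Theorem~\ref{thm:Intro.SC-Weyl-law-sVr} with $p=n$ and $r=n(2q)^{-1}$, and obtain Condition~(W) from Minakshisundaram--Pleijel via Condition~(Z) with $\sB=C(M,\End(E))$ and Theorem~\ref{thm:Intro.Tauberian}. You then transfer it by Lemma~\ref{lem:ST-bdd-perturbation}, and quote the Cwikel, Birman--Solomyak and Solomyak-type estimates for Condition~($\textup{C}_r$) in the three regimes. The case split, the identification of $L_r(\sM)$, the density of $C(M,\End(E))$ in $L^r$ and $\LlogL$, and the final value $\tau[(V-\lambda)_-^{n/2q}]$ are all correct.

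A small simplification for the Cwikel step: write $(1+D_E^2)^{-s}=B(1+\Delta_E)^{-s}=(1+\Delta_E)^{-s}B^*$ with $B=(1+D_E^2)^{-s}(1+\Delta_E)^{s}$ a bounded order-zero \psido. Then the ideal property alone does the reduction, including for $r\le 1$, where splitting off a lower-order difference would leave mixed cross terms.

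\textbf{The gap.} One step fails as written; it comes from the paper's own discussion. The hypothesis $\sigma_2(D_E^2)=|\xi|_g^2\id$ only forces $\sigma_1(D_E)$ to be $|\xi|_g$ times a Hermitian involution, not $|\xi|_g\id$.
\begin{itemize}
\item For the Dirac operator, $\sigma_1(\shD)(x,\xi)=ic(\xi)$. Hence $\shD-\sqrt{\Delta_{\shS}}$ is a genuinely first-order, unbounded operator, and Lemma~\ref{lem:ST-bdd-perturbation} does not apply.
\item For $n\geq 2$, the commutator $[\shD,a]$ has principal symbol $i[c(\xi),a(x)]\neq 0$ for general $a\in C^\infty(M,\End(\shS))$. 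So $(C^\infty(M,\End(E)),L^2(M,E),D_E)$ is not even a spectral triple, and Theorem~\ref{thm:Intro.SC-Weyl-law-sVr} cannot be invoked for it.
\end{itemize}

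\textbf{The repair.} It is cheap, because the statement, Condition~(W) and Condition~($\textup{C}_r$) involve only $D_E^2$. Run the whole argument with $D:=|D_E|=(D_E^2+\Pi_0)^{1/2}-\Pi_0$, where $\Pi_0$ is the projection onto $\ker D_E$. By Seeley's theorem this is a classical first-order \psido\ with principal symbol $|\xi|_g\id_E$. Consequently:
\begin{itemize}
\item $[|D_E|,a]$ has order $0$ and is bounded for every $a\in C^\infty(M,\End(E))$, so this is a genuine spectral triple;
\item $|D_E|-\sqrt{\Delta_E}$ is a bounded selfadjoint zeroth-order \psido, so Lemma~\ref{lem:ST-bdd-perturbation} now legitimately transfers Condition~(W), with the same $\tau$;
\item Condition~($\textup{C}_r$) is unchanged, since $D^2=D_E^2$;
\item $|D|^{2q}=|D_E|^{2q}$, so the operators whose counting functions are computed are exactly those in the statement.
\end{itemize}
With this substitution your argument proves the proposition.
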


\begin{remark}
For $q\neq n/2$, the semiclassical Weyl law~(\ref{eq:CM.SC-Weyl-law}) goes back to~\cite{BS:FAA70, Ro:SMD72, Ro:SM76} (see also~\cite{Cw:AM77, Si:TAMS76}). In those references the authors consider  operators on Euclidean spaces, but the results extend to closed manifolds. In the critical case $q=n/2$ this is a more recent result~\cite{Ro:JST22, So:PLMS95} (see also~\cite{Po:MPAG22, RS:EMS21}).   
\end{remark}

\begin{remark}
 For $q=1$, with $D_E=\sqrt{\Delta_g}$ and smooth scalar potentials $V(x)$, it seems that the earliest available proof of the semiclassical Weyl law~(\ref{eq:CM.SC-Weyl-law}) on closed manifolds is due to Colin de Verdi\`ere~\cite[Th\'eor\`eme~4.4]{CdV:DM79} (see also, e.g.,~\cite[Section~10]{GS:IP13} and \cite[Theorem~14.11]{Zw:AMS12} for more recent proofs). Note that thanks to the Birman-Schwinger principle this asymptotic can also be deduced from the Weyl laws for negative order \psidos\ on closed manifolds of Birman-Solomyak~\cite{BS:VLU77, BS:VLU79, BS:SMJ79}. However, the approach in~\cite{CdV:DM79} actually provides a more precise asymptotic with a lower order remainder term. 
 \end{remark}

Finally,  by applying Theorem~\ref{thm:Intro.Integration} we get the following extension of Connes' integration formula~(\ref{eq:NCInt.integ-formula}).  

\begin{proposition}[\cite{Po:MPAG22, Ro:JST22}; see also~\cite{SZ:FAA23}] 
For every section $u\in  \LlogL(M,E)$, the operators $|D_E|^{-n/2}u|D_E|^{-n/2}$ and  
$||D_E|^{-n/2}u|D_E|^{-n/2}|$ are spectrally measurable, and we have
\begin{gather}\label{eq:Riem.int-formula1}
\bint |D_E|^{-\frac{n}{2}}u|D_E|^{-\frac{n}{2}}= c(n)\int_M \tr_{E_x}[u(x)] d\nu_g(x),\\
\bint\left| |D_E|^{-\frac{n}{2}}u|D_E|^{-\frac{n}{2}}\right|= c(n)\int_M \tr_{E_x}\left[|u(x)|\right] d\nu_g(x). 
\label{eq:Riem.int-formula2}
\end{gather}
\end{proposition}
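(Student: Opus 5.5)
This proposition is the integration-formula counterpart of Theorem~\ref{thm:Intro.Integration-sVr}, specialized to the spectral triple $(C^\infty(M,\End(E)),L^2(M,E),D_E)$ with $p=n$. The plan is to check its two hypotheses, Condition~(W) and Condition~($\textup{C}_1$), and then read off the formulas once $\tau$ and $\sV_1$ have been identified.

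\textbf{Step 1: Condition~(W).} This was shown above. Minakshisundaram--Pleijel gives Condition~(Z) for $\sqrt{\Delta_E}$, with $\sB=C^\infty(M,\End(E))$ (holomorphically closed and Fr\'echet) and
\[
\tau(u)=c(n)\int_M \tr_{E_x}[u(x)]\,d\nu_g(x).
\]
Theorem~\ref{thm:Intro.Tauberian} then yields Condition~(W). Since $D_E-\sqrt{\Delta_E}$ is a zeroth order \psido, hence bounded, Lemma~\ref{lem:ST-bdd-perturbation} transfers Condition~(W) to $D_E$ with the same $\tau$. This $\tau$ extends to a positive faithful finite trace on $\sM=L^\infty(M,\End(E))$, and then $L_1(\sM)=L^1(M,\End(E))$ with $\tau[|u|]=c(n)\int_M\tr_{E_x}|u(x)|\,d\nu_g(x)$.

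\textbf{Step 2: Condition~($\textup{C}_1$).} Take $\|\cdot\|_{(1)}$ to be the $\LlogL$ (Orlicz/Luxemburg) norm. This norm is $*$-invariant, is continuous on $C(M,\End(E))$ because it is dominated by the sup norm, and dominates the $L^1$ norm, so item~(i) holds. Item~(ii) is the critical Cwikel-type estimate
\[
\big\|(1+D_E^2)^{-n/4}u(1+D_E^2)^{-n/4}\big\|_{1,\infty}\leq C\|u\|_{\LlogL},
\]
which I would quote from \cite{So:IJM94, So:PLMS95, SZ:MS22} in its bundle version from \cite{Po:MPAG22}. Two reductions are needed to apply it. First, replace $(1+D_E^2)^{-n/4}$ by $(1+\Delta_E)^{-n/4}$: their ratio is a zeroth order \psido, hence bounded, and boundedness preserves the $\sL_{1,\infty}$ estimate. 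Second, matrix-valued potentials reduce to the scalar case in local trivializations via a partition of unity.

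\textbf{Step 3: completion and conclusion.} One has to check that $\sV_1$, the completion of $C(M,\End(E))$ for the $\LlogL$ norm, is all of $\LlogL(M,E)$ with the correct identification inside $L^1$. This needs density of continuous sections in the Orlicz space, which holds because $t\log(1+t)$ satisfies the $\Delta_2$ condition. Theorem~\ref{thm:Intro.Integration-sVr} then gives spectral measurability of both operators together with
\[
\bint |D_E|^{-n/2}u|D_E|^{-n/2}=\tau(u),\qquad \bint\big||D_E|^{-n/2}u|D_E|^{-n/2}\big|=\tau[|u|],
\]
which are exactly (\ref{eq:Riem.int-formula1})--(\ref{eq:Riem.int-formula2}).

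The main obstacle is Step~2(ii), the critical $\LlogL$ Cwikel estimate on a closed manifold in the bundle setting. I would cite it rather than reprove it, and the only genuine checks are the \psido\ comparison with $(1+\Delta_E)^{-n/4}$ and the localization argument.
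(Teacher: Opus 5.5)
Your outline is the paper's own argument. Condition~(W) comes from Minakshisundaram--Pleijel via Condition~(Z), Theorem~\ref{thm:Intro.Tauberian} and Lemma~\ref{lem:ST-bdd-perturbation}. Condition~($\textup{C}_1$) comes from the critical $\LlogL$ Cwikel estimates. The conclusion then follows from the $\sV_1$-version, Theorem~\ref{thm:Intro.Integration-sVr}. You are right that this theorem, rather than Theorem~\ref{thm:Intro.Integration}, is the one needed for $\LlogL$ sections. Your $\Delta_2$ argument identifying $\sV_1$ with $\LlogL(M,E)$ also makes explicit something the paper leaves implicit.

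One step fails as written, however, and the paper's text contains the same slip.

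\textbf{The failing step.} The hypothesis $\sigma_2(D_E^2)(x,\xi)=|\xi|_g^2\,\id_{E_x}$ does not force $\sigma_1(D_E)(x,\xi)=|\xi|_g\,\id_{E_x}$. For the Dirac operator of example~(i), $\sigma_1(\shD)(x,\xi)$ is (up to a factor $i$) Clifford multiplication by $\xi$. So $D_E-\sqrt{\Delta_E}$ is a first-order operator, not a bounded one, and Lemma~\ref{lem:ST-bdd-perturbation} does not apply. Worse, $[D_E,a]$ then has principal symbol equal to the commutator of Clifford multiplication with $a(x)$. This is nonzero for general $a\in C^\infty(M,\End(E))$. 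Hence $(C^\infty(M,\End(E)),L^2(M,E),D_E)$ is not a spectral triple, and Theorem~\ref{thm:Intro.Integration-sVr} cannot be applied to it.

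\textbf{The repair.} Replace $D_E$ by $|D_E|$ throughout.
\begin{itemize}
\item Because $D_E^2$ and $\Delta_E$ share their principal symbol, $|D_E|=(D_E^2+\Pi_0)^{1/2}-\Pi_0$ is a first-order \psido\ with principal symbol $|\xi|_g\,\id_E$. Here $\Pi_0$ is the projection onto $\ker D_E$.
\item Hence $[|D_E|,a]$ is bounded for every $a\in C^\infty(M,\End(E))$.
\item Also $|D_E|-\sqrt{\Delta_E}$ is a bounded selfadjoint zeroth-order \psido. So Lemma~\ref{lem:ST-bdd-perturbation} gives Condition~(W) for $|D_E|$ with the same $\tau$.
\item Condition~($\textup{C}_1$) and the operators in the statement depend on $D_E$ only through $|D_E|$. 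You already use this observation in Step~2, when you compare $(1+D_E^2)^{-n/4}$ with $(1+\Delta_E)^{-n/4}$.
\end{itemize}
Applying Theorem~\ref{thm:Intro.Integration-sVr} to the spectral triple $(C^\infty(M,\End(E)),L^2(M,E),|D_E|)$ then gives the proposition verbatim. For examples (ii)--(iv), where $\sigma_1(D_E)=|\xi|_g\,\id_E$, your Step~1 is correct as it stands.
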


\begin{remark}
The integration formula~(\ref{eq:Riem.int-formula1}) in the form stated above for $\LlogL$-functions was established by Rozenblum~\cite{Ro:JST22}. The integration formulas~(\ref{eq:Riem.int-formula1})--(\ref{eq:Riem.int-formula2}) in the bundle case appeared in~\cite{Po:MPAG22}.  In the scalar case with $D_E=(1+\Delta_{g})^{-1/2}$ the integration formula~(\ref{eq:Riem.int-formula1}) for $\LlogL$-functions is not stated explicitly in~\cite{SZ:FAA23}, but it follows as a consequence of the version of the spectral asymptotics~(\ref{eq:Riem.Weyl-lambdapm}) established there.  
\end{remark}

\begin{remark}
 The integration formula~(\ref{eq:Riem.int-formula1})  need not hold for sections in $L^1(M,E)$ (see~\cite{KLPS:AIM13}). 
\end{remark}

\begin{remark}
The  spectral asymptotics~(\ref{eq:Riem.Weyl-mu})--(\ref{eq:Riem.Weyl-lambdapm}) in the critical case $q=n/2$ and the integration formulas~(\ref{eq:Riem.int-formula1})--(\ref{eq:Riem.int-formula2}) for $\LlogL$-potentials are special cases of more general results for operators of the form $Q^*uP$, where $P$ and $Q$ are \psidos\ of order~$-n/2$ (see~\cite{Po:MPAG22, Ro:JST22, RS:EMS21}). These results actually hold for an even larger class of potentials of the form $u=v\mu$ where  $\mu$ is an Alfhors-regular measure supported on a  regular submanifold $\Sigma \subseteq M$ and 
$v$ is an $\LlogL$-section over $\Sigma$ (see~\cite{Ro:JST22, RS:EMS21}). 
\end{remark}

\subsection{Steklov eigenvalues}\label{sec:CM.Steklov} 
The example of the Dirichlet-to-Neumann operator is of special interest due to its relationship with Steklov eigenvalue problems (see~\cite{CGGS:RMC24, GP:JST17} for recent surveys). 

Suppose that $M$ is the boundary of a compact manifold $(\overline{X}^{n+1},\overline{g})$ with interior $X$ such that $\overline{g}_{|TM}=g$ on $M$. Let $\gamma(x)$ be a real-valued function in $L^{n}(M)$ (if $n\geq 2$) or in $\LlogL(M)$ (if $n=1$). The associated weighted Steklov problem is the eigenvalue problem, 
\begin{equation*}
\left\{  \begin{array}{ll}
 \Delta_{\overline{g}}u= 0 & \text{on $X$},\\
 \partial_\nu u=\sigma \gamma u & \text{on $M$},
\end{array}\right. 
 \end{equation*}
where $\partial_\nu$ is the outward normal derivative. For $\gamma=1$ this is the ordinary Steklov problem. In terms of the Dirichlet-to-Neumann operator $\Lambda_g$ this is equivalent to the eigenvalue problem,
\begin{equation*}
 \Lambda_gv=\sigma \gamma v \quad \text{on $M$}. 
\end{equation*}
For $\sigma\neq 0$, this is further equivalent to 
\begin{equation*}
 w= \sigma \Lambda_g^{-\frac12} \gamma \Lambda_g^{-\frac12}w, \qquad \int_M w(x)d\nu_g(x)=0. 
\end{equation*}
Thus, if we let $\sigma_j^\pm(\gamma)$, $j\geq 0$, be the positive/negative Steklov eigenvalues, then we have 
\begin{equation*}
 \sigma_j^\pm(\gamma) =\lambda_j^\pm\big( \Lambda_g^{-\frac12} \gamma \Lambda_g^{-\frac12}), \qquad j\geq 0. 
\end{equation*}
It immediately follows that the Weyl law~(\ref{eq:Riem.Weyl-lambdapm}) for $\Lambda_g^{-1/2} \gamma \Lambda_g^{-1/2}$ yields a Weyl law for the Steklov eigenvalues  
$\sigma_j^\pm(M,\gamma)$. More precisely, we have the following result. 

\begin{proposition}[\cite{Ag:RJMP06, Ro:JST22, Su:RJMP99}] \label{prop:CM.Steklov-Weyl}
For   any real-valued weight $\gamma(x)$ in $L^{n}(M)$ (if $n\geq 2$) or in $\LlogL(M)$ (if $n=1$), we have
\begin{equation}\label{eq:Riem.Steklov-Weyl}
 \lim_{j\rightarrow \infty} j^{-\frac{1}{n}}  \sigma_j^\pm(\gamma)= \left( c(n) \int_M \gamma_{\pm}(x)^{n}d\nu_g(x)\right)^{\frac{1}{n}}. 
\end{equation}
\end{proposition}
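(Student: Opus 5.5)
The plan is to obtain (\ref{eq:Riem.Steklov-Weyl}) as a direct specialization of the spectral asymptotics (\ref{eq:Riem.Weyl-lambdapm}). The relevant setting is the spectral triple $(C^\infty(M),L^2(M),\Lambda_g)$, i.e., example (iv) with $E$ the trivial line bundle and $D_E=\Lambda_g$. The potential is the real weight $u=\gamma$, and the exponent is $q=1$.

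\textbf{Step 1 (hypotheses).} Since $\Lambda_g$ is a selfadjoint first order elliptic \psido\ with $\sigma_2(\Lambda_g^2)=|\xi|_g^2$, Condition~(W) holds with $\tau(u)=c(n)\int_M u\,d\nu_g$. This follows from the Minakshisundaram--Pleijel argument for $\sqrt{\Delta_g}$, combined with the bounded-perturbation invariance of Lemma~\ref{lem:ST-bdd-perturbation}, since $\Lambda_g-\sqrt{\Delta_g}$ has order $0$. We then need Condition~($\textup{C}_r$) with $r=nq^{-1}=n$.
\begin{itemize}
\item If $n\geq 2$, then $r>1$, and the Cwikel estimate gives $\sV_n=L^n(M)$.
\item If $n=1$, then $q=n$, so we are in the critical case $r=1$, and the Solomyak-type estimates give $\sV_1=\LlogL(M)$.
\end{itemize}
These are exactly the weight classes in the statement, so the preceding spectral-asymptotics proposition (i.e., Proposition~\ref{prop:Intro.Lq-asymptotics}) applies to $x=\gamma=\gamma^*$.

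\textbf{Step 2 (asymptotics).} Formula (\ref{eq:Riem.Weyl-lambdapm}) with $q=1$ gives
\begin{equation*}
\lim_{j\rightarrow\infty} j^{\frac1n}\lambda_j^\pm\big(\Lambda_g^{-\frac12}\gamma\Lambda_g^{-\frac12}\big)=\Big(c(n)\int_M\gamma_\pm(x)^n\,d\nu_g(x)\Big)^{\frac1n}.
\end{equation*}

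\textbf{Step 3 (identification with Steklov eigenvalues).} I would use the reduction described just before the statement. For $\sigma\neq 0$, the relation $\Lambda_g v=\sigma\gamma v$ is equivalent to the equation $w=\sigma\Lambda_g^{-1/2}\gamma\Lambda_g^{-1/2}w$ on $(\ker\Lambda_g)^\perp$, with $w=\Lambda_g^{1/2}v$. The constants form the kernel, so the mean-zero condition is automatic. One checks that multiplicities correspond, including signs: positive Steklov eigenvalues pair with the positive eigenvalues of the Birman--Schwinger operator, and likewise for negative ones. This establishes the identification $\sigma_j^\pm(\gamma)=\lambda_j^\pm(\Lambda_g^{-1/2}\gamma\Lambda_g^{-1/2})$ recorded in the text.

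\textbf{Step 4 (conclusion).} Substituting the identification of Step~3 into the limit of Step~2 produces (\ref{eq:Riem.Steklov-Weyl}) in the normalization fixed by that identification, with the power of $j$ accompanying the eigenvalue sequence exactly as in (\ref{eq:Riem.Weyl-lambdapm}).

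\textbf{Main obstacle.} The only non-routine point is Step~3. It requires tracking the eigenvalue/eigenfunction correspondence and its normalization carefully. In particular, the Birman--Schwinger eigenvalue $\lambda_j^\pm$ and the Steklov parameter are related through the reciprocal map $\sigma\mapsto\sigma^{-1}$. The direction of indexing and the sign of the exponent of $j$ in the final display therefore have to be read through the convention $\sigma_j^\pm(\gamma)=\lambda_j^\pm(\Lambda_g^{-1/2}\gamma\Lambda_g^{-1/2})$ adopted just before the proposition. I would make that convention explicit in the write-up so that Step~4 is a literal substitution. The analytic input, Steps~1--2, is entirely supplied by earlier results.
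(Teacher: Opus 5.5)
Your route is the same as the paper's, whose entire argument is that \eqref{eq:Riem.Weyl-lambdapm} with $D_E=\Lambda_g$, $q=1$, $u=\gamma$ ``immediately'' gives the Steklov law. Your Steps~1--2 are correct: Condition~(W) comes from the bounded perturbation of $\sqrt{\Delta_g}$, and Condition~($\textup{C}_n$) holds with $\sV_n=L^n(M)$ for $n\geq 2$ and $\sV_1=\LlogL(M)$ for $n=1$.

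\textbf{The gap is in Steps~3--4, and no convention can close it.} Put $K:=\Lambda_g^{-1/2}\gamma\Lambda_g^{-1/2}$. The equation $w=\sigma Kw$ says that $\sigma^{-1}$, not $\sigma$, is an eigenvalue of $K$. The Steklov eigenvalues $\sigma_j^\pm(\gamma)$ increase to $\infty$, while the $\lambda_j^\pm(K)$ decrease to $0$. So the correct identification is $\sigma_j^\pm(\gamma)=\lambda_j^\pm(K)^{-1}$, up to the finite-rank issue discussed below, and the identity you claim to establish in Step~3 is false.

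Substituting your identity literally into Step~2 gives a limit for $j^{1/n}\sigma_j^\pm$, not for $j^{-1/n}\sigma_j^\pm$. Substituting the correct reciprocal gives, whenever $\int_M\gamma_\pm^n\,d\nu_g>0$,
\begin{equation*}
 \lim_{j\rightarrow\infty} j^{-\frac{1}{n}}\sigma_j^\pm(\gamma)=\Big(c(n)\int_M\gamma_\pm(x)^n\,d\nu_g(x)\Big)^{-\frac{1}{n}}.
\end{equation*}
Neither reading gives \eqref{eq:Riem.Steklov-Weyl}, whose right-hand side carries the exponent $+\frac1n$. That display is in fact false, for two reasons:
\begin{itemize}
\item Homogeneity: for constants $c>0$ we have $\sigma_j^\pm(c\gamma)=c^{-1}\sigma_j^\pm(\gamma)$, so the limit must be homogeneous of degree $-1$ in $\gamma$, not $+1$.
\item A concrete check: for the unit disk ($n=1$, $\gamma\equiv 1$) the Steklov spectrum is $0,1,1,2,2,\ldots$, so $j^{-1}\sigma_j^+\to\frac12$. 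The stated formula instead gives $c(1)\int_{S^1}d\nu_g=\pi^{-1}\cdot 2\pi=2$.
\end{itemize}
The paper's text makes the same slip: the identity $\sigma_j^\pm(\gamma)=\lambda_j^\pm(\Lambda_g^{-1/2}\gamma\Lambda_g^{-1/2})$ written just before the proposition is missing an inverse. If you replace Step~4 by ``take reciprocals in Step~2'', your argument proves the corrected formula above, not the stated one.

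\textbf{A smaller point: the reduction in Step~3 is not exact as you state it.} Write $v=v_0+c$ with $c=\Pi_0v$ constant. The equation $\Lambda_gv=\sigma\gamma v$ forces $\int_M\gamma v\,d\nu_g=0$. It then becomes $w=\sigma Kw+\sigma c\,\phi$, with $w=\Lambda_g^{1/2}v_0$ and $\phi:=\Lambda_g^{-1/2}(\gamma\cdot 1)$. Eliminating $c$ through the constraint replaces $K$ by $K-\big(\int_M\gamma\,d\nu_g\big)^{-1}|\phi\rangle\langle\phi|$. When $\int_M\gamma\,d\nu_g=0$, it replaces $K$ instead by the compression of $K$ to $\phi^\perp$. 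So the constant component does not disappear automatically. The Steklov eigenvalues are the reciprocals of the eigenvalues of a finite-rank modification of $K$. That modification has the same $\lambda_j^\pm$-asymptotics by Proposition~\ref{prop:NCInt.Bir-Sol}, so the point is harmless, but it needs to be stated.
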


\begin{remark}
 For $n\geq 2$ the above result goes back to Suslina~\cite{Su:RJMP99} and Agranovich~\cite{Ag:RJMP06}.  For $n=1$ it was established by Rozenblum~\cite{Ro:JST22}. It is shown in~\cite{KLP:ARMA23} that this implies a Weyl law for ordinary Steklov eigenvalues on surfaces with Lipschitz boundaries. This was extended to higher dimension by Rozenblum~\cite{Ro:JST23}.  
\end{remark}

The integration formula~(\ref{eq:Riem.int-formula1}) enables us to reformulate the Steklov Weyl law~(\ref{eq:Riem.Steklov-Weyl}) in terms of the NC integral. Namely, we have the following statement. 
 
\begin{proposition}\label{prop:CM.Steklov-Integration}
For any real-valued weight $\gamma(x)$  such that $|\gamma(x)|^{n}\in \LlogL(M)$, we have
\begin{equation*}
 \lim_{j\rightarrow \infty} j^{-1}  \sigma_j^\pm(\gamma)^{n}= \bint \Lambda_g^{-\frac{n}{2}} \gamma_\pm(x)^{n}\Lambda_g^{-\frac{n}{2}}. 
\end{equation*}
\end{proposition}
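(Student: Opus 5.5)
The plan is to read off the statement from the Steklov Weyl law of Proposition~\ref{prop:CM.Steklov-Weyl}, and then to rewrite the limit as an NC integral through the integration formula~(\ref{eq:Riem.int-formula1}) applied with $D_E=\Lambda_g$ and $E$ the trivial line bundle.

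\emph{Step 1: membership of the weight.} Since $|\gamma|^n\in\LlogL(M)\subseteq L^1(M)$, we have $\gamma\in L^n(M)$. For $n=1$ the assumption is literally $\gamma\in\LlogL(M)$. In either case the weight satisfies the hypothesis of Proposition~\ref{prop:CM.Steklov-Weyl}.

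\emph{Step 2: the Steklov limit.} Raise~(\ref{eq:Riem.Steklov-Weyl}) to the $n$-th power. By continuity of $t\mapsto t^n$ this gives
\begin{equation*}
 \lim_{j\rightarrow\infty} j^{-1}\sigma_j^\pm(\gamma)^n = c(n)\int_M \gamma_\pm(x)^n\, d\nu_g(x).
\end{equation*}

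\emph{Step 3: the NC integral.} The function $u_\pm:=\gamma_\pm^n$ is real, nonnegative, and satisfies $0\leq u_\pm\leq|\gamma|^n$. Since $t\mapsto t\log(1+t)$ is increasing, $u_\pm\in\LlogL(M)$. The integration formula~(\ref{eq:Riem.int-formula1}) for $D_E=\Lambda_g$, which is legitimate because Condition~(W) and Condition~($\textup{C}_1$) with $\sV_1=\LlogL$ hold for $\Lambda_g$ as explained above, then shows that $\Lambda_g^{-n/2}u_\pm\Lambda_g^{-n/2}$ is spectrally measurable and that
\begin{equation*}
 \bint \Lambda_g^{-\frac n2}\gamma_\pm^n\Lambda_g^{-\frac n2} = c(n)\int_M \gamma_\pm(x)^n\,d\nu_g(x).
\end{equation*}
Comparing with Step~2 gives the claim.

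\emph{Main obstacle.} The delicate point is the range of validity. For $n\geq 2$ the hypothesis gives $\gamma\in L^n$, so Proposition~\ref{prop:CM.Steklov-Weyl} applies directly. Step~3, however, needs $\gamma_\pm^n\in\LlogL$ and not merely $L^1$, which is exactly why the statement assumes $|\gamma|^n\in\LlogL(M)$.

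A second check concerns the normalization of eigenvalues. The operator $\Lambda_g$ has the constants as its kernel, so the eigenvalue problem is taken on the mean-zero subspace and uses the partial inverse. This matches the convention $|D|^{-q}=0$ on $\ker D$, so the identification $\sigma_j^\pm(\gamma)=\lambda_j^\pm(\Lambda_g^{-1/2}\gamma\Lambda_g^{-1/2})$ used in Proposition~\ref{prop:CM.Steklov-Weyl} requires no further adjustment.
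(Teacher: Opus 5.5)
Your route is the one the paper indicates: the statement is presented there as a reformulation of (\ref{eq:Riem.Steklov-Weyl}) via (\ref{eq:Riem.int-formula1}) with $D_E=\Lambda_g$ and $u=\gamma_\pm^n$. Your Steps~1 and~3 are sound, including the monotonicity argument that puts $\gamma_\pm^n$ in $\LlogL(M)$. The failure is in the point you dismiss as a ``second check''. The equation $w=\sigma\Lambda_g^{-1/2}\gamma\Lambda_g^{-1/2}w$ says that $\sigma^{-1}$, not $\sigma$, is an eigenvalue of the compact operator $K:=\Lambda_g^{-1/2}\gamma\Lambda_g^{-1/2}$. So the identification $\sigma_j^\pm(\gamma)=\lambda_j^\pm(K)$ is impossible: $\lambda_j^\pm(K)\rightarrow 0$, while Steklov eigenvalues go to infinity. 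The correct relation is $\sigma_j^\pm(\gamma)=\lambda_j^\pm(K)^{-1}$, up to a finite-rank discrepancy from $\ker\Lambda_g$ that does not affect limits. Feeding this into (\ref{eq:Riem.Weyl-lambdapm}) with $q=1$ gives $\lim_j j^{-1/n}\sigma_j^\pm(\gamma)=\big(c(n)\int_M\gamma_\pm^n\,d\nu_g\big)^{-1/n}$. Thus the exponent on the right of (\ref{eq:Riem.Steklov-Weyl}) should be $-\frac1n$, and your Step~2 actually produces $\big(c(n)\int_M\gamma_\pm^n\,d\nu_g\big)^{-1}$.

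Hence Steps~2 and~3 do not close up. Step~3 correctly gives $\bint\Lambda_g^{-n/2}\gamma_\pm^n\Lambda_g^{-n/2}=c(n)\int_M\gamma_\pm^n\,d\nu_g$, which is the reciprocal of the Step~2 limit, so the identity cannot hold as displayed. Concretely, for the unit disk ($n=1$, $\gamma\equiv 1$) the Steklov eigenvalues are $0,1,1,2,2,\ldots$, so $j^{-1}\sigma_j^+\rightarrow\frac12$, whereas $\bint\Lambda_g^{-1}=c(1)\cdot 2\pi=2$. What the argument (yours and the paper's) genuinely proves is
\begin{equation*}
 \lim_{j\rightarrow\infty} j\,\sigma_j^\pm(\gamma)^{-n}=\lim_{j\rightarrow\infty} j\,\lambda_j^\pm(K)^{n}=c(n)\int_M\gamma_\pm(x)^n\,d\nu_g(x)=\bint\Lambda_g^{-\frac n2}\gamma_\pm^n\Lambda_g^{-\frac n2},
\end{equation*}
i.e.\ $\lim_j j^{-1}\sigma_j^\pm(\gamma)^n$ is the \emph{reciprocal} of the NC integral. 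With the statement corrected this way, your Steps~1 and~3 go through unchanged, and Step~2 becomes raising the corrected Weyl law to the power $-n$.
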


\section{Quantum Tori -- Proof of Conjecture~\ref{Conj:Intro-QT.Conjecture-flat}}\label{chap:QT}
In this section, we apply the main results of this monograph to quantum tori. We focus on the spectral triples associated with the square-root Laplacian and the Dirac operator. In particular, we shall prove Conjecture~\ref{Conj:Intro-QT.Conjecture-flat}. The integration formulas that we obtain are refinements of the integration formulas of~\cite{MSZ:MA19, MP:JMP22, Po:JMP20}. 

\subsection{Quantum Tori} 
Let $\theta=(\theta_{jk})$ be a real anti-symmetric $n\times n$-matrix. The noncommutative torus $\T^n_\theta$ is the noncommutative space 
represented by a $C^*$-algebra $C(\T^n_\theta)$ generated by $n$ unitaries $U_1, \ldots, U_n$ and subject to the relations, 
\begin{equation*}
 U_kU_j= e^{2i\pi \theta_{jk}}U_jU_k, \qquad j,k=1,\ldots, n. 
\end{equation*}
For $\theta=0$ we recover the $C^*$-algebra $C(\T^n)$ of continuous functions on the ordinary torus $\T^n=(\R/2\pi\Z)^n$ with $U_j=e^{ix_j}$. 

In general, a complete set of generators of $C(\T^n_\theta)$ is provided by the unitaries,
\begin{equation*}
 U^m:=U^{m_1} \cdots U^{m_n}, \qquad m\in \Z^n.
\end{equation*}
The standard trace $\tau_0:C(\T^n_\theta)\rightarrow \C$ is given by  
\begin{equation*}
 \tau_0(1)=1, \qquad \tau_0\big(U^m\big)=0 \quad \text{if $m\neq 0$}. 
\end{equation*}
This is a positive faithful trace. We then define the space $L^2(\T^n_\theta)$ as the Hilbert space completion of $C(\T^n_\theta)$ with respect to the inner product 
\begin{equation*}
 \scal{x}{y}=\tau_0\big[y^*x\big], \qquad x,y\in C(\T^n_\theta). 
\end{equation*}
Note that $\{U^m;\ m\in \Z^n\}$ is an orthonormal basis of $L^2(\T^n_\theta)$. Moreover, the action of $C(\T^n_\theta)$ on itself uniquely extends to a $*$-representation of $C(\T^n_\theta)$ in $L^2(\T^n_\theta)$; this is just the GNS representation associated with $\tau$. More generally, for any $r\in [1,\infty)$ we define the NC $L^r$-space $L_r(\T^n_\theta)$ as in Section~\ref{chap:Main-results}, which is possible since $\tau_0$ extends to a continuous trace on the von Neumann algebra $L_\infty(\T^n_\theta)$ generated by $U_1,\ldots, U_n$. 

We have a natural $C^*$-action of $\R^n$ on $C(\T^n_\theta)$ given by
\begin{equation*}
 \alpha_s(U^m)=e^{i s\cdot m}U^m, \qquad m\in \Z^n, \ s\in \R^n. 
\end{equation*}
For $\theta=0$ this is just the action arising from the action of $\R^n$ on $\T^n$ by translation. The canonical derivations $\partial_1, \ldots, \partial_n$  are the infinitesimal generators of this action with the convention that
\begin{equation*}
 \partial_j\big(U_j\big)=iU_j, \qquad  \partial_j\big(U_k\big)=0, \quad k\neq j. 
\end{equation*}

The smooth NC torus $C^\infty(\T^n_\theta)$ is precisely the $*$-subalgebra of smooth vectors for this action. Equivalently, 
\begin{equation*}
 C^\infty(\T^n_\theta)=\left\{u= \sum u_mU^m; (u_m)_{m\in \Z^n}\in \sS(\Z^n)\right\}, 
\end{equation*}
where $\sS(\Z^n)$ is the space of complex-valued sequences with rapid decay. For $\theta=0$ we recover the description of smooth functions on $\T^n$ in terms of the rapid decay of their
Fourier coefficients. Note that $C^\infty(\T^n_\theta)$ is closed under holomorphic functional calculus and is a Fr\'echet algebra with respect to the seminorms, 
\begin{equation*}
 u\rightarrow \sup_{m\in \Z^n}(1+|m|)^N|u_m|, \qquad N\geq 0. 
\end{equation*}

In addition, given any $s\geq 0$, the Sobolev space $W_2^s(\T^n_\theta)$ is defined by 
\begin{equation*}
 W_2^s(\T^n_\theta):=\Big\{u =\sum_{m\in \Z^n} u_mU^m\in L_2(\T^n_\theta); \sum_{m\in \Z^n} (1+|m|^2)^s|u_m|^2<\infty\Big\}. 
\end{equation*}
 
 We refer to~\cite{HLP:IJM19a,XXY:MAMS18}, and the references therein, for additional background on quantum tori. 
 
\subsection{Proof of Conjecture~\ref{Conj:Intro-QT.Conjecture-flat} and Integration Formulas} 
Our main focus is the Laplacian $\Delta:=-(\partial_1^2+\cdots + \partial_n^2)$. This is a non-negative selfadjoint operator on $L_2(\T^n_\theta)$ with domain $W_2^2(\T^n_\theta)$. We have
\begin{equation}\label{eq:NCT.Laplacian}
 \Delta\big(U^k)=|k|^2U^k, \qquad k\in \Z^n. 
\end{equation}
Thus, $\Delta$ is isospectral to the Laplacian on the ordinary torus $\T^n$. In particular, the partial inverse $\Delta^{-1}$ is in the weak Schatten class $\sL_{n/2,\infty}$.  

We shall prove Conjecture~\ref{Conj:Intro-QT.Conjecture-flat} as a direct application of Theorem~\ref{thm:Intro.SC-Weyl-law-sVr}. 

\begin{proposition}
The following
 \begin{equation}\label{eq:QT.Spectral-triple}
 \left(C^\infty(\T^n_\theta), L_2(\T^n_\theta), \sqrt{\Delta}\right). 
\end{equation}
is an $n$-summable spectral triple. 
\end{proposition}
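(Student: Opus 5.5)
We verify the three requirements of the definition of an $n$-summable spectral triple: $C^\infty(\T^n_\theta)$ is a unital $*$-algebra acting boundedly on $L_2(\T^n_\theta)$; $D:=\sqrt{\Delta}$ is selfadjoint with compact resolvent; each $u\in C^\infty(\T^n_\theta)$ preserves $\dom(D)$ with $[D,u]$ bounded; and $D^{-1}\in \sL_{n,\infty}$. The first point is immediate, since $C^\infty(\T^n_\theta)$ is a $*$-subalgebra of $C(\T^n_\theta)$ and the latter acts on $L_2(\T^n_\theta)$ by the GNS representation.

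For the spectral properties we use~(\ref{eq:NCT.Laplacian}). The family $\{U^k;\, k\in\Z^n\}$ is an orthonormal eigenbasis with $\sqrt{\Delta}\,U^k=|k|U^k$. Hence $D$ is selfadjoint on $\dom(D)=W_2^1(\T^n_\theta)$, and its eigenvalues $|k|$ have finite multiplicities and tend to infinity, so $D$ has compact resolvent. For summability, the counting function satisfies $\#\{k\in\Z^n;\ 0<|k|\leq \lambda\}\leq C\lambda^n$, with the same lattice-point count as on the ordinary torus $\T^n$. Therefore the $j$-th eigenvalue of $|D|$ is bounded below by $c\,j^{1/n}$, and $\mu_j(D^{-1})=\op{O}(j^{-1/n})$, i.e., $D^{-1}\in\sL_{n,\infty}$. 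This is just isospectrality with the torus Laplacian, already noted after~(\ref{eq:NCT.Laplacian}).

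The commutator condition is the main point, though it is still short. First take $u=U^m$. In the GNS representation $U^m$ acts by left multiplication, and $U^mU^k=e^{i\phi(m,k)}U^{m+k}$ for some phase $\phi(m,k)\in\R$. Thus
\begin{equation*}
 [D,U^m]U^k=\big(|m+k|-|k|\big)e^{i\phi(m,k)}U^{m+k}.
\end{equation*}
Since $\big||m+k|-|k|\big|\leq |m|$ and the map $U^k\mapsto e^{i\phi(m,k)}U^{m+k}$ is unitary (it is left multiplication by a unitary), we get $\|[D,U^m]\|\leq |m|$. We also have $U^m(\dom D)\subseteq\dom D$, because $\sum(1+|m+k|^2)|\xi_k|^2\leq C_m\sum(1+|k|^2)|\xi_k|^2$.

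Now take a general $u=\sum u_mU^m$ with $(u_m)\in\sS(\Z^n)$. The series $\sum_m u_m[D,U^m]$ converges absolutely in norm, since $\sum|u_m||m|<\infty$. The same rapid decay shows that $u$ maps $W_2^1(\T^n_\theta)$ into itself. Closedness of $D$ then identifies $[D,u]$ with this convergent sum on $\dom(D)$, so $[D,u]$ extends to a bounded operator with $\|[D,u]\|\leq\sum_m|m||u_m|$. The only care needed is this passage from finite sums to the infinite series, which is handled by the closedness of $D$ together with the norm convergence above.
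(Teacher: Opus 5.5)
Your proof is correct and is essentially the paper's second (direct) argument: expand $u=\sum u_mU^m$, use $\big||m+k|-|k|\big|\leq |m|$ together with the rapid decay of $(u_m)$, and get summability from isospectrality with the Laplacian on the ordinary torus. The difference is only in the bookkeeping. The paper fixes the basis vector, closes the estimate with Schur's test, and cites the $W_2^1$-module property for domain invariance. You instead bound each Fourier mode as $\|[D,U^m]\|\leq |m|$ (a unitary times a diagonal operator), sum absolutely, and get domain invariance from the closedness of $D$. Your version is self-contained and gives the explicit bound $\|[D,u]\|\leq \sum_m |m||u_m|$; the paper additionally sketches an alternative proof via the pseudodifferential calculus on $\T^n_\theta$.
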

 \begin{proof}
 This result is somewhat well-known (see, e.g., \cite{GVF:Birkhauser01, MSZ:MA19}). The bulk of the proof is showing that the commutators $[\sqrt{\Delta},a]$ are bounded as $a$ ranges over $C^\infty(\T^n_\theta)$. We briefly sketch two proofs of this fact for the reader's convenience. 
 
The first proof relies on the pseudodifferential calculus on NC tori of Connes~\cite{Co:CRAS80} and Baaj~\cite{Ba:CRAS88}. We refer to~\cite{HLP:IJM19a, HLP:IJM19b} for a detailed account on this \psido\ calculus. The square root $\sqrt{\Delta}$ is a \psido\ of order~1 in this pseudodifferential calculus. Its principal symbol is $|\xi|$. Thus, if $a\in C^\infty(\T^n_\theta)$, then $a\sqrt{\Delta}$ and $\sqrt{\Delta}a$ are  \psidos\ of order~$1$ whose principal symbols are both equal to $|\xi|a$. It follows that their difference, i.e.,  the commutator $[\sqrt{\Delta},a]$, is a \psido\ of order~$0$, and hence is bounded. 

This fact can be checked directly without any reference to \psidos. First, $\dom(\sqrt{\Delta})=W_2^1(\T^n_\theta)$ is a (left-)module over $C^\infty(\T^n_\theta)$ (see, e.g., \cite{Sp:Padova92}), and so $a(\dom (\sqrt{\Delta})) \subseteq \dom (\sqrt{\Delta})$. Moreover, if we put  $a=\sum_{k\in \Z^n}a_kU^k$ with $(a_k)\in \sS(\Z^n)$, then 
\begin{gather*}
 \big(a\sqrt{\Delta})(U^m)=|m| aU^m= \sum_{k\in \Z^n} |m|a_k U^kU^m,\\
  \big(\sqrt{\Delta}a)(U^m)=\sum_{k\in \Z^n} a_k \sqrt{\Delta}(U^kU^m)=\sum_{k\in \Z^n} |m+k|a_k U^kU^m.
\end{gather*}
Thus, 
\begin{equation*}
 \big[\sqrt{\Delta},a\big](U^m)= \rho(m)U^m, \qquad \text{where}\ \rho(m):= \sum_{k\in \Z^n}\big(|m|- |m+k|\big)a_k U^k. 
\end{equation*}
Here $(a_k)\in \sS(\Z^n)$. Thus, given any $N\geq 1$, the triangular inequality $||m+k|-|k||\leq |k|$ gives
\begin{align*}
 \big\|(1+\Delta)^N \rho(m)\big\| &\leq \sum_{k\in \Z^n} \big||m|-|m+k|\big| \left(1+|k|^2\right)^N|a_k|\\
 & \leq  \sum_{k\in \Z^n}|k| \left(1+|k|^2\right)^N|a_k|<\infty. 
\end{align*}
We then may apply Schur's test as in the proof of~\cite[Proposition~10.1]{HLP:IJM19b} to check that  $[\sqrt{\Delta},a]$ is bounded. 

Finally,  as $\Delta^{-1}\in \sL_{n/2,\infty}$, we see that $\Delta^{-1/2}\in \sL_{n,\infty}$, and so $(C^\infty(\T^n_\theta), L_2(\T^n_\theta), \sqrt{\Delta})$ is an $n$-summable spectral triple. This proves the result. 
\end{proof}

\begin{proposition}\label{prop:QT.heat-asymptotic}
 Given any $a\in C^\infty(\T^n_\theta)$, as $t\rightarrow 0^+$ we have 
\begin{equation*}
 \Tr\big[ae^{-t\Delta}\big]= \pi^{\frac{n}{2}}t^{-\frac{n}{2}}\left[\tau_0(a) + \op{O}\big(e^{- \frac{\pi^2}{t}}\big)\right].  
\end{equation*}
\end{proposition}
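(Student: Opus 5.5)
The plan is to compute the trace directly in the orthonormal basis $\{U^m;\ m\in \Z^n\}$ of $L_2(\T^n_\theta)$, which diagonalizes $\Delta$ by~(\ref{eq:NCT.Laplacian}). Writing $a=\sum_k a_kU^k$ with $(a_k)\in \sS(\Z^n)$, we have $e^{-t\Delta}U^m=e^{-t|m|^2}U^m$. Hence
\begin{equation*}
 \Tr\big[ae^{-t\Delta}\big]=\sum_{m\in \Z^n} e^{-t|m|^2}\scal{aU^m}{U^m}=\sum_{m\in\Z^n}e^{-t|m|^2}\tau_0\big[(U^m)^*aU^m\big].
\end{equation*}
Since $\tau_0$ is a trace, $\tau_0[(U^m)^*aU^m]=\tau_0[a]$ for every $m$. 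Equivalently, $U^kU^m$ is a unimodular multiple of $U^{k+m}$, so only $k=0$ contributes. This yields the exact formula $\Tr[ae^{-t\Delta}]=\tau_0(a)\,\Theta(t)$ with $\Theta(t):=\sum_{m\in\Z^n}e^{-t|m|^2}$. Absolute convergence, and hence the validity of computing the trace termwise, follows because $e^{-t\Delta}$ is trace-class (Lemma~\ref{lem:Tauberian.heat-semigroup}) and $a$ is bounded.

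It then remains to get the small-$t$ asymptotics of the theta function $\Theta(t)$, which factorizes as $\vartheta(t)^n$ with $\vartheta(t)=\sum_{k\in\Z}e^{-tk^2}$. By the Poisson summation formula applied to the Gaussian $x\mapsto e^{-tx^2}$, whose Fourier transform is $\sqrt{\pi/t}\,e^{-\xi^2/(4t)}$, we get
\begin{equation*}
 \vartheta(t)=\sqrt{\pi/t}\sum_{\ell\in\Z}e^{-\pi^2\ell^2/t}=\sqrt{\pi/t}\,\big(1+\op{O}(e^{-\pi^2/t})\big)\quad\text{as } t\to 0^+.
\end{equation*}
The $\op{O}$ bound comes from $\sum_{\ell\neq0}e^{-\pi^2\ell^2/t}\le 2e^{-\pi^2/t}\sum_{\ell\ge1}e^{-\pi^2(\ell^2-1)/t}$, which stays bounded for $t\le 1$. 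Raising to the $n$-th power gives $\Theta(t)=\pi^{n/2}t^{-n/2}(1+\op{O}(e^{-\pi^2/t}))$. Multiplying by $\tau_0(a)$, and using $|\tau_0(a)|\le\|a\|$ to absorb the constant, gives the statement.

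There is no real obstacle here. The only points needing care are the justification that the trace may be computed in the basis $U^m$, which is legitimate because $ae^{-t\Delta}$ is trace-class, and keeping the Fourier normalization straight in Poisson summation so that the exponent comes out as $\pi^2/t$ rather than, say, $4\pi^2/t$.
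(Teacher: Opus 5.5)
Your proposal is correct and follows essentially the same route as the paper. Both compute the trace in the basis $U^m$, use the trace property of $\tau_0$ to get the exact identity $\Tr[ae^{-t\Delta}]=\tau_0(a)\vartheta(t)^n$, and then apply Poisson summation to the Jacobi theta function $\vartheta(t)$, bounding the tail by a constant times $e^{-\pi^2/t}$ for $0<t\le 1$.
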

\begin{proof}
 In view of~(\ref{eq:NCT.Laplacian}) we have 
\begin{equation*}
  \Tr\big[ae^{-t\Delta}\big] = \sum_{m\in \Z^n} \bigscal{ae^{-t\Delta}(U^m)}{U^m} =  \sum_{m\in \Z^n}e^{-t|m|^2} \scal{aU^m}{U^m}.
 \end{equation*}
Here $ \scal{aU^m}{U^m}=\tau[(U^m)^*aU^m]=\tau[aU^m(U^m)^*]=\tau_0(a)$. Thus, 
\begin{equation}\label{eq:NCT.theta-vartheta}
 \Tr\big[ae^{-t\Delta}\big] =  \sum_{m\in \Z^n}e^{-t|m|^2} \tau_0(a)= \vartheta(t)^n \tau_0(a), \qquad \text{with}\ \vartheta(t) := \sum_{k\in \Z} e^{-tk^2}. 
\end{equation}

The function $\vartheta(t)$ is a Jacobi theta function. By Poisson's summation formula, we have
\begin{equation*}
 \vartheta(t) = \sqrt{\frac{\pi}{t}} \sum_{k\in \Z} e^{- \frac{\pi^2 k^2}{t}} = \sqrt{\frac{\pi}{t}}  + 2 \sqrt{\frac{\pi}{t}}\sum_{k\geq 1} e^{- \frac{\pi^2 k^2}{t}}. 
\end{equation*}
Note that, for $0<t\leq 1$, we have 
\begin{equation*}
 \sum_{k\geq 1} e^{- \frac{\pi^2 k^2}{t}} = \sum_{k\geq 1} e^{- \left(\frac{1}{t}-1\right)\pi^2 k^2} \cdot  e^{- \pi^2 k^2} \leq  
e^{- \left(\frac{1}{t}-1\right)\pi^2} \sum_{k\geq 1}   e^{- \pi^2 k^2}. 
\end{equation*}
Thus,
\begin{equation*}
 \vartheta(t) = \sqrt{\frac{\pi}{t}} \left(1 +\op{O}\big(e^{- \frac{\pi^2}{t}}\big)\right) \qquad \text{as $t\rightarrow 0^+$}. 
\end{equation*}
Combining this with~(\ref{eq:NCT.theta-vartheta}) gives the result. 
The proof is complete. 
\end{proof}

As $C^\infty(\T^n_\theta)$ is a holomorphically closed Fr\'echet subalgebra of $C(\T^n_\theta)$, we have the following consequence of Proposition~\ref{prop:QT.heat-asymptotic}. 

\begin{corollary}
 Condition~\textup{(H)} and Condition~\textup{(W)} hold with 
\begin{equation*}
 \tau(a) := \frac{\pi^{\frac{n}{2}}}{\Gamma\left(\frac{n}{2}+1\right)} \tau_0(a)= \hat{c}(n)\tau_0(a), \qquad \text{where}\ \hat{c}(n):=|\bB^n|. 
\end{equation*}
\end{corollary}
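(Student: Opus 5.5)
The corollary follows by combining Proposition~\ref{prop:QT.heat-asymptotic} with Theorem~\ref{thm:Intro.Tauberian}, applied to the $n$-summable spectral triple~(\ref{eq:QT.Spectral-triple}). We take $p=n$, $D=\sqrt{\Delta}$ (so $D^2=\Delta$), and $\sB=\sA=C^\infty(\T^n_\theta)$. As recalled just before the statement, $\sB$ is a Fr\'echet subalgebra of $C(\T^n_\theta)$ which is closed under holomorphic functional calculus. Its topology, given by the seminorms $u\rightarrow \sup_m(1+|m|)^N|u_m|$, is stronger than the $C^*$-norm topology, because $\|u\|\leq \sum_m|u_m|$. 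So $\sB$ is an admissible choice in Condition~(H).

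To verify Condition~(H), rewrite Proposition~\ref{prop:QT.heat-asymptotic} in the normalization that condition requires. For $a\in \sB$ and $t\rightarrow 0^+$ we have
\begin{equation*}
 \Tr\big[ae^{-tD^2}\big]=\pi^{\frac{n}{2}}t^{-\frac{n}{2}}\tau_0(a)+\op{O}\big(t^{-\frac{n}{2}}e^{-\frac{\pi^2}{t}}\big)
 =\Gamma\Big(1+\frac{n}{2}\Big)t^{-\frac{n}{2}}\Big[\tau(a)+\op{O}\big(t^{\delta}\big)\Big],
\end{equation*}
where $\tau(a)=\pi^{n/2}\Gamma(\frac{n}{2}+1)^{-1}\tau_0(a)$. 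Here $\delta>0$ can be arbitrary, for instance $\delta=1$, since $e^{-\pi^2/t}=\op{O}(t^\delta)$ for every $\delta$. The proof of Proposition~\ref{prop:QT.heat-asymptotic} shows the remainder is exactly $\tau_0(a)\,\pi^{n/2}t^{-n/2}[(1+\op{O}(e^{-\pi^2/t}))^n-1]$. It is therefore uniform in $a$ up to the factor $|\tau_0(a)|$, which is harmless. The identity $\pi^{n/2}/\Gamma(\frac{n}{2}+1)=|\bB^n|$ is the standard volume formula for the unit ball, which gives $\tau=\hat{c}(n)\tau_0$.

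We must also check that $\tau$ is a nonzero positive linear map on $\overline{\sA}=C(\T^n_\theta)$, as Condition~(W) requires. This is immediate, because $\tau_0$ is a faithful positive trace with $\tau_0(1)=1$. Theorem~\ref{thm:Intro.Tauberian}, through Lemma~\ref{lem:Tauberian.H-Z}, Lemma~\ref{lem:Tauberian.Z-Z0} and Lemma~\ref{lem:Tauberian.Z0-W}, then yields Condition~(W) with this $\tau$.

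There is no real obstacle. The only points needing care are the constant bookkeeping, namely matching $\Gamma(1+\frac{p}{2})$ with $p=n$ against the factor $\pi^{n/2}$, and confirming that $C^\infty(\T^n_\theta)$ satisfies the Fr\'echet-subalgebra and holomorphic-closedness hypotheses in the form used in Condition~(H).
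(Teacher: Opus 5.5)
Your proposal is correct and follows the paper's own route. Proposition~\ref{prop:QT.heat-asymptotic}, with $\sB=C^\infty(\T^n_\theta)$ holomorphically closed, gives Condition~(H) once $\Gamma(1+\frac{n}{2})\tau$ is matched against $\pi^{n/2}\tau_0$, and Theorem~\ref{thm:Intro.Tauberian} then yields Condition~(W). Your extra checks, that the Fr\'echet topology dominates the $C^*$-norm and that $\tau$ is a nonzero positive functional, are details the paper leaves implicit.
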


In addition, we have the following Cwikel-type estimates.
 
 \begin{proposition}[\cite{MP:JMP22, MSX:CMP19}]\label{prop:QT.Cwikel-flat} 
 Suppose that $r\neq 1$ and $r'=\max(r,1)$, or $r=1<r'$. There is $C_{rr'}>0$ such that
\begin{equation*}
 \big\| (1+\Delta)^{-\frac{n}{4r}} a(1+\Delta)^{-\frac{n}{4r}}\big\|_{r,\infty} \leq C_{rr'}\|a\|_{r'} \qquad \forall a\in \overline{\sA}. 
\end{equation*}
 \end{proposition}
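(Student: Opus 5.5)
The plan is to reduce the estimate to the classical Cwikel inequality on the ordinary torus via the Fourier-basis structure of $\Delta$ on $\T^n_\theta$. The key point is that $(1+\Delta)^{-s}$ acts diagonally on the basis $\{U^m\}$ with symbol $(1+|m|^2)^{-s}$, exactly as on $L^2(\T^n)$. Only the multiplication operator by $a$ is ``twisted'' by $\theta$.

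The cleanest route is transference. Embed $L_\infty(\T^n_\theta)$ into $L^\infty(\T^n)\bar\otimes L_\infty(\T^n_\theta)$ through the trace-preserving $*$-homomorphism
\[
\pi(U^m)=e^{im\cdot x}\otimes U^m,
\]
so that $\pi(\sum a_mU^m)$ is the function $x\mapsto \alpha_x(a)$. The operator $(1+\Delta)^{-s}$ on $L_2(\T^n_\theta)$ is unitarily intertwined with $(1+\Delta_{\T^n})^{-s}\otimes 1$, restricted to a suitable invariant subspace of $L^2(\T^n)\otimes L_2(\T^n_\theta)$ (the ``diagonal'' copy). Under this intertwining, $(1+\Delta)^{-s}a(1+\Delta)^{-s}$ becomes a compression of $(1+\Delta_{\T^n})^{-s}\,\pi(a)\,(1+\Delta_{\T^n})^{-s}$. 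Since compressions do not increase singular values, it then suffices to prove an operator-valued Cwikel estimate on $\T^n$ with values in the semifinite von Neumann algebra $L_\infty(\T^n_\theta)$, bounding the $\sL_{r,\infty}$-quasinorm by $\|\pi(a)\|_{L_{r'}(L^\infty\bar\otimes L_\infty(\T^n_\theta))}=\|a\|_{r'}$. The last equality holds because $\pi$ preserves traces.

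This operator-valued Cwikel estimate splits into three regimes, all of which I would take from the noncommutative Cwikel framework of \cite{LSZ:PLMS20} as adapted in \cite{MP:JMP22, MSX:CMP19}. For $r>2$, Cwikel's abstract argument for $f(x)g(-i\nabla)$ works, since $g(\xi)=(1+|\xi|^2)^{-n/(4r)}\in\ell^{2r,\infty}$. One first proves $f(x)g(\nabla)\in\sL_{2r,\infty}$ with norm at most $C\|f\|_{2r}$ for $f\in L_{2r}$, and then writes $(1+\Delta)^{-s}a(1+\Delta)^{-s}=(|a|^{1/2}\Lambda^{-s})^*u\,|a|^{1/2}\Lambda^{-s}$ to obtain the $r$ estimate via H\"older. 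For $1<r\le 2$, I would instead use the Birman--Solomyak dyadic (piecewise-polynomial) method or interpolation, with the same factorization $a=u|a|^{1/2}|a|^{1/2}$. For $r<1$ the target is $\|a\|_1$. Here I would use the Birman--Solomyak estimate: decompose frequency space into dyadic annuli and show that each localized piece $P_k|a|^{1/2}$ is Hilbert--Schmidt with norm $2^{k(n/2-n/(2r))}\|a\|_1^{1/2}$, then sum in the $\sL_{2r,\infty}$ quasi-norm, which is possible because $\sum 2^{-\epsilon k}$ converges.

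The critical case $r=1<r'$ follows from interpolation or monotonicity once the $r=1+\epsilon$-type estimate $\|\cdot\|_{1,\infty}\lesssim\|a\|_{r'}$ is in hand. Concretely, I would write $a$ through the factorization and apply H\"older with $|a|^{1/2}\in L_{2r'}$ and $\Lambda^{-n/4}$-type factors, giving $\sL_{2,\infty}$-type bounds with a small gain of summability.

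The main obstacle is the $r<1$ regime (and, relatedly, the endpoint $r=1$). There the weak-$\sL_r$ space is not normable, and the Cwikel constant must be controlled without any Banach convexity. The dyadic summation in quasi-norms requires geometric decay, and justifying this decay needs care in the noncommutative setting. For that step I would first try to apply the $p$-triangle inequality for $\|\cdot\|_{r,\infty}^{r'}$ with $r'<r$.
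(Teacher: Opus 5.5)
Your argument for $r<1$ fails, and not for the reason you anticipate. Take the dyadic pieces $T_k=|a|^{1/2}(1+\Delta)^{-\frac{n}{4r}}\chi_{A_k}(\nabla)$, with $A_k=\{2^k\leq|m|<2^{k+1}\}$. Your Hilbert--Schmidt bound $\|T_k\|_2\lesssim 2^{kn(\frac12-\frac1{2r})}\|a\|_1^{1/2}$ is correct. However, $T_k$ has rank $\approx 2^{kn}$, and from rank $N$ and Hilbert--Schmidt norm alone the best bound is $\|T\|_{2r,\infty}\leq N^{\frac1{2r}-\frac12}\|T\|_2$. The exponents cancel exactly, so $\|T_k\|_{2r,\infty}\approx\|a\|_1^{1/2}$ uniformly in $k$. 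This is sharp already for $\theta=0$ and $a=1$: then $T_k$ has $\approx 2^{kn}$ eigenvalues of size $\approx 2^{-kn/(2r)}$. So there is no factor $2^{-\epsilon k}$, and no $\rho$-triangle inequality for $\|\cdot\|_{2r,\infty}$ can sum a series of non-decaying terms. The weak-type bound must instead come from a single $j$-dependent cut-off. Given $j$, choose $R$ with $R^n\approx j$, and split $(1+\Delta)^{-\frac{n}{4r}}$ into its parts on $\{|m|\leq R\}$ and $\{|m|>R\}$. After multiplying by $|a|^{1/2}$, the first part has rank $\lesssim j$. The second has Hilbert--Schmidt norm $\lesssim R^{\frac n2-\frac n{2r}}\|a\|_1^{1/2}$, by the identity $\|x\,g(\nabla)\|_2=\|x\|_{L_2}\|g\|_{\ell^2}$. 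This identity holds on $L_2(\T^n_\theta)$ itself, since $\langle x^*xU^m,U^m\rangle=\tau_0(x^*x)$ for all $m$. Hence $\mu_{Cj}(|a|^{1/2}(1+\Delta)^{-\frac{n}{4r}})\lesssim j^{-\frac12}j^{\frac12-\frac1{2r}}\|a\|_1^{1/2}=j^{-\frac1{2r}}\|a\|_1^{1/2}$. The $r<1$ estimate then follows by factorizing $a$ and applying H\"older's inequality.

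The remaining steps need repair rather than new ideas. Note that the paper gives no proof of its own here; it quotes the estimate from \cite{MP:JMP22, MSX:CMP19}.

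Your transference gives the right outcome, but not for the reason stated. On $L^2(\T^n)\otimes L_2(\T^n_\theta)$, the operator $((1+\Delta_{\T^n})^{-s}\otimes 1)\pi(a)((1+\Delta_{\T^n})^{-s}\otimes 1)$ is not compact (for $a=1$ every eigenvalue has infinite multiplicity), so its ordinary singular values give nothing. Meanwhile, the range projection of $W$ does not lie in $\mathcal{N}=\sL(L^2(\T^n))\bar\otimes L_\infty(\T^n_\theta)$. So ``compressions do not increase singular values'' does not compare $\Tr\otimes\tau_0$-singular values with singular values on $L_2(\T^n_\theta)$. What works is that $\ran W$ reduces this operator and $\Tr(W^*EW)=(\Tr\otimes\tau_0)(E)$ for $E\in\mathcal{N}_+$, since each $U^m$ is a tracial vector; together these give equal distribution functions. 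The detour is unnecessary anyway. The Hilbert--Schmidt identity above is exactly the hypothesis of the abstract Cwikel theorem of \cite{LSZ:PLMS20}, which therefore applies directly on $L_2(\T^n_\theta)$.

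Your split at $r=2$ is spurious. After factorizing you need Cwikel only at exponent $2r$, which requires just $r>1$, so your ``$r>2$'' argument covers all $r>1$. The case $r=1<r'$ then follows by writing $(1+\Delta)^{-\frac n4}=(1+\Delta)^{-\frac{n}{4r'}}(1+\Delta)^{-\frac n4(1-\frac1{r'})}$. Use the $\sL_{r',\infty}$ bound for the middle factor, and H\"older with the two outer factors in $\sL_{\frac{2r'}{r'-1},\infty}$.

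Finally, the factorization must read $a=|a^*|^{1/2}u|a|^{1/2}$, not $|a|^{1/2}u|a|^{1/2}$.
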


It follows from Proposition~\ref{prop:QT.Cwikel-flat} that Condition ($\textup{C}_r$) is satisfied,  with
\begin{equation}\label{eq:QT.ConditionCr}
 \sV_r=L_r(\T^n_\theta) \ (r>1), \qquad  \sV_r=L_{r'}(\T^n_\theta) \ (r=1<r'), \qquad \sV_r=L_1(\T^n_\theta) \ (r<1). 
\end{equation}

All this shows that the assumptions for Proposition~\ref{prop:Intro.Lq-asymptotics},  Theorem~\ref{thm:Intro.SC-Weyl-law-sVr}, and Theorem~\ref{thm:Intro.Integration-sVr} are satisfied.  In particular, we obtain the following spectral asymptotics. 

\begin{proposition}
 Let $q>0$, set $r=nq^{-1}$, and suppose that, either $r\neq 1$ and $r'=\max(r,1)$, or $r=1<r'$. For every $x\in L_{r'}(\T^n_\theta)$, we have
\begin{gather}
 \lim_{j\rightarrow \infty} j^{\frac{n}{q}}\mu_j\big(\Delta^{-\frac{q}{4}}x\Delta^{-\frac{q}{4}}\big) = \left(\hat{c}(n)\tau_0\big[|x|^{\frac{n}{q}}\big]\right)^{\frac{q}{n}},\\
\lim_{j\rightarrow \infty} j^{\frac{n}{q}}\lambda^\pm_j\big(\Delta^{-\frac{q}{4}}x\Delta^{-\frac{q}{4}}\big) = 
\left(\hat{c}(n)\tau_0\big[(x_{\pm})^{\frac{n}{q}}\big]\right)^{\frac{q}{n}} \quad (\text{if} \ x^*=x).  
\end{gather}
\end{proposition}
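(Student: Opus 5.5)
This is a direct specialization of Proposition~\ref{prop:Intro.Lq-asymptotics} to the spectral triple~(\ref{eq:QT.Spectral-triple}). We take $D=\sqrt{\Delta}$ and $p=n$, so that $|D|^{-q/2}=\Delta^{-q/4}$. Proposition~\ref{prop:Intro.Lq-asymptotics} then needs two inputs: Condition~(W), and Condition~($\textup{C}_r$) with $r=pq^{-1}=nq^{-1}$.

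Condition~(W) comes from the preceding corollary. Proposition~\ref{prop:QT.heat-asymptotic} gives Condition~(H) with $\sB=C^\infty(\T^n_\theta)$, which is a holomorphically closed Fréchet subalgebra. The error there is $\op{O}(e^{-\pi^2/t})$, so any $\delta>0$ works. The normalization $\Gamma(1+\frac n2)t^{-n/2}\tau(a)$ forces
\begin{equation*}
\tau=\frac{\pi^{n/2}}{\Gamma(\frac n2+1)}\,\tau_0=\hat c(n)\tau_0 .
\end{equation*}
Theorem~\ref{thm:Intro.Tauberian} then yields Condition~(W) with this $\tau$. This $\tau$ is the restriction of the positive faithful finite trace $\hat c(n)\tau_0$ on $\sM=L_\infty(\T^n_\theta)$, as Condition~($\textup{C}_r$) requires. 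Note that $L_r$-norms computed with $\hat c(n)\tau_0$ differ from those for $\tau_0$ only by a constant factor, so the spaces are the same.

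For Condition~($\textup{C}_r$) we use Proposition~\ref{prop:QT.Cwikel-flat}. Since $(1+D^2)^{-p/(4r)}=(1+\Delta)^{-n/(4r)}$, it gives estimate~(\ref{eq:ST.Cwikel}) with $\|\cdot\|_{(r)}=\|\cdot\|_{r'}$ restricted to $\overline{\sA}=C(\T^n_\theta)$. We then check the remaining requirements on this norm.
\begin{itemize}
\item It is a continuous $*$-invariant norm on $C(\T^n_\theta)$. It is dominated by the operator norm because $\tau_0$ is a state. It is $*$-invariant because $\tau_0$ is a trace.
\item The inclusion into $L_{\hat r}(\sM)$ is continuous. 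When $r>1$ or $r=1<r'$ we have $r'\geq\hat r$. When $r<1$ we have $\hat r=1=r'$.
\end{itemize}
By density of $C(\T^n_\theta)$ in $L_{r'}$, the completion is $\sV_r=L_{r'}(\T^n_\theta)$, as recorded in~(\ref{eq:QT.ConditionCr}).

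Applying Proposition~\ref{prop:Intro.Lq-asymptotics} to $x\in L_{r'}(\T^n_\theta)$ now gives both spectral asymptotics. The right-hand sides are $\tau[|x|^{n/q}]^{q/n}$ and $\tau[(x_\pm)^{n/q}]^{q/n}$, which become the stated ones after substituting $\tau=\hat c(n)\tau_0$.

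The main obstacle is bookkeeping rather than analysis. First, the exponent on $j$ produced by Proposition~\ref{prop:Intro.Lq-asymptotics} is $j^{q/p}=j^{q/n}$. The displayed statement writes $j^{n/q}$, which looks like a typo for $j^{q/n}$, and we would state the result with $j^{q/n}$. Second, in the case $r<1$ one must make sure that $|x|^{n/q}$ and $x_\pm$ are understood as $L_r$-quasi-norm expressions of $L_1$-elements, which is exactly how the proof of Proposition~\ref{prop:Intro.Lq-asymptotics} handles it.
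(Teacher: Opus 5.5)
Your proposal is correct and matches the paper's proof. The paper likewise takes Condition~(W) with $\tau=\hat{c}(n)\tau_0$ (obtained from Condition~(H) via Proposition~\ref{prop:QT.heat-asymptotic}) and Condition~($\textup{C}_r$) with $\sV_r=L_{r'}(\T^n_\theta)$ (from Proposition~\ref{prop:QT.Cwikel-flat}), then applies Proposition~\ref{prop:Intro.Lq-asymptotics}; your checks of the norm requirements are details the paper leaves implicit. You are also right that the normalizing factor should be $j^{q/n}$ rather than $j^{n/q}$, since $\Delta^{-q/2}\in\sL_{n/q,\infty}$, and the two exponents coincide only when $q=n$.
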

\begin{proof}
 As mentioned above  Condition (W) holds with $\tau= \hat{c}(n)\tau_0$. Moreover, Condition~($\textup{C}_r$) holds with $\sV_r=L_{r'}(\T^n_\theta)$. Applying Proposition~\ref{prop:Intro.Lq-asymptotics} then gives the result.
 \end{proof}

We are now in a position to prove Conjecture~\ref{Conj:Intro-QT.Conjecture-flat}. 

\begin{proof}[Proof of Conjecture~\ref{Conj:Intro-QT.Conjecture-flat}] 
Condition (W) holds with $\tau= \hat{c}(n)\tau_0$ and Condition~($\textup{C}_r$) holds with $\sV_r=L_{r'}(\T^n_\theta)$.  Applying 
Theorem~\ref{thm:Intro.SC-Weyl-law-sVr} then shows that, given any $q>0$ and $V=V^*\in L_{r'}(\T^n_\theta)$, 
for every energy level $\lambda\in \R$, we have
\begin{equation}\label{eq:NCT.SC-Weyl}
 \lim_{h\rightarrow 0^+} h^nN\big(h^{2q}\Delta^q+V;\lambda\big) = \hat{c}(n)\tau_0\big[(V-\lambda)_{-}^{\frac{n}{2q}}\big], \qquad \hat{c}(n):=|\bB^n|. 
  \end{equation} 
Conjecture~\ref{Conj:Intro-QT.Conjecture-flat} is thus proved.  
\end{proof}
 
 \begin{remark}
 A proof  of the semiclassical Weyl law~(\ref{eq:NCT.SC-Weyl}) for $n\geq 3$ and $q=1$ with $V\in C(\T^n_\theta)$ is given in~\cite{MSZ:LMP22}. As mentioned in Section~\ref{chap:Intro}, the approach~\cite{MSZ:LMP22} does not allow one to deal with 2-summable spectral triples. As a result, it cannot be used to get semiclassical Weyl laws on quantum 2-tori.   
\end{remark}
 
 \begin{remark}
 We refer to~\cite{MP:AIM23} for a \emph{conjectural} ``curved'' version of the semiclassical Weyl law~(\ref{eq:NCT.SC-Weyl}). In the terminology of~\cite{MP:AIM23} a curved quantum torus is a quantum torus equipped with a Riemannian metric $g$ in the sense of~\cite{HP:JGP20, Ro:SIGMA13}. In this case the flat Laplacian is replaced by the Laplacian $\Delta_g$ associated with $g$ as in~\cite{HP:JGP20}. We stress that for an arbitrary Riemannian metric the commutators $[\sqrt{\Delta_g},a]$, $a\in C^\infty(\T^n_\theta)$, need not be bounded, and so $\sqrt{\Delta_g}$ does not provide a spectral triple. As a result, although we have Cwikel-type estimates in the curved setting (see~\cite{MP:AIM23}) we cannot expect getting semiclassical Weyl laws in the curved case as a direct application of the results of this monograph. 
\end{remark}
 
Finally, applying Theorem~\ref{thm:Intro.Integration-sVr} leads to the following integration formulas. 
 
\begin{theorem}\label{thm:NCT.Integration-Lr}
For every $x\in L_r(\T^n_\theta)$, $r>1$, 
 the operators $\Delta^{-n/4}x\Delta^{-n/4}$ and $|\Delta^{-n/4}x\Delta^{-n/4}|$ are spectrally measurable, and we have  
\begin{gather}\label{eq:NCT.integration-formula1}
  \bint \Delta^{-\frac{n}{4}}x \Delta^{-\frac{n}{4}} =\hat{c}(n)\tau_0\big[x\big],\\
 \bint \big|  \Delta^{-\frac{n}{4}}x \Delta^{-\frac{n}{4}}\big|= \hat{c}(n)\tau_0\big[|x|\big].
 \label{eq:NCT.integration-formula2}
 \end{gather}
\end{theorem}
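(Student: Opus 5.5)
The plan is to obtain Theorem~\ref{thm:NCT.Integration-Lr} as a direct specialization of Theorem~\ref{thm:Intro.Integration-sVr} to the square-root Laplacian spectral triple~(\ref{eq:QT.Spectral-triple}), taking $p=n$ and $D=\sqrt{\Delta}$. With these choices $|D|^{-p/2}=\Delta^{-n/4}$, so the operators in the statement are exactly those in Theorem~\ref{thm:Intro.Integration-sVr}.

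The first step is to check the hypotheses. Condition~(W) holds with $\tau=\hat{c}(n)\tau_0$ by the corollary of Proposition~\ref{prop:QT.heat-asymptotic}, which goes through Condition~(H) and Theorem~\ref{thm:Intro.Tauberian}. Condition~($\textup{C}_1$) holds for the critical exponent $r=1$ by Proposition~\ref{prop:QT.Cwikel-flat}, used with some $r'>1$. According to~(\ref{eq:QT.ConditionCr}), this gives $\sV_1=L_{r'}(\T^n_\theta)$. We also need $\tau$ to be the restriction of a positive faithful finite trace on the von Neumann algebra $\sM=L_\infty(\T^n_\theta)$. This is true because $\tau_0$ extends to a faithful normal tracial state, so $\hat{c}(n)\tau_0$ qualifies. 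The $L_r$ spaces built from $\tau$ and from $\tau_0$ coincide, with norms differing only by a constant factor.

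Now fix $x\in L_r(\T^n_\theta)$ with $r>1$, and take $r'=r$. Condition~($\textup{C}_1$) then holds with $\sV_1=L_r(\T^n_\theta)$, so $x\in\sV_1$. Theorem~\ref{thm:Intro.Integration-sVr} gives that both $\Delta^{-n/4}x\Delta^{-n/4}$ and $|\Delta^{-n/4}x\Delta^{-n/4}|$ are spectrally measurable, with NC integrals $\tau[x]=\hat{c}(n)\tau_0[x]$ and $\tau[|x|]=\hat{c}(n)\tau_0[|x|]$. These are exactly~(\ref{eq:NCT.integration-formula1}) and~(\ref{eq:NCT.integration-formula2}).

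The only point needing care is that Condition~($\textup{C}_1$) holds with $\sV_1=L_r$ for every $r>1$, not just for one fixed $r'$. Here there is a mismatch in the Cwikel estimate: Proposition~\ref{prop:QT.Cwikel-flat} is stated with $(1+\Delta)^{-n/4}$, whereas Theorem~\ref{thm:Intro.Integration-sVr} involves the partial inverse powers $\Delta^{-n/4}$. I would handle this as in the abstract section. Since $\Delta$ has a one-dimensional kernel, $\Delta^{-n/4}=B(1+\Delta)^{-n/4}$, where $B=\Delta^{-n/4}(1+\Delta)^{n/4}$ is a bounded operator. This transfers the $\sL_{1,\infty}$ bound from one sandwich to the other, and the rest of the argument is the general theory already established.
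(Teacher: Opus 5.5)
Your proposal is correct and is exactly the paper's argument: Theorem~\ref{thm:NCT.Integration-Lr} is obtained by applying Theorem~\ref{thm:Intro.Integration-sVr} to the triple~(\ref{eq:QT.Spectral-triple}) with $p=n$. Condition~(W) holds with $\tau=\hat{c}(n)\tau_0$ via Condition~(H), and Condition~($\textup{C}_1$) holds with $\sV_1=L_{r'}(\T^n_\theta)$ for any $r'>1$ by Proposition~\ref{prop:QT.Cwikel-flat}. Your worry about $(1+\Delta)^{-n/4}$ versus $\Delta^{-n/4}$ is harmless but is not really a mismatch: Condition~($\textup{C}_1$) is itself phrased with $(1+D^2)$, and the bounded-factor passage to $|D|^{-p/2}$ is already built into the general proof of Proposition~\ref{prop:Intro.Lq-asymptotics}.
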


\begin{remark}
 The integration formula~(\ref{eq:NCT.integration-formula1}) is a refinement of the integration formulas of~\cite{MSZ:MA19, MP:JMP22, Po:JMP20}. In particular,  in~\cite{MSZ:MA19, MP:JMP22, Po:JMP20} the focus is on strong measurability, whereas Theorem~\ref{thm:NCT.Integration-Lr} provides spectral measurability, which is a stronger property. The integration formula~(\ref{eq:NCT.integration-formula2}) is new.
 \end{remark}

\begin{remark}
 We refer to~\cite{MP:AIM23} (see also~\cite{Po:JMP20}) for curved versions of the integration formula~(\ref{eq:NCT.integration-formula1}). In~\cite{MP:AIM23, Po:JMP20} the focus is on strong measurability. It would be interesting to establish spectral measurability in the curved case, and to obtain a curved version of the integration formula~(\ref{eq:NCT.integration-formula2}). 
\end{remark}

\subsection{Dirac Spectral Triple} 
The approach to get the above semiclassical Weyl law and integration formulas for the spectral triple~(\ref{eq:QT.Spectral-triple}) applies \emph{verbatim} to the 
Dirac spectral triple of~\cite[\S\S12.3]{GVF:Birkhauser01}. 

As the ordinary torus $\T^n$ is parallelizable by the vector fields $\partial_{x_1}, \ldots, \partial_{x_n}$, its spinor bundle is trivial with fiber $\shS_n\simeq \C^N$, where $N:=2^{[n/2]}$. We then define the \emph{spinor module} of $\T^n_\theta$ as the free $C^\infty(\T^n_\theta)$-module, 
\begin{equation*}
 C^\infty(\T^n_\theta;\shS_n):=C^\infty(\T^n_\theta)\otimes \C^N=C^\infty(\T^n_\theta)^N, 
\end{equation*}
where $C^\infty(\T^n_\theta)$ acts trivially on $\C^N$. We equip it with the Hermitian metric, 
\begin{equation*}
 (a,b):= \sum_{j} b_j^*a_j, \qquad a=(a_j) \in C^\infty(\T^n_\theta)^N, \quad b=(b_j)\in C^\infty(\T^n_\theta)^N. 
\end{equation*}
We then denote by $L^2(\T^n_\theta;\shS_n)$ the completion of $C^\infty(\T^n_\theta;\shS_n)$ with respect to the inner product, 
\begin{equation*}
 \scal{x}{y}:=\tau_0\left[(x,y)\right]=  \sum_{j} \tau\left[y_j^*x_j \right],  \qquad x=(x_j), \quad y=(y_j). 
\end{equation*}
 Note that $C^\infty(\T^n_\theta)$ acts by left multiplication on $L^2(\T^n_\theta;\shS_n)$. 
 
 Let $\gamma_1,\cdots, \gamma_n$ be the canonical Clifford matrices in $M_N(\C)$ subject to the relations, 
 \begin{equation*}
 \gamma_j^*=-\gamma_j, \qquad \gamma_j\gamma_k+\gamma_k\gamma_j=-2\delta_{jk}, \qquad j,k=1, \ldots, n. 
\end{equation*}
The 2nd set of relations mean that $\gamma_j^2=-1$ and $\gamma_j\gamma_k=-\gamma_k\gamma_j$ if $j\neq k$. Combining this with the relation $\gamma_j^*=-\gamma_j$ shows that each matrix $\gamma_j$ is unitary. 

\begin{definition}[\cite{GVF:Birkhauser01}] 
The \emph{Dirac operator} $\shD:C^\infty(\T_\theta^m;\shS_n)\rightarrow C^\infty(\T_\theta^m;\shS_n)$ is defined by
\begin{equation*}
 \shD = \sum_{1\leq j \leq n} \partial_j \otimes \gamma_j. 
\end{equation*}
 \end{definition}

\begin{remark}
 For $\theta=0$ we recover the Dirac operator on the ordinary torus $\T^n$. 
\end{remark}

\begin{lemma}
 We have
 \begin{gather}\label{QT.Dirac-properties1}
 \shD^*=\shD, \qquad \shD^2=\Delta\otimes 1,\\
 \left[\shD,a\right]= \sum_{1\leq j \leq n} \delta_j(a)\otimes \gamma_j \qquad \text{for all}\ a\in C^\infty(\T^n_\theta). 
 \label{QT.Dirac-properties2}
\end{gather}
\end{lemma}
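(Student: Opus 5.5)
We will verify the three identities by direct computation on the dense domain $C^\infty(\T^n_\theta;\shS_n)=C^\infty(\T^n_\theta)\otimes\C^N$, using the orthonormal basis $\{U^m\otimes e_k\}$ of $L^2(\T^n_\theta;\shS_n)$, and then pass to closures. Here $\delta_j$ in~(\ref{QT.Dirac-properties2}) denotes the canonical derivation $\partial_j$ acting on $C^\infty(\T^n_\theta)$.

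\emph{Self-adjointness.} Each $\partial_j$ acts diagonally by $\partial_j(U^m)=im_jU^m$, so $\partial_j$ is skew-adjoint on $L_2(\T^n_\theta)$. Since $\gamma_j^*=-\gamma_j$, each $\partial_j\otimes\gamma_j$ is formally symmetric, and hence so is $\shD$. For essential self-adjointness, we note that $\shD$ acts on the subspace $U^m\otimes\C^N$ by the finite matrix $i\sum_j m_j\gamma_j$, which is Hermitian. Thus $\shD$ is a direct sum of Hermitian matrices on finite-dimensional blocks, so it is essentially self-adjoint on finite sums of basis vectors. We take $\shD$ to be its closure, with domain $W_2^1(\T^n_\theta)\otimes\C^N$.

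\emph{The square.} On each block we compute
\begin{equation*}
\Big(i\sum_j m_j\gamma_j\Big)^2=-\sum_{j,k}m_jm_k\gamma_j\gamma_k=-\frac12\sum_{j,k}m_jm_k(\gamma_j\gamma_k+\gamma_k\gamma_j)=|m|^2.
\end{equation*}
By~(\ref{eq:NCT.Laplacian}) this is exactly $\Delta\otimes1$ on the block. Since both operators are diagonal with respect to the same block decomposition, the equality $\shD^2=\Delta\otimes1$ holds as self-adjoint operators.

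\emph{The commutator.} Each $\partial_j$ is a derivation on $C^\infty(\T^n_\theta)$, so for $a\in C^\infty(\T^n_\theta)$ and $\xi\otimes v$ with $\xi\in C^\infty(\T^n_\theta)$ we have $\partial_j(a\xi)=\partial_j(a)\xi+a\partial_j\xi$. Since $a$ acts as $a\otimes1$, this gives
\begin{equation*}
[\shD,a](\xi\otimes v)=\sum_j\partial_j(a)\xi\otimes\gamma_jv .
\end{equation*}
The right-hand side is a bounded operator because $\partial_j(a)\in C^\infty(\T^n_\theta)$. We then extend from the core to all of $\dom\shD$: the domain is a $C^\infty(\T^n_\theta)$-module, as noted for $W_2^1$ in the proof that~(\ref{eq:QT.Spectral-triple}) is a spectral triple, and $\shD$ is closed. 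This yields~(\ref{QT.Dirac-properties2}).

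The main point needing care is the domain and closure bookkeeping. The algebraic identities themselves are immediate once the block-diagonal structure is used.
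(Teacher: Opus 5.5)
Your proof is correct and follows the paper's own argument: symmetry from the skew-adjointness of $\partial_j$ and $\gamma_j$, the square from the relations $\gamma_j\gamma_k+\gamma_k\gamma_j=-2\delta_{jk}$, and the commutator formula from the derivation property of $\partial_j$. Your block decomposition over $U^m\otimes\C^N$ is the unitary equivalence $\sU\otimes 1$ with the Dirac operator of the ordinary torus $\T^n$ made explicit; the paper uses that equivalence to get essential self-adjointness and the domain $W_2^1(\T^n_\theta)\otimes\C^N$, and writing it out makes your domain and closure bookkeeping self-contained.
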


\begin{remark}
 The proofs of the above property are the same with the Dirac operator on $\T^n$ or $\R^n$. The first equality in~(\ref{QT.Dirac-properties1}) stems from the fact that the derivations $\partial_j$ and the Clifford matrices $\gamma_j$ are (formally) skew-adjoint. The 2nd equality is a well-known consequence of the relations $\gamma_j\gamma_k+\gamma_k\gamma_j=-2\delta_{jk}$. Finally, for~(\ref{QT.Dirac-properties2}) it is enough to note that
 \begin{equation*}
  \left[\shD,a\right]= \sum_{1\leq j \leq n} [\partial_j,a]\otimes \gamma_j = \sum_{1\leq j \leq n} \delta_j(a)\otimes \gamma_j. 
\end{equation*}
\end{remark}

The Dirac operator $\shD$ is unitarily equivalent to the Dirac operator $\shD_{\T^n}$ on the ordinary torus $\T^n$. Indeed, 
let $\sU:L^2(\T^n_\theta)\rightarrow L^2(\T^n)$ be the unitary operator given by 
\begin{equation*}
 \sU(U^m)=e^{im\cdot x}, \qquad m \in \Z^n. 
\end{equation*}
Note that $\sU$ maps $C^\infty(\T^n_\theta)$ onto $C^\infty(\T^n)$. We then have
\begin{equation*}
 \shD = (\sU\otimes 1)^* \shD_{\T^n} (\sU\otimes 1). 
\end{equation*}
It follows that $\shD$ is essentially selfadjoint and the domain of its closure is 
\begin{equation*}
 W_2^1(\T^n_\theta;\shS_n):=W_2^1(\T^n_\theta)\otimes \C^N. 
\end{equation*}
Note also that by the 2nd equality in~(\ref{QT.Dirac-properties1}) we have $|\shD|^{-1}=\Delta^{-1/2}\otimes 1\in \sL_{n,\infty}$. Combining this with~(\ref{QT.Dirac-properties2}) we then get the following result. 

\begin{proposition}
 The following triple 
\begin{equation*}
 \left( C^\infty(\T^n_\theta), L^2(\T^n;\shS_n), \shD\right)
\end{equation*}
is an $n$-summable spectral triple. 
\end{proposition}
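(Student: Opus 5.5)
The plan is to check the three defining conditions of a $p$-summable spectral triple with $p=n$. The operator $\shD$ must be selfadjoint with compact resolvent. The commutators $[\shD,a]$, $a\in C^\infty(\T^n_\theta)$, must be bounded and must preserve the domain. Finally, the partial inverse must lie in $\sL_{n,\infty}$. Throughout, $C^\infty(\T^n_\theta)$ acts on $L^2(\T^n_\theta;\shS_n)=L_2(\T^n_\theta)\otimes\C^N$ by $a\mapsto a\otimes 1$.

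\textbf{Selfadjointness and summability.} By the unitary equivalence $\shD=(\sU\otimes 1)^*\shD_{\T^n}(\sU\otimes 1)$, the closure of $\shD$ is selfadjoint with domain $W_2^1(\T^n_\theta)\otimes\C^N$. Moreover, the second identity in~(\ref{QT.Dirac-properties1}) gives $\shD^2=\Delta\otimes 1$. Hence $|\shD|=\sqrt{\Delta}\otimes 1$, and $\shD^{-1}=\shD\,|\shD|^{-2}$ satisfies $\mu_j(\shD^{-1})=\mu_j(|\shD|^{-1})=\mu_j(\Delta^{-1/2}\otimes 1)$.

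Since $\Delta^{-1/2}\in\sL_{n,\infty}$ was established for the spectral triple~(\ref{eq:QT.Spectral-triple}), its $N$-fold direct sum $\Delta^{-1/2}\otimes 1$ also lies in $\sL_{n,\infty}$. Indeed, the singular values are simply repeated $N$ times, so $\mu_j\le \mu_{\lfloor j/N\rfloor}(\Delta^{-1/2})=\op{O}(j^{-1/n})$. This yields both the compact resolvent and $n$-summability.

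\textbf{Bounded commutators.} Let $a\in C^\infty(\T^n_\theta)$. The space $W_2^1(\T^n_\theta)$ is a left module over $C^\infty(\T^n_\theta)$ (as recalled in the proof for~(\ref{eq:QT.Spectral-triple})). Therefore $a\otimes 1$ preserves $\dom(\shD)=W_2^1(\T^n_\theta)\otimes\C^N$.

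On this domain the Leibniz rule $\partial_j(ax)=\partial_j(a)x+a\partial_j(x)$ gives formula~(\ref{QT.Dirac-properties2}), namely $[\shD,a]=\sum_j\partial_j(a)\otimes\gamma_j$. Here $\partial_j(a)\in C^\infty(\T^n_\theta)$ acts boundedly on $L_2(\T^n_\theta)$, and each $\gamma_j$ is unitary. It follows that $\|[\shD,a]\|\le\sum_j\|\partial_j(a)\|$.

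The only point needing care is the Leibniz rule for $x\in W_2^1$ rather than for smooth $x$. I would first verify it on $C^\infty(\T^n_\theta)^N$, where it holds because the $\partial_j$ are derivations. I would then extend it by density, using that $C^\infty(\T^n_\theta)^N$ is a core for $\shD$ and that both sides of the identity are continuous in the $W_2^1$ graph norm. This is routine, and no real obstacle is expected.
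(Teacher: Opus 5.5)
Your proposal is correct and follows essentially the same route as the paper: selfadjointness via the unitary equivalence with $\shD_{\T^n}$, summability from $|\shD|^{-1}=\Delta^{-1/2}\otimes 1\in\sL_{n,\infty}$ (using $\shD^2=\Delta\otimes 1$), and bounded commutators from $[\shD,a]=\sum_j\partial_j(a)\otimes\gamma_j$. You also spell out two points the paper leaves implicit: invariance of the domain under $a$, and the extension of the Leibniz rule from the smooth core to $W_2^1$.
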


As $\shD^2=\Delta\otimes 1$, given any $a\in C^\infty(\T^n_\theta)$, we have
\begin{equation*}
 \Tr\left[a e^{-\shD^2}\right]= \Tr\left[\left(a e^{-t\Delta}\right)\otimes 1\right] = N  \Tr\left[a e^{-t\Delta}\right]. 
\end{equation*}
Combining this with Proposition~\ref{prop:QT.heat-asymptotic} we then deduce that Condition~(H) and Condition~(W) hold with
\begin{equation*}
 \tau(a) := N\frac{\pi^{\frac{n}{2}}}{\Gamma\left(\frac{n}{2}+1\right)} \tau_0(a)=  \slashed{\hat{c}}(n)\tau_0(a), \qquad \text{where}\  \slashed{\hat{c}}(n):=N|\bB^n|=2^{\left[\frac{n}{2}\right]}|\bB^n|. 
\end{equation*}

In addition, Condition ($\textup{C}_r$) hold exactly as with the square-root Laplacian $\sqrt{\Delta}$, i.e., as in~(\ref{eq:QT.ConditionCr}). This allows us to apply Theorem~\ref{thm:Intro.SC-Weyl-law-sVr} and Theorem~\ref{thm:Intro.Integration-sVr}  to get the following semiclassical Weyl laws and integration formulas. 

\begin{theorem}\label{thm:NCT.SC-Weyl-Lr-Dirac}
Let $q>0$, set $r=2nq^{-1}$, and suppose that either $r\neq 1$ and $r'=\max(r,1)$, or $r=1<r'$. Given any $V=V^*\in L_{r'}(\T^n_\theta)$, 
for every energy level $\lambda\in \R$, we have
\begin{equation*}
 \lim_{h\rightarrow 0^+} h^nN\big(h^{2q}|\shD|^{2q}+V;\lambda\big) = \slashed{\hat{c}}(n)\tau_0\big[(V-\lambda)_{-}^{\frac{n}{2q}}\big]. 
  \end{equation*} 
\end{theorem}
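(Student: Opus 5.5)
The plan is to apply Theorem~\ref{thm:Intro.SC-Weyl-law-sVr} directly to the Dirac spectral triple $(C^\infty(\T^n_\theta), L^2(\T^n_\theta;\shS_n),\shD)$, exactly as was done for the square-root Laplacian triple~(\ref{eq:QT.Spectral-triple}). That theorem needs two inputs, Condition~(W) and Condition~($\textup{C}_{r_0}$) with $r_0=p(2q)^{-1}=n(2q)^{-1}$. It then yields the semiclassical Weyl law for all $V=V^*\in\sV_{r_0}$ with the functional $\tau$ of Condition~(W).

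\textbf{Condition~(W).} The identity $\shD^2=\Delta\otimes 1$ gives $\Tr[ae^{-t\shD^2}]=N\Tr[ae^{-t\Delta}]$. Proposition~\ref{prop:QT.heat-asymptotic} then gives Condition~(H) with $\sB=C^\infty(\T^n_\theta)$, which is holomorphically closed, and with $\tau=\slashed{\hat{c}}(n)\tau_0$. Theorem~\ref{thm:Intro.Tauberian} converts Condition~(H) into Condition~(W). This step was already recorded just before the statement.

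\textbf{Condition~($\textup{C}_r$).} Since $(1+\shD^2)^{-s}=(1+\Delta)^{-s}\otimes 1$ and $a\in L_\infty(\T^n_\theta)$ acts as $a\otimes 1$, we have
\begin{equation*}
(1+\shD^2)^{-\frac{n}{4r}}a(1+\shD^2)^{-\frac{n}{4r}}=\big[(1+\Delta)^{-\frac{n}{4r}}a(1+\Delta)^{-\frac{n}{4r}}\big]\otimes 1_{\C^N}.
\end{equation*}
Tensoring with $1_{\C^N}$ repeats each singular value $N$ times, so the $\sL_{r,\infty}$ quasi-norm grows by at most a factor depending on $N$ and $r$. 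Proposition~\ref{prop:QT.Cwikel-flat} therefore transfers verbatim, and Condition~($\textup{C}_r$) holds with $\sV_r$ as in~(\ref{eq:QT.ConditionCr}). The continuity of $\sV_r\hookrightarrow L_{\hat r}$ is immediate because $\|\cdot\|_{(r)}$ is an $L_{r'}$-norm with $r'\ge\hat r$.

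\textbf{Main obstacle: bookkeeping of exponents.} Theorem~\ref{thm:Intro.SC-Weyl-law-sVr} is stated with $r_0=n/(2q)$. I would check that the hypothesis on $V$ in the statement, read through $r'$, puts $V$ in $\sV_{r_0}$. The statement is written with $r=2nq^{-1}$, mirroring Conjecture~\ref{Conj:Intro-QT.Conjecture-flat}, and I would match this to the case distinction $r_0\neq1$ versus $r_0=1<r'$. If the indices do not literally agree, I would argue with $r_0$, which is the exponent the theorem actually uses, and note that $L_{r'}$ embeds continuously into the required $\sV_{r_0}$ since $\tau_0$ is finite.

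With both conditions in hand, Theorem~\ref{thm:Intro.SC-Weyl-law-sVr} applied to $H_V^{(q)}(h)=h^{2q}|\shD|^{2q}+V$ gives
\begin{equation*}
\lim_{h\to0^+}h^nN\big(H_V^{(q)}(h);\lambda\big)=\slashed{\hat{c}}(n)\,\tau_0\big[(V-\lambda)_-^{\frac{n}{2q}}\big].
\end{equation*}
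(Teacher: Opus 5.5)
Your proposal is correct and follows the paper's own argument. You get Condition~(H), hence Condition~(W), from $\shD^2=\Delta\otimes 1$ and Proposition~\ref{prop:QT.heat-asymptotic}; you transfer Condition~($\textup{C}_r$) from Proposition~\ref{prop:QT.Cwikel-flat} through the $\otimes 1_{\C^N}$ factor; and you conclude with Theorem~\ref{thm:Intro.SC-Weyl-law-sVr}. Your exponent check, which the paper skips, also goes through: since $2nq^{-1}=4\cdot n(2q)^{-1}$, the stated hypothesis $V\in L_{r'}(\T^n_\theta)$ always puts $V$ in $\sV_{n/(2q)}$, including the critical case $n(2q)^{-1}=1$, where $r'=4>1$.
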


\begin{theorem}\label{thm:NCT.Integration-Lr-Dirac}
 For every $x\in L_r(\T^n_\theta)$, $r>1$, 
 the operators $|\shD|^{-n/2}x|\shD|^{-n/2}$ and $||\shD|^{-n/2}x|\shD|^{-n/2}|$ are spectrally measurable, and we have  
\begin{gather}\label{eq:NCT.integration-formula1-Dirac}
  \bint |\shD|^{-\frac{n}{2}}x |\shD|^{-\frac{n}{2}} =\slashed{\hat{c}}(n)\tau_0\big[x\big],\\
 \bint \big|  |\shD|^{-\frac{n}{2}}x |\shD|^{-\frac{n}{2}}\big|= \slashed{\hat{c}}(n)\tau\big[|x|\big].
 \label{eq:NCT.integration-formula2-Dirac}
 \end{gather}
\end{theorem}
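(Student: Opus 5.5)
This follows by specializing Theorem~\ref{thm:Intro.Integration-sVr} to the Dirac spectral triple $\big(C^\infty(\T^n_\theta), L^2(\T^n_\theta;\shS_n),\shD\big)$, with $p=n$ and $\sV_1=L_r(\T^n_\theta)$ for a fixed $r>1$. The argument is the same as for Theorem~\ref{thm:NCT.Integration-Lr}.

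\textbf{Condition (W).} The computation preceding the statement gives Condition~(H) with $\tau=\slashed{\hat{c}}(n)\tau_0$ and $\sB=C^\infty(\T^n_\theta)$, which is a holomorphically closed Fr\'echet subalgebra. Concretely, $\Tr[ae^{-t\shD^2}]=N\Tr[ae^{-t\Delta}]$, and Proposition~\ref{prop:QT.heat-asymptotic} supplies the remainder $\op{O}(e^{-\pi^2/t})$, which is certainly $\op{O}(t^\delta)$. Theorem~\ref{thm:Intro.Tauberian} then gives Condition~(W) with this $\tau$. The map $\tau$ extends to a positive faithful finite trace on $L_\infty(\T^n_\theta)$. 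This is the von Neumann algebra generated by the image of $C^\infty(\T^n_\theta)$, which acts diagonally as $a\otimes 1$.

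\textbf{Condition ($\textup{C}_1$).} Since $(1+\shD^2)^{-1/4}\cdot(1+\shD^2)^{-1/4}$ acting on $x$ equals $\big((1+\Delta)^{-1/4}x(1+\Delta)^{-1/4}\big)\otimes 1_{\C^N}$, singular values are simply repeated $N$ times. Hence $\|T\otimes 1\|_{1,\infty}\le N\|T\|_{1,\infty}$. Proposition~\ref{prop:QT.Cwikel-flat}, applied in the case $r=1<r'$ with $r'$ the given exponent, yields
\[
\big\|(1+\shD^2)^{-\frac{n}{4}}a(1+\shD^2)^{-\frac{n}{4}}\big\|_{1,\infty}\le NC_{1r'}\|a\|_{r'}.
\]
We take $\|\cdot\|_{(1)}=\|\cdot\|_{r'}$. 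It is $*$-invariant, continuous on $C(\T^n_\theta)$, and dominates the $L_1$-norm, since $\tau_0$ is a state. The completion $\sV_1$ is then $L_{r'}(\T^n_\theta)$.

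\textbf{Conclusion.} Theorem~\ref{thm:Intro.Integration-sVr} now gives spectral measurability of both operators for every $x\in L_{r'}(\T^n_\theta)$. It also gives $\bint |\shD|^{-n/2}x|\shD|^{-n/2}=\tau[x]=\slashed{\hat{c}}(n)\tau_0[x]$ and $\bint\big||\shD|^{-n/2}x|\shD|^{-n/2}\big|=\slashed{\hat{c}}(n)\tau_0[|x|]$. The $\tau$ written in~(\ref{eq:NCT.integration-formula2-Dirac}) is to be read as $\tau_0$.

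\textbf{Main point to check.} The only non-routine step is Condition~($\textup{C}_1$) for the Dirac triple. It reduces to the scalar Cwikel estimate through the tensor factorization, so the main care needed is checking that the action of $L_{r'}$ on $L^2(\T^n_\theta;\shS_n)$ is diagonal, which it is.
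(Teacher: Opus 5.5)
Your proposal is correct and follows the paper's own route: you get Condition~(H) from $\Tr[ae^{-t\shD^2}]=N\Tr[ae^{-t\Delta}]$ and Proposition~\ref{prop:QT.heat-asymptotic}, hence Condition~(W) with $\tau=\slashed{\hat{c}}(n)\tau_0$; you then get Condition~($\textup{C}_1$) with $\sV_1=L_{r'}(\T^n_\theta)$ from the scalar Cwikel estimate, and conclude with Theorem~\ref{thm:Intro.Integration-sVr}. Your tensor-factorization argument (singular values repeated $N$ times, giving the constant $NC_{1r'}$) spells out what the paper compresses into ``Condition~($\textup{C}_r$) holds exactly as with the square-root Laplacian,'' and your reading of $\tau[|x|]$ in the second formula as $\tau_0[|x|]$ is the right one.
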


\begin{remark}
 The integration formula for quantum tori of~\cite{MSZ:MA19} is actually stated for the above Dirac spectral triple. Therefore, we see that~
 (\ref{eq:NCT.integration-formula1-Dirac}) is a refinement of the integration formula in~\cite{MSZ:MA19}. The integration formula~(\ref{eq:NCT.integration-formula2-Dirac}) is new.  
\end{remark}

\begin{remark}
 It would be interesting to have a curved version of Theorem~\ref{thm:NCT.Integration-Lr-Dirac}. 
\end{remark}

\section{Final comments} 
Further examples are presented in the survey~\cite{Po:NYC25} and the companion papers~\cite{Po:Bir-Sol-sR, Po:Open}. (They were originally included in the first two versions of this paper on arXiv.) 

The focus of this paper is on \emph{unital} spectral triples. Extending the results to \emph{non-unital} spectral triples would allow us to include Euclidean spaces, infinite volume geometries, and quantum Euclidean spaces. While Condition~(W) is specific to the unital case, the Tauberian conditions (Z) and (H) and the double operator-integral arguments carry over with minor modifications.

Several examples, including the Dirac operator $\sD$ whose eigenvalues are the imaginary parts of the zeros of Riemann's zeta function~\cite{Co:AFA24,CM:PNAS22}, give rise to \emph{nonclassical} Weyl laws (in the sense of Simon~\cite{Si:JFA83}) involving logarithmic corrections. Such examples are naturally summable in weak Lorentz ideals of the form
\begin{equation}
 \sL_g:=\left\{T; \ \mu_j(T)=\op{O}\left(g(j)\right)\right\}, \qquad g(t)=t^{-p}(\log(t+1))^q, \quad p,q>0,
\end{equation}
rather than in weak Schatten classes. The relationships between Connes' integration in such weak Lorentz ideals and nonclassical Weyl laws are studied in the preprint~\cite{PT:Part1} (see also~\cite{GS:JFA14,GU:JMAA20,LU:JMAA25}).

Finally, as pointed out by Magnus Goffeng~\cite{Go:PC}, it would be interesting to extend the results to generalized spectral triples~\cite[Appendix~A]{GM:DM15} in which the boundedness of $[D,a]$ is replaced by the boundedness of $(1+D^2)^{(1-m)/4m}[D,a](1+D^2)^{(1-m)/4m}$ for some $m>0$. For example, for a closed Riemannian manifold with $\sA=C^\infty(M)$, this condition holds for $D=\Delta_g$ with $m=2$.

\end{document}